\author[Raghavan]{Dilip Raghavan}
\thanks{This paper was completed during the Thematic
 Program on Set Theoretic Methods in Algebra, Dynamics and Geometry held at the Fields Institute.
The authors thank the Fields Institute for its kind hospitality.}
\address[Raghavan]{Department of Mathematics\\
National University of Singapore\\
Singapore 119076.}
\email{\href{dilip.raghavan@protonmail.com}{dilip.raghavan@protonmail.com}}
\urladdr{\url{https://dilip-raghavan.github.io/}}
\author[Stepr{\= a}ns]{Juris Stepr{\= a}ns}
\thanks{Second author is partially supported by grants from NSERC}
\address[Stepr{\= a}ns]{Department of Mathematics and Statistics\\ York University\\ 4700 Keele St.\@\\ Toronto, ON M3J 1P3, Canada.}
\email{\href{steprans@yorku.ca}{steprans@yorku.ca}}
\urladdr{\url{https://steprans.info.yorku.ca/}}
\date{\today}
\subjclass[2020]{03E35, 05D10, 22A15, 54D35, 03E40}
\keywords{Hindman's theorem, selective ultrafilter, stable ordered-union ultrafilter}
\title[Stable ordered-union vs.\@ Selective]{Stable ordered-union versus selective ultrafilters}
\def\polhk#1{\setbox0=\hbox{#1}{\ooalign{\hidewidth
    \lower1.5ex\hbox{`}\hidewidth\crcr\unhbox0}}}
\newtheorem{Theorem}{Theorem}[section]
\newtheorem{Claim}[Theorem]{Claim}
\newtheorem{Lemma}[Theorem]{Lemma}
\newtheorem{Cor}[Theorem]{Corollary}
\theoremstyle{definition}
\newtheorem{Def}[Theorem]{Definition}
\theoremstyle{remark}
\newtheorem{remark}[Theorem]{Remark}
\newcommand{\forces}{\Vdash}
\newcommand{\restrict}{\mathord\upharpoonright}
\newcommand{\forallbutfin}{{\forall}^{\infty}}
\newcommand{\existsinf}{{\exists}^{\infty}}
\renewcommand{\[}{\left[}
\renewcommand{\]}{\right]}
\newcommand{\PP}{\mathbb{P}}
\newcommand{\QQ}{\mathbb{Q}}
\newcommand{\lc}{\left|}
\newcommand{\rc}{\right|}
\newcommand\ZFC{\mathrm{ZFC}}
\newcommand\FIN{\mathrm{FIN}}
\newcommand\CH{\mathrm{CH}}
\newcommand{\BS}{{\omega}^{\omega}}
\DeclareMathOperator{\dom}{dom}
\DeclareMathOperator{\succc}{succ}
\DeclareMathOperator{\cf}{cf}
\DeclareMathOperator{\clu}{{cl}_{\cup}}
\newcommand{\Pset}{\mathcal{P}}
\newcommand{\BBB}{\mathcal{B}}
\newcommand{\C}{{\mathscr{C}}}
\newcommand{\DDD}{\mathcal{D}}
\newcommand{\GGG}{{\mathcal{G}}}
\newcommand{\EEE}{{\mathcal{E}}}
\newcommand{\UUU}{{\mathcal{U}}}
\newcommand{\VVV}{{\mathcal{V}}}
\newcommand{\HHH}{{\mathcal{H}}}
\newcommand{\FFF}{{\mathcal{F}}}
\newcommand{\TT}{{\mathbb{T}}}
\newcommand{\V}{{\mathbf{V}}}
\newcommand{\VG}{{{\mathbf{V}}[G]}}
\newcommand{\pr}[2]{\left\langle #1, #2 \right\rangle}
\newcommand{\seq}[4]{\left\langle {#1}_{#2}: #2 #3 #4 \right\rangle}
\newcommand{\trp}[3]{{\left( {#1}^{#2} \right)}^{#3}}
\newcommand{\pc}[2]{{\[#1\]}^{#2}}
\newcommand{\lb}[2]{#1 \; {<}_{\mathtt{b}} \; #2}
\newcommand{\lbb}{{<}_{\mathtt{b}}}
\newcommand{\lv}[2]{{\mathtt{Lev}}_{#2}{\mathord{\left(#1\right)}}}
\begin{document}
\begin{abstract}
 It will be shown to be consistent that there are at least two non-isomorphic selective ultrafilters, but no stable ordered-union ultrafilters.
 This answers a question of Blass from his 1987 paper~\cite{blass-hindman}.
\end{abstract}
\maketitle
\section{Introduction} \label{sec:intro}
Ramsey's theorem~\cite{ramsey} and Hindman's theorem~\cite{MR349574}, together with their common generalization, the Milliken--Taylor theorem of~\cite{MR373906} and~\cite{MR424571}, are among the most widely used facts of infinite combinatorics.
The results of this paper clarify an important aspect of their relationship to one another.
Before stating these results, it may be worthwhile to set the stage by reviewing some aspects of the history of this subject.
The classical theorem of Ramsey is often referred to as a higher dimensional pigeonhole principle and this might lead one to think that there is not much of interest to say about the one dimensional case. But any consideration of van der Waerden's theorem, to say nothing of Szemer\'{e}di's Theorem, would reveal the error of this view point. The interest, as well as the complexity of both of these pigeonhole type results stems from the fact that their conclusions yield sets that are not necessarily large in terms of cardinality, but large in terms of their algebraic structure.
For example, a consequence of van der Waerden's Theorem is that for any partition of $\mathbb N$ into two pieces, one of the pieces will contain arbitrarily large arithmetic progressions, or, in other words, arbitrarily large sets that are closed under addition by a fixed integer.
This provides the motivation for a conjecture of Graham and Rothschild~\cite{MR284352} that for any partition of $\mathbb N$ into finitely many pieces, one of the pieces
contains a set that is closed under all sums of distinct members.
The truth of this conjecture was established by Hindman (Theorem 3.1 in  \cite{MR349574}) and
is now known as Hindman's Theorem.

The history leading to the proof is both interesting and relevant to the results about to be presented.
A natural approach to proving Hindman's Theorem would be to proceed inductively to construct $k_0, k_1, \dotsc, k_n$ and $A_n$ such that all sums of distinct integers from  $k_0, k_1, \dotsc, k_n$ belong to one piece of the given partition and, crucially, $A_n$ is an infinite set from which it is possible to select the next integer $k_{n+1}$. And, indeed, this is how all proofs of Hindman's Theorem proceed, but the technical details in the original proof of \cite{MR349574} are daunting and a potential stumbling block is correctly choosing the set $A_n$.
An early realization was that it would help if the $A_n$ could be selected from an ultrafilter; but that ultrafilter would need to enjoy some very specific properties.

In the earlier paper~\cite{MR307926}, Hindman had made the following observation: The Graham and Rothschild Conjecture holds if and only if there is an ultrafilter on $\mathbb N$ every member of which contains all non-repeating sums from some infinite subset.
In the same paper, he shows that if the Continuum Hypothesis holds, then the Graham and Rothschild Conjecture is equivalent to the existence of an ultrafilter $\UUU$ that is an idempotent (namely $\UUU + \UUU = \UUU$) in the semigroup $(\beta\mathbb N,+)$ where ultrafilters are thought of as finitely additive measures and the $+$ operation is the convolution of measures.
This paper will focus on a strengthening of the property of an ultrafilter that all of its members contain the non-repeating sums of an infinite subset.
But before explaining the strengthening we need to return to the story and mention that Hindman was able to eliminate the use of ultrafilters by fashioning an elaborately technical, albeit elementary, argument in \cite{MR349574} that seemingly did away with the use of the idempotent.
Somewhat later, though, Baumgartner produced a much simpler version of Hindman's technical argument in \cite{MR354394}.
A key idea used by Baumgartner was the notion of a large set, somewhat reminiscent of the construction of Haar measure.

As explained by Bergelson in \cite{MR2757532}, the notion of largeness in this context can be traced back to Poincar\'{e}'s work on celestial mechanics and, when combined with an idempotent ultrafilter, very quickly yields Hindman's Theorem.
This realization of Galvin and Glazer that a much older and more general theorem of Ellis~\cite{MR101283} about idempotents in compact semigroups could vastly simplify the proof of Hindman's Theorem points to the important role of ultrafilters in this area of Ramsey Theory.
So even though Hindman's construction using the Continuum Hypothesis was not, ultimately, necessary, it played an important role in the development of the subject and fostered future research.

For example, in \cite{MR349574} van Douwen is credited with realizing that, assuming the Continuum Hypothesis, it is possible to construct an ultrafilter that satisfies a stronger version of the property Hindman established under the same assumptions.
As documented in \cite{MR2991425}, it was noticed by van Douwen that, assuming the Continuum Hypothesis, certain ultrafilters had a base consisting of all of the finite sums of some infinite set of positive integers.
The difference is worth highlighting: Hindman had asked only that each member of his ultrafilter contain a set closed under finite sums, but van Douwen is asking that this set actually belongs to the ultrafilter.
These ultrafilters identified by van Douwen are now known as {\em strongly summable ultrafilters} and the question of whether the Continuum Hypothesis is needed to construct them is also attributed to van Douwen.
The strongly summable ultrafilters play a significant role in the theory of the semigroup $(\beta \mathbb N,+)$  and the interested reader can learn more about this, and much more, in \cite{MR2893605}.

Closely related to Hindman's Theorem is Theorem~3.3 from  \cite{MR349574}, a result about the union operation on the finite subsets of the positive integers.
It will be worthwhile introducing some notation in order to state this result, since this notation will be used in later sections as well.
\begin{Def} \label{def:omegafinclu}
 As usual in set theory, $\omega$ denotes $\mathbb{N}$.
 For a set $X$ and a cardinal $\kappa$, $\pc{X}{\kappa} = \{A \subseteq X: \lc A \rc = \kappa\}$ and $\pc{X}{< \kappa} = \{A \subseteq X: \lc A \rc < \kappa\}$.
 $\FIN$ denotes $\pc{\omega}{< {\aleph}_{0}} \setminus \{\emptyset\}$.
 
 $A \subseteq \FIN$ is said to be \emph{closed under finite unions} if $s \cup t \in A$, for all $s, t \in A$.
 Let
 \begin{align*}
  \clu\left(A\right) = \left\{\bigcup{X}: X \in \pc{A}{< {\aleph}_{0}} \setminus \{\emptyset\} \right\}.
 \end{align*}
 $\clu\left(A\right) \subseteq \FIN$ and $\clu\left(A\right)$ is closed under finite unions.
 Indeed, $\clu\left(A\right)$ is the closure of $A$ under finite non-empty unions.
\end{Def}
In analogy with the strongly summable ultrafilters, it is possible to formulate the following definition.
\begin{Def} \label{def:union}
 An ultrafilter on $\FIN$ will be called a \emph{union ultrafilter} if it has a base consisting of sets of the form $\clu\left(A\right)$, where $A \subseteq \FIN$ consists of pairwise disjoint sets and is infinite.
\end{Def}
However, it turns out that the connection between strongly summable ultrafilters and union ultrafilters goes beyond analogy.
In \S5 of \cite{blass-hindman} it is shown that the mapping from $\FIN$ to $\mathbb N$ sending $a$ to $\sum_{n\in a}2^n$ sends union ultrafilters to strongly summable ultrafilters and a converse is established in Theorem~1 of \cite{MR906807}.
Indeed Baumgartner's simplified proof of Hindman's Theorem actually states that if $\FIN$ is partitioned into finitely many pieces then there is an infinite $A\subseteq \FIN$ consisting of pairwise disjoint sets such that $\clu\left(A\right)$ is contained in one of the pieces.
It turns out that many of the constructions of union ultrafilters actually produce a stronger property than needed and this is the main property of ultrafilters to be examined in the subsequent sections.
\begin{Def} \label{def:block}
 For $s, t \in \FIN$, write $\lb{s}{t}$ to mean $\max(s) < \min(t)$.
 Note that $\lbb$ is a transitive and irreflexive relation on $\FIN$.
 
 $X \subseteq \FIN$ is called a \emph{block sequence} if $X$ is non-empty and it is linearly ordered by the relation $\lbb$.
 More formally, $X \subseteq \FIN$ is a block sequence if and only if $X \neq \emptyset$ and $\forall s, t \in X\[s = t \vee \lb{s}{t} \vee \lb{t}{s}\]$.
 Note that if $X$ is a block sequence, then $\pr{X}{\lbb}$ is a well-order and the order type of $\pr{X}{\lbb}$ is $\lc X \rc$.
 The notation $X(i)$ will be used to denote the $i$th member of $\pr{X}{\lbb}$, for all $i < \lc X \rc$.
 
 The following notation, which is based on Todorcevic~\cite{introramsey}, will be used.
 For $1 \leq \alpha \leq \omega$ and $A \subseteq \FIN$,
 \begin{align*}
  &{A}^{\[\alpha\]} = \left\{ X \subseteq A: X \ \text{is a block sequence and} \ \lc X \rc = \alpha \right\};\\
  &{A}^{\[< \alpha\]} = \left\{ X \subseteq A: X \ \text{is a block sequence and} \ 1 \leq \lc X \rc < \alpha \right\}.
 \end{align*}
 Thus ${A}^{\[\alpha\]}$ denotes the collection of all block sequences of length $\alpha$ from $A$ and ${A}^{\[< \alpha\]}$ is the collection of all block sequences of length $< \alpha$ from $A$ (by definition block sequences are non-empty).
 For $A \subseteq \FIN$,
 \begin{align*}
  \[A\] = \left\{\bigcup{X}: X \in {A}^{\[< \omega\]}\right\}
 \end{align*}
 Thus $\[A\]$ is the collection of all unions of finite length block sequences from $A$.
 Note $A \subseteq \[A\] \subseteq \FIN$.
 
 Further, if $X \subseteq \FIN$ is a block sequence, then $\clu\left(X\right) = \[X\]$.
\end{Def}
%
\begin{Def} \label{def:orderedunionuf}
 An ultrafilter $\HHH$ on $\FIN$ is called an \emph{ordered-union ultrafilter} if $\HHH$ has a base consisting of sets of the form $\[X\]$, where $X$ is an infinite block sequence.
 In other words, an ultrafilter $\HHH$ on $\FIN$ is an ordered-union ultrafilter if and only if $\forall A \in \HHH \exists X \in {\FIN}^{\[\omega\]}\[\[X\] \in \HHH \ \text{and} \ \[X\] \subseteq A\]$.
\end{Def}
That union ultrafilters and ordered-union ultrafilters are consistently different notions is proved in Theorem~4 of \cite{MR906807} assuming the Continuum Hypothesis.
While the ordered-union ultrafilters are defined to contain witnesses to Hindman's theorem, it is not clear that they also contain witnesses to the higher dimensional analogue of Hindman's theorem, namely the Milliken-Taylor theorem of \cite{MR373906} and \cite{MR424571}.
Obtaining such properties requires introducing a further strengthening.
\begin{Def} \label{def:refines}
 Let $X, Y \in {\FIN}^{\[\omega\]}$.
 $Y$ is said to \emph{refine} $X$ if $\forall i \in \omega\[Y(i) \in \[X\]\]$.
 $Y$ is said to \emph{almost refine} $X$ if $\forallbutfin i \in \omega\[Y(i) \in \[X\]\]$.
\end{Def}
\begin{Def} \label{def:stable}
 An ordered-union ultrafilter $\HHH$ on $\FIN$ is said to be \emph{stable} if for every sequence $\seq{X}{n}{\in}{\omega}$ with the property that ${X}_{n} \in {\FIN}^{\[\omega\]}$ and $\[{X}_{n}\] \in \HHH$, for all $n \in \omega$, there exists $Y \in {\FIN}^{\[\omega\]}$ such that $\forall n \in \omega\[Y \ \text{almost refines} \ {X}_{n}\]$ and $\[Y\] \in \HHH$.
\end{Def}
While stability is deceptively similar to the definition of a P-point, it is shown in \cite{blass-hindman} to be a weaker notion.
However, it was also shown in \cite{blass-hindman} to capture a number of important combinatorial properties.
Indeed, within the realm of ordered-union ultrafilters on $\FIN$, stability is the analogue of selectivity for ultrafilters on $\mathbb{N}$.
\begin{Def} \label{def:canonical}
 An ordered-union ultrafilter $\HHH$ on $\FIN$ has the \emph{canonical partition property} if for every function $f: \FIN \rightarrow \omega$, there exists $A \in \HHH$ such that one of the following statements hold:
 \begin{enumerate}
  \item
  $\forall s, t \in A\[f(s)=f(t)\]$;
  \item
  $\forall s, t \in A\[f(s)=f(t) \leftrightarrow \min(s)=\min(t)\]$;
  \item
  $\forall s, t \in A\[f(s)=f(t) \leftrightarrow \max(s)=\max(t)\]$;
  \item
  $\forall s, t \in A\[ f(s)=f(t) \leftrightarrow \left( \min(s)=\min(t) \wedge \max(s)=\max(t) \right) \]$;
  \item
  $\forall s, t \in A\[f(s)=f(t) \leftrightarrow s=t\]$.
 \end{enumerate}
\end{Def}
Blass~\cite{blass-hindman} proved that ordered-union ultrafilters are stable if and only if they enjoy strong Ramsey theoretic properties.
\begin{Theorem}[Theorem 4.2 of Blass~\cite{blass-hindman}] \label{thm:blassramsey}
 For any ordered-union ultrafilter $\HHH$ on $\FIN$, the following are equivalent:
 \begin{enumerate}
  \item
  $\HHH$ is stable;
  \item
  for each $1 \leq n < \omega$ and $c: {\FIN}^{\[n\]} \rightarrow 2$, there is an $A \in \HHH$ such that $c$ is constant on ${A}^{\[n\]}$; 
  \item
  for each $c: {\FIN}^{\[2\]} \rightarrow 2$, there is an $A \in \HHH$ such that $c$ is constant on ${A}^{\[2\]}$;
  \item \label{blassramsey:fourth}
  $\HHH$ has the canonical partition property.
 \end{enumerate}
\end{Theorem}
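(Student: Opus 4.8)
The plan is to run the cycle $(1)\Rightarrow(2)\Rightarrow(3)\Rightarrow(1)$ and to attach the canonical partition property by proving $(1)\Rightarrow(4)$ and $(4)\Rightarrow(1)$. Two links are free: $(2)\Rightarrow(3)$ is the instance $n=2$, and the case $n=1$ of $(2)$ holds for \emph{every} ordered-union ultrafilter, since for $c\colon\FIN\to 2$ one of the two colour classes lies in $\HHH$ and may be shrunk to a base set $\[X\]\in\HHH$. I shall use throughout that the image of $\HHH$ under $s\mapsto\min(s)$ is a non-principal ultrafilter on $\omega$, so that $\{t\in\FIN:\max(s)<\min(t)\}\in\HHH$ for every $s\in\FIN$.

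For $(3)\Rightarrow(1)$, let $\seq{X}{n}{\in}{\omega}$ witness the hypothesis of stability, put ${W}_{i}=\bigcap_{n\le i}\[{X}_{n}\]\in\HHH$, and, using that $\HHH$ is ordered-union, fix infinite block sequences ${Z}_{i}$ with $\[{Z}_{i}\]\in\HHH$, $\[{Z}_{i}\]\subseteq{W}_{i}$ and $\[{Z}_{i+1}\]\subseteq\[{Z}_{i}\]$. Define $c\colon{\FIN}^{\[2\]}\to 2$ by declaring $c$ of a pair with $\lb{s}{t}$ to be $0$ exactly when $t\in\[{Z}_{\min(s)}\]$, and let $\[Y\]\in\HHH$ be $c$-homogeneous by $(3)$. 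The homogeneous value cannot be $1$: with $s=Y(0)$ and $m=\min(s)$ the set $\[Y\]\cap\[{Z}_{m}\]\cap\{t:\lb{s}{t}\}$ lies in $\HHH$, hence is non-empty, and any $t$ in it yields a pair of colour $0$. So $t\in\[{Z}_{\min(s)}\]$ whenever $\lb{s}{t}$ are both in $\[Y\]$. Fixing $n$ and choosing $i$ with $\min(Y(i))\ge n$, we obtain $Y(j)\in\[{Z}_{\min(Y(i))}\]\subseteq\[{Z}_{n}\]\subseteq{W}_{n}\subseteq\[{X}_{n}\]$ for every $j>i$; thus $Y$ almost refines every ${X}_{n}$ while $\[Y\]\in\HHH$, so $\HHH$ is stable.

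For $(1)\Rightarrow(2)$ I induct on $n$, the case $n=1$ being settled above. Given $c\colon{\FIN}^{\[n\]}\to 2$, for each $\bar{s}\in{\FIN}^{\[n-1\]}$ the map $t\mapsto c(\bar{s}^{\frown}t)$ on $\{t:\max(\bar{s})<\min(t)\}\in\HHH$ is constant, with value ${\epsilon}_{\bar{s}}$, on a base set $\[{B}_{\bar{s}}\]\in\HHH$; being a base set, $\[{B}_{\bar{s}}\]$ is closed under unions of its block sequences, which is what will let a union $t\in\[Y\]$ of committed blocks stay inside it. It suffices to produce $\[Y\]\in\HHH$ with $c(\bar{s}^{\frown}t)={\epsilon}_{\bar{s}}$ for all $\bar{s}\in{\[Y\]}^{\[n-1\]}$ and $t\in\[Y\]$ with $\max(\bar{s})<\min(t)$, for then $\bar{s}\mapsto{\epsilon}_{\bar{s}}$ is a $2$-colouring of ${\[Y\]}^{\[n-1\]}$ to which the inductive hypothesis applies inside $\[Y\]$. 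The block sequence $Y$ is built by a fusion: one keeps a finite stem ${\sigma}_{m}=\{Y(0),\dots,Y(m-1)\}$ and a current set $\[{U}_{m}\]\in\HHH$, and at stage $m$ chooses $Y(m)\in\[{U}_{m}\]$ and then $\[{U}_{m+1}\]\in\HHH$ below $\[{U}_{m}\]\cap\bigcap\{\[{B}_{\bar{s}}\]:\bar{s}\in{\[{\sigma}_{m+1}\]}^{\[n-1\]}\}$, a finite intersection. The difficulty---and the main obstacle of the whole theorem---is that the commitment imposed at stage $m$ depends on the blocks already chosen: a single application of stability to the fixed family $\seq{B_{\bar s}}{\bar s}{\in}{\FIN^{\[n-1\]}}$ only forces a \emph{tail} of $Y$ into each individual $\[{B}_{\bar{s}}\]$, which is useless for the short $\bar{s}$ sitting at the very start of $Y$. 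One must instead interleave the block-by-block choice with the diagonalisation, and it is precisely stability---rather than a weaker P-point-type hypothesis---that guarantees the fused $Y$ again satisfies $\[Y\]\in\HHH$ instead of merely almost-refining the $\[{U}_{m}\]$; this interleaving is where the Milliken--Taylor combinatorics must be made to respect the ultrafilter. The implication $(1)\Rightarrow(4)$ runs by the same fusion, now started from the canonical form of the Milliken--Taylor theorem for single blocks (the $\FIN$-analogue of the Erd\H{o}s--Rado canonical Ramsey theorem), which densely produces block sequences on which $f$ realises one of the five listed patterns; stability deposits such a block sequence into $\HHH$.

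Finally $(4)\Rightarrow(1)$ closes the cycle cleanly. Given $\seq{X}{n}{\in}{\omega}$ and ${W}_{i}=\bigcap_{n\le i}\[{X}_{n}\]\in\HHH$, define $f\colon\FIN\to\omega$ so that $f(s)$ counts how many of ${W}_{0},\dots,{W}_{\max(s)}$ contain $s$; since the ${W}_{i}$ decrease, $f(s)>n$ implies $s\in{W}_{n}$. Apply $(4)$ and shrink to a base set $\[X\]\in\HHH$ realising one of the five forms. Form $(1)$ is impossible, for if $f$ were constantly $c$ on $\[X\]$ then $\[X\]\cap{W}_{c}=\emptyset$, contradicting ${W}_{c},\[X\]\in\HHH$. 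In each of the forms $(2)$--$(5)$ the values of $f$ on the blocks $Y(j)$ of any base block sequence $\[Y\]\subseteq\[X\]$ with $\[Y\]\in\HHH$ are pairwise distinct---the blocks have strictly increasing $\min$ and $\max$---so $f(Y(j))\to\infty$; hence for each $n$ we get $Y(j)\in{W}_{n}\subseteq\[{X}_{n}\]$ for all large $j$, that is, $Y$ almost refines every ${X}_{n}$, with $\[Y\]\in\HHH$. Thus $\HHH$ is stable and all four conditions are equivalent.
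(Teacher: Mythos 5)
First, context you could not have known: the paper never proves this theorem --- it is quoted with attribution to Blass~\cite{blass-hindman}, and the paper's later Lemmas~\ref{lem:stableramsey}, \ref{lem:selectivediag}, \ref{lem:htreebranch} and \ref{lem:stabgame} are all presented as \emph{consequences} of it. So your proposal can only be judged on its own terms and against Blass's original argument. Judged that way, a substantial part of it is correct and complete: $(2)\Rightarrow(3)$, the case $n=1$ of $(2)$, your proof of $(3)\Rightarrow(1)$ via the colouring $c(\{s,t\})=0\iff t\in\[{Z}_{\min(s)}\]$, and your proof of $(4)\Rightarrow(1)$ via the counting function $f$ are all sound. (In the latter, the literal claim ``$f$ constantly $c$ implies $\[X\]\cap {W}_{c}=\emptyset$'' is slightly off, since elements with $\max(s)=c-1$ can lie in both; intersect with $\{s:\min(s)>c\}\in\HHH$ first. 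This is cosmetic.)

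The genuine gap is in $(1)\Rightarrow(2)$, at exactly the point you flag and then wave away. Your fusion chooses $Y(m)\in\[{U}_{m}\]$ adaptively and shrinks $\[{U}_{m+1}\]$ below a finite intersection of sets $\[{B}_{\bar{s}}\]$ determined by the blocks already chosen; at the end you must know $\[Y\]\in\HHH$, and you assert that ``it is precisely stability'' which guarantees this. It does not, at least not without a further argument that is the real content of the implication. Definition~\ref{def:stable} produces, for a countable family of block sequences fixed \emph{in advance}, a single $Y$ with $\[Y\]\in\HHH$ almost refining each member --- with no control over which finitely many blocks are exceptional, and no provision for commitments depending on earlier choices. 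What your fusion needs is precisely the assertion that every such adaptive construction (equivalently: every $\HHH$-tree, or every strategy for Player I in the stability game) admits a branch $Y$ with $\[Y\]\in\HHH$. In this paper that assertion is Lemma~\ref{lem:htreebranch} (and Lemma~\ref{lem:stabgame}), and you cannot appeal to it: the paper obtains it from Lemma~\ref{lem:selectivediag}, whose proof is an application of item (3) of the very theorem you are proving, so the appeal would be circular. Deriving a fusion principle of this kind from the bare definition of stability is the hard step; the known proofs --- Blass's original, and the localized Galvin--Prikry/Ellentuck-style arguments for the Milliken--Taylor space --- accomplish it by a combinatorial-forcing (accept/reject) argument in which stability is only ever applied to a pre-specified countable family of decisions indexed by finite block sequences, which is how the synchronization problem you yourself describe gets solved. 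Identifying the obstacle is not overcoming it: as written, your $(1)\Rightarrow(2)$ assumes the key lemma rather than proving it. Your sketch of $(1)\Rightarrow(4)$ inherits the same gap and adds another: even granting the fusion, the phrase ``stability deposits such a block sequence into $\HHH$'' does not produce a single member of $\HHH$ on which one \emph{fixed} canonical form holds; the standard route is to deduce (4) from (2)/(3) by applying the Ramsey property to the finitely many auxiliary pair colourings recording the equality pattern of $f$ (whether $f(s)=f(t)$, and how this correlates with agreement of $\min$ and $\max$), and then analysing the homogeneous side.
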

The theorem of \cite{blass-hindman} actually contains a number of other equivalences.
One of these will be touched upon below.
It will be illuminating to note that Theorem \ref{thm:blassramsey} is the analogue of a well-known characterization of selective ultrafilters on $\mathbb{N}$ due to Kunen.
Recall the following definition.
\begin{Def} \label{def:selective}
 An ultrafilter $\UUU$ on $\omega$ is called \emph{selective} if for every $f: \omega \rightarrow \omega$, there is a set $A \in \UUU$ such that $f$ is either constant or 1-1 on $A$.
\end{Def}
Observe that Definition \ref{def:selective} is saying that every function on $\omega$ attains a canonical form on some member of a selective ultrafilter.
It is the analogue of Item (\ref{blassramsey:fourth}) of Theorem \ref{thm:blassramsey} for ultrafilters on $\mathbb{N}$.
An unpublished theorem of Kunen says that selectivity is equivalent to the existence of witnesses for Ramsey's theorem, which provides the analogue of Theorem \ref{thm:blassramsey} for ultrafilters on $\mathbb{N}$.
\begin{Theorem}[Kunen; see \cite{BJ}] \label{thm:kunen}
 The following are equivalent for an ultrafilter $\UUU$ on $\omega$:
 \begin{enumerate}
  \item
  $\UUU$ is selective;
  \item \label{kunen:second}
  for each $1 \leq n < \omega$ and $c: {\[\omega\]}^{n} \rightarrow 2$, there is an $A \in \UUU$ such that $c$ is constant on ${\[A\]}^{n}$; 
  \item \label{kunen:third}
  for each $c: {\[\omega\]}^{2} \rightarrow 2$, there is an $A \in \UUU$ such that $c$ is constant on ${\[A\]}^{2}$;
 \end{enumerate}
\end{Theorem}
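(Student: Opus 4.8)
The plan is to establish the cycle $(1) \Rightarrow (2) \Rightarrow (3) \Rightarrow (1)$. The implication $(2) \Rightarrow (3)$ is the special case $n = 2$ and needs nothing. For $(3) \Rightarrow (1)$, given $f : \omega \rightarrow \omega$ I would define a colouring $c : \pc{\omega}{2} \rightarrow 2$ by setting $c(\{m, n\}) = 0$ when $f(m) = f(n)$ and $c(\{m, n\}) = 1$ otherwise. Applying $(3)$ yields $A \in \UUU$ on which $c$ is constant; if the constant value is $0$ then $f$ is constant on $A$, and if it is $1$ then $f$ is injective on $A$. In either case $A$ witnesses selectivity in the sense of Definition~\ref{def:selective}, so this direction is immediate.

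The content of the theorem is $(1) \Rightarrow (2)$, which I would prove by induction on $n$. The case $n = 1$ uses only that $\UUU$ is an ultrafilter, since a colouring of $\pc{\omega}{1}$ splits $\omega$ into two pieces and the piece in $\UUU$ is homogeneous. For the step from $n$ to $n + 1$, let $c : \pc{\omega}{n + 1} \rightarrow 2$. For each $s \in \pc{\omega}{n}$ the assignment $k \mapsto c(s \cup \{k\})$ is a $2$-colouring of $\{k : k > \max(s)\}$, so there is a colour $i(s) \in 2$ with $B_s = \{k > \max(s) : c(s \cup \{k\}) = i(s)\} \in \UUU$. The induction hypothesis applied to $i : \pc{\omega}{n} \rightarrow 2$ produces $A_0 \in \UUU$ on which $i$ is constantly equal to some $j \in 2$. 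It then suffices to find $A \in \UUU$ with $A \subseteq A_0$ such that $k \in B_s$ whenever $s \in \pc{A}{n}$ and $k \in A$ with $k > \max(s)$: for any $t \in \pc{A}{n + 1}$, writing $t = s \cup \{\max(t)\}$ with $s \in \pc{A}{n}$, one gets $c(t) = i(s) = j$, so $A$ is homogeneous of colour $j$.

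The main obstacle is producing this set $A$, and it is precisely here that the full strength of selectivity is needed. I would isolate it as a self-diagonalisation lemma: if $\UUU$ is selective and $B_s \in \UUU$ for every finite $s \subseteq \omega$, then there is $A = \{a_0 < a_1 < \cdots\} \in \UUU$ such that $a_m \in B_s$ for all $m$ and all $s \subseteq \{a_0, \dots, a_{m - 1}\}$; the step above is the instance of this lemma obtained by setting $B_s = \omega$ for $\lc s \rc \neq n$. Writing $C_k = \bigcap \{B_s : s \subseteq \{0, \dots, k - 1\}\}$, a finite intersection of members of $\UUU$ and hence itself in $\UUU$, the sequence $\langle C_k : k \in \omega \rangle$ is decreasing, and it suffices to secure the rapid-growth condition $a_m \in C_{a_{m - 1} + 1}$: indeed $s \subseteq \{a_0, \dots, a_{m - 1}\}$ then forces $s \subseteq \{0, \dots, a_{m - 1}\}$ and hence $C_{a_{m - 1} + 1} \subseteq B_s$. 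Everything thus reduces to finding, for a given decreasing sequence in $\UUU$, a member of $\UUU$ that diagonalises it rapidly along itself.

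Deriving this rapid diagonalisation from Definition~\ref{def:selective} is the delicate heart of the proof. The first move is to extract the selector form of selectivity: if $\{P_k : k \in \omega\}$ partitions $\omega$ with every $P_k \notin \UUU$, then the function sending each point of $P_k$ to $k$ is constant on no member of $\UUU$, so by Definition~\ref{def:selective} it is injective on some $S \in \UUU$, which means $S$ meets every $P_k$ in at most one point. This one principle shows both that $\UUU$ is a P-point --- applying it to the pieces $C_k \setminus C_{k + 1}$ yields $W \in \UUU$ with $W \subseteq^{*} C_k$ for every $k$ --- and that $\UUU$ is a Q-point, since selectors exist for partitions into finite sets. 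The remaining, and genuinely subtle, step is to marry these: one thins the pseudo-intersection $W$, using the Q-point strength to force its successive gaps to grow quickly enough, to a subset $A \in \UUU$ whose consecutive elements satisfy $a_m \in C_{a_{m - 1} + 1}$. Once this is achieved the self-diagonalising $A$ is in hand, the inductive step of $(1) \Rightarrow (2)$ closes, and with it the theorem.
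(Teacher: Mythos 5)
The paper never proves Theorem~\ref{thm:kunen}: it is quoted as an unpublished result of Kunen with a pointer to \cite{BJ}, so your argument can only be compared with the standard proof in such references, and in outline it is that proof. The skeleton is correct: $(3)\Rightarrow(1)$ via the equality colouring, the induction for $(1)\Rightarrow(2)$ with the sets $B_s$, the reduction to a self-diagonalisation lemma and then to the rapid-growth condition $a_m \in C_{a_{m-1}+1}$, and the extraction of the selector principle, the P-point property, and the Q-point property from Definition~\ref{def:selective} are all sound. Three pieces of bookkeeping: you also need $A \subseteq A_0$, which your lemma as instantiated (with $B_s = \omega$ for $\lc s \rc \neq n$) does not give, but this is repaired either by setting $B_s = A_0$ for $\lc s \rc \neq n$ or by intersecting with $A_0$ at the end, since the diagonalisation property passes to subsets; the sets $C_k \setminus C_{k+1}$ only partition $\omega$ after you normalise $C_0 = \omega$ and dispose of the trivial case $\bigcap_k C_k \in \UUU$, in which case that intersection itself serves as $W$; and throughout, $\UUU$ must be taken nonprincipal (otherwise $B_s$ need not exist and finite sets can lie in $\UUU$), which is the only case with content anyway.

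The one genuine gap is the final step, which you describe only as thinning $W$ ``using the Q-point strength to force its successive gaps to grow quickly enough.'' Read literally --- take a Q-point selector for a suitable interval partition --- this fails: a selector may choose points from adjacent intervals, and if $g(k)$ denotes the least $l$ with $W \setminus l \subseteq C_k$, then guaranteeing $b \geq g(a+1)$ for a pair $a < b$ lying in adjacent intervals $[m_j, m_{j+1})$, $[m_{j+1}, m_{j+2})$ would force $m_{j+1} \geq g(m_{j+1})$, a fixed-point condition that fails whenever $g$ dominates the identity; no choice of interval partition avoids this. The standard repair is the parity trick: set $m_0 = 0$ and $m_{j+1} = 1 + \max\left(\{m_j\} \cup \{g(a+1) : a < m_j\}\right)$, take a selector $T \subseteq W$, $T \in \UUU$, for the intervals $I_j = [m_j, m_{j+1})$, and keep whichever of $T \cap \bigcup_j I_{2j}$ and $T \cap \bigcup_j I_{2j+1}$ lies in $\UUU$. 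Consecutive points of the surviving set lie in intervals of the same parity, hence with indices differing by at least two, so a consecutive pair $a < b$ with $a \in I_{2j}$ satisfies $a < m_{2j+1}$ and $b \geq m_{2j+2} > g(a+1)$, whence $b \in C_{a+1}$ as required. With this detail supplied your proof closes, and it is essentially the argument in the source the paper cites.
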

The homogeneity properties given by Items (\ref{kunen:second}) and (\ref{kunen:third}) of Theorems \ref{thm:blassramsey} and \ref{thm:kunen} can be improved to cover all analytic partitions of ${\FIN}^{\[\omega\]}$ and $\pc{\omega}{\omega}$ respectively.
In the presence of large cardinals, they be strengthened even further to obtain homogeneous sets for any partition in the inner model $\mathbf{L}(\mathbb{R})$.
This further strengthening leads to the notion of a generic ultrafilter over a Ramsey space.
Forcing with $\pc{\omega}{\omega}$ ordered by almost inclusion adds a selective ultrafilter and forcing with ${\FIN}^{\[\omega\]}$ ordered by almost refinement adds a stable ordered-union ultrafilter.
Furthermore, it is a theorem of Todorcevic, appearing in \cite{carlosstevo}, that in the presence of large cardinals, the selective ultrafilters are precisely those ultrafilters on $\omega$ that are generic over $\mathbf{L}(\mathbb{R})$ for $\pc{\omega}{\omega}$ partially ordered by almost inclusion.
Similarly, in the presence of large cardinals, the stable ordered-union ultrafilters are precisely those ultrafilters on $\FIN$ that are generated from a generic filter over $\mathbf{L}(\mathbb{R})$ for ${\FIN}^{\[\omega\]}$ partially ordered by almost refinement.
Blass~\cite{Bl} proved the same result in a version of the Solovay model.

Todorcevic~\cite{introramsey} has investigated the abstract concept of a topological Ramsey space.
In \cite{introramsey}, the space corresponding to Ramsey's theorem is called the \emph{Ellentuck space} and the one corresponding to Hindman's theorem and the Milliken-Taylor theorem is known as the \emph{Milliken-Taylor space}.
The results discussed above establish that, in the presence of large cardinals, the selective ultrafilters are the generic ultrafilters corresponding to the Ellentuck space, while the stable ordered-union ultrafilters are the generic ultrafilters corresponding to the Milliken-Taylor space.
An abstract notion of generic ultrafilter corresponding to an arbitrary topological Ramsey space has been studied by Mijares~\cite{MR2330595}.

The existence of canonical forms for functions has striking implications for which ultrafilters may appear below a selective or a stable ordered-union ultrafilter in the Rudin-Keisler or Tukey orders.
Recall the following definitions.
\begin{Def} \label{def:rk}
 Let $\FFF$ be a filter on $X$ and $\GGG$ a filter on $Y$.
 $\FFF$ is \emph{Rudin-Keisler below} or \emph{RK below} $\GGG$, written as $\FFF \; {\leq}_{RK} \; \GGG$, if there is a function $f: Y \rightarrow X$ such that $\FFF = \{A \subseteq X: {f}^{-1}(A) \in \GGG\}$.
 
 $\FFF$ is \emph{RK-equivalent} to $\GGG$, written as $\FFF \; {\equiv}_{RK} \; \GGG$, if $\FFF \; {\leq}_{RK} \; \GGG$ and $\GGG \; {\leq}_{RK} \; \FFF$.
 
 A subset $\BBB \subseteq \FFF$ is said to be \emph{cofinal in $\FFF$} if for each $A \in \FFF$, there exists $B \in \BBB$ with $B \subseteq A$.
 A map $\varphi: \GGG \rightarrow \FFF$ is a \emph{convergent map} if the image under $\varphi$ of every cofinal subset of $\GGG$ is cofinal in $\FFF$.
 $\FFF$ is said to be \emph{Tukey below} $\GGG$, written as $\FFF \; {\leq}_{T} \; \GGG$, if there is a convergent map $\varphi: \GGG \rightarrow \FFF$.
 If $\FFF \; {\leq}_{T} \; \GGG$ and $\GGG \; {\leq}_{T} \; \FFF$, then $\FFF$ and $\GGG$ are said to be \emph{Tukey equivalent} and this is denoted $\FFF \; {\equiv}_{T} \; \GGG$.
\end{Def}
It is a well-known fact (see e.g.\@ \cite{maryellenorder}) that when $\UUU$ and $\VVV$ are ultrafilters on $\omega$, $\UUU \; {\equiv}_{RK} \; \VVV$ if and only if there is a permutation $e: \omega \rightarrow \omega$ so that $\UUU = \{ e\[A\]: A \in \VVV \}$.
In this case, the equivalence $\UUU \; {\equiv}_{RK} \; \VVV$ is often expressed by saying $\UUU$ and $\VVV$ are \emph{RK-isomorphic}.

An easy consequence of Item (\ref{kunen:third}) of Theorem \ref{thm:kunen} is that selective ultrafilters are RK-minimal among all non-principal ultrafilters on $\omega$.
Hence if $\UUU$ and $\VVV$ are selective ultrafilters on $\omega$, then either $\UUU$ and $\VVV$ are RK-incomparable or they are RK-isomorphic.
Using the existence of a more complicated canonical form for convergent maps (see, for example, Lemma 28 of \cite{pomegaembed}), it is proved by Raghavan and Todorcevic~\cite{tukey} that selective ultrafilters are also Tukey minimal among all non-principal ultrafilters on $\omega$.

Analogously, Item (\ref{blassramsey:fourth}) of Theorem \ref{thm:blassramsey} implies that there are precisely two RK-equivalence classes of selective ultrafilters that are RK-below a stable ordered-union ultrafilter.
\begin{Def} \label{def:maxmin}
 The function ${f}_{\max}: \FIN \rightarrow \omega$ is defined by ${f}_{\max}(s) = \max(s)$ and the function ${f}_{\min}: \FIN \rightarrow \omega$ is defined by ${f}_{\min}(s) = \min(s)$, for all $s \in \FIN$.
\end{Def}
\begin{Def} \label{def:minmax}
 Let $\HHH$ be an ultrafilter on $\FIN$.
 Then \emph{${\HHH}_{\min}$} and \emph{${\HHH}_{\max}$} are the RK-projections of $\HHH$ induced by ${f}_{\min}$ and ${f}_{\max}$ respectively.
 More formally,
 \begin{align*}
  &{\HHH}_{\min} = \left\{ M \subseteq \omega: {f}^{-1}_{\min}(M) \in \HHH \right\}\\
  &{\HHH}_{\max} = \left\{ M \subseteq \omega: {f}^{-1}_{\max}(M) \in \HHH \right\}.
 \end{align*}
\end{Def}
\begin{Theorem}[Blass~\cite{blass-hindman}] \label{thm:minmaxrk}
 Let $\HHH$ be a stable ordered-union ultrafilter on $\FIN$.
 Then ${\HHH}_{\min}$ and ${\HHH}_{\max}$ are selective ultrafilters on $\omega$.
 Furthermore, ${\HHH}_{\min} \; {\not\equiv}_{RK} \; {\HHH}_{\max}$, and hence ${\HHH}_{\min}$ and ${\HHH}_{\max}$ are RK-incomparable.
\end{Theorem}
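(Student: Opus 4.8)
The plan is to establish the two assertions separately and then read off RK\nobreakdash-incomparability from the dichotomy for selective ultrafilters noted above. For selectivity of ${\HHH}_{\min}$, fix $g : \omega \to \omega$ and apply the canonical partition property of $\HHH$ (which by Theorem~\ref{thm:blassramsey} is a consequence of stability) to $f = g \circ {f}_{\min}$, obtaining $A \in \HHH$ realising one of the five alternatives; as $\HHH$ is ordered-union I may take $A = \[X\]$ with $X \in {\FIN}^{\[\omega\]}$. Because $f$ depends only on $\min$, the pair $s = X(0)$, $t = X(0) \cup X(1)$ (equal minima, distinct maxima, $s \neq t$) refutes alternatives (3)--(5); so either $f$ is constant on $A$, giving $g$ constant on ${f}_{\min}\[A\] \in {\HHH}_{\min}$, or alternative (2) holds, forcing $g$ to be injective on ${f}_{\min}\[A\]$. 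Hence ${\HHH}_{\min}$ is selective, and the symmetric argument with $g \circ {f}_{\max}$ and the pair $X(1)$, $X(0) \cup X(1)$ handles ${\HHH}_{\max}$.

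For the non-isomorphism it suffices, by RK-minimality of selective ultrafilters, to refute RK-isomorphism. Assume toward a contradiction a bijection $e$ with ${\HHH}_{\min} = \{ e\[M\] : M \in {\HHH}_{\max} \}$, so that $e$ carries ${\HHH}_{\max}$-large sets to ${\HHH}_{\min}$-large sets. Fix any $\[X\] \in \HHH$ and write ${a}_{i} = \min(X(i))$ and ${b}_{i} = \max(X(i))$, so that ${a}_{i} \leq {b}_{i} < {a}_{i+1}$, with $\{{a}_{i} : i \in \omega\} \in {\HHH}_{\min}$ and $\{{b}_{i} : i \in \omega\} \in {\HHH}_{\max}$. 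The idea is to use the Ramsey property (the third clause of Theorem~\ref{thm:blassramsey}) to pin the behaviour of $e$ to this interleaving so tightly that it becomes inconsistent.

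Concretely, I would homogenise on a single $\[X\] \in \HHH$ — intersecting the finitely many homogeneous sets and thinning to a block sequence, using the pair-Ramsey property (iterated to finitely many colours) and, for the singleton colouring, the ordered-union structure of $\HHH$ — the following colourings: a pair $\{s,t\}$ with $\lb{s}{t}$ coloured by the order of $e(\max(s))$ and $e(\max(t))$; a singleton $s$ coloured by $\mathrm{sign}(e^{-1}(\min(s)) - \max(s))$; and a pair $\{s,t\}$ with $\lb{s}{t}$ coloured by $\mathrm{sign}(e^{-1}(\min(t)) - \max(s))$. In each case all but one colour is impossible: a decreasing colour would produce an infinite descending sequence in $\omega$, an equality colour contradicts injectivity of $e$, and a ``too small'' colour would trap infinitely many distinct values below a fixed bound. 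The surviving colours give that $e$ is increasing on $\{{b}_{i}\}$, and that $e^{-1}({a}_{j}) < {b}_{j}$ while $e^{-1}({a}_{j}) > {b}_{i}$ for all $i < j$; hence
\begin{align*}
 {b}_{j-1} < {e}^{-1}({a}_{j}) < {b}_{j} \qquad (j \geq 1),
\end{align*}
so ${e}^{-1}({a}_{j})$ lies strictly between consecutive maxima and is never one of the ${b}_{i}$. Finally $P = \{ e({b}_{i}) : i \in \omega \}$ is the $e$-image of a set in ${\HHH}_{\max}$, hence $P \in {\HHH}_{\min}$, and its increasing enumeration is $e({b}_{0}) < e({b}_{1}) < \dotsb$ since $e$ is increasing on $\{{b}_{i}\}$. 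As $\{{a}_{i}\}$ and $P$ both lie in the ultrafilter ${\HHH}_{\min}$, their intersection is infinite, so ${a}_{m} = e({b}_{n})$ for some $m \geq 1$ and some $n$; then ${b}_{n} = {e}^{-1}({a}_{m})$, contradicting the displayed sandwich. Thus ${\HHH}_{\min} \; {\not\equiv}_{RK} \; {\HHH}_{\max}$, and RK-incomparability follows from RK-minimality.

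I expect the main obstacle to be the amalgamation in the third paragraph: one must check that the several homogeneous block sequences furnished by the separate colourings can be merged inside one $\[X\] \in \HHH$ on which every conclusion holds simultaneously, and that each equality alternative is genuinely excluded using injectivity of $e$ together with the strict monotonicity of $\{{b}_{i}\}$. Once this rigidity is in place, the concluding intersection step is routine, and everything else is bookkeeping with the interleaving ${a}_{i} \leq {b}_{i} < {a}_{i+1}$.
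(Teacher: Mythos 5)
The paper never proves Theorem~\ref{thm:minmaxrk}: it is quoted from Blass~\cite{blass-hindman}, so there is no in-paper argument to compare yours against. Judged on its own, your proof is correct, and it has the virtue of deriving the statement entirely from facts the paper does record: Theorem~\ref{thm:blassramsey} (the canonical partition property and the pair-Ramsey property), Lemma~\ref{lem:stableramsey} (homogeneity for finitely many colours), the permutation characterisation of ${\equiv}_{RK}$ quoted after Definition~\ref{def:rk}, and RK-minimality of selective ultrafilters from Section~\ref{sec:intro}.

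Part one is the standard argument and is fine: since $g \circ {f}_{\min}$ is $\min$-invariant, the pair $X(0)$, $X(0) \cup X(1)$ eliminates alternatives (3)--(5), and alternatives (1) and (2) say precisely that $g$ is constant, respectively injective, on ${f}_{\min}\[\[X\]\] \in {\HHH}_{\min}$; the membership holds because ${f}^{-1}_{\min}\left({f}_{\min}\[\[X\]\]\right) \supseteq \[X\] \in \HHH$, and the fact that ${\HHH}_{\min}$ is an ultrafilter is the usual property of RK-projections --- both deserve one explicit line. Part two is also sound, and the amalgamation you flag as the main obstacle is routine: intersect the three homogeneous sets (Lemma~\ref{lem:stableramsey} covers the three-valued colourings), choose a single $Y$ with $\[Y\] \in \HHH$ inside the intersection using the ordered-union property, and observe that homogeneity passes to subsets; every exclusion argument you list then runs inside $\[Y\]$, because the witnesses it needs --- the sets $Y(0) \cup Y(j)$ with fixed minimum and growing maximum, two pairs sharing a common right block, pairs $\left\{Y(0), Y(j)\right\}$ with fixed left block --- all lie in $\[Y\]$. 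Two refinements are worth making. First, your sandwich together with the case $j = 0$ (where ${e}^{-1}({a}_{0}) < {b}_{0}$) already shows ${e}^{-1}({a}_{j}) \notin \left\{{b}_{n}: n \in \omega\right\}$ for every $j$, so in the last step you need only that $\left\{{a}_{j}: j \in \omega\right\} \cap e\[\left\{{b}_{n}: n \in \omega\right\}\]$ is non-empty, which is automatic for two members of the ultrafilter ${\HHH}_{\min}$; infinitude and the restriction $m \geq 1$ are superfluous. Second, when you invoke RK-minimality to pass from non-equivalence to incomparability, you do need ${\HHH}_{\min}$ and ${\HHH}_{\max}$ to be non-principal; this is immediate from the observation in Section~\ref{sec:prelim} that $\{s \in A: \min(s) > k\} \in \HHH$ for every $A \in \HHH$ and $k \in \omega$, but it should be said.
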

It is also implicit in \cite{blass-hindman} that if $\HHH$ is a stable ordered-union ultrafilter and $\UUU$ is a selective ultrafilter such that $\UUU \; {\leq}_{RK} \; \HHH$, then either $\UUU \; {\equiv}_{RK} \; {\HHH}_{\min}$ or $\UUU \; {\equiv}_{RK} \; {\HHH}_{\max}$.
A modification of the methods in \cite{tukey} similarly shows that the only selective ultrafilters that are Tukey below a stable ordered-union ultrafilter are the ones that are RK-isomorphic to ${\HHH}_{\min}$ or to ${\HHH}_{\max}$.

Thus the existence of a stable ordered-union ultrafilter guarantees the existence of at least two distinct RK-classes of selective ultrafilters.
Blass' question, which will be answered in this paper, asks if the converse is true.
Blass had observed in \cite{blass-hindman} that under the Continuum Hypothesis, any two RK-non-isomorphic selective ultrafilters are realized at the ${\HHH}_{\min}$ and ${\HHH}_{\max}$ of some stable ordered-union ultrafilter $\HHH$.
\begin{Theorem}(Theorem 2.4 of Blass~\cite{blass-hindman})
 Assume $\CH$, and let $\UUU$ and $\VVV$ be selective ultrafilters such that $\UUU \; {\not\equiv}_{RK} \; \VVV$.
 Then there is a stable ordered-union ultrafilter $\HHH$ such that ${\HHH}_{\max} = \UUU$ and ${\HHH}_{\min} = \VVV$.
\end{Theorem}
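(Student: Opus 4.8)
The plan is to carry out a transfinite recursion of length $\omega_1$ under $\CH$, building a tower of infinite block sequences that is decreasing under almost-refinement and whose associated sets generate $\HHH$. For a block sequence $X$, call $\{\min(X(n)) : n \in \omega\}$ its min-trace and $\{\max(X(n)) : n \in \omega\}$ its max-trace. Throughout I would maintain the invariant (I) that the min-trace of ${X}_{\alpha}$ lies in $\VVV$ and its max-trace lies in $\UUU$. Using $\CH$, fix enumerations in type $\omega_1$ of $\PPP(\FIN)$, of $\VVV$, and of $\UUU$, and interleave them into a single list of tasks of length $\omega_1$. At a limit stage $\lambda$ I would take ${X}_{\lambda}$ to almost-refine every earlier ${X}_{\beta}$: since $\lambda$ is countable and almost-refinement is $\sigma$-directed, a diagonal fusion yields such an ${X}_{\lambda}$, and because $\VVV$ and $\UUU$ are $P$-points the min- and max-traces of the fusion can be taken to be pseudo-intersections lying in $\VVV$ and in $\UUU$, preserving (I). A base case ${X}_{0}$ with (I) is produced by the same device used at successors, applied with targets $V = U = \omega$.

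At a successor stage I would treat the next task. A task "$C \subseteq \FIN$" demands ${X}_{\alpha+1}$ almost-refining ${X}_{\alpha}$ with $\[{X}_{\alpha+1}\] \subseteq C$ or $\[{X}_{\alpha+1}\] \cap C = \emptyset$; a task "$V \in \VVV$" demands in addition that the min-trace of ${X}_{\alpha+1}$ be contained in $V$, and a task "$U \in \UUU$" makes the dual demand on the max-trace. Granting that each such step can be met while preserving (I), the filter $\HHH = \{A : \exists \alpha \ \[{X}_{\alpha}\] \subseteq A\}$ is as required: the "$C$"-tasks make $\HHH$ an ultrafilter; the $\[{X}_{\alpha}\]$ form an ordered-union base; the invariant forces every element of ${\HHH}_{\min}$ to contain a min-trace lying in $\VVV$ (so ${\HHH}_{\min} \subseteq \VVV$), while the "$V$"-tasks give the reverse inclusion, whence ${\HHH}_{\min} = \VVV$, and symmetrically ${\HHH}_{\max} = \UUU$. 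Stability is then automatic from the tower: if $\[{Y}_{n}\] \in \HHH$ for all $n$, each ${Y}_{n}$ is refined by some ${X}_{{\beta}_{n}}$, so for $\beta = \sup_{n} {\beta}_{n}$ the sequence ${X}_{\beta}$ almost-refines every ${Y}_{n}$ and has $\[{X}_{\beta}\] \in \HHH$, which is exactly the witness required by Definition~\ref{def:stable}.

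The real content, and the step I expect to be the main obstacle, is the successor step itself: refining an $X$ satisfying (I) so as to decide $C$ (or steer a trace into a prescribed $V$ or $U$) while keeping the new min-trace in $\VVV$ and the new max-trace in $\UUU$. The ordered-union form of Hindman's theorem (Baumgartner's version, with $\clu\left(X\right) = \[X\]$) does return a sub-block-sequence $Z$ of $X$ with $\[Z\]$ monochromatic for the colouring of $\[X\]$ by membership in $C$, but it offers no control over the traces of $Z$. To regain control one passes to blocks that are unions $\bigcup_{n \in {F}_{k}} X(n)$ over consecutive runs ${F}_{k}$; then the minimum of the $k$th new block is governed by $\min {F}_{k}$ and its maximum by $\max {F}_{k}$, so \emph{merging decouples the two coordinates}. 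Steering then reduces to a shuffle: to find indices ${p}_{0} \leq {q}_{0} < {p}_{1} \leq {q}_{1} < \cdots$ with $\{\min(X({p}_{k})) : k\} \in \VVV$ and $\{\max(X({q}_{k})) : k\} \in \UUU$ (and inside the prescribed $V$, $U$). Transported to the index set $\omega$, this asks for interleaved sets $R \in {\VVV}'$ and $S \in {\UUU}'$, where ${\VVV}' \; {\equiv}_{RK} \; \VVV$ and ${\UUU}' \; {\equiv}_{RK} \; \UUU$ are the copies carried by the two traces.

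This shuffle is precisely where the hypothesis $\UUU \; {\not\equiv}_{RK} \; \VVV$ is indispensable, and isolating and proving it is the crux. Since ${\VVV}' \; {\not\equiv}_{RK} \; {\UUU}'$ they are in particular distinct, so I may separate them by disjoint large sets; the selectivity of both ultrafilters — invoked through their $RK$-minimality and their Ramsey/selector property — then lets me thin these to an interleaved pair still lying in ${\VVV}'$ and ${\UUU}'$. Non-isomorphism is essential rather than cosmetic here: were $\UUU \; {\equiv}_{RK} \; \VVV$, the two coordinates could not be pulled apart, which is consistent with Theorem~\ref{thm:minmaxrk}, asserting ${\HHH}_{\min} \; {\not\equiv}_{RK} \; {\HHH}_{\max}$ for any stable ordered-union $\HHH$. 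I expect the genuinely delicate point to be that the shuffle and the Ramsey argument must be run \emph{simultaneously}, not in sequence: a naive application of Hindman's theorem after the traces are fixed would in general destroy their membership in $\VVV$ and $\UUU$, since an arbitrary sub-block-sequence of an invariant-preserving one need not preserve (I). Thus the decisive lemma is a trace-preserving, ordered-union Ramsey theorem relative to the pair $\UUU \; {\not\equiv}_{RK} \; \VVV$, and establishing it is where I would concentrate the effort.
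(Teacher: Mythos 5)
Your outline reproduces, in skeleton, Blass's own $\CH$ construction (note that this paper does not prove the statement at all --- it quotes it from \cite{blass-hindman}, so the comparison point is Blass's argument and the related machinery in Sections \ref{sec:prelim} and \ref{sec:manyselectives}): an ${\omega}_{1}$-tower of block sequences decreasing under almost-refinement, the invariant that min-traces lie in $\VVV$ and max-traces in $\UUU$, tasks for subsets of $\FIN$ and for members of the two ultrafilters, and stability falling out of the tower at the end. That bookkeeping is essentially sound (one small repair: since the tower decreases only under \emph{almost}-refinement, the collection $\{A \subseteq \FIN: \exists \alpha\, \[{X}_{\alpha}\] \subseteq A\}$ is not literally closed under intersection; one must pass to tails, or argue via the $C$-tasks that a negative decision on $A \cap B$ is incompatible with later stages of the tower). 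The genuine gap is exactly the step you flag and then defer: the trace-preserving ordered-union Ramsey lemma. That lemma is not a technical refinement of your argument; it \emph{is} the theorem. Your diagnosis of why it is delicate is correct --- once a sub-block-sequence's min-trace leaves $\VVV$, no refinement can restore it, since refining only shrinks traces and no subset of a non-member of an ultrafilter is a member --- but observing that the Hindman step and the steering must be run simultaneously is not the same as running them. Nothing in the proposal indicates how to obtain monochromaticity of \emph{all} finite unions of the new blocks (not merely of the blocks themselves) while keeping both traces large; that is where all of the work in Blass's proof lives, and without it the successor stages cannot be executed.

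Moreover, the one ingredient of that lemma you do sketch --- the shuffle --- is derived from the wrong hypothesis. You argue that since ${\VVV}'$ and ${\UUU}'$ are non-isomorphic they are distinct, hence separated by disjoint large sets, and that selectivity then thins these to an interleaved pair. Distinctness is far too weak. Take ${\UUU}'$ selective and ${\VVV}'$ its image under $n \mapsto n+1$: these are distinct (no selective ultrafilter is shift-invariant) and separated by disjoint sets, yet there is no sequence ${n}_{0} < {n}_{1} < {n}_{2} < \dotsb$ with gaps exceeding $2$, evens forming a set in ${\UUU}'$ and odds a set in ${\VVV}'$ --- shifting the odd-indexed set down by one would produce two disjoint members of ${\UUU}'$. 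Interleaving with \emph{prescribed gap functions}, which is what you need because the block structure of $X$ dictates where the next admissible boundary lies, is precisely what RK-non-isomorphism buys, and it requires a real argument: in this paper it is Lemma \ref{lem:hausdorff}, proved not by separation-plus-thinning but from Shelah's game lemma (Lemma \ref{lem:selsel}) that Player I has no winning strategy in ${\Game}^{\mathtt{SelSel}}\left(\UUU, \VVV\right)$ when $\UUU \; {\not\equiv}_{RK} \; \VVV$. The same defect infects your limit stages: P-pointness of $\UUU$ and $\VVV$ separately does not let you arrange both traces of a single fusion simultaneously; the coordination there needs the same interleaving machinery as the successor step, not just pseudo-intersections.
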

In the final sentence of \cite{blass-hindman}, Blass asked whether the existence of at least two RK-non-isomorphic selective ultrafilters guarantees the existence of a stable ordered-union ultrafilter.
The main result of this paper gives a negative answer to this long-standing problem.
It will be shown that there is a model of $\ZFC$ with ${2}^{{\aleph}_{0}}$ distinct RK-classes of selective ultrafilters, but no stable ordered-union ultrafilters.
It will also be shown that for any $\kappa \leq {\aleph}_{2}$, it is possible to construct a model of $\ZFC$ with precisely $\kappa$ distinct RK-classes of selective ultrafilters, but no stable ordered-union ultrafilters.
These appear to be the first known models of $\ZFC$ containing continuum many distinct generic ultrafilters corresponding to some Ramsey space, but no generic ultrafilters at all corresponding to another Ramsey space.

The approach taken will be to iteratively destroy all stable ordered-union ultrafilters  while preserving all selective ultrafilters from a suitably chosen ground model.
Many of the ideas to be used have their roots in Shelah's proof (Theorem~4.1 of \S4 of Chapter~XVIII of \cite{PIF}) of the consistency of a single P-point, up to RK equivalence, where a similar strategy is employed. However, it has to be kept in mind that stable ordered-union ultrafilters are not P-points, so the arguments used by Shelah, which rely heavily on the combinatorics of P-points, cannot be applied directly.
The major advance in this paper is the introduction of a new partial order that allows different arguments to be employed to circumvent these difficulties.
In fact, it may be of interest to restate the main result of this paper in the language of forcing theory.
In this language, the main result says that for any stable ordered-union ultrafilter $\HHH$, there is a proper $\BS$-bounding forcing notion $\PP(\HHH)$ which preserves all selective ultrafilters and \emph{strongly destroys} $\HHH$, which means that $\HHH$ cannot be extended to a stable ordered-union ultrafilter in any forcing extension by any $\BS$-bounding forcing that contains $\PP(\HHH)$ as a complete suborder.

New techniques for proving the preservation of selective ultrafilters are introduced in establishing the main result.
All previously known methods for destroying an ultrafilter $\VVV$ while preserving a selective ultrafilter require that $\VVV$ be a P-point.
The new methods introduced in this paper allow the destruction of the non-P-point $\HHH$ while ${\HHH}_{\min}$ and ${\HHH}_{\max}$ are both preserved.
It is well known (see Corollary 3 of \cite{blassrk}) that a P-point cannot have two RK-non-isomorphic selective ultrafilters RK-below it.
Hence this paper is the first known instance where an ultrafilter $\VVV$ is destroyed while two RK-non-isomorphic ultrafilters that are RK-below $\VVV$ are preserved.
\section{Some preliminaries} \label{sec:prelim}
This section will establish several combinatorial results about two player games on selective and stable ordered-union ultrafilters that will be essential to virtually all of the results in later sections.
The most important of these will be Lemma \ref{lem:selstabgame}.
It also provides a good opportunity to establish notation and to gather together some well-known results as well as some simple observations involving the concepts defined in Section \ref{sec:intro}.
\begin{Def} \label{def:image}
 If $F$ is a function and $X \subseteq \dom(F)$, then $F\[X\]$ denotes the image of $X$ under $F$.
 Formally, $F\[X\] = \left\{F(x): x \in X\right\}$.
\end{Def}
With regard to the notations of Definition \ref{def:image} and Definition \ref{def:block} from Section \ref{sec:intro}, the reader should bear in mind that if $F$ is a function and $X \subseteq \FIN$ is such that $X \subseteq \[X\] \subseteq \dom(F)$, then $F\[X\]$ is the image of $X$ under $F$ while $F\[\[X\]\]$ is the image of $\[X\]$ under $F$.
This is unlikely to cause confusion.
\begin{Def} \label{def:tree}
 Let $X$ be any set.
 $T \subseteq {X}^{< \omega}$ is called a \emph{subtree} if $T$ is downwards closed.
 In other words, $T \subseteq {X}^{< \omega}$ is a subtree if and only if $\forall g \in T \forall l' \leq \dom(g)\[g\restrict l' \in T\]$.
 For a subtree $T \subseteq {X}^{< \omega}$, the \emph{$n$th level of $T$}, denoted \emph{$\lv{T}{n}$}, is $\{f \in T: \dom(f) = n \}$, for all $n \in \omega$.
 For a subtree $T \subseteq {X}^{< \omega}$ and $f \in T$, define
 \begin{align*}
  &T\langle f \rangle = \left\{ g \in T: f \subseteq g \vee g \subseteq f \right\};\\
  &{\succc}_{T}(f) = \left\{ \sigma: {f}^{\frown}{\langle \sigma \rangle} \in T \right\}.
 \end{align*}
 For a subtree $T \subseteq {X}^{< \omega}$, $\[T\]$ denotes the collection of all infinite branches through $T$.
 In other words,
 \begin{align*}
  \[T\] = \left\{ f \in {X}^{\omega}: \forall l \in \omega \[f \restrict l \in T \]\right\}.
 \end{align*}
\end{Def}
Note that if $\HHH$ is an ordered-union ultrafilter on $\FIN$, then for any $A \in \HHH$ and $k \in \omega$, $\{s \in A: \min(s) > k\} \in \HHH$ because $\{s \in A: \min(s) \leq k\}$ does not contain any set of the form $\[X\]$ for $X \in {\FIN}^{\[\omega\]}$.
Although the next two results do not explicitly appear in Blass~\cite{blass-hindman}, their proofs are standard and will be omitted.
\begin{Lemma} \label{lem:stableramsey}
 Suppose $\HHH$ is a stable ordered-union ultrafilter on $\FIN$.
 Then for any $A \in \HHH$, $1 \leq n, k < \omega$, and $c: {A}^{\[n\]} \rightarrow k$, there exists $B \in \HHH$ such that $B \subseteq A$ and $c$ is constant on ${B}^{\[n\]}$.
\end{Lemma}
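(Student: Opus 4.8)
The plan is to bootstrap the conclusion from Item~(2) of Theorem~\ref{thm:blassramsey}, which supplies, for each $2$-coloring of ${\FIN}^{\[n\]}$, a member of $\HHH$ whose length-$n$ block sequences are monochromatic. Two gaps separate that statement from the present one: here the coloring $c$ is defined only on ${A}^{\[n\]}$ for a fixed $A \in \HHH$ rather than on all of ${\FIN}^{\[n\]}$, and its range may have $k > 2$ values. I would close the first gap by an extension-and-intersection trick and the second by induction on $k$, using throughout only that $\HHH$ is a filter together with the conclusion of Theorem~\ref{thm:blassramsey}(2). The one elementary fact underpinning everything is the monotonicity observation that $B \subseteq A$ implies ${B}^{\[n\]} \subseteq {A}^{\[n\]}$, since any length-$n$ block sequence contained in $B$ is a fortiori one contained in $A$.

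For the case $k = 2$, given $c: {A}^{\[n\]} \rightarrow 2$ I would extend it to $\tilde{c}: {\FIN}^{\[n\]} \rightarrow 2$ by setting $\tilde{c}(X) = c(X)$ when $X \in {A}^{\[n\]}$ and $\tilde{c}(X) = 0$ otherwise. Theorem~\ref{thm:blassramsey}(2) then yields $C \in \HHH$ on which $\tilde{c}$ is constant on ${C}^{\[n\]}$. Putting $B = A \cap C$, which lies in $\HHH$ because $\HHH$ is a filter, monotonicity gives ${B}^{\[n\]} \subseteq {A}^{\[n\]} \cap {C}^{\[n\]}$. Hence on ${B}^{\[n\]}$ the coloring $c$ agrees with $\tilde{c}$ (as ${B}^{\[n\]} \subseteq {A}^{\[n\]}$) and $\tilde{c}$ is constant (as ${B}^{\[n\]} \subseteq {C}^{\[n\]}$), so $c$ is constant on ${B}^{\[n\]}$.

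The general case follows by induction on $k$, the cases $k \leq 2$ being the trivial constant coloring and the case just treated. Assuming the statement for $k$ colors for every member of $\HHH$, let $c: {A}^{\[n\]} \rightarrow k+1$. Define $c': {A}^{\[n\]} \rightarrow 2$ by $c'(X) = 0$ if $c(X) = 0$ and $c'(X) = 1$ otherwise, and apply the $2$-color case to obtain $B_0 \in \HHH$ with $B_0 \subseteq A$ on which $c'$ is constant. If $c'$ is constantly $0$ on ${B_0}^{\[n\]}$, then $c$ is constantly $0$ there and $B = B_0$ works. Otherwise $c$ takes values in $\{1, \dotsc, k\}$ on ${B_0}^{\[n\]}$, so $c \restrict {B_0}^{\[n\]}$ is a $k$-coloring of ${B_0}^{\[n\]}$ (again ${B_0}^{\[n\]}$ is exactly the domain required, by monotonicity); the induction hypothesis applied to $B_0 \in \HHH$ then produces $B \in \HHH$ with $B \subseteq B_0 \subseteq A$ on which $c$ is constant, completing the induction.

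I do not expect a genuine obstacle here, and this matches the paper's remark that the proof is standard. The only points demanding care are purely bookkeeping: arranging that the extended coloring $\tilde{c}$ agrees with $c$ precisely on ${A}^{\[n\]}$, and invoking the inclusion ${B}^{\[n\]} \subseteq {A}^{\[n\]}$ at each stage so that every successive coloring is defined on the correct, shrinking domain. It is worth noting that only Item~(2) of Theorem~\ref{thm:blassramsey}, equivalently stability of $\HHH$, and the filter property of $\HHH$ are used; the full strength of the canonical partition property plays no role.
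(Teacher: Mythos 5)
Your proof is correct and follows essentially the same approach as the paper's own (omitted) argument: extend the coloring from ${A}^{\[n\]}$ to all of ${\FIN}^{\[n\]}$, apply Item (2) of Theorem \ref{thm:blassramsey}, intersect the homogeneous set $C$ with $A$ using the monotonicity ${\left(A \cap C\right)}^{\[n\]} \subseteq {A}^{\[n\]} \cap {C}^{\[n\]}$, and then induct on the number of colors by splitting off the color $0$. The only divergence is a mild simplification: you extend the $2$-coloring by the constant $0$, whereas the paper first proves a three-color version of the theorem and extends using a third color as a marker for ${\FIN}^{\[n\]} \setminus {A}^{\[n\]}$; your shortcut is sound, since whatever constant value $\tilde{c}$ takes on ${C}^{\[n\]}$ must agree with $c$ on ${\left(A \cap C\right)}^{\[n\]}$, so the marker color is dispensable.
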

\begin{Lemma} \label{lem:selectivediag}
 Suppose $\HHH$ is a stable ordered-union ultrafilter on $\FIN$.
 Suppose $\seq{A}{n}{\in}{\omega}$ is a sequence with ${A}_{n} \in \HHH$, for all $n \in \omega$.
 Then there exists $X \in {\FIN}^{\[\omega\]}$ such that $\[X\] \in \HHH$, $\[X\] \subseteq {A}_{0}$, and $\forall i \in \omega\[\[\left\{ X(j): j > i \right\}\] \subseteq {A}_{\max\left(X(i)\right)}\]$.
\end{Lemma}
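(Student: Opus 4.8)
The plan is to avoid a fusion and instead read $X$ off directly from a homogeneous set produced by the two-dimensional Ramsey partition property, exactly as one proves the classical selective diagonalization lemma from Kunen's characterization. First I would use that $\HHH$ is an ordered-union ultrafilter (Definition \ref{def:orderedunionuf}) to fix, for each $n$, a block sequence ${X}_{n} \in {\FIN}^{\[\omega\]}$ with $\[{X}_{n}\] \in \HHH$ and $\[{X}_{n}\] \subseteq {A}_{n}$. Since $\[{X}_{n}\] = \clu({X}_{n})$ is closed under finite unions (Definition \ref{def:block}) and $\[{X}_{n}\] \subseteq {A}_{n}$, it is enough to prove the conclusion with each ${A}_{n}$ replaced by $\[{X}_{n}\]$: the required inclusions $\[X\] \subseteq {A}_{0}$ and $\[\{X(j): j > i\}\] \subseteq {A}_{\max(X(i))}$ then follow from $\[{X}_{n}\] \subseteq {A}_{n}$. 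So I assume ${A}_{n} = \[{X}_{n}\]$ from now on.

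Next I would invoke stability. Applying Definition \ref{def:stable} to $\seq{X}{n}{\in}{\omega}$ yields $Y \in {\FIN}^{\[\omega\]}$ with $\[Y\] \in \HHH$ that almost refines every ${X}_{n}$; fix for each $n$ a threshold ${k}_{n}$ with $Y(i) \in \[{X}_{n}\]$ for all $i \geq {k}_{n}$. The role of this step is the following observation, which I will use twice: any $s \in \[Y\]$ all of whose constituent $Y$-blocks have index $\geq {k}_{n}$ is a finite union of members of $\[{X}_{n}\]$, hence lies in $\[{X}_{n}\] = {A}_{n}$ by closure under finite unions.

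Now define a colouring $c: {\[Y\]}^{\[2\]} \rightarrow 2$ by declaring, for a block sequence $\{s, t\}$ with $\lb{s}{t}$, that $c(\{s, t\}) = 0$ if and only if $t \in {A}_{\max(s)}$. By Lemma \ref{lem:stableramsey} there is $B \in \HHH$ with $B \subseteq \[Y\]$ such that $c$ is constant on ${B}^{\[2\]}$, and since $\HHH$ is an ordered-union ultrafilter I may fix $Z \in {\FIN}^{\[\omega\]}$ with $\[Z\] \in \HHH$ and $\[Z\] \subseteq B$; then ${Z}^{\[2\]} \subseteq {B}^{\[2\]}$ and every block of $Z$ lies in $\[Y\]$. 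The crux is that this constant value must be $0$: fixing $s = Z(0)$ and $m = \max(s)$, the least $Y$-index occurring in $Z(k)$ tends to infinity as $k \to \infty$, so for all sufficiently large $k$ every $Y$-block inside $Z(k)$ has index $\geq {k}_{m}$, whence $Z(k) \in {A}_{m}$ by the previous paragraph; since $\lb{s}{Z(k)}$, the pair $\{s, Z(k)\}$ lies in ${Z}^{\[2\]} \subseteq {B}^{\[2\]}$ and witnesses colour $0$. This is precisely where stability is indispensable, since for a general homogeneous set one has no control over whether its single blocks fall into ${A}_{m}$.

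Finally I would obtain $X$ by trimming $Z$. Using the observation recorded just before Lemma \ref{lem:stableramsey} that $\{s \in \[Z\]: \min(s) > k\} \in \HHH$ for every $k$, I discard an initial segment of $Z$ to get $X \in {\FIN}^{\[\omega\]}$ with $\[X\] \in \HHH$, $X \subseteq Z$, and every block of $X$ composed entirely of $Y$-blocks of index $\geq {k}_{0}$; then $X(i) \in {A}_{0}$ for all $i$, and closure of $\[{X}_{0}\]$ under finite unions gives $\[X\] \subseteq {A}_{0}$. Since $X \subseteq Z$ and $c$ is constantly $0$ on ${Z}^{\[2\]} \supseteq {X}^{\[2\]}$, for every $i$ and every $j > i$ one has $X(j) \in {A}_{\max(X(i))}$; as $\{X(j): j > i\}$ is a block sequence of members of ${A}_{\max(X(i))} = \[{X}_{\max(X(i))}\]$, closure under finite unions yields $\[\{X(j): j > i\}\] \subseteq {A}_{\max(X(i))}$, which is what is required. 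The only genuine obstacle is securing $\[X\] \in \HHH$ simultaneously with the diagonal condition, and this is exactly what Lemma \ref{lem:stableramsey} delivers once stability has forced the homogeneous colour to be $0$.
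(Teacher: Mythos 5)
Your proof is correct, and its core device is the same as the paper's: colour a pair $\{s,t\}$ with $\lb{s}{t}$ according to whether $t \in {A}_{\max(s)}$, extract a homogeneous set in $\HHH$, argue that the homogeneous colour must be the good one, and take a block sequence inside the homogeneous set. The differences are in the surrounding machinery, and the paper's proof shows that essentially all of yours can be discarded. The paper colours all of ${\FIN}^{\[2\]}$ (using (3) of Theorem \ref{thm:blassramsey} rather than Lemma \ref{lem:stableramsey}), so no preliminary passage to ${A}_{n} = \[{X}_{n}\]$ is needed. It pins down the homogeneous colour on $A \in \HHH$ by a two-line ultrafilter argument: if the colour were the bad one, then fixing $s \in A$ and putting $l = \max(s)$, the set $\{t \in A: \min(t) > l\}$ would be a member of $\HHH$ disjoint from ${A}_{l} \in \HHH$, which is absurd. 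This replaces your detour through Definition \ref{def:stable}, the thresholds ${k}_{n}$, and the analysis of $Y$-indices inside the blocks of $Z$. Finally, the paper reads off the full conclusion directly from homogeneity: choosing $\[X\] \in \HHH$ with $\[X\] \subseteq A \cap {A}_{0}$, any $t \in \[\left\{X(j): j > i\right\}\]$ already lies in $A$, so $\{X(i), t\}$ is itself a pair in ${A}^{\[2\]}$ and homogeneity gives $t \in {A}_{\max(X(i))}$ outright; no closure-under-unions step (and hence no reduction to sets of the form $\[{X}_{n}\]$) is required. In particular, your remark that the almost-refinement structure is ``indispensable'' for forcing the colour is not accurate: stability enters only through the pair-Ramsey property itself, and the colour is forced by the bare fact that two members of an ultrafilter meet. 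What your route buys is a constructive exhibition of pairs of the good colour; what it costs is the extra reduction, the refinement bookkeeping, and the closing closure argument, roughly doubling the length of what is otherwise the same proof.
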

\begin{Def} \label{def:IsNX}
 Suppose $s \in \FIN$.
 Define $I(s) = \{ k \in \omega: \min(s) \leq k \leq \max(s) \}$.
 Note that $\min(s), \max(s) \in I(s)$.
 Suppose $X \in {\FIN}^{\[\omega\]}$.
 Define $N(X) = {\bigcup}_{i \in \omega}{I(X(i))} \subseteq \omega$.
\end{Def}
The classes ${\C}_{0}(\HHH)$ and ${\C}_{1}(\HHH)$ to be defined below will play an important role in Section \ref{sec:preservesel}, where it will be shown that the main forcing notion used in this paper preserves all selective ultrafilters.
To elaborate, Definition \ref{def:c0c1} introduces two new classes of selective ultrafilters.
These classes will be needed in Lemma \ref{lem:selstabgame}, which in turn will play a crucial role in the proof of Theorem \ref{thm:preservesselective}.
\begin{Def} \label{def:c0c1}
 For a stable ordered-union ultrafilter $\HHH$ on $\FIN$, define ${\C}_{0}(\HHH)$ to be the collection of all $\UUU$ such that:
 \begin{enumerate}[series=c0c1]
  \item
  $\UUU$ is a selective ultrafilter on $\omega$;
  \item
  for every $X \in {\FIN}^{\[\omega\]}$, if $\[X\] \in \HHH$, then there exists $Y \in {\FIN}^{\[\omega\]}$ such that $\[Y\] \in \HHH$, $\[Y\] \subseteq \[X\]$, and $N(Y) \notin \UUU$.
 \end{enumerate}
 Define ${\C}_{1}(\HHH)$ to be the collection of all $\UUU$ such that:
 \begin{enumerate}[resume=c0c1]
  \item
  $\UUU$ is a selective ultrafilter on $\omega$;
  \item
  $\forall \VVV\[\text{if} \ \VVV \ \text{is a selective ultrafilter on} \ \omega \ \text{and} \ \VVV \notin {\C}_{0}(\HHH)\text{,} \ \text{then} \ \UUU \; {\not\equiv}_{RK} \; \VVV\]$.
 \end{enumerate}
 Observe that ${\C}_{1}(\HHH) \subseteq {\C}_{0}(\HHH)$.
\end{Def}
\begin{Lemma} \label{lem:minmaxnotc0}
 If $\HHH$ is a stable ordered-union ultrafilter on $\FIN$, then ${\HHH}_{\min} \notin {\C}_{0}(\HHH)$ and ${\HHH}_{\max} \notin {\C}_{0}(\HHH)$.
\end{Lemma}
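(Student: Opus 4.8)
The plan is to verify that ${\HHH}_{\min}$ and ${\HHH}_{\max}$ each fail clause~(2) in the definition of ${\C}_{0}(\HHH)$. By Theorem~\ref{thm:minmaxrk}, both are selective ultrafilters on $\omega$, so clause~(1) holds for each and only clause~(2) is at issue. Unwinding the definition, $\UUU \notin {\C}_{0}(\HHH)$ will follow once I exhibit a single $X \in {\FIN}^{\[\omega\]}$ with $\[X\] \in \HHH$ for which no $Y \in {\FIN}^{\[\omega\]}$ with $\[Y\] \in \HHH$ and $\[Y\] \subseteq \[X\]$ satisfies $N(Y) \notin \UUU$. In fact I will prove the stronger statement that $N(Y) \in {\HHH}_{\min}$ and $N(Y) \in {\HHH}_{\max}$ for \emph{every} $Y \in {\FIN}^{\[\omega\]}$ with $\[Y\] \in \HHH$, which renders the choice of $X$ immaterial.

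The key observation is that both ${f}_{\min}$ and ${f}_{\max}$ map $\[Y\]$ into $N(Y)$. Indeed, fix $Y \in {\FIN}^{\[\omega\]}$ and $s \in \[Y\]$, so $s = \bigcup Z$ for some finite block sequence $Z \in {Y}^{\[< \omega\]}$. Since $Z$ is linearly ordered by $\lbb$ and its members are among the blocks $Y(i)$, the minimum of $s$ is attained at the $\lbb$-least block of $Z$; that is, $\min(s) = \min(Y(i))$ for some $i \in \omega$, whence $\min(s) \in I(Y(i)) \subseteq N(Y)$. Symmetrically, $\max(s) = \max(Y(j))$ for the $\lbb$-greatest block $Y(j)$ occurring in $Z$, so $\max(s) \in N(Y)$ as well. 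Thus $\[Y\] \subseteq {f}^{-1}_{\min}(N(Y))$ and $\[Y\] \subseteq {f}^{-1}_{\max}(N(Y))$.

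Now suppose $\[Y\] \in \HHH$. Because $\HHH$ is a filter and $\[Y\] \subseteq {f}^{-1}_{\min}(N(Y))$, upward closure gives ${f}^{-1}_{\min}(N(Y)) \in \HHH$, which by Definition~\ref{def:minmax} is precisely the assertion $N(Y) \in {\HHH}_{\min}$; the identical argument yields $N(Y) \in {\HHH}_{\max}$. Finally, since $\HHH$ is an ordered-union ultrafilter it admits at least one $X \in {\FIN}^{\[\omega\]}$ with $\[X\] \in \HHH$, and for this $X$ (indeed for any such $X$) every admissible $Y$ has $N(Y) \in {\HHH}_{\min}$ and $N(Y) \in {\HHH}_{\max}$. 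Hence clause~(2) fails for $\UUU = {\HHH}_{\min}$ and for $\UUU = {\HHH}_{\max}$, giving ${\HHH}_{\min} \notin {\C}_{0}(\HHH)$ and ${\HHH}_{\max} \notin {\C}_{0}(\HHH)$. There is no substantial obstacle here: the entire content is the elementary fact that the minimum and maximum of any union of blocks drawn from $Y$ lie in $N(Y)$, so the only point requiring care is the correct negation of the quantifiers in clause~(2).
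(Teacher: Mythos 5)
Your proof is correct and follows essentially the same route as the paper's: both rest on the observation that $f_{\min}$ and $f_{\max}$ map $\[Y\]$ into $N(Y)$, so that $N(Y) \in {\HHH}_{\min} \cap {\HHH}_{\max}$ for every $Y$ with $\[Y\] \in \HHH$, which defeats Clause~(2) of Definition~\ref{def:c0c1}. Your explicit remark that at least one $X$ with $\[X\] \in \HHH$ must exist (so the failure is not vacuous) is a minor point the paper leaves implicit, but there is no substantive difference.
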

\begin{proof}
 Suppose $Y \in {\FIN}^{\[\omega\]}$ and that $\[Y\] \in \HHH$.
 Then ${f}_{\min}\[\[Y\]\] \in {\HHH}_{\min}$ and ${f}_{\max}\[\[Y\]\] \in {\HHH}_{\max}$.
 If $k \in {f}_{\min}\[\[Y\]\]$, then $k = \min(s)$, for some $s \in \[Y\]$, which means $k = \min\left(Y(j)\right)$, for some $j \in \omega$.
 Thus $k \in I\left(Y(j)\right) \subseteq N(Y)$, and so ${f}_{\min}\[\[Y\]\] \subseteq N(Y) \subseteq \omega$.
 Similarly, if $k \in {f}_{\max}\[\[Y\]\]$, then $k = \max(s)$, $s \in \[Y\]$, which means $k = \max\left(Y(j)\right)$, $j \in \omega$.
 Thus $k \in I\left(Y(j)\right) \subseteq N(Y)$, and so ${f}_{\max}\[\[Y\]\] \subseteq N(Y) \subseteq \omega$.
 Therefore, $N(Y) \in {\HHH}_{\min}$ and $N(Y) \in {\HHH}_{\max}$.
 Hence ${\HHH}_{\min}$ and ${\HHH}_{\max}$ fail Clause (2) of Definition \ref{def:c0c1}.
\end{proof}
\begin{Def} \label{def:utrees}
 For an ultrafilter $\HHH$ on $\FIN$, $T$ is called an \emph{$\HHH$-tree} if $T \subseteq {\FIN}^{< \omega}$ is a downwards closed subtree of ${\FIN}^{< \omega}$ such that $\emptyset \in T$ and $\forall \sigma \in T\[{\succc}_{T}(\sigma) \in \HHH\]$.
 
 For ultrafilters $\UUU$ on $\omega$ and $\HHH$ on $\FIN$, $T$ is called a \emph{$\pr{\UUU}{\HHH}$-tree} if $T \subseteq {\left( \omega \cup \FIN \right)}^{< \omega}$ is a downwards closed subtree of ${\left( \omega \cup \FIN \right)}^{< \omega}$ such that:
 \begin{enumerate}
  \item
  $\emptyset \in T$;
  \item
  $\forall \sigma \in T\[\lc \sigma \rc \ \text{is even} \implies {\succc}_{T}(\sigma) \in \UUU\]$;
  \item
  $\forall \sigma \in T\[\lc \sigma \rc \ \text{is odd} \implies {\succc}_{T}(\sigma) \in \HHH\]$.
 \end{enumerate}
\end{Def}
\begin{Lemma} \label{lem:htreebranch}
 Suppose $\HHH$ is a stable ordered-union ultrafilter on $\FIN$ and $T \subseteq {\FIN}^{< \omega}$ is an $\HHH$-tree.
 Then there exists $f \in \[T\]$ such that $\{ f(i): i \in \omega \} \in {\FIN}^{\[\omega\]}$, $\forall i < j < \omega\[\lb{f(i)}{f(j)}\]$, and $\[\{ f(i): i \in \omega \}\] \in \HHH$.
\end{Lemma}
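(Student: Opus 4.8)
The plan is to reduce the statement to a single application of Lemma \ref{lem:selectivediag}. The obstacle is that the branching of $T$ is indexed by arbitrary finite block sequences $\sigma \in T$, whereas the diagonalization in Lemma \ref{lem:selectivediag} only remembers a single natural number, namely $\max(X(i))$. The device that overcomes this is the observation that, for a block sequence $\sigma$ whose last entry has maximum $m$, every entry of $\sigma$ is a nonempty subset of $\{0, \ldots, m\}$; hence for each fixed $m$ there are only finitely many such $\sigma$, so that an intersection of their successor sets still belongs to $\HHH$.

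Concretely, for each $m \in \omega$ let ${S}_{m}$ be the set of all $\sigma \in T$ such that $\sigma$ is a nonempty block sequence (i.e.\@ $\ran(\sigma) \in {\FIN}^{\[\lc \sigma \rc\]}$), $\bigcup \ran(\sigma) \subseteq \{0, \ldots, m\}$, and the last entry $\sigma(\dom(\sigma)-1)$ has maximum equal to $m$. As just noted, ${S}_{m}$ is finite. Since $\emptyset \in T$ and $T$ is an $\HHH$-tree, ${\succc}_{T}(\sigma) \in \HHH$ for every $\sigma \in {S}_{m} \cup \{\emptyset\}$, so the finite intersection
\begin{equation*}
 {A}_{m} = {\succc}_{T}(\emptyset) \cap \bigcap_{\sigma \in {S}_{m}} {\succc}_{T}(\sigma)
\end{equation*}
belongs to $\HHH$. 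I would then apply Lemma \ref{lem:selectivediag} to the sequence $\seq{A}{n}{\in}{\omega}$ to obtain $X \in {\FIN}^{\[\omega\]}$ with $\[X\] \in \HHH$, $\[X\] \subseteq {A}_{0}$, and $\[\{X(j): j > i\}\] \subseteq {A}_{\max(X(i))}$ for all $i$, and set $f(i) = X(i)$. The first two conclusions of the lemma are then immediate: $\{f(i): i \in \omega\} = \ran(X) \in {\FIN}^{\[\omega\]}$ is a block sequence, so $\lb{f(i)}{f(j)}$ holds whenever $i < j$, and $\[\{f(i): i \in \omega\}\] = \[X\] \in \HHH$.

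It remains to verify that $f \in \[T\]$, i.e.\@ that $\langle X(0), \ldots, X(i) \rangle \in T$ for every $i$, which I would establish by induction on $i$. For the base case, $X(0) \in \[X\] \subseteq {A}_{0} \subseteq {\succc}_{T}(\emptyset)$, so $\langle X(0) \rangle \in T$. For the inductive step, suppose $\sigma = \langle X(0), \ldots, X(i) \rangle \in T$ and put $m = \max(X(i))$. Because $X$ is a block sequence, $\sigma$ is a nonempty block sequence with $\bigcup \ran(\sigma) \subseteq \{0, \ldots, m\}$ whose last entry has maximum $m$; that is, $\sigma \in {S}_{m}$, whence ${A}_{m} \subseteq {\succc}_{T}(\sigma)$. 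On the other hand $X(i+1) \in \[\{X(j): j > i\}\] \subseteq {A}_{\max(X(i))} = {A}_{m}$, so $X(i+1) \in {\succc}_{T}(\sigma)$ and therefore $\langle X(0), \ldots, X(i+1) \rangle \in T$. The main difficulty is entirely in packaging the tree into the sequence $\seq{A}{m}{\in}{\omega}$; once the finiteness of ${S}_{m}$ is noticed the rest is bookkeeping, the only delicate point being the degenerate index $m = 0$ (possible only at $i = 0$, when $X(0) = \{0\}$), which is handled by having folded ${\succc}_{T}(\emptyset)$ into the definition of every ${A}_{m}$.
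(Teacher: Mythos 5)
Your proof is correct and takes essentially the same route as the paper: the paper also defines, for each $n$, the finite set ${S}_{n}$ of block-sequence nodes of $T$ with all maxima at most $n$, sets ${A}_{n} = \bigcap\{{\succc}_{T}(\sigma): \sigma \in {S}_{n}\}$, applies Lemma \ref{lem:selectivediag}, and verifies $f \in \[T\]$ by the same induction. The only difference is bookkeeping: the paper's ${S}_{n}$ contains the empty sequence (so ${\succc}_{T}(\emptyset)$ is absorbed automatically rather than folded in by hand), and it requires only $\max(\sigma(i)) \leq n$ rather than equality at the last entry, both of which are immaterial.
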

\begin{proof}
 Define
 \begin{align*}
  {S}_{n} = \left\{ \sigma \in T: \forall i < j < \dom(\sigma)\[ \lb{\sigma(i)}{\sigma(j)} \] \ \text{and} \ \forall i \in \dom(\sigma)\[ \max\left(\sigma(i)\right) \leq n \] \right\},
 \end{align*}
 for $n \in \omega$.
 Note that each ${S}_{n}$ is finite and downwards closed.
 Hence ${A}_{n} = \bigcap \left\{ {\succc}_{T}(\sigma): \sigma \in {S}_{n} \right\} \in \HHH$ (here $\bigcap \emptyset$ is taken to be $\FIN$).
 Applying Lemma \ref{lem:selectivediag}, find $X \in {\FIN}^{\[\omega\]}$ such that $\[X\] \in \HHH$, $\[X\] \subseteq {A}_{0}$, and for each $i \in \omega$, $\[ \left\{ X(j): j > i \right\} \] \subseteq {A}_{\max\left( X(i) \right)}$.
 Define $f: \omega \rightarrow \FIN$ by setting $f(i)=X(i)$, for all $i \in \omega$.
 Since $X \in {\FIN}^{\[\omega\]}$, for all $i < j < \omega$, $f(i)=\lb{X(i)}{X(j)}=f(j)$.
 To complete the proof, it suffices to argue that $f \in \[T\]$, and for this, it suffices to verify that $\forall i \in \omega\[f\restrict i \in T\]$.
 This is done by induction on $i$.
 $f\restrict 0 = \emptyset \in T$ by the definition of an $\HHH$-tree.
 Since $\emptyset \in {S}_{0}$, ${A}_{0} \subseteq {\succc}_{T}(\emptyset)$.
 Therefore, $f(0)=X(0) \in X \subseteq \[X\] \subseteq {A}_{0} \subseteq {\succc}_{T}(\emptyset)$, whence $\langle f(0) \rangle \in T$.
 Now assume $i \in \omega$ and that $\sigma = \langle f(0), \dotsc, f(i) \rangle \in T$.
 Let $n = \max(f(i))$.
 Then $\sigma \in {S}_{n}$ and so ${A}_{n} \subseteq {\succc}_{T}(\sigma)$.
 Since $f(i+1)=X(i+1) \in {A}_{\max\left(X(i)\right)} = {A}_{\max\left(f(i)\right)} = {A}_{n} \subseteq {\succc}_{T}(\sigma)$, $\langle f(0), \dotsc, f(i), f(i+1) \rangle = {\sigma}^{\frown}{\langle f(i+1) \rangle} \in T$.
\end{proof}
\begin{Lemma} \label{lem:uhtreebranch}
 Suppose $\UUU$ is a selective ultrafilter on $\omega$ and $\HHH$ is a stable ordered-union ultrafilter on $\FIN$.
 Suppose $T \subseteq {\left( \omega \cup \FIN \right)}^{< \omega}$ is an $\pr{\UUU}{\HHH}$-tree.
 Suppose that $\UUU \in {\C}_{1}(\HHH)$.
 Then there exists $G \in \[T\]$ such that $\left\{ G(2i): i \in \omega \right\} \in \UUU$, $\left\{ G(2i+1): i \in \omega \right\} \in {\FIN}^{\[\omega\]}$, $\forall i < j < \omega\[\lb{G(2i+1)}{G(2j+1)}\]$, and
 \begin{align*}
  \[\left\{ G(2i+1): i \in \omega \right\}\] \in \HHH.
 \end{align*}
\end{Lemma}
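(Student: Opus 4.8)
The plan is to mirror the proof of Lemma~\ref{lem:htreebranch}, but to build the branch $G$ one alternating pair at a time: at each even (that is, $\UUU$-branching) level I extract a natural number $n_i = G(2i)$ and at each odd (that is, $\HHH$-branching) level a block $b_i = G(2i+1)$, arranging the interleaving $n_0 < \min(b_0) \leq \max(b_0) < n_1 < \min(b_1) \leq \cdots$. With this interleaving each $n_i$ lies strictly between the intervals $I(b_{i-1})$ and $I(b_i)$, so $n_i \notin N\left(\left\{ b_j : j \in \omega \right\}\right)$ automatically; consequently the demand $\left\{ n_i : i \in \omega \right\} \in \UUU$ can succeed only if the complement of $N\left(\left\{ b_i : i \in \omega \right\}\right)$ is $\UUU$-large, i.e.\ if the block side can be chosen so that $N\left(\left\{ b_i\right\}\right) \notin \UUU$. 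This is exactly the room that $\UUU \in \C_0(\HHH)$ supplies (and $\UUU \in \C_0(\HHH)$ holds since $\C_1(\HHH) \subseteq \C_0(\HHH)$). Note that Lemma~\ref{lem:minmaxnotc0} shows ${\HHH}_{\min}, {\HHH}_{\max} \notin \C_0(\HHH)$, so the argument correctly refuses to apply to those two projections, as it must.

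First I would set up bounded finite approximations as in Lemma~\ref{lem:htreebranch}. For $m \in \omega$ let $S_m$ be the set of $\sigma \in T$ that are legal alternating prefixes of the kind just described --- the odd-indexed entries forming a $\lbb$-increasing block sequence, the even-indexed entries being naturals respecting the interleaving --- all of whose entries are bounded by $m$. Each $S_m$ is finite and downward closed, so $A^{\HHH}_{m} = \bigcap \left\{ \succc_T(\sigma) : \sigma \in S_m, \lc \sigma \rc \ \text{odd} \right\} \in \HHH$ and $A^{\UUU}_{m} = \bigcap \left\{ \succc_T(\sigma) : \sigma \in S_m, \lc \sigma \rc \ \text{even} \right\} \in \UUU$. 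Exactly as in Lemma~\ref{lem:htreebranch}, the point of these intersections is that once a legal prefix whose maximal entry is $m$ has been built, any block in $A^{\HHH}_{m}$, and any natural in $A^{\UUU}_{m}$, is a legal next choice in $T$.

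The heart of the matter is then a single simultaneous diagonalization producing the two interleaved sequences. I would feed $\seq{A^{\HHH}}{m}{\in}{\omega}$ into the stable-ordered-union diagonalization of Lemma~\ref{lem:selectivediag} to govern the blocks, and feed $\seq{A^{\UUU}}{m}{\in}{\omega}$ into the analogous diagonalization property of the selective ultrafilter $\UUU$ to govern the naturals, driving both by one common increasing sequence of bounds $m_0 < m_1 < \cdots$ recording the successive maxima of everything chosen so far. At stage $i$ I pick $n_i \in A^{\UUU}_{m}$ with $n_i > \max(b_{i-1})$ (possible since the successor set is an infinite member of $\UUU$), and then $b_i \in A^{\HHH}_{m}$ with $\min(b_i) > n_i$ (possible since $\HHH$ concentrates on blocks of arbitrarily large minimum); the blocks may be thinned by $\C_0(\HHH)$ to keep $N\left(\left\{ b_j : j \leq i \right\}\right) \notin \UUU$, so that $A^{\UUU}_{m} \cap \left(\omega \setminus N\right) \in \UUU$ remains available at every stage. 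The bounded-approximation trick then gives, just as in Lemma~\ref{lem:htreebranch}, that every initial segment of $G = \left\langle n_0, b_0, n_1, b_1, \dotsc \right\rangle$ lies in $T$, hence $G \in \[T\]$, while the two diagonalizations deliver $\left\{ n_i : i \in \omega \right\} \in \UUU$ and $\[\left\{ b_i : i \in \omega \right\}\] \in \HHH$, and the interleaving secures that $\left\{ b_i \right\} \in \FIN^{\[\omega\]}$ is properly $\lbb$-ordered.

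The main obstacle is the coupling of the two ultrafilters through the shared branch: the $\UUU$-successor set from which $n_i$ must be drawn depends on $b_{i-1}$, whose maximum exceeds $n_{i-1}$, so the bound at which one queries $A^{\UUU}$ is always pushed past the previous natural by a block-maximum that is determined only one step later. Reconciling this is what forces a single, jointly generated bound sequence rather than two independent diagonalizations, and it is the reason the construction cannot be decoupled into a separate appeal to Lemma~\ref{lem:htreebranch} for the blocks followed by a separate selective argument for the naturals. Finally, although the gap property used above needs only $\C_1(\HHH) \subseteq \C_0(\HHH)$, I expect the full strength of $\C_1(\HHH)$ --- that \emph{every} selective ultrafilter RK-equivalent to $\UUU$ lies in $\C_0(\HHH)$ --- to be invoked exactly when the surviving gap-naturals are identified with $\omega$ through their increasing enumeration, since this transports $\UUU$ to an RK-isomorphic selective ultrafilter which must still admit the $N(Y) \notin \UUU$ thinning; isolating that single point is the delicate part of the argument.
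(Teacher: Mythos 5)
Your skeleton --- the interleaved branch, the bounded finite approximations $S_m$ with ${A}^{\HHH}_{m} \in \HHH$ and ${A}^{\UUU}_{m} \in \UUU$, the use of ${\C}_{0}(\HHH)$ to make the set of gaps $\UUU$-large, and a one-point-per-gap (Q-point) selection --- is indeed the architecture of the paper's proof. But the step you describe as ``a single simultaneous diagonalization'' is precisely where the real work lies, and as stated it fails. A stage-by-stage choice of $n_i$ and $b_i$ gives no control over whether the final sets land in the ultrafilters: picking elements one at a time from members of $\UUU$ (resp.\ $\HHH$) never by itself yields $\{n_i : i \in \omega\} \in \UUU$ or $\[\{b_i : i \in \omega\}\] \in \HHH$. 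If instead you run the two diagonalizations honestly --- Lemma \ref{lem:selectivediag} applied to $\seq{{A}^{\HHH}}{m}{\in}{\omega}$ producing a block sequence $Z$ in one shot, and the selective analogue applied to $\seq{{A}^{\UUU}}{m}{\in}{\omega}$ producing $D \in \UUU$ in one shot --- then each output satisfies only its \emph{own} diagonal condition (e.g.\ $n_{i+1} \in {A}^{\UUU}_{n_i}$), whereas legality in $T$ needs the \emph{cross} conditions: $n_{i+1}$ must lie in the $\UUU$-set indexed by $\max(b_i)$, where $n_i < \max(b_i) < n_{i+1}$, and this is strictly stronger because the sets ${A}^{\UUU}_{m}$ shrink as $m$ grows; symmetrically, the blocks from stage $i+1$ on must lie in the $\HHH$-set indexed by $n_{i+1}$, which is strictly smaller than the set indexed by $\max(b_i)$ that the $\HHH$-diagonalization supplies. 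No common bound sequence repairs this, because neither diagonalization can see the other's output. (Also, your stage-wise ``thinning by ${\C}_{0}(\HHH)$ to keep $N(\{b_j : j \leq i\}) \notin \UUU$'' is vacuous: for finitely many blocks that set is finite; ${\C}_{0}(\HHH)$ must be applied once, to an infinite block sequence.)

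The missing ingredient, which your proposal never invokes, is the RK-incomparability of $\UUU$ with ${\HHH}_{\min}$ and ${\HHH}_{\max}$ --- and this is exactly what $\UUU \in {\C}_{1}(\HHH)$ buys, via Lemma \ref{lem:minmaxnotc0}; your closing guess about where the full strength of ${\C}_{1}$ enters (re-enumerating the gap naturals) is not what happens. In the paper's Claim \ref{claim:uhtree1} one sets $\psi(m)$ to be the least $\min\left(Z(i)\right)$ above $m$ and $\varphi(m)$ to be the least element of $D$ above $m$, then uses ${\HHH}_{\min} \; {\not\leq}_{RK} \; \UUU$ to get $K \in \UUU$ with $\psi\[K\] \notin {\HHH}_{\min}$, and $\UUU \; {\not\leq}_{RK} \; {\HHH}_{\max}$ to get $N \in {\HHH}_{\max}$ with $\varphi\[N\] \notin \UUU$. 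After shrinking to suitable $E \in \UUU$ and $L \in \HHH$ using $K$, $N$, the cross conditions become automatic for \emph{any} interleaving: if $m \in E$, $s \in L$, and $m < \min(s)$, then $\min(s)$ cannot equal $\psi(m)$, so $s$ begins strictly after the first $Z$-block past $m$ and the diagonal property of $Z$ places $s$ in the required set indexed by $m$; dually when $\max(s) < m$. Only after this decoupling do ${\C}_{0}(\HHH)$ (yielding an infinite $X$ with $N(X) \notin \UUU$, hence a $\UUU$-large gap set), a stability/coloring argument guaranteeing that every gap of $X$ meets $E$, and the Q-point property of $\UUU$ produce the interleaved $B \in \UUU$ and $X$ with $\[X\] \in \HHH$ as genuine ultrafilter members. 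Without this step your construction cannot be completed, so the proposal has a genuine gap at its core.
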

\begin{proof}
 Since ${\HHH}_{\min}$ and ${\HHH}_{\max}$ are both selective ultrafilters on $\omega$, ${\HHH}_{\min}, {\HHH}_{\max} \notin {\C}_{0}(\HHH)$, and $\UUU \in {\C}_{1}(\HHH)$, then $\UUU \; {\not\equiv}_{RK} \; {\HHH}_{\min}$ and $\UUU \; {\not\equiv}_{RK} \; {\HHH}_{\max}$.
 The following claim will be established using this.
 \begin{Claim} \label{claim:uhtree1}
  Suppose $\seq{B}{n}{\in}{\omega}$ and $\seq{A}{n}{\in}{\omega}$ are sequences such that $\forall n \in \omega\[{B}_{n} \in \UUU \ \text{and} \ {A}_{n} \in \HHH\]$.
  Then there exist $B \in \UUU$ and $X \in {\FIN}^{\[\omega\]}$ such that:
  \begin{enumerate}
   \item
   $\[X\] \in \HHH$;
   \item
   if $\seq{n}{i}{\in}{\omega}$ is the strictly increasing enumeration of $B$, then for each $i \in \omega$, ${n}_{i} < \min\left(X(i)\right) \leq \max\left(X(i)\right) < {n}_{i+1}$, ${n}_{i+1} \in {B}_{\max\left(X(i)\right)}$, and $\[\left\{X(j): j \geq i \right\}\] \subseteq {A}_{{n}_{i}}$;
   \item
   $B \subseteq {B}_{0}$ and $\[X\] \subseteq {A}_{0}$.
  \end{enumerate}
 \end{Claim}
 \begin{proof}
  Define ${A}_{n}' = {\bigcap}_{m \leq n}{{A}_{m}} \in \HHH$ and ${B}_{n}' = {\bigcap}_{m \leq n}{{B}_{m}} \in \UUU$, for all $n \in \omega$.
  Since $\UUU$ is selective, there is $D \in \UUU$ such that $D \subseteq {B}_{0}'$ and $\forall i \in \omega\[{l}_{i+1} \in {B}_{{l}_{i}}'\]$, where $\seq{l}{i}{\in}{\omega}$ is the strictly increasing enumeration of $D$.
  By Lemma \ref{lem:selectivediag}, there exists $Z \in {\FIN}^{\[\omega\]}$ such that $\[Z\] \in \HHH$, $\[Z\] \subseteq {A}_{0}'$ and for each $i \in \omega$, $\[\left\{Z(j): j > i\right\}\] \subseteq {A}_{\max\left(Z(i)\right)}'$.
  Define $\varphi: \omega \rightarrow \omega$ and $\psi: \omega \rightarrow \omega$ as follows.
  For $m \in \omega$, $\varphi(m) = \min\{{l}_{i}: i \in \omega \ \text{and} \ {l}_{i} > m\}$, and
  \begin{align*}
   \psi(m) = \min\left\{ \min\left(Z(i)\right): i \in \omega \ \text{and} \ \min\left(Z(i)\right) > m \right\}.
  \end{align*}
  As ${\HHH}_{\min} \; {\not\leq}_{RK} \; \UUU$, there exists $K \in \UUU$ such that $\psi\[K\] \notin {\HHH}_{\min}$, and as $\UUU \; {\not\leq}_{RK} \; {\HHH}_{\max}$, there is $N \in {\HHH}_{\max}$ such that $\varphi\[N\] \notin \UUU$.
  Thus $E = D \cap K \cap \left( \omega \setminus \varphi\[N\] \right) \in \UUU$ and $L = \[Z\] \cap \{s \in \FIN: \min(s) \in \omega \setminus \psi\[K\]\} \cap \{s \in \FIN: \max(s) \in N\} \in \HHH$.
  
  Consider $m \in E$ and $s \in L$ with $m < \min(s)$.
  Let $i \in \omega$ be such that $\psi(m) = \min\left(Z(i)\right) > m$.
  As $s \in \[Z\]$, $m < \min(s) = \min\left(Z(j)\right)$, for some $j \in \omega$.
  By the minimality of $\psi(m)$, $\min\left(Z(i)\right) \leq \min\left(Z(j)\right)$, whence $i \leq j$.
  Since $m \in K$, $\min\left(Z(i)\right) = \psi(m) \in \psi\[K\]$, while $\min\left(Z(j)\right) = \min(s) \in \omega \setminus \psi\[K\]$.
  Therefore, $i < j$ and $s \in \[\left\{Z({j}^{\ast}): {j}^{\ast} > i \right\}\] \subseteq {A}_{\max\left(Z(i)\right)}'$.
  Since $m < \min\left(Z(i)\right) \leq \max\left(Z(i)\right)$, ${A}_{\max\left(Z(i)\right)}' \subseteq {A}_{m}$, and so $s \in {A}_{m}$.
  Thus $s \in {A}_{m}$, whenever $m \in E$ and $s \in L$ with $m < \min(s)$.
  
  Next, consider $m \in E$ and $s \in L$ with $\max(s) < m$.
  As $m \in D$, $m = {l}_{j}$, for some $j \in \omega$.
  There exists $i \in \omega$ such that $\varphi(\max(s)) = {l}_{i} > \max(s)$.
  Since $\max(s) < {l}_{j}$, the minimality of $\varphi(\max(s))$ implies that ${l}_{i} \leq {l}_{j}$, whence $i \leq j$.
  As $s \in L$, $\max(s) \in N$, and so ${l}_{i} = \varphi(\max(s)) \in \varphi\[N\]$, while ${l}_{j} = m \in \omega \setminus \varphi\[N\]$.
  Therefore, $i < j$, and so $m = {l}_{j} \in {B}_{{l}_{j-1}}'$.
  Since $i \leq j-1$, $\max(s) < {l}_{i} \leq {l}_{j-1}$, and so ${B}_{{l}_{j-1}}' \subseteq {B}_{\max(s)}$.
  Therefore, $m \in {B}_{\max(s)}$.
  Thus $m \in {B}_{\max(s)}$ whenever $m \in E$, $s \in L$, and $\max(s) < m$.
  
  Define $c: {L}^{\[2\]} \rightarrow 2$ as follows.
  Given $s, t \in L$ with $\lb{s}{t}$, $c(\{s, t\}) = 0$ if and only if $\exists m \in E\[\max(s) < m < \min(t)\]$.
  Find $R \in \HHH$ such that $R \subseteq L$ and $c$ is constant on ${R}^{\[2\]}$.
  This constant value cannot be $1$.
  To see this, argue by contradiction and suppose it is $1$.
  Fix some $s \in R$.
  Find $m \in E$ with $\max(s) < m$.
  Now find $t \in R$ with $m < \min(t)$.
  Then $c(\{s, t\}) = 0$ contradicting the supposition that the constant value is $1$.
  Hence $c$ is constantly $0$ on ${R}^{\[2\]}$.
  Pick ${n}_{0} \in E$ and let ${R}^{\ast} = \{s \in R: \min(s) > {n}_{0}\} \in \HHH$.
  Let $Y \in {\FIN}^{\[\omega\]}$ be such that $\[Y\] \in \HHH$ and $\[Y\] \subseteq {R}^{\ast}$.
  Since $\UUU \in {\C}_{0}(\HHH)$, there exists $X \in {\FIN}^{\[\omega\]}$ such that $\[X\] \in \HHH$, $\[X\] \subseteq \[Y\]$, and $N(X) \notin \UUU$.
  For each $i \in \omega$, define $J(i) = \{m \in \omega: \max\left(X(i)\right) < m < \min\left(X(i+1)\right)\}$, and let $J(X) = {\bigcup}_{i \in \omega}{J(i)}$.
  As $X \in {\FIN}^{\[\omega\]}$, it is clear that $\omega = \min(X(0)) \cup N(X) \cup J(X)$, and since $N(X) \cup \min(X(0)) \notin \UUU$, $J(X) \in \UUU$.
  Note that ${n}_{0} < \min\left(X(0)\right)$ because $X(0) \in {R}^{\ast}$.
  Let $P = E \cap J(X) \in \UUU$.
  If $i \in \omega$, then $\{X(i), X(i+1)\} \in {R}^{\[2\]}$ and so there exists $m \in E$ with $\max\left(X(i)\right) < m < \min\left(X(i+1)\right)$.
  By definition, $m \in J(i) \subseteq J(X)$, whence $m \in P$.
  Hence for every $i \in \omega$, $J(i) \cap P \neq \emptyset$.
  Since $P \subseteq J(X)$, since $\UUU$ is a Q-point, and since $\forall i < {i}^{\ast} < \omega\[J(i) \cap J({i}^{\ast}) = \emptyset\]$, there exists $Q \in \UUU$ so that $Q \subseteq P \subseteq J(X)$ and $\forall i \in \omega\[\lc Q \cap J(i) \rc = 1\]$.
  Let $B = Q \cup \{{n}_{0}\} \in \UUU$.
  Observe that $B \cap \min\left(X(0)\right) = \{{n}_{0}\}$.
  It now follows that the strictly increasing enumeration of $B$ has the form $\seq{n}{i}{\in}{\omega}$, where ${n}_{0}$ is the value chosen earlier and for each $i \in \omega$, ${n}_{i} < \min\left(X(i)\right) \leq \max\left(X(i)\right) < {n}_{i+1}$.
  Note that for any $i \in \omega$, ${n}_{i+1} \in Q \subseteq P \subseteq E$, $X(i) \in \[X\] \subseteq \[Y\] \subseteq {R}^{\ast} \subseteq R \subseteq L$, and $\max\left(X(i)\right) < {n}_{i+1}$, whence ${n}_{i+1} \in {B}_{\max\left(X(i)\right)}$.
  If $s \in \[\left\{X(j): j \geq i\right\}\]$, then $s \in \[X\] \subseteq L$ and $\min(s) \geq \min\left(X(i)\right) > {n}_{i}$.
  Further, ${n}_{i} \in E$ because ${n}_{0} \in E$ and $Q \subseteq E$.
  Hence $s \in {A}_{{n}_{i}}$.
  So $\[\left\{X(j): j \geq i \right\}\] \subseteq {A}_{{n}_{i}}$.
  This verifies (2).
  (1) is satisfied by the choice of $X$.
  Finally, (3) holds because $B \subseteq E \subseteq D \subseteq {B}_{0}' \subseteq {B}_{0}$ and $\[X\] \subseteq L \subseteq \[Z\] \subseteq {A}_{0}' \subseteq {A}_{0}$.
 \end{proof}
 To prove the lemma, define ${S}_{n}$ to be the collection of all $\sigma$ such that:
 \begin{enumerate}
  \item
  $\sigma \in T$, $\dom(\sigma) \leq n+1$;
  \item
  $\forall i \in \dom(\sigma)\[i \ \text{is even} \implies \sigma(i) \leq n \]$;
  \item
  $\forall i \in \dom(\sigma)\[i \ \text{is odd} \implies \max(\sigma(i)) \leq n \]$,
 \end{enumerate}
 for all $n \in \omega$.
 Then ${S}_{n}$ is finite, and so
 \begin{align*}
  {B}_{n} = \bigcap{\left\{ {\succc}_{T}(\sigma): \sigma \in {S}_{n} \ \text{and} \ \lc \sigma \rc \ \text{is even}\right\}} \in \UUU
 \end{align*}
 (with $\bigcap \emptyset$ interpreted as $\omega$) and ${A}_{n} = \bigcap{\left\{ {\succc}_{T}(\sigma): \sigma \in {S}_{n} \ \text{and} \ \lc \sigma \rc \ \text{is odd} \right\}} \in \HHH$ (with $\bigcap \emptyset$ interpreted as $\FIN$).
 Find $B$ and $X$ as in Claim \ref{claim:uhtree1}, and let $\seq{n}{i}{\in}{\omega}$ be the strictly increasing enumeration of $B$.
 Define $G: \omega \rightarrow \omega \cup \FIN$ by setting $G(2i) = {n}_{i}$ and $G(2i+1) = X(i)$, for all $i \in \omega$.
 Observe that by the properties of $B$ and $X$, for any $i \in \omega$, if $i$ is even, then $i \leq G(i)$, while if $i$ is odd, then $i \leq \max\left(G(i)\right)$.
 Since $X \in {\FIN}^{\[\omega\]}$, for all $i < j < \omega$, $G(2i+1)=\lb{X(i)}{X(j)}=G(2j+1)$.
 To complete the proof it suffices to show that $G \in \[T\]$, and for this it suffices to show that $G\restrict l \in T$, for all $l \in \omega$.
 Now $G \restrict 0 = \emptyset \in T$ by the definition of a $\pr{\UUU}{\HHH}$-tree.
 $\emptyset \in {S}_{0}$ and so $B \subseteq {B}_{0} \subseteq {\succc}_{T}(\emptyset)$, whence ${n}_{0} \in {\succc}_{T}(\emptyset)$.
 So ${\emptyset}^{\frown}{\langle {n}_{0} \rangle} = \langle {n}_{0} \rangle = \langle G(0) \rangle \in T$.
 Letting $\sigma = \langle {n}_{0} \rangle$, $\dom(\sigma) = 1 \leq {n}_{0}+1$, and so $\sigma \in {S}_{{n}_{0}}$.
 Therefore, $X(0) \in {A}_{{n}_{0}} \subseteq {\succc}_{T}(\sigma)$, and so ${\sigma}^{\frown}{\langle X(0) \rangle} = \langle {n}_{0}, X(0) \rangle = \langle G(0), G(1) \rangle \in T$.
 Now suppose $i \in \omega$ and that $\sigma = \langle {n}_{0}, X(0), \dotsc, {n}_{i}, X(i) \rangle = \langle G(0), G(1), \dotsc, G(2i), G(2i+1) \rangle \in T$.
 Then $\dom(\sigma) = 2i+2 = (2i+1)+1 \leq \max\left(G(2i+1)\right)+1 = \max\left(X(i)\right)+1$, and so $\sigma \in {S}_{\max\left(X(i)\right)}$.
 Therefore, ${n}_{i+1} \in {B}_{\max\left(X(i)\right)} \subseteq {\succc}_{T}(\sigma)$, and so $\langle {n}_{0}, X(0), \dotsc, {n}_{i}, X(i), {n}_{i+1} \rangle = {\sigma}^{\frown}{\langle {n}_{i+1} \rangle} \in T$.
 Now letting $\sigma = \langle {n}_{0}, X(0), \dotsc, {n}_{i}, X(i), {n}_{i+1} \rangle = \langle G(0), G(1), \dotsc, G(2i), G(2i+1), G(2i+2) \rangle$, $\dom(\sigma) = 2i+3 = (2i+2)+1 \leq G(2i+2)+1 = {n}_{i+1}+1$, and so $\sigma \in {S}_{{n}_{i+1}}$.
 Therefore, $X(i+1) \in {A}_{{n}_{i+1}} \subseteq {\succc}_{T}(\sigma)$, and so ${\sigma}^{\frown}{\langle X(i+1) \rangle} = \langle {n}_{0}, X(0), \dotsc, {n}_{i}, X(i), {n}_{i+1}, X(i+1) \rangle = \langle G(0), G(1), \dotsc, G(2i), G(2i+1), G(2i+2), G(2i+3) \rangle \in T$.
 Thus by induction, $G\restrict l \in T$, for all $l \in \omega$.
\end{proof}
\begin{Def} \label{def:games}
 Define the following two player games.
 \begin{enumerate}
  \item
  Let $\HHH$ be an ordered-union ultrafilter on $\FIN$.
  The \emph{stability game on $\HHH$}, denoted \emph{${\Game}^{\mathtt{Stab}}(\HHH)$}, is a two player perfect information game in which Players I and II alternatively choose sets ${A}_{i}$ and ${s}_{i}$ respectively, where ${A}_{i} \in \HHH$ and ${s}_{i} \in {A}_{i}$.
  Together they construct the sequence
  \begin{align*}
   {A}_{0}, {s}_{0}, {A}_{1}, {s}_{1}, \dotsc,
  \end{align*}
  where each ${A}_{i} \in \HHH$ has been played by Player I and ${s}_{i} \in {A}_{i}$ has been chosen by Player II in response.
  Player II wins if and only if $\forall i < j < \omega\[\lb{{s}_{i}}{{s}_{j}}\]$ and $\[\left\{ {s}_{i}: i < \omega \right\}\] \in \HHH$.
  \item
  Let $\UUU$ be an ultrafilter on $\omega$ and let $\HHH$ be an ordered-union ultrafilter on $\FIN$.
  The \emph{selectivity-stability game on $\pr{\UUU}{\HHH}$}, denoted \emph{${\Game}^{\mathtt{SelStab}}\left(\UUU, \HHH\right)$}, is a two player perfect information game in which Players I and II alternatively choose objects as follows: when $i$ is even, Player I chooses ${B}_{i} \in \UUU$ and Player II responds with ${n}_{i} \in {B}_{i}$; when $i$ is odd, Player I chooses ${A}_{i} \in \HHH$ and Player II responds with ${s}_{i} \in {A}_{i}$.
  Together they construct the sequence
  \begin{align*}
   {B}_{0}, {n}_{0}, {A}_{1}, {s}_{1}, \dotsc,
  \end{align*}
  where each ${B}_{2j} \in \UUU$ has been played by Player I and ${n}_{2j} \in {B}_{2j}$ has been played by Player II, and each ${A}_{2j+1} \in \HHH$ has been played by Player I and ${s}_{2j+1} \in {A}_{2j+1}$ has been played by Player II.
  Player II wins if and only if $\left\{ {n}_{2j}: j \in \omega \right\} \in \UUU$, $\forall i < j < \omega\[\lb{{s}_{2i+1}}{{s}_{2j+1}}\]$, and $\[\left\{ {s}_{2j+1}: j < \omega \right\}\] \in \HHH$.
  \item
  Let $\UUU$ and $\VVV$ be ultrafilters on $\omega$.
  The \emph{selectivity-selectivity game on $\pr{\UUU}{\VVV}$}, denoted \emph{${\Game}^{\mathtt{SelSel}}\left(\UUU, \VVV\right)$}, is a two player perfect information game in which Players I and II alternatively choose objects as follows: when $i$ is even, Player I chooses ${B}_{i} \in \UUU$ and Player II responds with ${n}_{i} \in {B}_{i}$; when $i$ is odd, Player I chooses ${A}_{i} \in \VVV$ and Player II responds with ${n}_{i} \in {A}_{i}$.
  Together they construct the sequence
  \begin{align*}
   {B}_{0}, {n}_{0}, {A}_{1}, {n}_{1}, \dotsc,
  \end{align*}
  where each ${B}_{2j} \in \UUU$ has been played by Player I and ${n}_{2j} \in {B}_{2j}$ has been played by Player II, and each ${A}_{2j+1} \in \VVV$ has been played by Player I and ${n}_{2j+1} \in {A}_{2j+1}$ has been played by Player II.
  Player II wins if and only if $\left\{ {n}_{2j}: j \in \omega \right\} \in \UUU$ and $\left\{ {n}_{2j+1}: j \in \omega \right\} \in \VVV$.
  \item
  Let $\UUU$ be an ultrafilter on $\omega$.
  The \emph{selectivity game on $\UUU$}, denoted \emph{${\Game}^{\mathtt{Sel}}\left(\UUU\right)$}, is a two player perfect information game in which Players I and II alternatively choose ${A}_{i}$ and ${n}_{i}$ respectively, where ${A}_{i} \in \UUU$ and ${n}_{i} \in {A}_{i}$.
  Together they construct the sequence
  \begin{align*}
   {A}_{0}, {n}_{0}, {A}_{1}, {n}_{1}, \dotsc,
  \end{align*}
  where each ${A}_{i} \in \UUU$ has been played by Player I and ${n}_{i} \in {A}_{i}$ has been chosen by Player II in response.
  Player II wins if and only if $\{{n}_{i}: i < \omega\} \in \UUU$.
 \end{enumerate}
\end{Def}
\begin{Lemma} \label{lem:stabgame}
 Suppose $\HHH$ is a stable ordered-union ultrafilter on $\FIN$.
 Then Player I does not have a winning strategy in ${\Game}^{\mathtt{Stab}}(\HHH)$.
\end{Lemma}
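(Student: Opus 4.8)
The plan is to derive a contradiction from the assumption that Player~I has a winning strategy $\tau$ in ${\Game}^{\mathtt{Stab}}(\HHH)$ by extracting from $\tau$ an $\HHH$-tree and then applying Lemma~\ref{lem:htreebranch}. The key observation is that in any play consistent with $\tau$, Player~I's moves are completely determined by the preceding responses of Player~II, so everything can be indexed by the sequence of Player~II's moves alone.

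First I would set up the tree. Proceeding by recursion on $\dom(\sigma)$, I associate to each legal finite sequence $\sigma = \langle {s}_{0}, \dotsc, {s}_{n-1} \rangle \in {\FIN}^{< \omega}$ the set ${A}_{\sigma} \in \HHH$ that $\tau$ prescribes for Player~I after the partial play whose Player~II moves are exactly $\sigma$ (with Player~I's moves ${A}_{\sigma \restrict i}$ reconstructed from $\tau$). I then declare ${\sigma}^{\frown}{\langle s \rangle} \in T$ if and only if $\sigma \in T$ and $s \in {A}_{\sigma}$, starting from $\emptyset \in T$. By construction $T \subseteq {\FIN}^{< \omega}$ is downwards closed, $\emptyset \in T$, and ${\succc}_{T}(\sigma) = {A}_{\sigma} \in \HHH$ for every $\sigma \in T$; that is, $T$ is an $\HHH$-tree in the sense of Definition~\ref{def:utrees}.

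Next I would apply Lemma~\ref{lem:htreebranch} to $T$ to obtain a branch $f \in \[T\]$ with $\{ f(i): i \in \omega \} \in {\FIN}^{\[\omega\]}$, with $\lb{f(i)}{f(j)}$ for all $i < j < \omega$, and with $\[\{ f(i): i \in \omega \}\] \in \HHH$. This branch describes a single run of ${\Game}^{\mathtt{Stab}}(\HHH)$ in which Player~II plays ${s}_{i} = f(i)$ and Player~I plays ${A}_{i} = {A}_{f \restrict i}$ according to $\tau$: since $f \restrict (i+1) \in T$ we have $f(i) \in {A}_{f \restrict i}$, so each of Player~II's moves is legal and the entire play is consistent with $\tau$. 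But the conclusion of Lemma~\ref{lem:htreebranch} says precisely that this play is a win for Player~II, contradicting that $\tau$ is winning for Player~I. Hence no such $\tau$ exists. The argument is routine once the dictionary between strategies and $\HHH$-trees is fixed; the only point requiring care -- and the main (minor) obstacle -- is verifying that Player~I's moves are recoverable as a function of $\sigma$ alone, so that ${\succc}_{T}(\sigma)$ is a single well-defined member of $\HHH$ and $T$ genuinely satisfies the successor condition of an $\HHH$-tree. With that in place, Lemma~\ref{lem:htreebranch} performs all of the combinatorial work.
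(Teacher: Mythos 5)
Your proposal is correct and follows essentially the same argument as the paper: both convert the winning strategy for Player~I into an $\HHH$-tree (using the key observation that Player~I's moves are uniquely determined by Player~II's responses), apply Lemma~\ref{lem:htreebranch} to extract a branch, and read that branch back as a run consistent with the strategy which Player~II nevertheless wins. The only cosmetic difference is that the paper defines the tree via existence (and then uniqueness) of a witnessing sequence of Player~I moves, whereas you build it by direct recursion; the content is identical.
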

\begin{proof}
 Suppose for a contradiction that $\Sigma$ is a winning strategy for Player I in ${\Game}^{\mathtt{Stab}}(\HHH)$.
 Let $T$ be the collection of all $\sigma \in {\FIN}^{< \omega}$ for which there exists a function $\tau$ such that $\dom(\tau) = \dom(\sigma)$ and $\left\langle \pr{\tau(i)}{\sigma(i)}: i \in \dom(\sigma) \right\rangle$ is a partial run of ${\Game}^{\mathtt{Stab}}(\HHH)$ in which Player I has followed $\Sigma$.
 Observe that if $\sigma \in T$, then there is precisely one function $\tau$ which witnesses this.
 It is also easily seen that $T$ is an $\HHH$-tree.
 By Lemma \ref{lem:htreebranch}, find $f \in \[T\]$ such that the conclusions of that lemma are satisfied.
 Then $\forall n \in \omega\[f \restrict n \in T\]$.
 It follows from the argument for the uniqueness of the witness to $f \restrict n \in T$ for each $n \in \omega$ that there is a sequence $\seq{A}{i}{\in}{\omega}$ such that $\left\langle \pr{{A}_{i}}{f(i)}: i \in \omega \right\rangle$ is a run of ${\Game}^{\mathtt{Stab}}(\HHH)$ in which Player I has followed $\Sigma$.
 However Player II wins this run because $\forall i < j < \omega\[\lb{f(i)}{f(j)}\]$ and $\[\{f(i): i \in \omega\}\] \in \HHH$.
 This contradicts the supposition that $\Sigma$ is a winning strategy for Player I and concludes the proof.
\end{proof}
\begin{Lemma} \label{lem:selstabgame}
 Suppose $\UUU$ is a selective ultrafilter on $\omega$ and $\HHH$ is a stable ordered-union ultrafilter on $\FIN$.
 Suppose that $\UUU \in {\C}_{1}(\HHH)$.
 Then Player I does not have a winning strategy in ${\Game}^{\mathtt{SelStab}}\left(\UUU, \HHH\right)$.
\end{Lemma}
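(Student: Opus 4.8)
The plan is to mimic the proof of Lemma~\ref{lem:stabgame}, replacing the $\HHH$-tree and Lemma~\ref{lem:htreebranch} by a $\pr{\UUU}{\HHH}$-tree and Lemma~\ref{lem:uhtreebranch}. Suppose for a contradiction that $\Sigma$ is a winning strategy for Player~I in ${\Game}^{\mathtt{SelStab}}\left(\UUU, \HHH\right)$. I would let $T$ be the collection of all $\sigma \in {\left(\omega \cup \FIN\right)}^{< \omega}$ for which there exists a function $\tau$ with $\dom(\tau) = \dom(\sigma)$ such that $\left\langle \pr{\tau(i)}{\sigma(i)}: i \in \dom(\sigma) \right\rangle$ is a partial run of ${\Game}^{\mathtt{SelStab}}\left(\UUU, \HHH\right)$ in which Player~I has followed $\Sigma$; here $\sigma$ records the moves of Player~II and $\tau$ records the moves of Player~I. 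Exactly as in Lemma~\ref{lem:stabgame}, whenever such a $\tau$ exists it is unique, by induction on $\dom(\sigma)$ using the fact that $\Sigma$ is a function.

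Next I would check that $T$ is a $\pr{\UUU}{\HHH}$-tree in the sense of Definition~\ref{def:utrees}. That $\emptyset \in T$ and that $T$ is downwards closed are immediate, since the empty run follows $\Sigma$ vacuously and any initial segment of a partial run following $\Sigma$ again follows $\Sigma$. For the successor conditions, fix $\sigma \in T$ with witness $\tau$. If $\lc \sigma \rc$ is even, then the rules of the game dictate that Player~I's next move is $B = \Sigma\left(\left\langle \pr{\tau(i)}{\sigma(i)}: i \in \dom(\sigma) \right\rangle\right) \in \UUU$, and for every $n \in B$ the function ${\tau}^{\frown}{\langle B \rangle}$ witnesses ${\sigma}^{\frown}{\langle n \rangle} \in T$; hence $B \subseteq {\succc}_{T}(\sigma)$ and so ${\succc}_{T}(\sigma) \in \UUU$. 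Symmetrically, if $\lc \sigma \rc$ is odd, then Player~I's next move is some $A \in \HHH$, giving $A \subseteq {\succc}_{T}(\sigma)$ and ${\succc}_{T}(\sigma) \in \HHH$. Thus $T$ satisfies all the requirements of a $\pr{\UUU}{\HHH}$-tree.

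With $T$ recognised as a $\pr{\UUU}{\HHH}$-tree and $\UUU \in {\C}_{1}(\HHH)$ at hand, I would apply Lemma~\ref{lem:uhtreebranch} to obtain $G \in \[T\]$ such that $\left\{ G(2i): i \in \omega \right\} \in \UUU$, $\left\{ G(2i+1): i \in \omega \right\} \in {\FIN}^{\[\omega\]}$, $\forall i < j < \omega\[\lb{G(2i+1)}{G(2j+1)}\]$, and $\[\left\{ G(2i+1): i \in \omega \right\}\] \in \HHH$. Since $G \restrict l \in T$ for every $l \in \omega$, the uniqueness of the witnesses established above allows them to be assembled into a single function $\tau$ such that $\left\langle \pr{\tau(i)}{G(i)}: i \in \omega \right\rangle$ is a full run of ${\Game}^{\mathtt{SelStab}}\left(\UUU, \HHH\right)$ in which Player~I has followed $\Sigma$. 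Reading off the moves of Player~II, namely ${n}_{2j} = G(2j)$ and ${s}_{2j+1} = G(2j+1)$, the three displayed properties of $G$ are exactly the three conditions under which Player~II wins this run. This contradicts the assumption that $\Sigma$ is a winning strategy for Player~I and completes the argument.

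The genuine combinatorial difficulty has already been isolated in Lemma~\ref{lem:uhtreebranch}: extracting a branch of $T$ whose even coordinates lie in $\UUU$ and whose odd coordinates form a block sequence generating a member of $\HHH$ is where the hypothesis $\UUU \in {\C}_{1}(\HHH)$---and hence the RK-incomparability of $\UUU$ with both ${\HHH}_{\min}$ and ${\HHH}_{\max}$---does the essential work. The only point in the present argument needing care is the bookkeeping that recognises $T$ as a bona fide $\pr{\UUU}{\HHH}$-tree and that converts a branch of $T$ back into a legitimate run of the game; this rests on the determinism of $\Sigma$ and runs entirely parallel to Lemma~\ref{lem:stabgame}.
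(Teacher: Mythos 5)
Your proposal is correct and follows essentially the same route as the paper: define $T$ as the tree of Player II's moves in partial runs where Player I follows $\Sigma$, observe the uniqueness of the witnessing sequence of Player I's moves, verify that $T$ is a $\pr{\UUU}{\HHH}$-tree, and then apply Lemma \ref{lem:uhtreebranch} (which is where the hypothesis $\UUU \in {\C}_{1}(\HHH)$ is consumed) to produce a branch that reassembles into a run of ${\Game}^{\mathtt{SelStab}}\left(\UUU, \HHH\right)$ won by Player II, contradicting that $\Sigma$ is winning. Your identification of Lemma \ref{lem:uhtreebranch} as the locus of the real combinatorial content, with the present lemma reduced to bookkeeping parallel to Lemma \ref{lem:stabgame}, matches the paper exactly.
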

\begin{proof}
 This is similar to the proof of Lemma \ref{lem:stabgame}.
 Suppose for a contradiction that $\Sigma$ is a winning strategy for Player I in ${\Game}^{\mathtt{SelStab}}\left(\UUU, \HHH\right)$.
 Let $T$ be the collection of all $\sigma \in {\left( \omega \cup \FIN \right)}^{< \omega}$ for which there exists a function $\tau$ such that $\dom(\tau) = \dom(\sigma)$ and $\left\langle \pr{\tau(i)}{\sigma(i)}: i \in \dom(\sigma) \right\rangle$ is a partial run of ${\Game}^{\mathtt{SelStab}}\left(\UUU, \HHH\right)$ in which Player I has followed $\Sigma$.
 Observe that if $\sigma \in T$, then there is a unique function $\tau$ witnessing this.
 It is also easy to see that $T$ is an $\pr{\UUU}{\HHH}$-tree.
 By Lemma \ref{lem:uhtreebranch}, there exists $G \in \[T\]$ satisfying the conclusions of that lemma.
 Then $\forall n \in \omega\[G \restrict n \in T\]$.
 It follows from the argument for the uniqueness of the witness to $G \restrict n \in T$ for each $n \in \omega$ that there is a sequence $\seq{C}{i}{\in}{\omega} \in {\left( \UUU \cup \HHH \right)}^{\omega}$ such that $\left\langle \pr{{C}_{i}}{G(i)}: i \in \omega \right\rangle$ is a run of ${\Game}^{\mathtt{SelStab}}\left(\UUU, \HHH\right)$ in which Player I has followed $\Sigma$.
 However Player II wins this run of the game because $\{G(2i): i \in \omega\} \in \UUU$, $\forall i < j < \omega\[\lb{G(2i+1)}{G(2j+1)}\]$, and $\[\{G(2i+1): i \in \omega\}\] \in \HHH$.
 This contradicts the hypothesis that $\Sigma$ is a winning strategy for Player I and concludes the proof.
\end{proof}
It is worth noting here that the hypothesis that $\UUU \in {\C}_{1}(\HHH)$ is necessary for Lemma \ref{lem:selstabgame}.
Under $\CH$, weaker hypotheses such as $\UUU \; {\not\equiv}_{RK} \; {\HHH}_{\min}$ and $\UUU \; {\not\equiv}_{RK} \; {\HHH}_{\max}$ are provably not sufficient for the conclusion.
The following two results are well-known from the literature.
\begin{Lemma}[Shelah~\cite{PIF}] \label{lem:selsel}
 Let $\UUU$ and $\VVV$ be selective ultrafilters on $\omega$.
 If $\UUU \; {\not\equiv}_{RK} \; \VVV$, then Player I does not have a winning strategy in ${\Game}^{\mathtt{SelSel}}\left(\UUU, \VVV\right)$.
\end{Lemma}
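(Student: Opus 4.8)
The plan is to imitate the proofs of Lemmas~\ref{lem:stabgame} and~\ref{lem:selstabgame}, systematically replacing the stable ordered-union ultrafilter $\HHH$ by the second selective ultrafilter $\VVV$; the winning condition for ${\Game}^{\mathtt{SelSel}}\left(\UUU, \VVV\right)$ is literally the winning condition for ${\Game}^{\mathtt{SelStab}}\left(\UUU, \HHH\right)$ with each block $s_{2j+1}$ collapsed to a single integer, so the entire argument should specialize. The first thing I would record is that, because selective ultrafilters are RK-minimal among the non-principal ultrafilters on $\omega$, the hypothesis $\UUU \; {\not\equiv}_{RK} \; \VVV$ makes $\UUU$ and $\VVV$ RK-incomparable; that is, $\UUU \; {\not\leq}_{RK} \; \VVV$ and $\VVV \; {\not\leq}_{RK} \; \UUU$. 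These two nonreductions will play exactly the roles that $\UUU \; {\not\leq}_{RK} \; {\HHH}_{\max}$ and ${\HHH}_{\min} \; {\not\leq}_{RK} \; \UUU$ played in Claim~\ref{claim:uhtree1}.

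Suppose toward a contradiction that $\Sigma$ is a winning strategy for Player~I. Exactly as in Lemma~\ref{lem:selstabgame}, I would let $T \subseteq {\omega}^{< \omega}$ be the tree of all finite sequences $\sigma$ of Player~II moves that occur in some partial run in which Player~I follows $\Sigma$, and observe that each $\sigma \in T$ has a unique witnessing sequence $\tau$ of Player~I's moves. This $T$ is the evident two-sided analogue of a $\pr{\UUU}{\HHH}$-tree: it is a downward closed subtree of ${\omega}^{< \omega}$ containing $\emptyset$ with ${\succc}_{T}(\sigma) \in \UUU$ when $\lc \sigma \rc$ is even and ${\succc}_{T}(\sigma) \in \VVV$ when $\lc \sigma \rc$ is odd. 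It therefore suffices to prove the branch lemma corresponding to Lemma~\ref{lem:uhtreebranch}: for every such $T$ there is $G \in \[T\]$ with $G(i) < G(i+1)$ for all $i$, with $\left\{ G(2i): i \in \omega \right\} \in \UUU$, and with $\left\{ G(2i+1): i \in \omega \right\} \in \VVV$. Running $G$ back through $\Sigma$ then exhibits a single run that Player~II wins, contradicting the choice of $\Sigma$ and finishing the proof.

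For the branch lemma I would repeat the level-by-level construction of Lemma~\ref{lem:uhtreebranch}, which extracts finite sets $S_n$, and hence sequences $\seq{B}{n}{\in}{\omega}$ in $\UUU$ and $\seq{A}{n}{\in}{\omega}$ in $\VVV$, and then reduces everything to the following two-sided version of Claim~\ref{claim:uhtree1}: there are $B \in \UUU$ and $A \in \VVV$ whose increasing enumerations $\seq{m}{i}{\in}{\omega}$ and $\seq{k}{i}{\in}{\omega}$ satisfy $m_0 < k_0 < m_1 < k_1 < \dotsb$ together with $k_i \in A_{m_i}$ and $m_{i+1} \in B_{k_i}$ for every $i$. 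To prove this I would first use selectivity (the P-point property) of $\UUU$ and of $\VVV$ to diagonalize the two given sequences, producing $D = \left\{ l_i: i \in \omega \right\} \in \UUU$ and $Z = \left\{ z_i: i \in \omega \right\} \in \VVV$ along which the cross-index conditions already hold asymptotically. I would then let $\varphi(m)$ be the least element of $D$ exceeding $m$ and $\psi(m)$ the least element of $Z$ exceeding $m$; since $\UUU \; {\not\leq}_{RK} \; \VVV$ there is $N \in \VVV$ with $\varphi\[N\] \notin \UUU$, and since $\VVV \; {\not\leq}_{RK} \; \UUU$ there is $K \in \UUU$ with $\psi\[K\] \notin \VVV$. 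Putting $E = D \cap K \cap \left( \omega \setminus \varphi\[N\] \right) \in \UUU$ and $L = Z \cap N \cap \left( \omega \setminus \psi\[K\] \right) \in \VVV$, the nearest-point computation of Claim~\ref{claim:uhtree1} shows, verbatim, that $k \in A_m$ whenever $m \in E$, $k \in L$, and $m < k$, and that $m \in B_k$ whenever $m \in E$, $k \in L$, and $k < m$.

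It remains to force the strict alternation $m_0 < k_0 < m_1 < \dotsb$, and this is where I expect the only real difficulty to lie. I would proceed as in the final part of Claim~\ref{claim:uhtree1}: first shrink $L$ so that $L \notin \UUU$, which is possible since $\UUU \neq \VVV$; next run the Ramsey argument, coloring a pair from $L$ according to whether some element of $E$ lies strictly between its two members, to obtain a $\VVV$-homogeneous $R \subseteq L$ in which every gap between consecutive members contains an element of $E$; and finally invoke the Q-point property of $\UUU$ to thin $E$ to a single point per such gap, yielding the required $B \in \UUU$ interleaving with $A = R \in \VVV$. The cross-index conditions are then inherited from the two displayed properties of $E$ and $L$. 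The crux throughout is this interleaving claim: without $\UUU \; {\not\equiv}_{RK} \; \VVV$ the two ultrafilters could be coupled so tightly that no member of one strictly separates the successive members of a member of the other, and it is precisely the two RK-nonreductions, transmitted through $\varphi$ and $\psi$, that break this coupling and push the elements of each chosen set into the gaps left by the other.
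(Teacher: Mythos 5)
The paper offers no proof of Lemma \ref{lem:selsel}: it is quoted as a known result from Shelah's book, so there is no in-paper argument to measure your attempt against. Judged on its own terms, your reconstruction is correct, and it is the natural one: you specialize the paper's own machinery (Lemma \ref{lem:selstabgame}, Lemma \ref{lem:uhtreebranch}, and Claim \ref{claim:uhtree1}) to two selective ultrafilters. Each step checks out: RK-minimality of selective ultrafilters upgrades $\UUU \; {\not\equiv}_{RK} \; \VVV$ to two-sided RK-incomparability; the strategy tree is the evident $\pr{\UUU}{\VVV}$-tree; the pushforward argument with $\varphi$ and $\psi$ gives $E \in \UUU$ and $L \in \VVV$ satisfying the two cross-membership statements (your ``nearest-point computation'' does transfer verbatim); and the Ramsey/Q-point endgame yields the interleaved $B \in \UUU$, $A \in \VVV$, which is even more than the winning condition of ${\Game}^{\mathtt{SelSel}}\left(\UUU, \VVV\right)$ requires, since that game does not insist on increasing plays. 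You also correctly isolate the one genuinely non-verbatim point: in Claim \ref{claim:uhtree1} the gap set is forced into $\UUU$ by invoking $\UUU \in {\C}_{0}(\HHH)$ to get $N(X) \notin \UUU$ (and the paper remarks after Lemma \ref{lem:selstabgame} that RK-nonequivalence alone is provably insufficient in that one-sided setting), whereas in your two-sided setting the corresponding step needs only some member of $\VVV$ outside $\UUU$, available simply because $\UUU \neq \VVV$; identifying that collapse is the real content of the adaptation. Two pieces of bookkeeping you gloss over are routine and are spelled out in the paper's Claim: the diagonalization of the two given sequences uses full selectivity (not merely the P-point property, as your parenthesis suggests), and to start the strict interleaving with $m_{0} < k_{0}$ one must fix the seed $n_{0} \in E$ first, shrink $A$ above it, and upgrade the Q-point selector from at most one to exactly one point of $E$ per gap by adding a point to each missed gap.
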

\begin{Lemma}[Galvin; McKenzie] \label{lem:sel}
 Suppose $\UUU$ is a selective ultrafilter on $\omega$.
 Then Player I does not have a winning strategy in ${\Game}^{\mathtt{Sel}}\left(\UUU\right)$.
\end{Lemma}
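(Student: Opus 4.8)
The plan is to imitate the proof of Lemma \ref{lem:stabgame}, replacing the tree machinery for $\HHH$ by its one-dimensional counterpart for $\UUU$. Call a downwards closed subtree $T \subseteq {\omega}^{< \omega}$ with $\emptyset \in T$ a \emph{$\UUU$-tree} if ${\succc}_{T}(\sigma) \in \UUU$ for every $\sigma \in T$. Suppose toward a contradiction that $\Sigma$ is a winning strategy for Player I in ${\Game}^{\mathtt{Sel}}(\UUU)$, and let $T$ be the set of all $\sigma \in {\omega}^{< \omega}$ for which there is a (necessarily unique, by induction on $\dom(\sigma)$ since $\Sigma$ is a function) $\tau$ with $\dom(\tau) = \dom(\sigma)$ such that $\left\langle \pr{\tau(i)}{\sigma(i)} : i \in \dom(\sigma) \right\rangle$ is a partial run of ${\Game}^{\mathtt{Sel}}(\UUU)$ in which Player I has followed $\Sigma$. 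Exactly as in Lemma \ref{lem:stabgame}, $T$ is a $\UUU$-tree: downward closure and $\emptyset \in T$ are immediate, and if $\tau$ witnesses $\sigma \in T$ then $A = \Sigma\left(\langle \pr{\tau(i)}{\sigma(i)} : i \in \dom(\sigma)\rangle\right) \in \UUU$ equals ${\succc}_{T}(\sigma)$, since $n \in A$ iff ${\tau}^{\frown}\langle A\rangle$ witnesses ${\sigma}^{\frown}\langle n\rangle \in T$.

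The heart of the argument is the selective analogue of Lemma \ref{lem:htreebranch}: every $\UUU$-tree $T$ has a branch $f \in \[T\]$ with $\{f(i): i \in \omega\} \in \UUU$. To prove this I would set
\begin{align*}
 {S}_{n} = \left\{\sigma \in T : \forall i < j < \dom(\sigma)\[\sigma(i) < \sigma(j)\] \ \text{and} \ \forall i \in \dom(\sigma)\[\sigma(i) \leq n\]\right\},
\end{align*}
note that each ${S}_{n}$ is finite, and put ${A}_{n} = \bigcap\{{\succc}_{T}(\sigma) : \sigma \in {S}_{n}\} \in \UUU$ (with $\bigcap \emptyset = \omega$). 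The defining diagonalization property of a selective ultrafilter then yields $X = \{x_i : i \in \omega\} \in \UUU$, enumerated increasingly, with $X \subseteq {A}_{0}$ and $\{x_j : j > i\} \subseteq {A}_{x_i}$ for every $i$. Setting $f(i) = x_i$, an induction on $i$ shows $f \restrict i \in T$: the successor step uses that $\sigma = \langle x_0, \dots, x_i\rangle \in {S}_{x_i}$, so ${A}_{x_i} \subseteq {\succc}_{T}(\sigma)$ and $f(i+1) = x_{i+1} \in {A}_{x_i}$. Hence $f \in \[T\]$ and $\{f(i): i \in \omega\} = X \in \UUU$.

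Finally, applying the branch lemma to the tree $T$ built from $\Sigma$ produces a branch $f$ with $\{f(i): i \in \omega\} \in \UUU$; by the uniqueness of witnesses there is a sequence $\seq{A}{i}{\in}{\omega}$ making $\langle \pr{{A}_{i}}{f(i)} : i \in \omega\rangle$ a full, legal run of ${\Game}^{\mathtt{Sel}}(\UUU)$ conforming to $\Sigma$, and Player II wins it precisely because $\{f(i): i \in \omega\} \in \UUU$. This contradicts the assumption that $\Sigma$ is winning for Player I. The only nonroutine step is the branch lemma, and there the sole point requiring care is that one must invoke the full diagonalization strength of selectivity: being merely a P-point or a Q-point would not suffice to guarantee that the branch set $X$ lands back inside $\UUU$.
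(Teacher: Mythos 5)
Your argument is correct, but there is nothing in the paper to compare it against line by line: the paper states this lemma as a known result of Galvin and McKenzie and supplies no proof. What you have done is reconstruct it by specializing the paper's own two-dimensional machinery to the one-dimensional setting, and this works: your notion of a $\UUU$-tree is the natural analogue of the $\HHH$-trees of Definition \ref{def:utrees}, your branch lemma is the analogue of Lemma \ref{lem:htreebranch}, and your conversion of a strategy into a tree and of a branch back into a legal run conforming to $\Sigma$ is exactly the argument of Lemma \ref{lem:stabgame}. The details check out: since ${S}_{n} \subseteq {S}_{n+1}$, the sets ${A}_{n}$ form a decreasing sequence in $\UUU$, and the diagonalization you invoke --- for a decreasing sequence $\left\langle {A}_{n}: n \in \omega \right\rangle$ in $\UUU$ there is $X = \{{x}_{0} < {x}_{1} < \dotsb\} \in \UUU$ with $X \subseteq {A}_{0}$ and ${x}_{i+1} \in {A}_{{x}_{i}}$, whence $\{{x}_{j}: j > i\} \subseteq {A}_{{x}_{i}}$ by monotonicity --- is a standard equivalent of selectivity, and is precisely the form of selectivity the paper itself uses for $\UUU$ in the proof of Claim \ref{claim:uhtree1} (its $\HHH$-analogue being Lemma \ref{lem:selectivediag}). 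The induction showing that the diagonalizing set traces out a branch of $T$, and the appeal to uniqueness of witnesses to recover the sequence of Player I's moves, are both sound. Your closing remark is also accurate: the winning condition for Player II requires the set of her moves to land back in $\UUU$, and this is where anything weaker than full selectivity breaks down; in fact the conclusion of the lemma is known to characterize selective ultrafilters.
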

\section{The partial order} \label{sec:partial}
This section introduces the main forcing notion used in this paper.
For a fixed stable ordered-union ultrafilter $\HHH$, a partial order $\PP(\HHH)$ is defined.
A key idea in the definition of $\PP(\HHH)$ is the combinatorial notion of a $\pr{k}{s}$-big set introduced in Definition \ref{def:skbig}.
Basic properties of $\PP(\HHH)$ will be established in this section.
In subsequent sections, it will be shown that $\PP(\HHH)$ is proper, $\BS$-bounding, destroys $\HHH$ and preserves all selective ultrafilters.
\begin{Def} \label{def:s-}
 For $s \in \FIN$, define ${s}^{-} = s \setminus \{\max(s)\}$.
 Note that ${s}^{-} \subseteq \max(s)$.
\end{Def}
\begin{Def} \label{def:skbig}
 Let $k \leq l < \omega$ and $s \in \FIN$ with $l = \max(s)$.
 $A \subseteq {2}^{\Pset(l)}$ is \emph{$\pr{k}{s}$-big} if for every $\sigma: \Pset(k) \rightarrow 2$, there exists $\tau \in A$ such that $\forall u \in \Pset(k)\[\sigma(u) = \tau(u \cup {s}^{-})\]$.
\end{Def}
\begin{remark} \label{rem:skbig}
 When $\min(s) < k$, $\pr{k}{s}$-big sets generally do not exist.
 So even though the definition of a $\pr{k}{s}$-big set makes sense when $\min(s) < k$, it will generally not be used in this situation.
 Note also that $\pr{k}{s}$-big sets are non-empty.
\end{remark}
\begin{Lemma} \label{lem:bigmonotone}
 Suppose $k \leq k' \leq l < \omega$ and $s \in \FIN$ with $l = \max(s)$.
 If $A \subseteq {2}^{\Pset(l)}$ is $\pr{k'}{s}$-big, then $A$ is also $\pr{k}{s}$-big.
\end{Lemma}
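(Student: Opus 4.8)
The plan is to reduce a $\pr{k}{s}$-challenge to a $\pr{k'}{s}$-challenge by extending the test function, exploiting the fact that $\pr{k'}{s}$-bigness imposes strictly more demands than $\pr{k}{s}$-bigness. First I would fix an arbitrary $\sigma: \Pset(k) \rightarrow 2$ and set out to produce some $\tau \in A$ witnessing $\pr{k}{s}$-bigness for this $\sigma$. The crucial elementary observation is that since $k \leq k'$ one has $k \subseteq k'$ as von Neumann ordinals, and therefore $\Pset(k) \subseteq \Pset(k')$. This lets me extend $\sigma$ to a function $\sigma': \Pset(k') \rightarrow 2$ by assigning it arbitrary values (say $0$) on $\Pset(k') \setminus \Pset(k)$, so that $\sigma' \restrict \Pset(k) = \sigma$.

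Next I would invoke the hypothesis that $A$ is $\pr{k'}{s}$-big, applied to $\sigma'$, to obtain some $\tau \in A$ with $\forall v \in \Pset(k')\[\sigma'(v) = \tau(v \cup {s}^{-})\]$. This application is legitimate because $k' \leq l = \max(s)$, so the notion of $\pr{k'}{s}$-bigness is defined in the sense of Definition \ref{def:skbig}. Moreover every evaluation appearing here is meaningful: for $v \in \Pset(k')$ one has $v \subseteq k' \leq l$, while ${s}^{-} \subseteq \max(s) = l$ by Definition \ref{def:s-}, so $v \cup {s}^{-} \subseteq l$ and hence $v \cup {s}^{-} \in \Pset(l)$, which is exactly the domain of each $\tau \in A \subseteq {2}^{\Pset(l)}$.

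Finally I would restrict the conclusion to the subdomain $\Pset(k) \subseteq \Pset(k')$. For every $u \in \Pset(k)$ this gives $\sigma(u) = \sigma'(u) = \tau(u \cup {s}^{-})$, which is precisely the condition required for $\tau$ to witness $\pr{k}{s}$-bigness of $A$ with respect to $\sigma$. Since $\sigma$ was arbitrary, it follows that $A$ is $\pr{k}{s}$-big, completing the argument.

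I do not anticipate any genuine obstacle: the statement is pure monotonicity of the family of realizability constraints as the parameter decreases, and the entire content is encapsulated in the inclusion $\Pset(k) \subseteq \Pset(k')$ together with the routine check that the sets $u \cup {s}^{-}$ (and $v \cup {s}^{-}$) lie in $\Pset(l)$, so that all the function values $\tau(u \cup {s}^{-})$ are well defined. The only point demanding even a moment's care is remembering that $\sigma$ may be extended freely off $\Pset(k)$, since the values of $\sigma'$ outside $\Pset(k)$ play no role in the final verification.
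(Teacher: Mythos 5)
Your proposal is correct and follows essentially the same route as the paper: extend $\sigma$ to a function $\sigma'$ on $\Pset(k')$, apply $\pr{k'}{s}$-bigness to $\sigma'$, and restrict the resulting witness $\tau$ back to $\Pset(k)$. The only (immaterial) difference is the choice of extension — the paper takes $\sigma'(u) = \sigma(u \cap k)$ while you set $\sigma'$ to $0$ off $\Pset(k)$ — and, as you observe, only the agreement of $\sigma'$ with $\sigma$ on $\Pset(k)$ is ever used.
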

\begin{proof}
 Let $\sigma: \Pset(k) \rightarrow 2$ be given.
 Define $\sigma': \Pset(k') \rightarrow 2$ by $\sigma'(u) = \sigma(u \cap k)$, for all $u \in \Pset(k')$.
 Note that if $u \in \Pset(k)$, then $\sigma'(u) = \sigma(u \cap k) = \sigma(u)$.
 Since $A$ is $\pr{k'}{s}$-big, there exists $\tau \in A$ such that $\forall u \in \Pset(k')\[\sigma'(u) = \tau(u \cup {s}^{-})\]$.
 Then $\forall u \in \Pset(k)\[\sigma(u) = \sigma'(u) = \tau(u \cup {s}^{-})\]$.
\end{proof}
\begin{Def} \label{def:trees}
 Define $\TT = {\bigcup}_{l \in \omega}{{\prod}_{k \in l}{{2}^{\Pset(k)}}}$.
 Then $\pr{\TT}{\subsetneq}$ is a tree and recall from Definition \ref{def:tree} that for any subtree $T \subseteq \TT$,
 \begin{align*}
  \[T\] = \left\{ F \in \displaystyle\prod_{k \in \omega}{{2}^{\Pset(k)}}: \forall l \in \omega\[F\restrict l \in T\] \right\}.
 \end{align*}
\end{Def}
Note that $\[T\]$ for a subtree $T \subseteq \TT$ and $\[A\]$ for a subset $A \subseteq \FIN$ are entirely different objects.
This should not be a source of confusion.
\begin{Def} \label{def:PH}
 Let $\HHH$ be a stable ordered-union ultrafilter on $\FIN$.
 $p$ is called an \emph{$\HHH$-condition} if $p = {T}_{p} \subseteq \TT$ is a subtree such that the following hold:
 \begin{enumerate}
  \item
  $\emptyset \in {T}_{p}$;
  \item
  $\forall f \in {T}_{p} \forall \dom(f) \leq n < \omega \exists g \in {T}_{p}\[f \subseteq g \wedge n \leq \dom(g)\]$;
  \item
  for each $k \in \omega$, ${H}_{p, k} \in \HHH$, where ${H}_{p, k} = $
  \begin{align*}
   \left\{ s \in \FIN: k \leq \max(s) \wedge \forall f \in \lv{{T}_{p}}{\max(s)}\[{\succc}_{{T}_{p}}(f) \ \text{is} \ \pr{k}{s}\text{-big}\] \right\}.
  \end{align*}
 \end{enumerate}
 Let $\PP(\HHH) = \left\{p: p \ \text{is an} \ \HHH\text{-condition} \right\}$.
 Define $q \leq p$ if and only if ${T}_{q} \subseteq {T}_{p}$, for all $p, q \in \PP(\HHH)$.
\end{Def}
From now until the end of Section \ref{sec:preservesel}, $\HHH$ will be a fixed stable ordered-union ultrafilter on $\FIN$.
\begin{Lemma} \label{lem:infintelymanyk}
 Suppose $p = {T}_{p} \subseteq \TT$ is a subtree satisfying (1) and (2) of Definition \ref{def:PH}.
 Suppose there are infinitely many $k \in \omega$ for which ${H}_{p, k} \in \HHH$.
 Then $p$ is an $\HHH$-condition.
\end{Lemma}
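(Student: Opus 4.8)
The plan is to verify that $p$ satisfies clause (3) of Definition \ref{def:PH}, namely that ${H}_{p, k} \in \HHH$ for \emph{every} $k \in \omega$; clauses (1) and (2) hold by hypothesis, so this is all that remains. The whole argument rests on a single monotonicity observation, which is an immediate consequence of Lemma \ref{lem:bigmonotone}: the family $\left\langle {H}_{p, k} : k \in \omega \right\rangle$ is \emph{decreasing} under inclusion.

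To see this, fix $k \leq k'$ and suppose $s \in {H}_{p, k'}$. Then $k \leq k' \leq \max(s)$, so in particular $k \leq \max(s)$. Moreover, for every $f \in \lv{{T}_{p}}{\max(s)}$ the set ${\succc}_{{T}_{p}}(f) \subseteq {2}^{\Pset(\max(s))}$ is $\pr{k'}{s}$-big, and hence, applying Lemma \ref{lem:bigmonotone} with $l = \max(s)$, it is also $\pr{k}{s}$-big. Thus $s$ witnesses both conjuncts in the definition of ${H}_{p, k}$, so $s \in {H}_{p, k}$. This shows ${H}_{p, k'} \subseteq {H}_{p, k}$ whenever $k \leq k'$.

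With monotonicity in hand, the hypothesis that ${H}_{p, k} \in \HHH$ for infinitely many $k$ finishes the proof. Fix an arbitrary ${k}_{0} \in \omega$. Since the set of $k$ with ${H}_{p, k} \in \HHH$ is infinite, one may choose some $k' \geq {k}_{0}$ in it. By monotonicity ${H}_{p, k'} \subseteq {H}_{p, {k}_{0}} \subseteq \FIN$, and since $\HHH$ is a filter and hence upward closed, ${H}_{p, {k}_{0}} \in \HHH$. As ${k}_{0}$ was arbitrary, clause (3) holds and $p$ is an $\HHH$-condition.

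I do not anticipate any real obstacle here: the only point requiring attention is the \emph{direction} of monotonicity supplied by Lemma \ref{lem:bigmonotone} --- $\pr{k'}{s}$-bigness for the larger index $k'$ is the stronger property and implies $\pr{k}{s}$-bigness --- which is precisely what makes $\left\langle {H}_{p, k} : k \in \omega \right\rangle$ decreasing rather than increasing, and thus lets the ``infinitely many'' hypothesis propagate downward to all $k$ through upward closure of the filter.
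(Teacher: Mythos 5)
Your proof is correct and follows essentially the same route as the paper: both arguments use Lemma \ref{lem:bigmonotone} to show that ${H}_{p, k'} \subseteq {H}_{p, k}$ whenever $k \leq k'$, and then transfer membership in $\HHH$ from some large $k'$ down to an arbitrary $k$ via upward closure of the filter. The only cosmetic difference is that you isolate the monotonicity of $\left\langle {H}_{p,k} : k \in \omega \right\rangle$ as a standalone observation, whereas the paper folds it into a single computation for a fixed $k$.
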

\begin{proof}
 Let $k \in \omega$ be given.
 Choose $k \leq k' < \omega$ for which ${H}_{p, k'} \in \HHH$.
 Then $\forall s \in {H}_{p, k'}\[k \leq \max(s)\]$.
 Consider $s \in {H}_{p, k'}$ and $f \in \lv{{T}_{p}}{\max(s)}$.
 Then $k \leq k' \leq \max(s) < \omega$, $s \in \FIN$, and ${\succc}_{{T}_{p}}(f) \subseteq {2}^{\Pset(\max(s))}$ is $\pr{k'}{s}$-big.
 By Lemma \ref{lem:bigmonotone}, ${\succc}_{{T}_{p}}(f)$ is also $\pr{k}{s}$-big.
 Thus
 \begin{align*}
  \forall s \in {H}_{p, k'} \forall f \in \lv{{T}_{p}}{\max(s)}\[{\succc}_{{T}_{p}}(f) \ \text{is} \ \pr{k}{s}\text{-big}\].
 \end{align*}
 Thus ${H}_{p, k} \supseteq {H}_{p, k'}$ and so ${H}_{p, k} \in \HHH$.
\end{proof}
\begin{Lemma} \label{lem:stepranscondition}
 Suppose $p \in \PP(\HHH)$.
 For each $H \subseteq \FIN$ and $m \leq k < \omega$, define $H\[m, k\]$ to be
 \begin{align*}
  \left\{ {s}^{-}: s \in H \ \text{and} \ \max(s) = k \ \text{and} \ \min(s) \geq m \right\}.
 \end{align*}
 There exists $H \in \HHH$ such that for each $l \in \omega$, for all but finitely many $k \in {f}_{\max}\[H\]$, $l \leq k$ and
 \begin{align*}
  \tag{${\ast}_{l, k}$} \forall f \in \lv{{T}_{p}}{k} &\forall g \in \trp{2}{\Pset(l)}{H\[l, k\]}\\
  &\exists \tau \in {\succc}_{{T}_{p}}(f)\forall x \subseteq l\forall u \in H\[l, k\]\[\tau(x \cup u) = g(u)(x)\].
 \end{align*}
\end{Lemma}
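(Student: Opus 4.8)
The plan is to build the required $H$ as $\[X\]$ for a single block sequence $X$ read off from $p$, and then to verify each instance $({\ast}_{l,k})$ by collapsing the \emph{simultaneous} control it demands into one instance of $\pr{k'}{s}$-bigness that $p$ already supplies. First I would apply Lemma \ref{lem:selectivediag} to the sequence $\seq{A}{n}{\in}{\omega}$ given by ${A}_{n} = {H}_{p, n+1}$ (each ${A}_{n} \in \HHH$ because $p \in \PP(\HHH)$), obtaining $X \in {\FIN}^{\[\omega\]}$ with $\[X\] \in \HHH$ and $\[\{X(j) : j > i\}\] \subseteq {H}_{p, \max(X(i))+1}$ for every $i$. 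Since $X(j) \in \[\{X(j') : j' > i\}\]$ whenever $j > i$, reading this with $i = j-1$ yields the single property of $X$ that drives everything:
\begin{align*}
 \forall j \geq 1 \[ X(j) \in {H}_{p, \max(X(j-1))+1} \].
\end{align*}
I would then set $H = \[X\] \in \HHH$ and note ${f}_{\max}\[H\] = \{\max(X(j)) : j \in \omega\}$.

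Next, fixing $l$, I would restrict attention to $k = \max(X(j))$ with $j$ large enough that $\max(X(j-1)) \geq l$; all but finitely many $k \in {f}_{\max}\[H\]$ are of this form, and each satisfies $l \leq k$. Writing $k' = \max(X(j-1))+1$ and $w = {X(j)}^{-}$, the displayed property gives that ${\succc}_{{T}_{p}}(f)$ is $\pr{k'}{X(j)}$-big for every $f \in \lv{{T}_{p}}{k}$, i.e.\ for every $\sigma : \Pset(k') \to 2$ there is $\tau \in {\succc}_{{T}_{p}}(f)$ with $\tau(v \cup w) = \sigma(v)$ for all $v \subseteq k'$. The key structural point is that the cut at $k'$ separates the ``varying'' and ``common'' parts of every $u \in H\[l,k\]$: such a $u$ is ${s}^{-}$ for some $s = \bigcup_{i \in F} X(i) \in \[X\]$ with $\max(F) = j$ and $\min(s) \geq l$, so, putting $F' = F \setminus \{j\}$,
\begin{align*}
 u = w \cup \bigcup_{i \in F'} X(i), \qquad \bigcup_{i \in F'} X(i) \subseteq [l, k'), \qquad w \subseteq [k', k).
\end{align*}
Hence every $u \in H\[l,k\]$ has the same tail $u \cap [k',k) = w$ and varies only below $k'$, and every $x \subseteq l$ appearing in $({\ast}_{l,k})$ satisfies $x \subseteq l \subseteq k'$.

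With this separation the simultaneous demand collapses to a single colouring. Given $f \in \lv{{T}_{p}}{k}$ and $g \in \trp{2}{\Pset(l)}{H\[l,k\]}$, I would define $\sigma : \Pset(k') \to 2$ by putting, for $v \subseteq k'$,
\begin{align*}
 \sigma(v) = g(u)(v \cap l) \ \text{if} \ u := (v \cap [l,k')) \cup w \in H\[l,k\], \ \text{and} \ \sigma(v) = 0 \ \text{otherwise};
\end{align*}
this is unambiguous since $u$ and $v \cap l$ are determined by $v$. Applying $\pr{k'}{X(j)}$-bigness to $\sigma$ produces $\tau \in {\succc}_{{T}_{p}}(f)$ with $\tau(v \cup w) = \sigma(v)$ for all $v \subseteq k'$. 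For any $u \in H\[l,k\]$ and $x \subseteq l$, taking $v = x \cup (u \cap [l,k'))$ gives $v \cup w = x \cup u$, $v \cap l = x$, and $(v \cap [l,k')) \cup w = u$, so $\tau(x \cup u) = \sigma(v) = g(u)(x)$; this is precisely $({\ast}_{l,k})$.

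I expect the main obstacle to be the conceptual first move rather than any computation: recognizing that the one witness $\tau$ required to interpolate prescribed values across the entire, generally overlapping, family $H\[l,k\]$ need not be patched together from many separate bigness witnesses, but is delivered by a \emph{single} $\pr{k'}{X(j)}$-big instance once $X$ is chosen so that the final block $X(j)$ carries the whole portion of every relevant ${s}^{-}$ lying above $k'$. The role of the index $n+1$ (rather than $n$) in the choice ${A}_{n} = {H}_{p, n+1}$ is exactly to push $k'$ strictly past $\max(X(j-1))$ so that the varying parts $\bigcup_{i \in F'} X(i)$ fall below $k'$, and the ``all but finitely many $k$'' clause is what discards the small values of $j$, where $\max(X(j-1)) < l$ forces $H\[l,k\]$ to be a singleton demanding a stronger $\pr{l}{X(j)}$-bigness that is not available.
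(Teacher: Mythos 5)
Your proposal is correct and is essentially the paper's own argument: the paper likewise applies Lemma \ref{lem:selectivediag} to ${A}_{n} = {H}_{p, n+1}$, sets $H = \[X\]$, and for $k = \max\left(X(i+1)\right)$ collapses the whole family of prescriptions $g$ into a single $\sigma: \Pset\left(\max\left(X(i)\right)+1\right) \rightarrow 2$ (via ${s}_{w} = \left(w \setminus l\right) \cup X(i+1)$), which one instance of $\pr{\max\left(X(i)\right)+1}{X(i+1)}$-bigness then realizes by a single $\tau$. Your bookkeeping with the cut at $k'$ and the common tail $w = {X(j)}^{-}$ is just a cleaner phrasing of the same decomposition, so there is nothing to add.
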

\begin{proof}
 For $n \in \omega$, let ${A}_{n} = {H}_{p, n+1} \in \HHH$.
 Applying Lemma \ref{lem:selectivediag}, find $X \in {\FIN}^{\[\omega\]}$ such that $\[X\] \in \HHH$, $\[X\] \subseteq {A}_{0}$, and $\forall i \in \omega \[ \[\left\{ X(j): j > i \right\}\] \subseteq {A}_{\max\left(X(i)\right)} \]$.
 Let $H = \[X\]$ and fix $l \in \omega$.
 Let ${i}_{0} \in \omega$ be such that $\min\left(X({i}_{0})\right) > l$, and consider any $k \in {f}_{\max}\[H\]$ with $k > \max\left(X({i}_{0})\right)$.
 Note $l < \min\left(X({i}_{0})\right) \leq \max\left(X({i}_{0})\right) < k$.
 Since $k \in {f}_{\max}\[H\]$ and $\max\left(X({i}_{0})\right) < k$, there is a unique $i \in \omega$ with $\max\left(X(i+1)\right) = k$.
 It follows that if $s \in H$ with $\max(s) = k$, and $l \leq \min(s)$, then
 \begin{align*}
  s = \left( s \setminus X(i+1) \right) \cup X(i+1),
 \end{align*}
 $\left( s \setminus X(i+1) \right) \subseteq \max\left(X(i)\right)+1$, and $\forall x \in \left( s \setminus X(i+1) \right) \[l \leq x\]$.
 By the choice of $X$, $X(i+1) \in {A}_{\max\left(X(i)\right)} = {H}_{p, \max\left(X(i)\right)+1}$.
 Consider any $f \in \lv{{T}_{p}}{k}$.
 Then ${\succc}_{{T}_{p}}(f)$ is $\pr{\max\left(X(i)\right)+1}{X(i+1)}$-big.
 Suppose $g: H\[l, k\] \rightarrow {2}^{\Pset(l)}$ is given.
 Define $\sigma: \Pset\left(\max\left(X(i)\right)+1\right) \rightarrow 2$ as follows.
 Given $w \in \Pset\left(\max\left(X(i)\right)+1\right)$, let ${s}_{w} = \left( w \setminus l \right) \cup X(i+1)$
 \begin{align*}
  \sigma(w) = \begin{cases}
               g\left({s}^{-}_{w}\right)(w \cap l) \ &\text{if} \ {s}_{w} \in H \ \text{and} \ \max({s}_{w}) = k \ \text{and} \ l \leq \min({s}_{w}),\\
               0 \ &\text{otherwise}.
              \end{cases}
 \end{align*}
 Find $\tau \in {\succc}_{{T}_{p}}(f)$ such that
 \begin{align*}
  \forall w \in \Pset\left( \max\left(X(i)\right)+1 \right)\[ \sigma(w) = \tau\left(w \cup {\left(X(i+1)\right)}^{-} \right) \].
 \end{align*}
 Fix $x \subseteq l$ and $u \in H\[l, k\]$.
 Then $u = {s}^{-}$, where $s \in H$, $\max(s) = k$, and $l \leq \min(s)$.
 Further, $\left( s \setminus X(i+1) \right) \subseteq \max\left(X(i)\right)+1$, and since $\max\left(X({i}_{0})\right) < \max\left(X(i+1)\right)$, ${i}_{0} \leq i$, whence $x \subseteq l < \min\left(X({i}_{0})\right) \leq \max\left(X({i}_{0})\right) \leq \max\left(X(i)\right) < \max\left(X(i)\right)+1$.
 Therefore, $w = x \cup \left( s \setminus X(i+1) \right) \in \Pset\left( \max\left(X(i)\right)+1 \right)$.
 Further, if $z \in \left( s \setminus X(i+1) \right)$, then $l \leq z$, whence $z \in w \setminus l$.
 Therefore, $w \setminus l = \left( s \setminus X(i+1) \right)$ and $w \cap l = x$.
 Therefore, ${s}_{w} = \left( s \setminus X(i+1) \right) \cup X(i+1) = s$, and $u = {s}^{-} = {s}^{-}_{w} = \left( s \setminus X(i+1) \right) \cup {\left(X(i+1)\right)}^{-}$.
 Therefore, $\tau(x\cup u) = \tau\left(x \cup \left( s \setminus X(i+1) \right) \cup {\left(X(i+1)\right)}^{-}\right) = \tau\left(w \cup {\left(X(i+1)\right)}^{-} \right) = \sigma(w) = g\left(u\right)(x)$, precisely as needed.
\end{proof}
\begin{Lemma} \label{lem:one}
 $\TT \in \PP(\HHH)$.
\end{Lemma}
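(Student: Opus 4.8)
The plan is to verify directly the three clauses of Definition \ref{def:PH} for the full tree $p = \TT$. Clauses (1) and (2) are immediate. For (1), the empty sequence---the unique element of $\prod_{k \in 0} 2^{\Pset(k)}$---lies in $\TT$. For (2), given any $f \in \TT$ with $\dom(f) = l \leq n$, one extends $f$ to a $g$ of domain $n$ by assigning to each coordinate $l \leq k < n$ an arbitrary element of $2^{\Pset(k)}$ (say the constant function $0$); then $g \in \TT$, $f \subseteq g$, and $\dom(g) = n$. The entire content of the lemma therefore lies in clause (3).

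For clause (3) the key simplification is that, because $\TT$ is the \emph{full} tree, every node $f$ at level $\max(s)$ has $\succc_{\TT}(f) = 2^{\Pset(\max(s))}$, the whole set of maps $\Pset(\max(s)) \to 2$. Thus the requirement that $\succc_{\TT}(f)$ be $\pr{k}{s}$-big for all $f \in \lv{\TT}{\max(s)}$ collapses to the single assertion that $2^{\Pset(\max(s))}$ itself is $\pr{k}{s}$-big. I would then show that this holds whenever $\min(s) \geq k$, in which case $\max(s) \geq \min(s) \geq k$ also holds, so the clause $k \leq \max(s)$ in the definition of $H_{\TT, k}$ is automatic and the bigness notion is well-posed.

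The argument for bigness of the full set is the main---and essentially only---point requiring an idea, and it is precisely the injectivity phenomenon flagged in Remark \ref{rem:skbig}. Writing $l = \max(s)$ and using Definition \ref{def:s-}, one has $s^{-} \subseteq l$; and since every element of $s^{-}$ is at least $\min(s) \geq k$, it follows that $s^{-} \cap k = \emptyset$. Consequently the map $u \mapsto u \cup s^{-}$ is injective on $\Pset(k)$, because $u = (u \cup s^{-}) \cap k$ recovers $u$ from its image. Given any $\sigma : \Pset(k) \to 2$, one may therefore define $\tau : \Pset(l) \to 2$ by $\tau(u \cup s^{-}) = \sigma(u)$ for $u \in \Pset(k)$ (well defined by injectivity) and $\tau(v) = 0$ for all remaining $v \subseteq l$; this $\tau$ lies in the full set $2^{\Pset(l)}$ and witnesses the required bigness. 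Hence $\{s \in \FIN : \min(s) \geq k\} \subseteq H_{\TT, k}$.

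It remains to observe that $\{s \in \FIN : \min(s) \geq k\} \in \HHH$, which is the standard fact recorded immediately after Definition \ref{def:tree}: applying it with $A = \FIN \in \HHH$ yields $\{s \in \FIN : \min(s) \geq k\} \in \HHH$ for each $k$ (the case $k = 0$ being $\FIN$ itself). Since $\HHH$ is an ultrafilter and hence closed under supersets, $H_{\TT, k} \in \HHH$ for every $k$, establishing clause (3) and completing the verification that $\TT \in \PP(\HHH)$. I do not expect any serious obstacle here, as the full tree trivializes the tree-combinatorial content and reduces everything to the elementary counting observation $s^{-} \cap k = \emptyset$; one could alternatively invoke Lemma \ref{lem:infintelymanyk} and check $H_{\TT, k} \in \HHH$ only for infinitely many $k$, but since the argument is uniform in $k$ no economy is gained.
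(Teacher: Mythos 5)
Your proof is correct and follows essentially the same route as the paper: both restrict attention to those $s$ whose minimum lies above $k$ (a set in $\HHH$ by the remark following Definition \ref{def:tree}), observe that in the full tree every successor set is all of ${2}^{\Pset(\max(s))}$, and then produce a witness $\tau$ extending $\sigma$ using the disjointness ${s}^{-} \cap k = \emptyset$. The only (cosmetic) difference is the choice of witness: the paper takes $\tau(t) = \sigma(t \cap k)$ uniformly on $\Pset(\max(s))$, whereas you define $\tau$ only on the image of $u \mapsto u \cup {s}^{-}$ and extend by $0$, paying for this with a well-definedness check that the paper's formula avoids.
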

\begin{proof}
 All of the requirements with the possible exception of (3) of Definition \ref{def:PH} are clear.
 To verify this, given $k \in \omega$, define $H = \{s \in \FIN: \min(s) > k\}$, and note that $H \in \HHH$.
 Clearly, $\forall s \in H\[k < \min(s) \leq \max(s)\]$.
 Consider $s \in H$.
 Let $l = \max(s)$ and consider $f \in \lv{\TT}{l}$.
 Then $A = {\succc}_{\TT}(f) = {2}^{\Pset(l)}$.
 To see that $A$ is $\pr{k}{s}$-big, consider $\sigma: \Pset(k) \rightarrow 2$.
 Define $\tau: \Pset(l) \rightarrow 2$ by $\tau(t) = \sigma(t \cap k)$, for all $t \in \Pset(l)$
 Then $\tau \in A$.
 Note that for any $u \in \Pset(k)$, $\left( u \cup {s}^{-} \right) \cap k = u$.
 So for every $u \in \Pset(k)$, by definition, $\tau\left( u \cup {s}^{-} \right) = \sigma\left(\left( u \cup {s}^{-} \right) \cap k\right) = \sigma(u)$.
 Thus $H \subseteq {H}_{\TT, k}$, and so ${H}_{\TT, k} \in \HHH$.
\end{proof}
\begin{Lemma} \label{def:Tf}
 Suppose $p \in \PP(\HHH)$.
 Then for any $f \in {T}_{p}$, $q = {T}_{p}\langle f \rangle \in \PP(\HHH)$ and $q \leq p$.
\end{Lemma}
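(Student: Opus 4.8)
The plan is to verify that $T_q := {T}_{p}\langle f \rangle$ satisfies the three clauses of Definition~\ref{def:PH}; the inequality $q \leq p$ is then immediate, since ${T}_{p}\langle f \rangle \subseteq {T}_{p}$ directly from the definition of $T\langle f \rangle$ in Definition~\ref{def:tree}. First I would record that $T_q$ is a downward-closed subtree of $\TT$: if $g \in T_q$ and $g' \subseteq g$, then $g' \in {T}_{p}$ by downward closure of ${T}_{p}$, and $g'$ is comparable with $f$ (if $g \subseteq f$ then $g' \subseteq f$; if $f \subseteq g$ then $g'$ and $f$ are both initial segments of $g$, hence comparable), so $g' \in T_q$. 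Clause (1) holds because $\emptyset \subseteq f$ forces $\emptyset \in T_q$.

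For clause (2), given $g \in T_q$ and $n \geq \dom(g)$, I would split on how $g$ sits relative to $f$. If $f \subseteq g$, apply clause (2) for $p$ to $g$ to obtain $h \in {T}_{p}$ with $g \subseteq h$ and $\dom(h) \geq n$; since $f \subseteq g \subseteq h$, we get $h \in T_q$. If instead $g \subseteq f$, apply clause (2) for $p$ to $f$ with the bound $\max(n, \dom(f))$ to obtain $h \in {T}_{p}$ with $f \subseteq h$ and $\dom(h) \geq n$; then $g \subseteq f \subseteq h$, so again $h \in T_q$.

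The substance of the argument is clause (3), and here the key observation is that $T_q$ and ${T}_{p}$ agree above level $\dom(f)$. Concretely, for any $l \geq \dom(f)$ every $g \in \lv{T_q}{l}$ must satisfy $f \subseteq g$ (its length is at least that of $f$ and it is comparable with $f$), so $\lv{T_q}{l} \subseteq \lv{{T}_{p}}{l}$; moreover for such $g$ one has ${\succc}_{T_q}(g) = {\succc}_{{T}_{p}}(g)$, because any one-step extension ${g}^{\frown}{\langle \sigma \rangle}$ has length $> \dom(f)$ and already extends $f$, so its membership in $T_q$ reduces to membership in ${T}_{p}$. Consequently, if $s \in {H}_{p, k}$ with $\max(s) \geq \dom(f)$, then for every $g \in \lv{T_q}{\max(s)} \subseteq \lv{{T}_{p}}{\max(s)}$ the set ${\succc}_{T_q}(g) = {\succc}_{{T}_{p}}(g)$ is $\pr{k}{s}$-big, whence $s \in {H}_{q, k}$. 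This yields
\begin{align*}
 {H}_{p, k} \cap \{s \in \FIN: \max(s) \geq \dom(f)\} \subseteq {H}_{q, k}.
\end{align*}
Recalling that for an ordered-union ultrafilter $\HHH$ and any $m \in \omega$ one has $\{s \in \FIN: \min(s) > m\} \in \HHH$, and that this set is contained in $\{s \in \FIN: \max(s) \geq \dom(f)\}$ when $m = \dom(f)$, the left-hand side above is an intersection of two members of $\HHH$, hence lies in $\HHH$; therefore ${H}_{q, k} \in \HHH$ for every $k$.

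I expect the only delicate point to be the bookkeeping in clause (3) that distinguishes the levels $l < \dom(f)$—where $\lv{T_q}{l} = \{f\restrict l\}$ is a singleton and the successor sets genuinely shrink—from the levels $l \geq \dom(f)$, where the two trees coincide. The proof goes through precisely because the defining condition for ${H}_{q, k}$ only needs to be checked on a set of $s$ that, modulo a member of $\HHH$, satisfies $\max(s) \geq \dom(f)$, so the shrinking at low levels never interferes.
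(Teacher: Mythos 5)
Your proof is correct and follows essentially the same route as the paper's: both arguments verify the three clauses of Definition~\ref{def:PH}, with the key step for clause (3) being that ${T}_{p}\langle f \rangle$ and ${T}_{p}$ have the same levels and the same successor sets above level $\dom(f)$, so that ${H}_{q,k}$ contains a member of $\HHH$ (the paper restricts to $\{s \in {H}_{p,k}: \min(s) > \dom(f)\}$ where you use $\max(s) \geq \dom(f)$, an immaterial difference). The only cosmetic divergence is in clause (2), where the paper avoids your case split by passing to $h = e \cup f$, which equals whichever of $e, f$ is longer.
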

\begin{proof}
 Clearly, ${T}_{p}\langle f \rangle \subseteq {T}_{p} \subseteq \TT$ and ${T}_{p}\langle f \rangle$ is a subtree of $\TT$.
 $\emptyset \in {T}_{p}\langle f \rangle$ because $\emptyset \in {T}_{p}$ and $\emptyset \subseteq f$.
 Next, suppose $e \in {T}_{p}\langle f \rangle$ and $\dom(e) \leq n < \omega$.
 Let $h = e \cup f$.
 Then $h \in \{e, f\}$ and $\dom(h) = \max\{\dom(e), \dom(f)\}$.
 As $h \in {T}_{p}$ and $\dom(h) \leq m = \max\{\dom(h), n\} < \omega$, there exists $g \in {T}_{p}$ with $h \subseteq g$ and $m \leq \dom(g)$.
 $f \subseteq h \subseteq g$, so $g \in {T}_{p}\langle f \rangle$.
 And $e \subseteq h \subseteq g$ and $n \leq m \leq \dom(g)$, as required.
 Finally, fix $k \in \omega$.
 Let $H = \{s \in {H}_{p, k}: \min(s) > \dom(f)\} \in \HHH$.
 Then $\forall s \in H\[k \leq \max(s)\]$.
 Suppose $s \in H$ and $e \in \lv{{T}_{p}\langle f \rangle}{\max(s)}$.
 Then $s \in {H}_{p, k}$, $e \in \lv{{T}_{p}}{\max(s)}$, and $\dom(e) = \max(s) \geq \min(s) > \dom(f)$, whence $f \subseteq e$.
 Thus ${\succc}_{{T}_{p}\langle f \rangle}(e) = {\succc}_{{T}_{p}}(e)$ is $\pr{k}{s}$-big.
 Thus $H \subseteq {H}_{q, k}$, and so ${H}_{q, k} \in \HHH$.
 Thus $q \in \PP(\HHH)$ and $q \leq p$.
\end{proof}
\begin{Lemma} \label{lem:amalgam}
 Let $p \in \PP(\HHH)$.
 Let $l \in \omega$, $1 \leq m < \omega$, and ${e}_{1}, \dotsc, {e}_{m} \in \lv{{T}_{p}}{l}$.
 If ${p}_{1}, \dotsc, {p}_{m} \in \PP(\HHH)$ are such that $\forall 1 \leq i \leq m\[{p}_{i} \leq {T}_{p}\langle {e}_{i} \rangle\]$, then $q = {T}_{q} = {\bigcup}_{1 \leq i \leq m}{{T}_{{p}_{i}}} \in \PP(\HHH)$, for each $1 \leq i \leq m$, ${p}_{i} \leq q$, and $q \leq p$.
\end{Lemma}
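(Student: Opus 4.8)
The plan is to check the three clauses of Definition~\ref{def:PH} for the subtree ${T}_{q} = \bigcup_{1 \le i \le m}{{T}_{{p}_{i}}}$, after which the ordering assertions are immediate. First note that $T_q$ is a downwards closed subtree of $\TT$, being a union of such, and that the inclusions ${T}_{{p}_{i}} \subseteq {T}_{q}$ hold by construction while ${T}_{{p}_{i}} \subseteq {T}_{p}\langle {e}_{i} \rangle \subseteq {T}_{p}$ gives ${T}_{q} \subseteq {T}_{p}$; hence once $q \in \PP(\HHH)$ is known, $q \le p$ and $p_i \le q$ for each $i$ follow at once from the definition of the ordering. Clause~(1) holds since $\emptyset \in {T}_{{p}_{1}} \subseteq {T}_{q}$. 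For clause~(2), given $f \in {T}_{q}$ and $\dom(f) \le n < \omega$, I would pick $i$ with $f \in {T}_{{p}_{i}}$ and apply clause~(2) for ${p}_{i}$ to obtain $g \in {T}_{{p}_{i}} \subseteq {T}_{q}$ with $f \subseteq g$ and $n \le \dom(g)$.

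The heart of the matter is clause~(3), that ${H}_{q, k} \in \HHH$ for every $k \in \omega$. The key point is an elementary monotonicity of bigness complementary to Lemma~\ref{lem:bigmonotone}: if $A \subseteq A' \subseteq {2}^{\Pset(l)}$ and $A$ is $\pr{k}{s}$-big, then $A'$ is $\pr{k}{s}$-big as well, since any witness $\tau \in A$ guaranteed by Definition~\ref{def:skbig} already lies in $A'$. Granting this, I claim $\bigcap_{1 \le i \le m}{{H}_{{p}_{i}, k}} \subseteq {H}_{q, k}$. Indeed, let $s$ belong to the intersection, so $k \le \max(s)$, and let $f \in \lv{{T}_{q}}{\max(s)}$ be arbitrary. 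Choosing $i$ with $f \in {T}_{{p}_{i}}$, one has $f \in \lv{{T}_{{p}_{i}}}{\max(s)}$, so ${\succc}_{{T}_{{p}_{i}}}(f)$ is $\pr{k}{s}$-big because $s \in {H}_{{p}_{i}, k}$; since ${T}_{{p}_{i}} \subseteq {T}_{q}$ yields ${\succc}_{{T}_{{p}_{i}}}(f) \subseteq {\succc}_{{T}_{q}}(f)$, the monotonicity just noted makes ${\succc}_{{T}_{q}}(f)$ $\pr{k}{s}$-big too, whence $s \in {H}_{q, k}$. Since each ${p}_{i} \in \PP(\HHH)$ gives ${H}_{{p}_{i}, k} \in \HHH$ and $m \ge 1$, the finite intersection lies in $\HHH$, and therefore so does the larger set ${H}_{q, k}$. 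This proves clause~(3), so $q \in \PP(\HHH)$.

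The one point requiring care --- the main obstacle --- is recognizing that no level-by-level bookkeeping of the successor sets is necessary. When the ${e}_{i}$ share initial segments, a single node may lie in several of the ${T}_{{p}_{i}}$, so that its set of successors in ${T}_{q}$ is the union of the corresponding successor sets and may be strictly larger than any one of them; the superset-monotonicity of bigness renders this enlargement harmless, so the crude containment $\bigcap_{1 \le i \le m}{{H}_{{p}_{i}, k}} \subseteq {H}_{q, k}$ already suffices. In particular the hypotheses on the ${e}_{i}$ and on ${p}_{i} \le {T}_{p}\langle {e}_{i} \rangle$ are used only to secure ${T}_{q} \subseteq {T}_{p}$, and not in the verification of the three clauses themselves.
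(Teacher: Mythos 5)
Your proof is correct and follows essentially the same route as the paper's: both verify the subtree and extension clauses by picking an index $i$ with $f \in {T}_{{p}_{i}}$, and both establish clause~(3) by showing ${H}_{{p}_{1}, k} \cap \dotsb \cap {H}_{{p}_{m}, k} \subseteq {H}_{q, k}$ using the fact that a superset of a $\pr{k}{s}$-big set is $\pr{k}{s}$-big. The only difference is cosmetic: you state that superset-monotonicity explicitly (and correctly observe that the hypotheses on the ${e}_{i}$ serve only to give ${T}_{q} \subseteq {T}_{p}$), whereas the paper uses it silently.
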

\begin{proof}
 For each $1 \leq i \leq m$, ${T}_{{p}_{i}} \subseteq {T}_{p}\langle {e}_{i} \rangle \subseteq {T}_{p}$.
 So ${T}_{q} \subseteq {T}_{p} \subseteq \TT$ and clearly ${T}_{q}$ is a subtree of $\TT$.
 $\emptyset \in {T}_{q}$ because $\emptyset \in {T}_{{p}_{1}}$.
 Next, suppose $f \in {T}_{q}$ and $\dom(f) \leq n < \omega$.
 Then $f \in {T}_{{p}_{i}}$ for some $1 \leq i \leq m$, and so there exists $g \in {T}_{{p}_{i}}$ such that $f \subseteq g$ and $n \leq \dom(g)$, whence $g \in {T}_{q}$ as well.
 Next, for each $k \in \omega$, define $H = {H}_{{p}_{1}, k} \cap \dotsb \cap {H}_{{p}_{m}, k} \in \HHH$.
 For every $s \in H$, $s \in \FIN$ and $k \leq \max(s) < \omega$ because $s \in {H}_{{p}_{1}, k}$.
 Consider $f \in \lv{{T}_{q}}{\max(s)}$.
 Then for some $1 \leq i \leq m$, $f \in \lv{{T}_{{p}_{i}}}{\max(s)}$.
 Since ${\succc}_{{T}_{{p}_{i}}}(f)$ is $\pr{k}{s}$-big and ${\succc}_{{T}_{{p}_{i}}}(f) \subseteq {\succc}_{{T}_{q}}(f) \subseteq {2}^{\Pset(\max(s))}$, ${\succc}_{{T}_{q}}(f)$ is also $\pr{k}{s}$-big.
 Thus $H \subseteq {H}_{q, k}$, and so ${H}_{q, k} \in \HHH$.
 Therefore $q \in \PP(\HHH)$.
 Finally, since ${T}_{q} \subseteq {T}_{p}$, $q \leq p$, and for all $1 \leq i \leq m$, since ${T}_{{p}_{i}} \subseteq {T}_{q}$, ${p}_{i} \leq q$.
\end{proof}
\begin{Lemma} \label{lem:fusion}
 Suppose $\seq{p}{i}{\in}{\omega}$ and $X$ satisfy:
 \begin{enumerate}
  \item
  $\forall i \in \omega\[{p}_{i} \in \PP(\HHH)\]$ and $\forall i \in \omega\[{p}_{i+1} \leq {p}_{i}\]$;
  \item
  $X \in {\FIN}^{\[\omega\]}$ with $\[X\] \in \HHH$;
  \item
  for each $i \in \omega$, $X(i+1) \in {H}_{{p}_{i+1}, \max\left(X(i)\right)+1}$;
  \item
  for each $i \leq j < \omega$ and for each $e \in \lv{{T}_{{p}_{j}}}{\max\left(X(i)\right)}$, ${\succc}_{{T}_{{p}_{i}}}(e) \subseteq {\succc}_{{T}_{{p}_{j}}}(e)$.
 \end{enumerate}
 Then $q = {T}_{q} = {\bigcap}_{i \in \omega}{{T}_{{p}_{i}}} \in \PP(\HHH)$.
\end{Lemma}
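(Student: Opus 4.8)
The plan is to set $q = {T}_{q} = {\bigcap}_{i \in \omega}{{T}_{{p}_{i}}}$ and to verify the three clauses of Definition \ref{def:PH}. As an intersection of downwards closed subtrees of $\TT$, $q$ is itself a downwards closed subtree, and $\emptyset \in {T}_{q}$ since $\emptyset \in {T}_{{p}_{i}}$ for every $i$, so Clause (1) is immediate. The engine for the remaining clauses is a stabilization observation extracted from hypotheses (1) and (4): since ${T}_{{p}_{j}} \subseteq {T}_{{p}_{i}}$ whenever $i \leq j$, one always has ${\succc}_{{T}_{{p}_{j}}}(f) \subseteq {\succc}_{{T}_{{p}_{i}}}(f)$, and combining this with the reverse inclusion supplied by hypothesis (4), I get that for every $f \in {T}_{q}$ with $\dom(f) = \max\left(X(i)\right)$ the sets ${\succc}_{{T}_{{p}_{j}}}(f)$ coincide for all $j \geq i$; consequently ${\succc}_{{T}_{q}}(f) = {\bigcap}_{n \in \omega}{{\succc}_{{T}_{{p}_{n}}}(f)} = {\succc}_{{T}_{{p}_{i}}}(f)$. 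Thus at the ``fusion levels'' $\max\left(X(i)\right)$ the tree $q$ inherits its branching directly from ${p}_{i}$.

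The substantive clause is (3), and here I would invoke Lemma \ref{lem:infintelymanyk} to reduce the task to producing infinitely many $k$ with ${H}_{q, k} \in \HHH$; I take $k = \max\left(X(i)\right)+1$ for each $i \in \omega$. By the observation recorded just after Definition \ref{def:tree}, the set $H = \{s \in \[X\]: \min(s) > \max\left(X(i)\right)\} = \[\left\{X(j): j > i\right\}\]$ lies in $\HHH$, and the claim is that $H \subseteq {H}_{q, k}$. Fix $s \in H$ and let $i' > i$ satisfy $\max(s) = \max\left(X(i')\right)$; then $\min(s) \geq k$, so $k \leq \max(s)$. For $f \in \lv{{T}_{q}}{\max(s)}$ the stabilization observation applied at the fusion level $\max\left(X(i')\right)$ gives ${\succc}_{{T}_{q}}(f) = {\succc}_{{T}_{{p}_{i'}}}(f)$, and hypothesis (3) applied to $X(i') \in {H}_{{p}_{i'}, \max\left(X(i'-1)\right)+1}$ tells me this set is $\pr{\max\left(X(i'-1)\right)+1}{X(i')}$-big. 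It remains to transfer bigness from the single block $X(i')$ to $s$. Writing $s = X(i') \cup w$ with $w = s \setminus X(i')$, one has ${s}^{-} = {\left(X(i')\right)}^{-} \cup w$, while $\max\left(X(i)\right) < \min(w) \leq \max(w) \leq \max\left(X(i'-1)\right)$; in particular $w$ is disjoint from $k$, is contained in $\Pset\left(\max\left(X(i'-1)\right)+1\right)$, and $k \leq \max\left(X(i'-1)\right)+1$. Hence, given any $\sigma: \Pset(k) \rightarrow 2$, the function $\sigma'$ on $\Pset\left(\max\left(X(i'-1)\right)+1\right)$ given by $\sigma'(v) = \sigma(v \cap k)$ witnesses, through the $\pr{\max\left(X(i'-1)\right)+1}{X(i')}$-bigness, a $\tau \in {\succc}_{{T}_{q}}(f)$ with $\tau\left(u \cup {s}^{-}\right) = \tau\left((u \cup w) \cup {\left(X(i')\right)}^{-}\right) = \sigma'(u \cup w) = \sigma(u)$ for every $u \in \Pset(k)$. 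Thus ${\succc}_{{T}_{q}}(f)$ is $\pr{k}{s}$-big, so $s \in {H}_{q, k}$ and Clause (3) is finished.

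For Clause (2), note first that $\TT$ is finitely branching, since each $\lv{\TT}{n} = {2}^{\Pset(n)}$ is finite; hence it suffices to show that every $f \in {T}_{q}$ extends to members of ${T}_{q}$ of length $\max\left(X(i')\right)$ for cofinally many $i'$, because by downwards closure such long nodes automatically fill in the intermediate levels along their branch. The stabilization observation guarantees that at each fusion level the relevant successor set is big, hence nonempty, and is shared by $q$ and ${p}_{i'}$, while Clause (2) of Definition \ref{def:PH} for the individual ${p}_{i'}$ supplies extensions inside ${T}_{{p}_{i'}}$. The point is then to choose an extension reaching the next fusion level that lies in every ${T}_{{p}_{n}}$, which rests on the coherence of the tail conditions below $\max\left(X(i')\right)$ encoded jointly by hypotheses (3) and (4). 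Keeping these interpolating segments between consecutive fusion levels inside the full intersection ${\bigcap}_{n}{{T}_{{p}_{n}}}$ is the most delicate point of this clause.

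The main obstacle I anticipate is precisely the bigness transfer in Clause (3): hypothesis (3) delivers bigness only relative to the single block $X(i')$, whereas ${H}_{q, k}$ demands bigness relative to an arbitrary union $s \in \[X\]$ sharing the value $\max\left(X(i')\right)$, and by Definition \ref{def:skbig} the notion of $\pr{k}{s}$-bigness is sensitive to the whole trace ${s}^{-}$ rather than to $\max(s)$ alone. Getting the reindexing of the test function $\sigma$ across the extra coordinates of $w$ exactly right, while simultaneously passing from the larger support $\Pset\left(\max\left(X(i'-1)\right)+1\right)$ down to $\Pset(k)$, is the crux of the argument; the analogous coherence bookkeeping needed to keep branches inside the intersection for Clause (2) is the secondary difficulty.
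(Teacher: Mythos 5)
Your treatment of Clause (3) is essentially correct and is, at bottom, the paper's own argument: the decomposition $s = w \cup X(i')$ with $w \subseteq \max\left(X(i'-1)\right)+1$ disjoint from $k$, the test function $\sigma'(v) = \sigma(v \cap k)$, and the transfer of $\pr{\max\left(X(i'-1)\right)+1}{X(i')}$-bigness to $\pr{k}{s}$-bigness all match the paper; your ``stabilization observation'' is a clean repackaging of the paper's use of hypothesis (4) to certify that the witnessing $\tau$ lands in ${\succc}_{{T}_{q}}(f)$. (Restricting attention to $k$ of the form $\max\left(X(i)\right)+1$ and citing Lemma \ref{lem:infintelymanyk}, rather than handling every $k$ directly as the paper does, is a harmless variation.)

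The gap is in Clause (2), and it is genuine. You reduce to showing that every $f \in {T}_{q}$ extends within ${T}_{q}$ to cofinally many fusion levels, you correctly get one-step extensions at fusion levels from bigness, and then you assert that pushing an extension from level $\max\left(X(i')\right)+1$ up to the next fusion level ``rests on the coherence of the tail conditions \dots encoded jointly by hypotheses (3) and (4)'' --- but you never carry this step out, and it cannot be carried out from those hypotheses: (3) and (4) only constrain successor sets of nodes whose domain is exactly a fusion level $\max\left(X(i)\right)$, and they say nothing about the levels strictly between $\max\left(X(i')\right)+1$ and $\max\left(X(i'+1)\right)$, which is exactly where your interpolating segments live. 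The mechanism that actually works is different and much simpler, and is what the paper does: a pigeonhole argument using only hypothesis (1) and the finite branching of $\TT$. Given $f \in {T}_{q}$ and $n \geq \dom(f)$, the set $\FFF = \left\{ e \in {T}_{{p}_{0}}: \dom(e) = n \ \text{and} \ f \subseteq e \right\}$ is finite; each ${T}_{{p}_{i}}$ meets $\FFF$ (extend $f$ inside ${T}_{{p}_{i}}$ using Clause (2) of Definition \ref{def:PH} for ${p}_{i}$ and restrict to length $n$); since the trees decrease, any $e \in \FFF$ lying in infinitely many ${T}_{{p}_{i}}$ lies in all of them, hence in ${T}_{q}$. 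Note that this needs no fusion levels at all. The omission also matters structurally: your Clause (3) goes through Lemma \ref{lem:infintelymanyk}, whose hypotheses include Clause (2) of Definition \ref{def:PH}, so until Clause (2) is established your argument for Clause (3) does not close the lemma either.
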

\begin{proof}
 For ease of notation in this proof, the symbols ${T}_{i}$ will replace ${T}_{{p}_{i}}$, ${l}_{i}$ will denote $\max\left(X(i)\right)$, and ${H}_{i, k}$ will be used for ${H}_{{p}_{i}, k}$, for all $i \in \omega$ and $k \in \omega$.
 Clearly, ${T}_{q} \subseteq {T}_{0} \subseteq \TT$ and ${T}_{q}$ is a subtree of $\TT$.
 Since $\emptyset \in {T}_{i}$ for every $i \in \omega$, $\emptyset \in {T}_{q}$.
 Next, suppose $f \in {T}_{q}$ and that $\dom(f) \leq n < \omega$.
 Let $\FFF = \{e \in {T}_{0}: \dom(e) = n \wedge f \subseteq e\}$.
 Then $\FFF$ is a finite set.
 For each $i \in \omega$, $f \in {T}_{i}$, and so there exists $g \in {T}_{i}$ with $f \subseteq g$ and $n \leq \dom(g)$.
 Then $g \restrict n \in {T}_{i}$, $\dom(g \restrict n) = n$, and $f \subseteq g \restrict n$, whence $g \restrict n \in \FFF \cap {T}_{i}$.
 Thus $\forall i \in \omega \exists {e}_{i} \in \FFF\[{e}_{i} \in {T}_{i}\]$.
 As $\FFF$ is finite, there exists $e \in \FFF$ such that $\existsinf i \in \omega\[e \in {T}_{i}\]$.
 Since $\forall i \leq j < \omega\[{T}_{j} \subseteq {T}_{i}\]$, it follows that $e \in {T}_{q}$.
 Therefore ${T}_{q}$ satisfies (2) of Definition \ref{def:PH}.
 To see that ${T}_{q}$ satisfies (3) of Definition \ref{def:PH}, fix $k \in \omega$.
 Choose ${i}_{0} \in \omega$ such that $k < \min\left(X({i}_{0})\right)$.
 Let $H = \left\{ s \in \[X\]: \min(s) > \max\left(X({i}_{0})\right) \right\} \in \HHH$.
 Suppose $s \in H$.
 Then $s \in \FIN$ and $k < \min\left(X({i}_{0})\right) \leq \max\left(X({i}_{0})\right) < \min(s) \leq \max(s)$.
 Let $l = \max(s)$.
 Then $l = \max\left(X({i}_{1})\right)$, for some ${i}_{1} \in \omega$.
 Since $\max\left(X({i}_{0})\right) < \max\left(X({i}_{1})\right)$, ${i}_{1} = i+1$, where ${i}_{0} \leq i < \omega$.
 Note that $s \setminus X(i+1) \subseteq \max\left(X(i)\right)+1$, $\left( s \setminus X(i+1) \right) \cap k = \emptyset$, $s = s \setminus X(i+1) \cup X(i+1)$, and ${s}^{-} = \left( s \setminus X(i+1) \right) \cup {\left( X(i+1) \right)}^{-}$.
 Further, if $u \subseteq k$, then for any $x \in u$, $x < k < \max\left(X({i}_{0})\right) \leq \max\left(X(i)\right) < \max\left(X(i)\right)+1$, and so $u \subseteq \max\left(X(i)\right)+1$.
 Therefore $u \cup \left( s \setminus X(i+1) \right) \subseteq \max\left(X(i)\right)+1$ and $\left( u \cup \left( s \setminus X(i+1) \right) \right) \cap k = u$.
 By hypothesis, $X(i+1) \in {H}_{i+1, \max\left(X(i)\right)+1}$.
 Now, fix $f \in \lv{{T}_{q}}{l}$.
 It needs to be seen that ${\succc}_{{T}_{q}}(f)$ is $\pr{k}{s}$-big.
 To this end, let $\sigma: \Pset(k) \rightarrow 2$ be fixed.
 Since $f \in \lv{{T}_{i+1}}{\max\left(X(i+1)\right)}$, ${\succc}_{{T}_{i+1}}(f)$ is $\pr{\max\left(X(i)\right)+1}{X(i+1)}$-big.
 Now define ${\sigma}^{\ast}: \Pset\left( \max\left(X(i)\right)+1 \right) \rightarrow 2$ by setting ${\sigma}^{\ast}(w) = \sigma(w \cap k)$, for all $w \subseteq \max\left(X(i)\right)+1$.
 Find $\tau \in {\succc}_{{T}_{i+1}}(f)$ such that for each $w \subseteq \max\left(X(i)\right)+1$, ${\sigma}^{\ast}(w) = \tau\left( w \cup {\left( X(i+1) \right)}^{-} \right)$.
 Consider any $i+1 \leq j < \omega$.
 Then $f \in \lv{{T}_{j}}{\max\left(X(i+1)\right)}$, and the hypothesis is that ${\succc}_{{T}_{i+1}}(f) \subseteq {\succc}_{{T}_{j}}(f)$.
 Thus $\tau \in {\succc}_{{T}_{j}}(f)$, and so ${f}^{\frown}{\langle \tau \rangle} \in {T}_{j}$.
 Hence $\forall i+1 \leq j < \omega\[{f}^{\frown}{\langle \tau \rangle} \in {T}_{j}\]$, whence ${f}^{\frown}{\langle \tau \rangle} \in {T}_{q}$.
 Therefore $\tau \in {\succc}_{{T}_{q}}(f)$.
 Take $u \subseteq k$.
 Then $u \cup \left( s \setminus X(i+1) \right) = w \subseteq \max\left(X(i)\right)+1$ and $w \cap k = u$.
 So $\sigma(u) = \sigma(w \cap k) = {\sigma}^{\ast}(w) = \tau\left( w \cup {\left( X(i+1) \right)}^{-} \right) = \tau\left( u \cup \left( s \setminus X(i+1) \right) \cup {\left( X(i+1) \right)}^{-} \right) = \tau\left( u \cup {s}^{-} \right)$.
 This proves that ${\succc}_{{T}_{q}}(f)$ is $\pr{k}{s}$-big.
 Thus $H \subseteq {H}_{q, k}$, and so ${H}_{q, k} \in \HHH$.
 This concludes the proof that $q \in \PP(\HHH)$.
\end{proof}
\begin{Def} \label{def:GcondH}
 Define the following two player game called the \emph{$\HHH$-condition game} and denoted \emph{${\Game}^{\mathtt{Cond}}\left(\HHH\right)$}.
 Players I and II alternatively choose $\pr{{p}_{i}}{{A}_{i}}$ and ${s}_{i}$ respectively, where
 \begin{enumerate}
  \item
  ${p}_{i} \in \PP(\HHH)$ and ${A}_{i} \in \HHH$;
  \item
  ${s}_{i} \in {A}_{i}$;
  \item
  there exists $\left\langle {p}_{i, e}: e \in \lv{{T}_{{p}_{i}}}{\max({s}_{i})+1} \right\rangle$ such that
  \begin{align*}
   \forall e \in \lv{{T}_{{p}_{i}}}{\max({s}_{i})+1}\[{p}_{i, e} \leq {T}_{{p}_{i}}\langle e \rangle\]
  \end{align*}
  and ${p}_{i+1} = {T}_{{p}_{i+1}} = \bigcup\left\{{T}_{{p}_{i, e}}: e \in \lv{{T}_{{p}_{i}}}{\max({s}_{i})+1}\right\}$.
 \end{enumerate}
 Together they construct the sequence
 \begin{align*}
  \pr{{p}_{0}}{{A}_{0}}, {s}_{0}, \pr{{p}_{1}}{{A}_{1}}, {s}_{1}, \dotsc,
 \end{align*}
 where each $\pr{{p}_{i}}{{A}_{i}}$ has been played by Player I and ${s}_{i}$ has been chosen by Player II in response subject to Conditions (1)--(3).
 Player II wins if and only if $\forall i < j < \omega\[\lb{{s}_{i}}{{s}_{j}}\]$, $\[\left\{{s}_{i}: i < \omega\right\}\] \in \HHH$, and $q = {T}_{q} = {\bigcap}_{i \in \omega}{{T}_{{p}_{i}}} \in \PP(\HHH)$.
\end{Def}
\begin{Lemma} \label{lem:condgame}
 Player I does not have a winning strategy in ${\Game}^{\mathtt{Cond}}\left(\HHH\right)$.
\end{Lemma}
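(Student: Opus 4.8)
The plan is to adapt the proof of Lemma~\ref{lem:stabgame} to this more elaborate game, using Lemma~\ref{lem:htreebranch} to produce the block sequence witnessing a win for Player II and Lemma~\ref{lem:fusion} to guarantee that the intersection of the conditions played along the run is again an $\HHH$-condition. Suppose toward a contradiction that $\Sigma$ is a winning strategy for Player I. Since $\Sigma$ is deterministic, a finite sequence $\sigma \in {\FIN}^{< \omega}$ of moves by Player II determines a unique partial run in which Player I has followed $\Sigma$; write $\pr{{p}_{i}}{{A}_{i}}$ for Player I's moves along this run. I would let $T$ be the tree of all such $\sigma$ subject to the additional constraint that the move played by Player II at stage $n$ lie in ${A}_{n} \cap {H}_{{p}_{n}, \max({s}_{n-1})+1}$ for $n \geq 1$, and in ${A}_{0}$ for $n = 0$, where ${s}_{n-1}$ denotes the preceding move of Player II.

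Both ${A}_{n}$ and ${H}_{{p}_{n}, \max({s}_{n-1})+1}$ belong to $\HHH$ by Definition~\ref{def:PH}, so every nonempty $\succc_{T}(\sigma)$ lies in $\HHH$. Moreover every $s \in {A}_{n}$ is a legal move for Player II, since Condition~(3) of Definition~\ref{def:GcondH} can always be met (for instance by amalgamating the cones ${T}_{{p}_{n}}\langle e \rangle$ themselves, which reproduces ${p}_{n}$). Hence $T$ is an $\HHH$-tree, and by Lemma~\ref{lem:htreebranch} there is a branch $f \in \[T\]$ with $X = \left\{ f(i): i \in \omega \right\} \in {\FIN}^{\[\omega\]}$, $\forall i < j < \omega\[\lb{f(i)}{f(j)}\]$, and $\[X\] \in \HHH$. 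The run determined by $f$ has Player I following $\Sigma$ and Player II playing ${s}_{i} = f(i)$; the first two winning conditions for Player II hold at once, so it remains to verify that $q = {\bigcap}_{i \in \omega}{{T}_{{p}_{i}}} \in \PP(\HHH)$, which I would obtain from Lemma~\ref{lem:fusion} applied to $\seq{p}{i}{\in}{\omega}$ and $X$.

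The hard part is checking the four hypotheses of Lemma~\ref{lem:fusion}, and in particular hypothesis~(4). Hypothesis~(1) follows from Condition~(3) of Definition~\ref{def:GcondH} together with Lemmas~\ref{def:Tf} and~\ref{lem:amalgam}, which yield ${p}_{i+1} \in \PP(\HHH)$ and ${p}_{i+1} \leq {p}_{i}$; hypothesis~(2) is the output of Lemma~\ref{lem:htreebranch}; and hypothesis~(3), that $X(i+1) = {s}_{i+1} \in {H}_{{p}_{i+1}, \max(X(i))+1}$, is precisely the restriction built into the successor sets of $T$. For hypothesis~(4) the key point is that Condition~(3) of Definition~\ref{def:GcondH} forces ${T}_{{p}_{i+1}}$ and ${T}_{{p}_{i}}$ to have exactly the same nodes of length at most $\max({s}_{i})+1$: each such node lies below some $e \in \lv{{T}_{{p}_{i}}}{\max({s}_{i})+1}$ by Condition~(2) of Definition~\ref{def:PH}, and the everywhere-extendability and downward closure of the subtree ${p}_{i,e}$ keep the whole initial branch up to $e$.

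Consequently $\succc_{{T}_{{p}_{i+1}}}(e') = \succc_{{T}_{{p}_{i}}}(e')$ for every $e'$ with $\dom(e') \leq \max({s}_{i})$. Since the maxima $\max({s}_{i})$ strictly increase along the block sequence, iterating this identity from stage $i$ to any $j \geq i$ gives $\succc_{{T}_{{p}_{j}}}(e') = \succc_{{T}_{{p}_{i}}}(e')$ whenever $\dom(e') = \max(X(i))$, which is hypothesis~(4). Lemma~\ref{lem:fusion} then yields $q \in \PP(\HHH)$, so Player II wins this run of ${\Game}^{\mathtt{Cond}}\left(\HHH\right)$, contradicting the assumption that $\Sigma$ is winning for Player I. The only genuinely delicate point is the node-agreement claim underlying hypothesis~(4); everything else is bookkeeping parallel to Lemma~\ref{lem:stabgame}.
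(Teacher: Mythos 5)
Your proof is correct and is essentially the paper's argument: the paper builds exactly your tree, but packages the construction as a translation of $\Sigma$ into a strategy $\Pi$ for ${\Game}^{\mathtt{Stab}}\left(\HHH\right)$ whose moves are ${B}_{0} = {A}_{0}$ and ${B}_{n} = {A}_{n} \cap {H}_{{p}_{n}, \max({s}_{n-1})+1}$ for $n \geq 1$, invoking Lemma \ref{lem:stabgame} where you apply Lemma \ref{lem:htreebranch} directly. Your verification of hypothesis (4) of Lemma \ref{lem:fusion} is also the paper's: its induction on $j$ establishes precisely your node-agreement claim, using Clause (3) of Definition \ref{def:GcondH} to see that any node of ${T}_{{p}_{j}}$ of length at most $\max({s}_{j})+1$ extends to some ${e}^{\ast}$ at level $\max({s}_{j})+1$ and hence lies in ${T}_{{p}_{j, {e}^{\ast}}} \subseteq {T}_{{p}_{j+1}}$.
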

\begin{proof}
 Suppose for a contradiction that $\Sigma$ is a winning strategy for Player I in ${\Game}^{\mathtt{Cond}}\left(\HHH\right)$.
 Define $\Pi$ and $\Phi$ such that:
 \begin{enumerate}
  \item
  $\Pi$ is a strategy for Player I in ${\Game}^{\mathtt{Stab}}\left(\HHH\right)$;
  \item
  for each $n \in \omega$, if $\left\langle \pr{{B}_{i}}{{s}_{i}}: i \leq n \right\rangle$ is a partial run of ${\Game}^{\mathtt{Stab}}\left(\HHH\right)$ in which Player I has followed $\Pi$, then $\Phi\left(\left\langle \pr{{B}_{i}}{{s}_{i}}: i \leq n \right\rangle\right) = \left\langle \pr{{p}_{i}}{{A}_{i}}: i \leq n \right\rangle$ and
  \begin{align*}
   \left\langle \pr{\pr{{p}_{i}}{{A}_{i}}}{{s}_{i}}: i \leq n \right\rangle
  \end{align*}
  is a partial play of ${\Game}^{\mathtt{Cond}}\left(\HHH\right)$ in which Player I has followed $\Sigma$ and it has the property that $\forall i < n\[ {B}_{i+1} = {A}_{i+1} \cap {H}_{{p}_{i+1}, \max({s}_{i})+1} \]$;
  \item
  for each $n \in \omega$, if $\left\langle \pr{{B}_{i}}{{s}_{i}}: i \leq n+1 \right\rangle$ is a partial run of ${\Game}^{\mathtt{Stab}}\left(\HHH\right)$ in which Player I has followed $\Pi$, then
  \begin{align*}
   \Phi\left( \left\langle \pr{{B}_{i}}{{s}_{i}}: i \leq n \right\rangle \right) = \Phi\left( \left\langle \pr{{B}_{i}}{{s}_{i}}: i \leq n+1 \right\rangle \right)\restrict n+1.
  \end{align*}
 \end{enumerate}
 $\Pi$ and $\Phi$ will be defined inductively.
 Let $\Sigma(\emptyset) = \pr{{p}_{0}}{{A}_{0}}$ define $\Pi(\emptyset) = {B}_{0} = {A}_{0}$.
 As ${B}_{0} \in \HHH$, this is a valid move for Player I in ${\Game}^{\mathtt{Stab}}\left(\HHH\right)$.
 Note that every partial run of ${\Game}^{\mathtt{Stab}}\left(\HHH\right)$ of length $1$ in which Player I has followed $\Pi$ will have the form $\pr{{B}_{0}}{{s}_{0}}$, where ${s}_{0} \in {B}_{0} = {A}_{0}$.
 For any such $\pr{{B}_{0}}{{s}_{0}}$, define $\Phi(\pr{{B}_{0}}{{s}_{0}}) = \langle \pr{{p}_{0}}{{A}_{0}} \rangle$, and note that $\pr{\pr{{p}_{0}}{{A}_{0}}}{{s}_{0}}$ is a partial run of ${\Game}^{\mathtt{Cond}}\left(\HHH\right)$ in which Player I has followed $\Sigma$.
 Now suppose $n \in \omega$, $\left\langle \pr{{B}_{i}}{{s}_{i}}: i \leq n \right\rangle$ is a partial run of ${\Game}^{\mathtt{Stab}}\left(\HHH\right)$ in which Player I has followed $\Pi$, $\Phi\left(\left\langle \pr{{B}_{i}}{{s}_{i}}: i \leq n \right\rangle\right) = \left\langle \pr{{p}_{i}}{{A}_{i}}: i \leq n \right\rangle$,
 \begin{align*}
  \left\langle \pr{\pr{{p}_{i}}{{A}_{i}}}{{s}_{i}}: i \leq n \right\rangle
 \end{align*}
 is a partial run of ${\Game}^{\mathtt{Cond}}\left(\HHH\right)$ in which Player I has followed $\Sigma$, and
 \begin{align*}
  \forall i < n\[ {B}_{i+1} = {A}_{i+1} \cap {H}_{{p}_{i+1}, \max({s}_{i})+1} \].
 \end{align*}
 Let $\Sigma\left( \left\langle \pr{\pr{{p}_{i}}{{A}_{i}}}{{s}_{i}}: i \leq n \right\rangle \right) = \pr{{p}_{n+1}}{{A}_{n+1}}$.
 Then ${p}_{n+1} \in \PP(\HHH)$ and ${A}_{n+1} \in \HHH$.
 Hence ${B}_{n+1} = {A}_{n+1} \cap {H}_{{p}_{n+1}, \max({s}_{n})+1} \in \HHH$.
 Note that ${B}_{n+1}$ is therefore a legitimate move for Player I in ${\Game}^{\mathtt{Stab}}\left(\HHH\right)$.
 Define $\Pi\left( \left\langle \pr{{B}_{i}}{{s}_{i}}: i \leq n \right\rangle \right) = {B}_{n+1}$.
 Note that any continuation of $\left\langle \pr{{B}_{i}}{{s}_{i}}: i \leq n \right\rangle$ to length $n+2$ in which Player I follows $\Pi$ must have the form $\left\langle \pr{{B}_{i}}{{s}_{i}}: i \leq n+1 \right\rangle$, where ${s}_{n+1} \in {B}_{n+1}$.
 Given any such $\left\langle \pr{{B}_{i}}{{s}_{i}}: i \leq n+1 \right\rangle$, define $\Phi\left( \left\langle \pr{{B}_{i}}{{s}_{i}}: i \leq n+1 \right\rangle \right) = \left\langle \pr{{p}_{i}}{{A}_{i}}: i \leq n+1 \right\rangle$.
 Note that $\left\langle \pr{\pr{{p}_{i}}{{A}_{i}}}{{s}_{i}}: i \leq n+1 \right\rangle$ is a partial run of ${\Game}^{\mathtt{Cond}}\left(\HHH\right)$ in which Player I has followed $\Sigma$ because of the definition of $\pr{{p}_{n+1}}{{A}_{n+1}}$ and ${s}_{n+1} \in {B}_{n+1} \subseteq {A}_{n+1}$.
 Further, by definition and by the induction hypothesis,
 \begin{align*}
  \forall i \leq n\[ {B}_{i+1} = {A}_{i+1} \cap {H}_{{p}_{i+1}, \max({s}_{i})+1} \].
 \end{align*}
 Thus the inductive definition satisfies (1)--(3).
 This concludes the definition of $\Pi$ and $\Phi$.
 
 Since $\Pi$ is not a winning strategy for Player I in ${\Game}^{\mathtt{Stab}}\left(\HHH\right)$, there is a play $\left\langle \pr{{B}_{i}}{{s}_{i}}: i < \omega \right\rangle$ of ${\Game}^{\mathtt{Stab}}\left(\HHH\right)$ in which Player I follows $\Pi$ and loses.
 There exists $\left\langle \pr{{p}_{i}}{{A}_{i}}: i < \omega \right\rangle$ such that for each $n \in \omega$, $\Phi\left( \left\langle \pr{{B}_{i}}{{s}_{i}}: i \leq n \right\rangle \right) = \left\langle \pr{{p}_{i}}{{A}_{i}}: i \leq n \right\rangle$.
 Therefore, $\left\langle \pr{\pr{{p}_{i}}{{A}_{i}}}{{s}_{i}}: i < \omega \right\rangle$ is a play of ${\Game}^{\mathtt{Cond}}\left(\HHH\right)$ in which Player I has followed $\Sigma$, and $\forall i < \omega\[ {B}_{i+1} = {A}_{i+1} \cap {H}_{{p}_{i+1}, \max({s}_{i})+1} \]$.
 Since Player II wins the play $\left\langle \pr{{B}_{i}}{{s}_{i}}: i < \omega \right\rangle$, $\forall i < j < \omega\[\lb{{s}_{i}}{{s}_{j}}\]$ and $\[\left\{ {s}_{i}: i < \omega \right\}\] \in \HHH$.
 Lemma \ref{lem:fusion} will be used to verify that $q = {T}_{q} = {\bigcap}_{i \in \omega}{{T}_{{p}_{i}}} \in \PP(\HHH)$.
 To this end, let $X = \{{s}_{i}: i < \omega\}$, and note that $X(i) = {s}_{i}$, for all $i \in \omega$.
 Note also that by (3) of Definition \ref{def:GcondH} and by Lemma \ref{lem:amalgam}, ${p}_{i+1} \leq {p}_{i}$, for all $i \in \omega$.
 For each $i \in \omega$, ${s}_{i+1} \in {B}_{i+1}$, and so $X(i+1) \in {H}_{{p}_{i+1}, \max\left(X(i)\right)+1}$.
 Next, fix some $i < \omega$.
 It will be proved by induction on $j$ that for each $i \leq j < \omega$ and for each $e \in \lv{{T}_{{p}_{j}}}{\max\left(X(i)\right)}$, ${\succc}_{{T}_{{p}_{i}}}(e) \subseteq {\succc}_{{T}_{{p}_{j}}}(e)$.
 This is clear when $i=j$.
 Assume this is true for some $i \leq j$.
 Fix $e \in \lv{{T}_{{p}_{j+1}}}{\max\left(X(i)\right)}$ and consider $\sigma \in {\succc}_{{T}_{{p}_{i}}}(e)$.
 Since ${T}_{{p}_{j+1}} \subseteq {T}_{{p}_{j}}$, $e \in \lv{{T}_{{p}_{j}}}{\max\left(X(i)\right)}$.
 So by the induction hypothesis, $\sigma \in {\succc}_{{T}_{{p}_{j}}}(e)$.
 Therefore ${e}^{\frown}{\langle \sigma \rangle} \in {T}_{{p}_{j}}$ and $\dom\left( {e}^{\frown}{\langle \sigma \rangle} \right) = \max\left({s}_{i}\right)+1 \leq \max\left({s}_{j}\right)+1$.
 Choose ${e}^{\ast}$ such that ${e}^{\frown}{\langle \sigma \rangle} \subseteq {e}^{\ast}$ and ${e}^{\ast} \in \lv{{T}_{{p}_{j}}}{\max\left({s}_{j}\right)+1}$.
 By (3) of Definition \ref{def:GcondH}, there exists ${p}_{j, {e}^{\ast}} \leq {T}_{{p}_{j}}\langle {e}^{\ast} \rangle$ such that ${T}_{{p}_{j, {e}^{\ast}}} \subseteq {T}_{{p}_{j+1}}$.
 Since ${e}^{\frown}{\langle \sigma \rangle} \subseteq {e}^{\ast}$, ${e}^{\frown}{\langle \sigma \rangle} \in {T}_{{p}_{j, {e}^{\ast}}}$.
 Therefore ${e}^{\frown}{\langle \sigma \rangle} \in {T}_{{p}_{j+1}}$, whence $\sigma \in {\succc}_{{T}_{{p}_{j+1}}}(e)$, as required.
 This concludes the induction.
 Thus the hypotheses of Lemma \ref{lem:fusion} are all satisfied, and so $q = {T}_{q} = {\bigcap}_{i \in \omega}{{T}_{{p}_{i}}} \in \PP(\HHH)$.
 However, this means that Player II wins the play $\left\langle \pr{\pr{{p}_{i}}{{A}_{i}}}{{s}_{i}}: i < \omega \right\rangle$ of ${\Game}^{\mathtt{Cond}}\left(\HHH\right)$ even though Player I has followed $\Sigma$ during this play, contradicting the hypothesis that $\Sigma$ is a winning strategy for Player I in ${\Game}^{\mathtt{Cond}}\left(\HHH\right)$.
\end{proof}
\begin{Lemma} \label{lem:winning1}
 Suppose $\left\langle \pr{\pr{{p}_{i}}{{A}_{i}}}{{s}_{i}}: i \in \omega \right\rangle$ is a run of ${\Game}^{\mathtt{Cond}}\left(\HHH\right)$ which is won by Player II.
 If $q = {T}_{q} = {\bigcap}_{i \in \omega}{{T}_{{p}_{i}}}$, then for each $i \in \omega$,
 \begin{align*}
  \forall f \in {T}_{{p}_{i}}\[\dom(f) \leq \max({s}_{i})+1 \implies f \in {T}_{q}\].
 \end{align*}
\end{Lemma}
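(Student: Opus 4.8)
The plan is to fix $i \in \omega$ and $f \in {T}_{{p}_{i}}$ with $\dom(f) \leq \max({s}_{i})+1$, and to prove $f \in {T}_{{p}_{j}}$ for \emph{every} $j \in \omega$; since ${T}_{q} = {\bigcap}_{j \in \omega}{{T}_{{p}_{j}}}$, this immediately yields $f \in {T}_{q}$. For $j \leq i$ there is nothing to do: by (3) of Definition \ref{def:GcondH} together with Lemma \ref{lem:amalgam}, the conditions are decreasing, so ${T}_{{p}_{i}} \subseteq {T}_{{p}_{j}}$ and hence $f \in {T}_{{p}_{j}}$. The real content lies in the range $j \geq i$, which I would handle by induction on $j$, carrying along the auxiliary bound $\dom(f) \leq \max({s}_{j})+1$ as part of the induction hypothesis. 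The base case $j = i$ is exactly the hypothesis on $f$.

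The inductive step is the heart of the argument. Assume $f \in {T}_{{p}_{j}}$ and $\dom(f) \leq \max({s}_{j})+1$. Using property (2) of Definition \ref{def:PH} for ${p}_{j}$, first extend $f$ to a node $e \in \lv{{T}_{{p}_{j}}}{\max({s}_{j})+1}$ with $f \subseteq e$ (pick $g \in {T}_{{p}_{j}}$ with $f \subseteq g$ and $\dom(g) \geq \max({s}_{j})+1$, and set $e = g \restrict (\max({s}_{j})+1)$). By (3) of Definition \ref{def:GcondH}, ${T}_{{p}_{j+1}} = \bigcup\{{T}_{{p}_{j, e'}}: e' \in \lv{{T}_{{p}_{j}}}{\max({s}_{j})+1}\}$ with each ${p}_{j, e'} \leq {T}_{{p}_{j}}\langle e' \rangle$. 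I would then establish the key fact that $e \in {T}_{{p}_{j, e}}$: since ${p}_{j, e} \in \PP(\HHH)$, property (2) applied to $\emptyset \in {T}_{{p}_{j, e}}$ produces a node $h \in {T}_{{p}_{j, e}}$ with $\dom(h) \geq \dom(e)$; as $h \in {T}_{{p}_{j, e}} \subseteq {T}_{{p}_{j}}\langle e \rangle$, the node $h$ is comparable to $e$ by Definition \ref{def:tree}, and $\dom(h) \geq \dom(e)$ forces $e \subseteq h$, so downward closure of ${T}_{{p}_{j, e}}$ gives $e \in {T}_{{p}_{j, e}}$. Since $f \subseteq e$ and ${T}_{{p}_{j, e}}$ is downward closed, $f \in {T}_{{p}_{j, e}} \subseteq {T}_{{p}_{j+1}}$.

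To close the induction I would advance the domain bound. Because Player II wins the run, $\lb{{s}_{j}}{{s}_{j+1}}$ holds, so $\max({s}_{j}) < \min({s}_{j+1}) \leq \max({s}_{j+1})$, and therefore $\dom(f) \leq \max({s}_{j})+1 \leq \max({s}_{j+1})+1$. This reestablishes both parts of the induction hypothesis at $j+1$, completing the induction and showing $f \in {T}_{{p}_{j}}$ for all $j \geq i$, whence $f \in {T}_{q}$. I expect the main obstacle to be precisely the key fact that the arbitrarily shrunken condition ${p}_{j, e}$ still contains the initial segment $e$: the point is that the no-dead-ends requirement (2) of Definition \ref{def:PH} forces ${T}_{{p}_{j, e}}$ to reach past level $\dom(e)$, and since every node of ${T}_{{p}_{j}}\langle e \rangle$ of length at least $\dom(e)$ must extend $e$, downward closure drags $e$ (and with it $f$) back into ${T}_{{p}_{j, e}}$. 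Everything else is bookkeeping with the monotonicity of the conditions and the block-sequence inequalities guaranteed by Player II's victory.
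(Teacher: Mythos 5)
Your proof is correct and follows essentially the same route as the paper's: induction on $j \geq i$, extending $f$ to a node $e \in \lv{{T}_{{p}_{j}}}{\max({s}_{j})+1}$ and using Clause (3) of Definition \ref{def:GcondH} to conclude $f \in {T}_{{p}_{j, e}} \subseteq {T}_{{p}_{j+1}}$. The only difference is that you explicitly justify the step the paper leaves implicit (that the no-dead-ends property (2) of Definition \ref{def:PH} forces $e$, and hence $f$, back into ${T}_{{p}_{j, e}}$), which is a correct and worthwhile filling-in of detail.
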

\begin{proof}
 Recall that by Clause (3) of Definition \ref{def:GcondH} and by Lemma \ref{lem:amalgam}, ${p}_{i+1} \leq {p}_{i}$, for all $i \in \omega$.
 Fix $i \in \omega$ and $f \in {T}_{{p}_{i}}$ with $\dom(f) \leq \max({s}_{i})+1$.
 It will be proved by induction on $j$ that $\forall i \leq j < \omega\[f \in {T}_{{p}_{j}}\]$.
 When $j=i$, there is nothing to prove.
 Suppose the statement holds for some $i \leq j < \omega$.
 So $f \in {T}_{{p}_{j}}$ and $\dom(f) \leq \max({s}_{i}) + 1 \leq \max({s}_{j})+1$.
 Choose $e \in {T}_{{p}_{j}}$ such that $f \subseteq e$ and $\dom(e) = \max({s}_{j})+1$.
 By Clause (3) of Definition \ref{def:GcondH}, there exists ${p}_{j, e} \leq {T}_{{p}_{j}} \langle e \rangle$ so that ${T}_{{p}_{j, e}} \subseteq {T}_{{p}_{j+1}}$.
 As $f \subseteq e$, $f \in {T}_{{p}_{j, e}}$, and so $f \in {T}_{{p}_{j+1}}$.
 This concludes the induction.
 Thus, $\forall i \leq j < \omega\[f \in {T}_{{p}_{j}}\]$, whence $f \in {\bigcap}_{l \in \omega}{{T}_{{p}_{l}}} = {T}_{q}$.
\end{proof}
\begin{Lemma} \label{lem:winning2}
 Suppose $\left\langle \pr{\pr{{p}_{i}}{{A}_{i}}}{{s}_{i}}: i \in \omega \right\rangle$ is a run of ${\Game}^{\mathtt{Cond}}\left(\HHH\right)$ which is won by Player II.
 Let $i \in \omega$ and $q = {T}_{q} = {\bigcap}_{j \in \omega}{{T}_{{p}_{j}}}$.
 If ${s}_{i+1} \in {H}_{{p}_{i+1}, \max({s}_{i})+1}$, then ${s}_{i+1} \in {H}_{q, \max({s}_{i})+1}$.
\end{Lemma}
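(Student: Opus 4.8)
The plan is to unwind the definition of $H_{q, \max({s}_{i})+1}$ and to reduce the required $\pr{k}{s_{i+1}}$-bigness of each successor set in ${T}_{q}$ to the already-known bigness of the corresponding successor set in ${T}_{{p}_{i+1}}$. Throughout I would abbreviate $k = \max({s}_{i})+1$ and $l = \max({s}_{i+1})$. The hypothesis ${s}_{i+1} \in {H}_{{p}_{i+1}, k}$ immediately supplies $k \leq \max({s}_{i+1}) = l$, which is precisely the first of the two clauses in Definition \ref{def:skbig}'s membership test needed for ${s}_{i+1} \in {H}_{q, k}$. So all the work lies in verifying the bigness clause: that $\forall f \in \lv{{T}_{q}}{l}\[{\succc}_{{T}_{q}}(f) \ \text{is} \ \pr{k}{s_{i+1}}\text{-big}\]$.

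Next I would fix an arbitrary $f \in \lv{{T}_{q}}{l}$ and aim to show ${\succc}_{{T}_{q}}(f)$ is $\pr{k}{s_{i+1}}$-big. Since $q = {T}_{q} = {\bigcap}_{j \in \omega}{{T}_{{p}_{j}}} \subseteq {T}_{{p}_{i+1}}$, we have $f \in \lv{{T}_{{p}_{i+1}}}{l}$, so the hypothesis ${s}_{i+1} \in {H}_{{p}_{i+1}, k}$ tells us directly that ${\succc}_{{T}_{{p}_{i+1}}}(f)$ is $\pr{k}{s_{i+1}}$-big. The only remaining gap is that ${\succc}_{{T}_{q}}(f)$ may be a proper subset of ${\succc}_{{T}_{{p}_{i+1}}}(f)$, so bigness does not transfer downward for free; closing this gap is the crux of the argument.

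The crucial step — and the one I expect to carry the whole proof — is to show that every $\tau$ produced by bigness in ${T}_{{p}_{i+1}}$ in fact lies in ${\succc}_{{T}_{q}}(f)$. Given $\sigma: \Pset(k) \rightarrow 2$, I would choose $\tau \in {\succc}_{{T}_{{p}_{i+1}}}(f)$ with $\forall u \in \Pset(k)\[\sigma(u) = \tau(u \cup {s}_{i+1}^{-})\]$. Then ${f}^{\frown}{\langle \tau \rangle} \in {T}_{{p}_{i+1}}$ and $\dom\left({f}^{\frown}{\langle \tau \rangle}\right) = l+1 = \max({s}_{i+1})+1$. This is exactly the hypothesis of Lemma \ref{lem:winning1} with $i$ replaced by $i+1$, which yields ${f}^{\frown}{\langle \tau \rangle} \in {T}_{q}$, that is, $\tau \in {\succc}_{{T}_{q}}(f)$. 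Since $\sigma$ was arbitrary, ${\succc}_{{T}_{q}}(f)$ is $\pr{k}{s_{i+1}}$-big, and since $f \in \lv{{T}_{q}}{l}$ was arbitrary, both clauses for membership hold and ${s}_{i+1} \in {H}_{q, \max({s}_{i})+1}$, completing the proof.
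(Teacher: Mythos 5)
Your proof is correct and follows essentially the same route as the paper: both reduce the $\pr{\max(s_i)+1}{s_{i+1}}$-bigness of ${\succc}_{{T}_{q}}(f)$ to the known bigness of ${\succc}_{{T}_{{p}_{i+1}}}(f)$ and then invoke Lemma \ref{lem:winning1} at index $i+1$ (since the extended node has domain $\max(s_{i+1})+1$) to push the relevant nodes into ${T}_{q}$. The only cosmetic difference is that the paper transfers the entire successor set, showing ${\succc}_{{T}_{{p}_{i+1}}}(f) \subseteq {\succc}_{{T}_{q}}(f)$, whereas you transfer only the witnesses $\tau$ produced by bigness; both are equally valid.
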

\begin{proof}
 Assume ${s}_{i+1} \in {H}_{{p}_{i+1}, \max({s}_{i})+1}$.
 Then ${s}_{i+1} \in \FIN$ and $\max({s}_{i})+1 \leq \max({s}_{i+1})$.
 Consider $e \in \lv{{T}_{q}}{\max({s}_{i+1})}$.
 Then $e \in \lv{{T}_{{p}_{i+1}}}{\max({s}_{i+1})}$ and so ${\succc}_{{T}_{{p}_{i+1}}}(e)$ is $\pr{\max({s}_{i})+1}{{s}_{i+1}}$-big.
 If $\sigma \in {\succc}_{{T}_{{p}_{i+1}}}(e)$, then ${e}^{\frown}{\langle \sigma \rangle} \in {T}_{{p}_{i+1}}$ and $\dom\left({e}^{\frown}{\langle \sigma \rangle}\right) = \max({s}_{i+1})+1$.
 Therefore, by Lemma \ref{lem:winning1}, ${e}^{\frown}{\langle \sigma \rangle} \in {T}_{q}$, whence $\sigma \in {\succc}_{{T}_{q}}(e)$.
 Thus ${\succc}_{{T}_{{p}_{i+1}}}(e) \subseteq {\succc}_{{T}_{q}}(e) \subseteq {2}^{\Pset\left( \max({s}_{i+1}) \right)}$, and so ${\succc}_{{T}_{q}}(e)$ is $\pr{\max({s}_{i})+1}{{s}_{i+1}}$-big.
 This shows ${s}_{i+1} \in {H}_{q, \max({s}_{i})+1}$.
\end{proof}
\section{Properness and reading of names} \label{sec:propernames}
This section establishes that $\PP(\HHH)$ is proper, $\BS$-bounding, and satisfies a version of the continuous reading of names property (see \cite{creaturebook}).
\begin{Lemma} \label{lem:continuous}
 Let $\theta$ be a sufficiently large regular cardinal.
 Assume $M \prec H(\theta)$ is countable and that $M$ contains all relevant parameters.
 Suppose $f: \omega \rightarrow M$ is such that $\forall n \in \omega\[f(n) \in {\V}^{\PP(\HHH)} \ \text{and} \ {\forces}_{\PP(\HHH)}\;{f(n) \in \V}\]$.
 Let $p \in \PP(\HHH) \cap M$.
 Then there exist $q, X$, and $F$ such that:
 \begin{enumerate}
  \item
  $q \leq p$, $X \in {\FIN}^{\[\omega\]}$, $F$ is a function, $\dom(F) =$
  \begin{align*}
   \left\{ \pr{i}{e}: i \in \omega \wedge e \in \lv{{T}_{q}}{\max\left(X(i)\right)+1} \right\};
  \end{align*}
  \item
  $\[X\] \in \HHH$, $\forall i \in \omega\[X(i+1) \in {H}_{q, \max\left(X(i)\right)+1}\]$;
  \item
  for each $i \leq j < \omega$ and each $e \in \lv{{T}_{q}}{\max\left(X(j)\right)+1}$,
  \begin{align*}
   {T}_{q}\langle e \rangle \; {\forces}_{\PP(\HHH)} \; {f(i) = F\left(\pr{i}{e\restrict \max\left(X(i)\right)+1}\right)};
  \end{align*}
  \item
  for any $\pr{i}{e} \in \dom(F)$, $F(\pr{i}{e}) \in M$.
 \end{enumerate}
\end{Lemma}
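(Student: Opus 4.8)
The plan is to reduce the statement to Lemma \ref{lem:condgame}, the assertion that Player I has no winning strategy in the condition game ${\Game}^{\mathtt{Cond}}\left(\HHH\right)$ of Definition \ref{def:GcondH}. I would build a strategy $\Sigma$ for Player I that, on top of respecting Clauses (1)--(3) of Definition \ref{def:GcondH}, also decides the names $f(i)$ as the play unfolds, and then extract $q$, $X$ and $F$ from a run in which Player I loses. Concretely, Player I opens with $\pr{{p}_{0}}{{A}_{0}}$, where ${p}_{0} = p$ and ${A}_{0} = {H}_{p, 0} \in \HHH$. Assume ${s}_{0}, \dotsc, {s}_{i}$ have been played and ${p}_{i}$ has been produced. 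Using only the number $\max({s}_{i})$, Player I forms the finite set $\lv{{T}_{{p}_{i}}}{\max({s}_{i})+1}$ and, for each $e$ in it, selects ${p}_{i, e} \leq {T}_{{p}_{i}}\langle e \rangle$ retaining the stem up to $e$, together with a value ${v}_{i, e}$, so that ${p}_{i, e} \; {\forces}_{\PP(\HHH)} \; f(i) = {v}_{i, e}$; such a pair exists because ${\forces}_{\PP(\HHH)} f(i) \in \V$, and it may be taken inside $M$ by elementarity, so ${v}_{i, e} \in M$. Player I then plays ${p}_{i+1} = {\bigcup}_{e}{{T}_{{p}_{i, e}}}$, a legitimate move by Lemma \ref{lem:amalgam} and Clause (3) of Definition \ref{def:GcondH}, together with ${A}_{i+1} = {H}_{{p}_{i+1}, \max({s}_{i})+1} \in \HHH$.

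The observation that makes this work is that, although Player II's moves ${s}_{i}$ need not lie in $M$, the strategy $\Sigma$ consults them only through the natural numbers $\max({s}_{i}) \in \omega \subseteq M$: the amalgamation level, the finite set $\lv{{T}_{{p}_{i}}}{\max({s}_{i})+1}$, each node $e$ therein, and ${H}_{{p}_{i+1}, \max({s}_{i})+1}$ all depend only on that number and on ${p}_{i}$. Fixing a well-order of $H(\theta)$ lying in $M$ to make the choices canonical, an induction then shows that every ${p}_{i}$, every ${A}_{i}$, and every ${v}_{i, e}$ produced by $\Sigma$ belongs to $M$.

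By Lemma \ref{lem:condgame}, $\Sigma$ is not winning, so there is a run $\left\langle \pr{\pr{{p}_{i}}{{A}_{i}}}{{s}_{i}}: i \in \omega \right\rangle$ in which Player I follows $\Sigma$ but Player II wins. Put $q = {T}_{q} = {\bigcap}_{i \in \omega}{{T}_{{p}_{i}}} \in \PP(\HHH)$ and $X = \left\{ {s}_{i}: i \in \omega \right\} \in {\FIN}^{\[\omega\]}$, so that $q \leq p$, $\[X\] \in \HHH$, and $X(i) = {s}_{i}$; define $F\left(\pr{i}{e}\right) = {v}_{i, e}$ for each $e \in \lv{{T}_{q}}{\max\left(X(i)\right)+1}$, giving Clauses (1) and (4) at once. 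For Clause (2), Player I played ${A}_{i+1} = {H}_{{p}_{i+1}, \max({s}_{i})+1}$, so $X(i+1) \in {H}_{{p}_{i+1}, \max\left(X(i)\right)+1}$, whence $X(i+1) \in {H}_{q, \max\left(X(i)\right)+1}$ by Lemma \ref{lem:winning2}. For Clause (3), fix $i \leq j$ and $e' \in \lv{{T}_{q}}{\max\left(X(j)\right)+1}$, and set $e = e' \restrict \max\left(X(i)\right)+1 \in \lv{{T}_{q}}{\max\left(X(i)\right)+1}$. Every node of ${T}_{q}\langle e' \rangle$ is comparable with $e$ and lies in ${T}_{{p}_{i+1}} = {\bigcup}_{e''}{{T}_{{p}_{i, e''}}}$; comparability forces such a node either to be an initial segment of $e$ or to have level-$\left(\max\left(X(i)\right)+1\right)$ restriction equal to $e$, so in all cases it lies in ${T}_{{p}_{i, e}}$. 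Hence ${T}_{q}\langle e' \rangle \leq {p}_{i, e} \; {\forces}_{\PP(\HHH)} \; f(i) = {v}_{i, e} = F\left(\pr{i}{e}\right)$, which is Clause (3).

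The main obstacle, and the reason the condition game was arranged as it was, is exactly this interplay: I must ensure simultaneously that the name-deciding refinements survive into the single fusion $q$ at the correct level, so that $f(i)$ is read continuously off the restriction to level $\max\left(X(i)\right)+1$, and that every decided value stays in $M$ even though the ${s}_{i}$ do not. The first is delivered by the structural Lemmas \ref{lem:winning1} and \ref{lem:winning2} governing runs won by Player II, and the second by the fact that $\Sigma$ inspects Player II's responses only through their maxima.
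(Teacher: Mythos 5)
Your proposal is correct and follows essentially the same route as the paper: you build a strategy for Player I in ${\Game}^{\mathtt{Cond}}\left(\HHH\right)$ that at each stage amalgamates conditions deciding $f(i)$ over all nodes of level $\max({s}_{i})+1$, invoke Lemma \ref{lem:condgame} to obtain a run that Player I loses, and then extract $q$, $X$, and $F$, verifying Clause (2) via Lemma \ref{lem:winning2} and Clause (3) via the inclusion ${T}_{q} \subseteq {T}_{{p}_{i+1}}$, exactly as the paper does. The only blemish is your device for canonicity: a well-order of $H(\theta)$ cannot lie in $M$, since it is not an element of $H(\theta)$; but it is also unnecessary --- as in the paper, it suffices to use elementarity (from ${p}_{i}, f(i) \in M$) to choose the entire sequences $\left\langle {p}_{i,e}: e \in \lv{{T}_{{p}_{i}}}{\max({s}_{i})+1} \right\rangle$ and $\left\langle {v}_{i,e}: e \in \lv{{T}_{{p}_{i}}}{\max({s}_{i})+1} \right\rangle$ as elements of $M$, since external choice is enough to make $\Sigma$ a function and only the chosen objects, not the choice itself, need to lie in $M$.
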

\begin{proof}
 Define a strategy $\Sigma$ for Player I in ${\Game}^{\mathtt{Cond}}\left(\HHH\right)$ as follows.
 ${p}_{0} = p \in \PP(\HHH)$ and ${A}_{0} = \FIN \in \HHH$.
 Note that ${p}_{0} \in M$.
 Define $\Sigma(\emptyset) = \pr{{p}_{0}}{{A}_{0}}$.
 Now suppose that $i \in \omega$ and that $\left\langle \pr{\pr{{p}_{j}}{{A}_{j}}}{{s}_{j}}: j \leq i \right\rangle$ is a partial run of ${\Game}^{\mathtt{Cond}}\left(\HHH\right)$ in which Player I has followed $\Sigma$ and that ${p}_{i} \in M$.
 Let $l = \max({s}_{i})+1$, for ease of notation.
 For any $e \in \lv{{T}_{{p}_{i}}}{l}$, ${T}_{{p}_{i}}\langle e \rangle \in \PP(\HHH)$ and by hypothesis, ${\forces}_{\PP(\HHH)}\;{f(i) \in \V}$.
 Since ${p}_{i} \in M$ and $f(i) \in M$, there exist sequences $\left\langle {p}_{i, e}: e \in \lv{{T}_{{p}_{i}}}{l} \right\rangle \in M$ and $\left\langle {x}_{i, e}: e \in \lv{{T}_{{p}_{i}}}{l} \right\rangle \in M$ such that
 \begin{align*}
  \forall e \in \lv{{T}_{{p}_{i}}}{l}\[{p}_{i, e} \leq {T}_{{p}_{i}}\langle e \rangle \ \text{and} \ {p}_{i, e} \; {\forces}_{\PP(\HHH)} \; {f(i) = {x}_{i, e}}\].
 \end{align*}
 Define ${p}_{i+1} = {T}_{{p}_{i+1}} = \bigcup{\left\{ {T}_{{p}_{i, e}}: e \in \lv{{T}_{{p}_{i}}}{l} \right\}}$.
 Note that ${p}_{i+1} \in M$ and that ${p}_{i+1} \in \PP(\HHH)$ by Lemma \ref{lem:amalgam}.
 Hence ${A}_{i+1} = {H}_{{p}_{i+1}, l} \in \HHH$.
 Define
 \begin{align*}
  \Sigma\left(\left\langle \pr{\pr{{p}_{j}}{{A}_{j}}}{{s}_{j}}: j \leq i \right\rangle\right) = \pr{{p}_{i+1}}{{A}_{i+1}}.
 \end{align*}
 The completes the definition of $\Sigma$.
 Note that by definition, if $\left\langle \pr{\pr{{p}_{i}}{{A}_{i}}}{{s}_{i}}: i < \omega \right\rangle$ is a run of ${\Game}^{\mathtt{Cond}}\left(\HHH\right)$ in which Player I has followed $\Sigma$, then ${p}_{i} \in M$, for all $i < \omega$.
 
 Since $\Sigma$ is not a winning strategy for Player I, there is a run $\left\langle \pr{\pr{{p}_{i}}{{A}_{i}}}{{s}_{i}}: i < \omega \right\rangle$ of ${\Game}^{\mathtt{Cond}}\left(\HHH\right)$ in which Player I has followed $\Sigma$ and lost.
 Then $X = \{{s}_{i}: i < \omega\} \in {\FIN}^{\[\omega\]}$ and $\[X\] \in \HHH$.
 Furthermore, $X(i) = {s}_{i}$, for all $i \in \omega$.
 Again, for ease of notation, denote $\max({s}_{i})+1$ by ${l}_{i}$.
 Let $q = {T}_{q} = {\bigcap}_{i \in \omega}{{T}_{{p}_{i}}}$.
 Then $q \in \PP(\HHH)$ and $q \leq {p}_{0} = p$.
 Moreover, for any $i \in \omega$, ${s}_{i+1} \in {A}_{i+1} = {H}_{{p}_{i+1}, {l}_{i}}$, and so by Lemma \ref{lem:winning2}, ${s}_{i+1} \in {H}_{q, {l}_{i}}$, as required for (2).
 Next, by the definition of $\Sigma$, for each $i \in \omega$, ${p}_{i} \in M$ and there exist sequences $\left\langle {p}_{i, e}: e \in \lv{{T}_{{p}_{i}}}{{l}_{i}} \right\rangle \in M$ and $\left\langle {x}_{i, e}: e \in \lv{{T}_{{p}_{i}}}{{l}_{i}} \right\rangle \in M$ as described in the previous paragraph.
 Given $i \in \omega$ and $e \in \lv{{T}_{q}}{{l}_{i}}$, then $e \in \lv{{T}_{{p}_{i}}}{{l}_{i}}$, and since $e \in M$, so ${x}_{i, e} \in M$.
 Define $F(\pr{i}{e}) = {x}_{i, e}$.
 To see that (3) is satisfied, fix $i \leq j < \omega$ and ${e}^{\ast} \in \lv{{T}_{q}}{{l}_{j}}$, and let $e = {e}^{\ast}\restrict {l}_{i} \in \lv{{T}_{q}}{{l}_{i}}$.
 Thus $e \in \lv{{T}_{{p}_{i}}}{{l}_{i}}$.
 Since ${T}_{q} \subseteq {T}_{{p}_{i+1}} = \bigcup{\left\{ {T}_{{p}_{i, e'}}: e' \in \lv{{T}_{{p}_{i}}}{{l}_{i}} \right\}}$, it follows that ${T}_{q}\langle {e}^{\ast} \rangle \leq {p}_{i, e}$.
 Therefore, ${T}_{q}\langle {e}^{\ast} \rangle \; {\forces}_{\PP(\HHH)} \; {f(i) = {x}_{i, e} = F(\pr{i}{e})}$, as required.
\end{proof}
\begin{Cor} \label{cor:proper+bounding}
 $\PP(\HHH)$ is proper and ${\omega}^{\omega}$-bounding.
\end{Cor}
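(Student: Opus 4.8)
The plan is to derive both statements directly from the continuous reading of names established in Lemma~\ref{lem:continuous}, using only two elementary supplementary facts about $\PP(\HHH)$. The first is that $\TT$ is finitely branching: a node of length $k$ has successor set ${2}^{\Pset(k)}$, which is finite, so for every $q \in \PP(\HHH)$ and every $n \in \omega$ the level $\lv{{T}_{q}}{n}$ is finite. The second is a predensity observation: for any $q \in \PP(\HHH)$ and any $n \in \omega$, the family $\left\{ {T}_{q}\langle e \rangle : e \in \lv{{T}_{q}}{n} \right\}$ is predense below $q$. Indeed, each ${T}_{q}\langle e \rangle$ is a condition below $q$ by Lemma~\ref{def:Tf}, and given any $r \leq q$, clause~(2) of Definition~\ref{def:PH} applied to $r$ yields a node $e' \in \lv{{T}_{r}}{n} \subseteq \lv{{T}_{q}}{n}$, so that ${T}_{r}\langle e' \rangle$ is a common extension of $r$ and ${T}_{q}\langle e' \rangle$. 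Consequently, a statement forced by every member of such a family is forced by $q$ itself.

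For $\BS$-bounding, I would fix a $\PP(\HHH)$-name $\dot{g}$ for an element of $\BS$ and a condition $p$, choose a countable $M \prec H(\theta)$ containing $\dot{g}$, $p$, $\PP(\HHH)$ and $\HHH$, and apply Lemma~\ref{lem:continuous} to the sequence whose $n$th term is a name $f(n) \in M$ for the $n$th value of $\dot{g}$; each $f(n)$ is forced to be a natural number and hence to lie in $\V$. This produces $q \leq p$, $X \in {\FIN}^{\[\omega\]}$ and $F$. Taking $j = i$ in clause~(3), each $e \in \lv{{T}_{q}}{\max(X(i))+1}$ satisfies ${T}_{q}\langle e \rangle \forces \dot{g}(i) = F(\pr{i}{e})$, and by clause~(4) every value $F(\pr{i}{e})$ is a natural number in $M$. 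Since that level is finite, I would set $h(i) = \max\left\{ F(\pr{i}{e}) : e \in \lv{{T}_{q}}{\max(X(i))+1} \right\}$, which is a function in the ground model. By the predensity observation applied at $n = \max(X(i))+1$, the conditions ${T}_{q}\langle e \rangle$ over $e \in \lv{{T}_{q}}{\max(X(i))+1}$ are predense below $q$ and each forces $\dot{g}(i) \leq h(i)$, so $q \forces \dot{g}(i) \leq h(i)$ for every $i$, witnessing that $\PP(\HHH)$ is $\BS$-bounding.

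For properness, I would fix a countable $M \prec H(\theta)$ containing $\PP(\HHH)$, $\HHH$ and all relevant parameters, together with $p \in \PP(\HHH) \cap M$, and invoke the standard criterion that $q$ is $(M, \PP(\HHH))$-generic as soon as $q \forces \dot{\alpha} \in M$ for every $\PP(\HHH)$-name $\dot{\alpha} \in M$ for an ordinal. Since $M$ is countable, only countably many such names exist, and I can enumerate them as $f(n)$, $n \in \omega$; each $f(n) \in M$ and $\forces f(n) \in \V$ because ordinals are absolute. Note that the enumeration $f$ itself need not belong to $M$, which the hypothesis of Lemma~\ref{lem:continuous} allows. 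Applying Lemma~\ref{lem:continuous} to this $f$ and $p$ produces $q \leq p$, $X$ and $F$. Exactly as in the bounding argument, clause~(3) with $j = i$ together with the predensity of $\left\{ {T}_{q}\langle e \rangle : e \in \lv{{T}_{q}}{\max(X(i))+1} \right\}$ below $q$ shows that $q$ forces $f(i)$ to equal one of the finitely many values $F(\pr{i}{e})$, each of which lies in $M$ by clause~(4); hence $q \forces f(i) \in M$ for every $i$, and therefore $q \forces \dot{\alpha} \in M$ for each name $\dot{\alpha} \in M$ for an ordinal. This makes $q$ an $(M, \PP(\HHH))$-generic condition below $p$, so $\PP(\HHH)$ is proper.

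The substantive work has already been carried out inside Lemma~\ref{lem:continuous}, so I expect this corollary to be essentially bookkeeping. The only points that I would check carefully are that the restriction families $\left\{ {T}_{q}\langle e \rangle : e \in \lv{{T}_{q}}{n} \right\}$ are predense below $q$, so that forcing a value on each member of the family forces it outright, and that the finiteness of the levels of $\TT$ is what makes both the ground-model bound $h$ well defined and the ``$f(i) \in M$'' conclusion finitary. Both facts are immediate from the finite branching of $\TT$, from Lemma~\ref{def:Tf}, and from clause~(2) of Definition~\ref{def:PH}.
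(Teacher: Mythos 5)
Your proposal is correct and takes essentially the same route as the paper: both deduce the corollary from Lemma~\ref{lem:continuous} by exploiting the finiteness of the levels $\lv{{T}_{q}}{\max\left(X(i)\right)+1}$ together with the fact that the cones ${T}_{q}\langle e \rangle$ over such a level decide $f(i)$ to be one of finitely many ground-model values in $M$. The only difference is packaging: the paper runs the density argument directly (given $r \leq q$, pick $e \in \lv{{T}_{r}}{\max\left(X(i)\right)+1}$ and note ${T}_{r}\langle e \rangle \leq r, {T}_{q}\langle e \rangle$), while you isolate the same computation as a predensity lemma and then quote it twice.
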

\begin{proof}
 To see that $\PP(\HHH)$ is proper, let $\theta$ be a sufficiently large regular cardinal, and fix a countable $M \prec H(\theta)$ containing the relevant parameters.
 Let $\seq{\mathring{\alpha}}{i}{\in}{\omega}$ be an enumeration of all $\PP(\HHH)$ names $\mathring{\alpha} \in M$ such that ${\forces}_{\PP(\HHH)} \; {\text{``}\mathring{\alpha} \ \text{is an ordinal''}}$.
 Let $f: \omega \rightarrow M$ be defined by $f(i) = {\mathring{\alpha}}_{i}$, for all $i \in \omega$.
 Fix $p \in \PP(\HHH) \cap M$.
 Applying Lemma \ref{lem:continuous}, fix $q \leq p$, $X$, and $F$ satisfying (1)--(4) of that lemma.
 Write ${l}_{i}$ for $\max\left(X(i)\right)+1$, for all $i \in \omega$.
 It needs to be seen that $q$ is $(M, \PP(\HHH))$-generic.
 For this, it suffices to see that whenever $\mathring{\alpha} \in M$ is a $\PP(\HHH)$ name such that ${\forces}_{\PP(\HHH)}\;{\text{``}\mathring{\alpha} \ \text{is an ordinal''}}$, then $q \; {\forces}_{\PP(\HHH)} \; {\mathring{\alpha} \in M}$.
 Indeed, if such an $\mathring{\alpha}$ is given, then $\mathring{\alpha} = {\mathring{\alpha}}_{i}$, for some $i \in \omega$.
 Suppose $r \leq q$.
 Choose $e \in \lv{{T}_{r}}{{l}_{i}}$.
 Then $e \in \lv{{T}_{q}}{{l}_{i}}$, ${T}_{r}\langle e \rangle \leq r$, and ${T}_{r}\langle e \rangle \leq {T}_{q}\langle e \rangle$.
 Therefore, ${T}_{r}\langle e \rangle \; {\forces}_{\PP(\HHH)} \; {\mathring{\alpha} = {\mathring{\alpha}}_{i} = f(i) = F(\pr{i}{e}) \in M}$, as required.
 
 To see that $\PP(\HHH)$ is ${\omega}^{\omega}$-bounding, fix a $\PP(\HHH)$-name $\mathring{f}$ such that
 \begin{align*}
  {\forces}_{\PP(\HHH)}\;{\mathring{f}: \omega \rightarrow \omega}
 \end{align*}
 as well as $p \in \PP(\HHH)$.
 Let $\theta$ be sufficiently large and regular, and fix a countable $M \prec H(\theta)$ with $\mathring{f}, p \in M$ and containing all the other relevant parameters.
 Define a function $f: \omega \rightarrow M$ as follows.
 For $i \in \omega$, $f(i) \in M$ is a $\PP(\HHH)$-name such that ${\forces}_{\PP(\HHH)}\;{f(i) \in \omega}$, and ${\forces}_{\PP(\HHH)}\;{\mathring{f}(i) = f(i)}$.
 Using Lemma \ref{lem:continuous}, find $q \leq p$, $X$, and $F$ satisfying (1)--(4) of that lemma.
 Again, write ${l}_{i}$ for $\max\left(X(i)\right)+1$, for all $i \in \omega$.
 Define $g: \omega \rightarrow \omega$ as follows.
 For any $i \in \omega$, $\lv{{T}_{q}}{{l}_{i}}$ is a finite set.
 So there exists $g(i) \in \omega$ such that for any $e \in \lv{{T}_{q}}{{l}_{i}}$, if $F(\pr{i}{e}) \in \omega$, then $F(\pr{i}{e}) \leq g(i)$.
 Now, it is easy to verify that $q \; {\forces}_{\PP(\HHH)} \; {\forall i \in \omega\[\mathring{f}(i) \leq g(i)\]}$.
 Indeed if not, then there are $r \leq q$ and $i \in \omega$ so that $r \; {\forces}_{\PP(\HHH)} \; {\mathring{f}(i) > g(i)}$.
 Find $e \in \lv{{T}_{r}}{{l}_{i}}$.
 Then $e \in \lv{{T}_{q}}{{l}_{i}}$, ${T}_{r}\langle e \rangle \leq r$, and ${T}_{r}\langle e \rangle \leq {T}_{q}\langle e \rangle$.
 So ${T}_{r}\langle e \rangle \; {\forces}_{\PP(\HHH)} \; {\mathring{f}(i) = f(i) = F(\pr{i}{e}) \leq g(i)}$, a contradiction.
\end{proof}
\begin{Lemma} \label{lem:continuous1}
 Suppose $\mathring{x}$ is a $\PP(\HHH)$-name such that ${\forces}_{\PP(\HHH)}\;{\mathring{x}: \omega \rightarrow 2}$.
 Let $p \in \PP(\HHH)$.
 Then there exist $q, X$, and $\left\langle {\eta}_{i, e}: i \in \omega \wedge e \in \lv{{T}_{q}}{\max\left(X(i)\right)+1} \right\rangle$ such that:
 \begin{enumerate}
  \item
  $q \leq p$, $X \in {\FIN}^{\[\omega\]}$, and
  \begin{align*}
   \forall i \in \omega \forall e \in \lv{{T}_{q}}{\max\left(X(i)\right)+1}\[{\eta}_{i, e}: \max\left(X(i)\right)+1 \rightarrow 2\];
  \end{align*}
  \item
  $\[X\] \in \HHH$, $\forall i \in \omega\[X(i+1) \in {H}_{q, \max\left(X(i)\right)+1}\]$;
  \item
  for each $i \in \omega$ and $e \in \lv{{T}_{q}}{\max\left(X(i)\right)+1}$,
  \begin{align*}
   {T}_{q}\langle e \rangle \; {\forces}_{\PP(\HHH)} \; {\mathring{x}\restrict\left(\max\left(X(i)\right)+1\right) = {\eta}_{i, e}};
  \end{align*}
  \item
  for any $i \leq j < \omega$, $e \in \lv{{T}_{q}}{\max\left(X(i)\right)+1}$, and ${e}^{\ast} \in \lv{{T}_{q}}{\max\left(X(j)\right)+1}$, if $e \subseteq {e}^{\ast}$, then ${\eta}_{i, e} \subseteq {\eta}_{j, {e}^{\ast}}$.
 \end{enumerate}
\end{Lemma}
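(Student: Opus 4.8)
The plan is to re-run the strategy construction from the proof of Lemma \ref{lem:continuous}, the only change being that at stage $i$ Player I reads off an initial segment of $\mathring{x}$ \emph{whose length matches the current tree height} $\max({s}_{i})+1$, rather than a single fixed name. Deciding the whole segment at once, at exactly this height, is what forces the decided length to agree with the level indexing demanded in (1)--(3); with a fixed-name indexing as in Lemma \ref{lem:continuous} the decision height and the segment length would fall out of step, so this adjustment is the crux.

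First I would define a strategy $\Sigma$ for Player I in ${\Game}^{\mathtt{Cond}}\left(\HHH\right)$. Set ${p}_{0}=p$ and ${A}_{0}=\FIN$. Given a partial run $\left\langle \pr{\pr{{p}_{j}}{{A}_{j}}}{{s}_{j}}: j \leq i \right\rangle$ in which Player I has followed $\Sigma$, write $l = \max({s}_{i})+1$; the set $\lv{{T}_{{p}_{i}}}{l}$ is finite since $\TT$ is finitely branching. For each $e \in \lv{{T}_{{p}_{i}}}{l}$ the condition ${T}_{{p}_{i}}\langle e \rangle$ lies in $\PP(\HHH)$ by Lemma \ref{def:Tf}, and since $\mathring{x}\restrict l$ is a name for an element of the finite set ${2}^{l}$, the conditions deciding it are dense below ${T}_{{p}_{i}}\langle e \rangle$; so I may choose ${p}_{i, e} \leq {T}_{{p}_{i}}\langle e \rangle$ and ${\eta}_{i, e}: l \rightarrow 2$ with ${p}_{i, e} \; {\forces}_{\PP(\HHH)} \; {\mathring{x}\restrict l = {\eta}_{i, e}}$. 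Put ${p}_{i+1} = {T}_{{p}_{i+1}} = \bigcup\left\{ {T}_{{p}_{i, e}}: e \in \lv{{T}_{{p}_{i}}}{l} \right\} \in \PP(\HHH)$ by Lemma \ref{lem:amalgam}, let ${A}_{i+1} = {H}_{{p}_{i+1}, l} \in \HHH$, and have $\Sigma$ play $\pr{{p}_{i+1}}{{A}_{i+1}}$. This respects Clauses (1)--(3) of Definition \ref{def:GcondH}.

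By Lemma \ref{lem:condgame}, $\Sigma$ is not winning, so I would fix a run $\left\langle \pr{\pr{{p}_{i}}{{A}_{i}}}{{s}_{i}}: i < \omega \right\rangle$ following $\Sigma$ that Player II wins. Then $X = \left\{ {s}_{i}: i < \omega \right\} \in {\FIN}^{\[\omega\]}$ with $X(i)={s}_{i}$, $\[X\] \in \HHH$, and $q = {T}_{q} = {\bigcap}_{i \in \omega}{{T}_{{p}_{i}}} \in \PP(\HHH)$ with $q \leq p$. Since ${s}_{i+1} \in {A}_{i+1} = {H}_{{p}_{i+1}, \max({s}_{i})+1}$, Lemma \ref{lem:winning2} yields $X(i+1) \in {H}_{q, \max\left(X(i)\right)+1}$, giving (2), while (1) holds by construction (note $\lv{{T}_{q}}{\max(X(i))+1} \subseteq \lv{{T}_{{p}_{i}}}{\max(X(i))+1}$, so each required ${\eta}_{i,e}$ has been defined). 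For (3), fix $i$ and $e \in \lv{{T}_{q}}{\max(X(i))+1}$ and write $l=\max(X(i))+1=\max({s}_{i})+1$; then $e \in \lv{{T}_{{p}_{i}}}{l}$ and ${T}_{q} \subseteq {T}_{{p}_{i+1}} = \bigcup\left\{ {T}_{{p}_{i, e'}}: e' \in \lv{{T}_{{p}_{i}}}{l} \right\}$ forces ${T}_{q}\langle e \rangle \leq {p}_{i, e}$, whence ${T}_{q}\langle e \rangle \; {\forces}_{\PP(\HHH)} \; {\mathring{x}\restrict\left(\max\left(X(i)\right)+1\right) = {\eta}_{i, e}}$.

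Finally, coherence (4) should fall out of (3). Given $i \leq j$, $e \in \lv{{T}_{q}}{\max(X(i))+1}$ and ${e}^{\ast} \in \lv{{T}_{q}}{\max(X(j))+1}$ with $e \subseteq {e}^{\ast}$, one checks from Definition \ref{def:tree} that ${T}_{q}\langle {e}^{\ast} \rangle \subseteq {T}_{q}\langle e \rangle$, so ${T}_{q}\langle {e}^{\ast} \rangle$ forces both ${\mathring{x}\restrict\left(\max\left(X(j)\right)+1\right) = {\eta}_{j, {e}^{\ast}}}$ and ${\mathring{x}\restrict\left(\max\left(X(i)\right)+1\right) = {\eta}_{i, e}}$; restricting the first equality to $\max\left(X(i)\right)+1 \leq \max\left(X(j)\right)+1$ gives ${\eta}_{i, e} = {\eta}_{j, {e}^{\ast}}\restrict\left(\max\left(X(i)\right)+1\right)$, that is ${\eta}_{i, e} \subseteq {\eta}_{j, {e}^{\ast}}$. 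I expect the only real subtlety to be the level bookkeeping already isolated above; everything else is a transcription of the machinery of Lemmas \ref{lem:amalgam}, \ref{lem:condgame}, \ref{lem:winning2}, and \ref{lem:continuous}.
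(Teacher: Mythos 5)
Your proposal is correct and follows essentially the same route as the paper's own proof: the identical strategy $\Sigma$ in ${\Game}^{\mathtt{Cond}}\left(\HHH\right)$ deciding $\mathring{x}\restrict\left(\max({s}_{i})+1\right)$ below each node of $\lv{{T}_{{p}_{i}}}{\max({s}_{i})+1}$ via Lemma \ref{lem:amalgam}, then Lemmas \ref{lem:condgame} and \ref{lem:winning2} applied to a losing run, with (3) obtained from ${T}_{q}\langle e \rangle \leq {p}_{i, e}$ and (4) obtained by forcing both restrictions below ${T}_{q}\langle {e}^{\ast} \rangle$. The bookkeeping points you flag (finiteness of levels, $\lv{{T}_{q}}{\max\left(X(i)\right)+1} \subseteq \lv{{T}_{{p}_{i}}}{\max\left(X(i)\right)+1}$, and ${T}_{q}\langle {e}^{\ast} \rangle \leq {T}_{q}\langle e \rangle$ when $e \subseteq {e}^{\ast}$) are exactly the ones the paper relies on, so nothing is missing.
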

\begin{proof}
 This is very similar to the proof of Lemma \ref{lem:continuous}, expect that $M$ plays no role here.
 Details are provided for completeness.
 Define a strategy $\Sigma$ for Player I in ${\Game}^{\mathtt{Cond}}\left(\HHH\right)$ as follows.
 ${p}_{0} = p \in \PP(\HHH)$ and ${A}_{0} = \FIN \in \HHH$.
 Define $\Sigma(\emptyset) = \pr{{p}_{0}}{{A}_{0}}$.
 Now suppose that $i \in \omega$ and that $\left\langle \pr{\pr{{p}_{j}}{{A}_{j}}}{{s}_{j}}: j \leq i \right\rangle$ is a partial run of ${\Game}^{\mathtt{Cond}}\left(\HHH\right)$ in which Player I has followed $\Sigma$.
 Let $l = \max({s}_{i})+1$, for ease of notation.
 There exist sequences $\left\langle {p}_{i, e}: e \in \lv{{T}_{{p}_{i}}}{l} \right\rangle$ and $\left\langle {\eta}_{i, e}: e \in \lv{{T}_{{p}_{i}}}{l} \right\rangle$ such that
 \begin{align*}
  \forall e \in \lv{{T}_{{p}_{i}}}{l}\[{p}_{i, e} \leq {T}_{{p}_{i}}\langle e \rangle \ \text{and} \ {p}_{i, e} \; {\forces}_{\PP(\HHH)} \; {\mathring{x} \restrict l = {\eta}_{i, e}}\].
 \end{align*}
 Since ${\forces}_{\PP(\HHH)} \; {\mathring{x}: \omega \rightarrow 2}$, it follows that ${\eta}_{i, e}: l \rightarrow 2$.
 Define ${p}_{i+1} = {T}_{{p}_{i+1}} = \bigcup{\left\{ {T}_{{p}_{i, e}}: e \in \lv{{T}_{{p}_{i}}}{l} \right\}}$.
 Note that ${p}_{i+1} \in \PP(\HHH)$ by Lemma \ref{lem:amalgam}.
 Hence ${A}_{i+1} = {H}_{{p}_{i+1}, l} \in \HHH$.
 Define $\Sigma\left(\left\langle \pr{\pr{{p}_{j}}{{A}_{j}}}{{s}_{j}}: j \leq i \right\rangle\right) = \pr{{p}_{i+1}}{{A}_{i+1}}$.
 This completes the definition of $\Sigma$.
 
 Since $\Sigma$ is not a winning strategy for Player I, there is a run $\left\langle \pr{\pr{{p}_{i}}{{A}_{i}}}{{s}_{i}}: i < \omega \right\rangle$ of ${\Game}^{\mathtt{Cond}}\left(\HHH\right)$ in which Player I has followed $\Sigma$ and lost.
 Then $X = \{{s}_{i}: i < \omega\} \in {\FIN}^{\[\omega\]}$ and $\[X\] \in \HHH$.
 Furthermore, $X(i) = {s}_{i}$, for all $i \in \omega$.
 Again, for ease of notation, denote $\max({s}_{i})+1$ by ${l}_{i}$.
 Let $q = {T}_{q} = {\bigcap}_{i \in \omega}{{T}_{{p}_{i}}}$.
 Then $q \in \PP(\HHH)$ and $q \leq {p}_{0} = p$.
 Moreover, for any $i \in \omega$, ${s}_{i+1} \in {A}_{i+1} = {H}_{{p}_{i+1}, {l}_{i}}$, and so by Lemma \ref{lem:winning2}, ${s}_{i+1} \in {H}_{q, {l}_{i}}$, as required for (2).
 Next, by the definition of $\Sigma$, for each $i \in \omega$, there exist sequences $\left\langle {p}_{i, e}: e \in \lv{{T}_{{p}_{i}}}{{l}_{i}} \right\rangle$ and $\left\langle {\eta}_{i, e}: e \in \lv{{T}_{{p}_{i}}}{{l}_{i}} \right\rangle$ as described in the previous paragraph.
 To see that (3) is satisfied, fix $i \in \omega$ and $e \in \lv{{T}_{q}}{{l}_{i}}$.
 Then $e \in \lv{{T}_{{p}_{i}}}{{l}_{i}}$.
 Since ${T}_{q} \subseteq {T}_{{p}_{i+1}} = \bigcup{\left\{ {T}_{{p}_{i, e'}}: e' \in \lv{{T}_{{p}_{i}}}{{l}_{i}} \right\}}$, it follows that ${T}_{q}\langle e \rangle \leq {p}_{i, e}$.
 Therefore, ${T}_{q}\langle e \rangle \; {\forces}_{\PP(\HHH)} \; {\mathring{x} \restrict {l}_{i} = {\eta}_{i, e}}$, as required.
 Finally for (4), if $i\leq j < \omega$, $e \in \lv{{T}_{q}}{{l}_{i}}$, ${e}^{\ast} \in \lv{{T}_{q}}{{l}_{j}}$, and $e \subseteq {e}^{\ast}$, then ${T}_{q}\langle {e}^{\ast} \rangle \leq {T}_{q}\langle e \rangle$, whence ${T}_{q}\langle {e}^{\ast} \rangle \; {\forces}_{\PP(\HHH)} \; {\mathring{x} \restrict {l}_{i} = {\eta}_{i, e}}$, whence ${\eta}_{i, e} = {\eta}_{j, {e}^{\ast}} \restrict {l}_{i}$.
\end{proof}
\section{Destroying witnesses to Hindman's theorem} \label{sec:nohindman}
This section continues the analysis of $\PP(\HHH)$.
It will be shown that there is no stable ordered-union ultrafilter extending $\HHH$ after forcing with $\PP(\HHH)$.
Furthermore, this is true after forcing with any $\BS$-bounding partial order which contains $\PP(\HHH)$ as a complete suborder.
\begin{Lemma} \label{lem:noextension}
 Suppose $\GGG \subseteq \Pset(\FIN)$ and that $\PP$ is a forcing notion.
 Suppose $\mathring{c}$ is a $\PP$-name such that ${\forces}_{\PP} \; {\mathring{c}: \FIN \rightarrow 2}$.
 Assume that for each $t \in 2$ and each $\PP$-name $\mathring{A}$, if ${\forces}_{\PP} \; {\mathring{A} \subseteq \FIN}$, then for every $p \in \PP$, there are $q \leq p$ and $B \in \GGG$ such that either:
 \begin{enumerate}
  \item
  there exists $k \in \omega$ such that $q \; {\forces}_{\PP} \; {\forall s \in \mathring{A} \cap B\[\min(s) < k\]}$, or
  \item
  $\forall s \in B \forall r \leq q \exists u \exists r' \leq r\[r' \; {\forces}_{\PP} \; {u \in \mathring{A}} \ \text{and} \ r' \; {\forces}_{\PP} \; {\mathring{c}(s \cup u) \neq t} \]$.
 \end{enumerate}
 Then ${\forces}_{\PP} \; {\text{``there is no stable ordered-union ultrafilter} \ \HHH \ \text{on} \ \FIN \ \text{with} \ \GGG \subseteq \HHH\text{''.}}$
\end{Lemma}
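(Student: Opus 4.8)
The plan is to argue by contradiction, the decisive observation being that every set of the form $\[X\]$ is closed under \emph{all} finite unions. Suppose some $p_0 \in \PP$ forced that $\mathring{\HHH}$ is a stable ordered-union ultrafilter with $\GGG \subseteq \mathring{\HHH}$. Since $p_0 \forces \mathring{c} : \FIN \rightarrow 2$ and $\mathring{\HHH}$ is an ultrafilter, one colour class is forced into $\mathring{\HHH}$; so I would first shrink to some $p_1 \leq p_0$ deciding a value $t \in 2$ with $p_1 \forces \{s : \mathring{c}(s) = t\} \in \mathring{\HHH}$, and then, using only that $\mathring{\HHH}$ is ordered-union, fix a name $\mathring{X}$ with $p_1 \forces \mathring{X} \in {\FIN}^{\[\omega\]}$, $p_1 \forces \[\mathring{X}\] \in \mathring{\HHH}$, and $p_1 \forces \forall w \in \[\mathring{X}\]\[\mathring{c}(w) = t\]$. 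Let $\mathring{A}$ be a name with $\forces_{\PP} \mathring{A} \subseteq \FIN$ and $p_1 \forces \mathring{A} = \[\mathring{X}\]$. Applying the hypothesis to this $t$, this $\mathring{A}$, and $p_1$ yields $q \leq p_1$ and $B \in \GGG$ for which alternative (1) or (2) holds; since $B \in \GGG$ and $q \forces \GGG \subseteq \mathring{\HHH}$, both $q \forces B \in \mathring{\HHH}$ and $q \forces \mathring{A} = \[\mathring{X}\] \in \mathring{\HHH}$.

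Next I would eliminate alternative (1). An ordered-union ultrafilter concentrates on sets of large minimum: as observed after Definition \ref{def:tree}, $\{s : \min(s) \geq k\}$ lies in every ordered-union ultrafilter. Hence $q$ forces $\mathring{A} \cap B \cap \{s : \min(s) \geq k\}$ to be the intersection of three members of $\mathring{\HHH}$, so $q$ forces it to be non-empty. This contradicts the assertion of (1) that $q \forces \forall s \in \mathring{A} \cap B\[\min(s) < k\]$, so alternative (2) must hold.

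For alternative (2) I would pass to a generic $G$ containing $q$ and work in $\VG$, writing $X = \mathring{X}\[G\]$ and $\HHH = \mathring{\HHH}\[G\]$, so that $\[X\] = \mathring{A}\[G\] \in \HHH$ and $\mathring{c}$ is constantly $t$ on $\[X\]$. Since $B, \[X\] \in \HHH$, I can choose $s \in B \cap \[X\]$. As $s \in B$ and $B \in \V$, alternative (2) applies to this $s$, making $\{r' \leq q : \exists u\,(r' \forces u \in \mathring{A} \wedge r' \forces \mathring{c}(s \cup u) \neq t)\}$ dense below $q$; by genericity there is such an $r' \in G$, producing $u \in \[X\]$ with $\mathring{c}(s \cup u) \neq t$ in $\VG$. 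The load-bearing step is that $\[X\] = \clu(X)$ is closed under all finite unions — not merely unions of $\lbb$-comparable blocks — so from $s, u \in \[X\]$ I get $s \cup u \in \[X\]$, whence homogeneity forces $\mathring{c}(s \cup u) = t$, the desired contradiction. I expect the only real friction to be bookkeeping: arranging $\mathring{A}$ so the hypothesis applies verbatim and making the density extraction of $u$ precise. The one genuinely mathematical point — union-closure of $\[X\]$ — is exactly what frees the argument from having to control the relative $\lbb$-position of $s$ and $u$, and it is the reason stability is never used, only the ordered-union property.
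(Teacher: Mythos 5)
Your proof is correct and takes essentially the same route as the paper's: alternative (1) is ruled out because ordered-union ultrafilters concentrate on $\{s \in \FIN : \min(s) > k\}$, and alternative (2) is defeated by the density-plus-genericity extraction of $u$ together with the key fact that $\[X\] = \clu(X)$ is closed under \emph{all} finite unions, with stability never used.

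The only difference from the paper is bookkeeping — you fix the names $\mathring{\HHH}$, $\mathring{X}$, $\mathring{A}$ and the condition $q$ before passing to a generic filter, whereas the paper fixes the generic $G$ first, finds the homogeneous $\[Y\] \in \HHH$ inside $\VG$, and only then chooses a name $\mathring{A}$ with $\mathring{A}\[G\] = \[Y\]$ and uses density of the hypothesized set of conditions to find $q \in G$ — which is an immaterial reordering of the same argument.
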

\begin{proof}
 For each $t \in 2$ and each $\PP$-name $\mathring{A}$ such that ${\forces}_{\PP} \; {\mathring{A} \subseteq \FIN}$, let
 \begin{align*}
  {\DDD}_{t, \mathring{A}} = \left\{q \in \PP: \text{there is} \ B \in \GGG \ \text{so that either (1) or (2) holds with} \ t, \mathring{A}, q, B \right\}.
 \end{align*}
 The hypothesis of the lemma is that ${\DDD}_{t, \mathring{A}}$ is dense.
 Let $G$ be $(\V, \PP)$-generic.
 Assume for a contradiction that in $\VG$, there exists a stable ordered-union ultrafilter $\HHH$ on $\FIN$ such that $\GGG \subseteq \HHH$.
 As $\mathring{c}\[G\]: \FIN \rightarrow 2$, there exists $H \in \HHH$ such that $\mathring{c}\[G\]$ is constantly $t$ on $H$, for some $t \in 2$.
 Fix $Y \in {\FIN}^{\[\omega\]}$ with $\[Y\] \in \HHH$ and $\[Y\] \subseteq H$.
 Let $\mathring{A}$ be a $\PP$-name in $\V$ such that ${\forces}_{\PP} \; {\mathring{A} \subseteq \FIN}$ and $\mathring{A}\[G\] = \[Y\]$.
 Fix $q \in {\DDD}_{t, \mathring{A}} \cap G$ and $B \in \GGG$ so that either (1) or (2) holds for $t, \mathring{A}$, $q$, and $B$.
 Suppose first that (1) holds.
 Then since $q \in G$, there exists $k \in \omega$ such that $\forall s \in \mathring{A}\[G\] \cap B\[\min(s) < k\]$.
 However, $\mathring{A}\[G\] \cap B \in \HHH$, and so $\{s \in \mathring{A}\[G\] \cap B: \min(s) > k\} \in \HHH$.
 In particular, there is some $s \in \mathring{A}\[G\] \cap B$ with $\min(s) > k$, a contradiction.
 
 Next suppose that (2) holds.
 Fix $s \in B \cap \[Y\]$.
 In $\V$, define
 \begin{align*}
  {\EEE}_{q, s, \mathring{A}, \mathring{c}, t} = \left\{ r' \leq q: \exists u\[r' \; {\forces}_{\PP} \; {u \in \mathring{A}} \ \text{and} \ r' \; {\forces}_{\PP} \; {\mathring{c}(s \cup u) \neq t} \] \right\}.
 \end{align*}
 By (2), ${\EEE}_{q, s, \mathring{A}, \mathring{c}, t}$ is dense below $q$.
 Since $q \in G$, there is some $r' \in G \cap {\EEE}_{q, s, \mathring{A}, \mathring{c}, t}$.
 Thus in $\VG$, there exists $u \in \[Y\]$ such that $\mathring{c}\[G\](s \cup u) \neq t$.
 However, since $s \in \[Y\]$ and $u \in \[Y\]$, so $s \cup u \in \[Y\] \subseteq H$, contradicting the fact that $\mathring{c}\[G\]$ is constantly $t$ on $H$.
 This contradiction concludes the proof.
\end{proof}
\begin{Lemma} \label{lem:bigrestriction}
 Suppose $k \leq k' \leq l < \omega$ and $s \in \FIN$ with $l = \max(s)$.
 Suppose $A \subseteq {2}^{\Pset(l)}$ is $\pr{k'}{s}$-big.
 Suppose $t \in 2$.
 Then $B = \{\tau \in A: \forall u \in \Pset(k) \forall v \in \FIN\[k \leq \min(v) \leq \max(v) < k' \implies \tau(u \cup v \cup {s}^{-}) = 1-t\]\} \subseteq A \subseteq {2}^{\Pset(l)}$ is $\pr{k}{s}$-big.
\end{Lemma}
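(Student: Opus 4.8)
The plan is to deduce the $\pr{k}{s}$-bigness of $B$ directly from the $\pr{k'}{s}$-bigness of $A$ by extending each target colouring of $\Pset(k)$ to a suitable colouring of $\Pset(k')$. The key observation is that every $w \in \Pset(k')$ splits uniquely as $w = u \cup v$ with $u = w \cap k \in \Pset(k)$ and $v = w \cap [k, k') \subseteq [k, k')$, so that $w \mapsto (u, v)$ is a bijection onto the pairs $(u, v)$ with $u \in \Pset(k)$ and $v \subseteq [k, k')$. The defining constraint on membership in $B$ concerns precisely those $w$ for which the ``upper part'' $v$ is nonempty.

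So fix an arbitrary $\sigma \colon \Pset(k) \to 2$; I must produce some $\tau \in B$ with $\tau(u \cup s^{-}) = \sigma(u)$ for all $u \in \Pset(k)$. First I would define an auxiliary $\sigma' \colon \Pset(k') \to 2$ by decomposing each $w \in \Pset(k')$ as above and setting $\sigma'(w) = \sigma(w)$ when $w \subseteq k$ (the case $v = \emptyset$) and $\sigma'(w) = 1 - t$ when $w \cap [k, k') \neq \emptyset$ (the case $v \neq \emptyset$). Applying the $\pr{k'}{s}$-bigness of $A$ to $\sigma'$ yields $\tau \in A$ with $\tau(w \cup s^{-}) = \sigma'(w)$ for every $w \in \Pset(k')$; this is well-defined because $w \subseteq k' \leq l$ and $s^{-} \subseteq \max(s) = l$, so $w \cup s^{-} \in \Pset(l)$.

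It then remains to verify two properties of this $\tau$. For the matching of $\sigma$: if $u \in \Pset(k)$ then $u \subseteq k$, so $\sigma'(u) = \sigma(u)$ and hence $\tau(u \cup s^{-}) = \sigma(u)$, as needed. For membership $\tau \in B$: given any $u \in \Pset(k)$ and any $v \in \FIN$ with $k \leq \min(v) \leq \max(v) < k'$, the set $w = u \cup v$ lies in $\Pset(k')$ and decomposes as $w \cap k = u$, $w \cap [k, k') = v \neq \emptyset$, so $\sigma'(w) = 1 - t$ and therefore $\tau(u \cup v \cup s^{-}) = \tau(w \cup s^{-}) = \sigma'(w) = 1 - t$. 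This is exactly the condition appearing in the definition of $B$, so $\tau \in B$; since $\sigma$ was arbitrary, $B$ is $\pr{k}{s}$-big.

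I do not expect a genuine obstacle here: the whole argument is the bookkeeping of the splitting $w = (w \cap k) \cup (w \cap [k, k'))$ together with the remark that the constraint defining $B$ quantifies over exactly the pairs with nonempty upper part. Assigning the constant value $1 - t$ to every such $w$ in $\sigma'$ is what simultaneously forces $\tau$ into $B$ while leaving the values of $\sigma'$ on $\Pset(k)$ free to encode $\sigma$. The one point to keep in mind is that $v$ ranges over $\FIN$ and is hence nonempty, which is exactly why it lands in the ``$v \neq \emptyset$'' branch of the definition of $\sigma'$; the degenerate case $k = k'$ makes the $B$-constraint vacuous and the statement trivial.
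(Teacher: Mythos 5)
Your proof is correct and is essentially identical to the paper's: both define the auxiliary colouring $\sigma'$ on $\Pset(k')$ to agree with $\sigma$ on subsets of $k$ and take the constant value $1-t$ on sets meeting $[k,k')$ (equivalently, sets not contained in $k$), then apply $\pr{k'}{s}$-bigness of $A$ and verify the two required properties of the resulting $\tau$. The only difference is expository: you organize the argument around the decomposition $w = (w \cap k) \cup (w \cap [k,k'))$, whereas the paper simply case-splits on whether $w \subseteq k$.
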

\begin{proof}
 Let $\sigma: \Pset(k) \rightarrow 2$ be given.
 Define $\sigma': \Pset(k') \rightarrow 2$ by stipulating that for any $w \in \Pset(k')$,
 \begin{align*}
  \sigma'(w) = \begin{cases}
   \sigma(w) &\ \text{if} \ w \subseteq k\\
   1-t       &\ \text{if} \ w \not\subseteq k.
  \end{cases}
 \end{align*}
 As $A$ is $\pr{k'}{s}$-big, there exists $\tau \in A$ such that $\forall w \in \Pset(k')\[\sigma'(w) = \tau(w \cup {s}^{-})\]$.
 To see $\tau \in B$, suppose $u \in \Pset(k)$ and $v \in \FIN$ with $k \leq \min(v) \leq \max(v) < k'$.
 Note that $v, u \subseteq k'$, whence $w = u \cup v \in \Pset(k')$.
 Further, $\min(v) \in w \setminus k$, showing that $w \not\subseteq k$.
 Therefore, $1-t=\sigma'(w) = \tau(u \cup v \cup {s}^{-})$, showing that $\tau \in B$.
 Finally, if $u \in \Pset(k)$, then $u \in \Pset(k')$ and $\sigma(u)=\sigma'(u)=\tau(u \cup {s}^{-})$.
 As $\sigma$ was arbitrary, $B$ is $\pr{k}{s}$-big.
\end{proof}
\begin{Lemma} \label{lem:genericreal}
 Suppose $G$ is $(\V, \PP(\HHH))$-generic.
 In $\VG$, there exists a function $F \in {\prod}_{k \in \omega}{{2}^{\Pset(k)}}$ such that $\{F\} = \bigcap{\left\{ \[{T}_{p}\]: p \in G \right\}}$.
\end{Lemma}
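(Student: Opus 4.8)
The plan is to show that the compact set $\bigcap\{\[{T}_{p}\]: p \in G\}$ is a singleton, working throughout in $\VG$, and then to let $F$ be its unique member. The argument splits into two independent parts: proving that this intersection is non-empty, which is a compactness argument needing only that each condition is a finitely branching tree without dead ends, and proving that it contains at most one point, which is where genericity enters through a density argument based on Lemma~\ref{def:Tf}.

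For non-emptiness, first observe that for every $p \in \PP(\HHH)$ the set $\[{T}_{p}\]$ is a non-empty closed subset of the space $\prod_{k \in \omega}{{2}^{\Pset(k)}}$. It is closed by the definition in Definition~\ref{def:trees}, and it is non-empty because Clause~(2) of Definition~\ref{def:PH} guarantees that every node of ${T}_{p}$ has extensions in ${T}_{p}$ of arbitrary length, so ${T}_{p}$ is an infinite, finitely branching subtree of $\TT$ and König's lemma supplies an infinite branch. Since each factor ${2}^{\Pset(k)}$ is finite, the product $\prod_{k \in \omega}{{2}^{\Pset(k)}}$ is compact. The family $\{\[{T}_{p}\]: p \in G\}$ has the finite intersection property: given $p_0, \dotsc, p_n \in G$, directedness of the filter $G$ yields $r \in G$ with $r \leq p_j$ for all $j \leq n$, whence $\[{T}_{r}\] \subseteq \bigcap_{j \leq n}{\[{T}_{p_j}\]}$ and $\[{T}_{r}\] \neq \emptyset$. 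By compactness the full intersection is non-empty.

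For uniqueness, I would show that for each $l \in \omega$ the set $\DDD_l = \{p \in \PP(\HHH): \lc \lv{{T}_{p}}{l} \rc = 1\}$ is dense. Given any $p$, Clause~(2) of Definition~\ref{def:PH} provides some $f \in \lv{{T}_{p}}{l}$, and then Lemma~\ref{def:Tf} shows that $q = {T}_{p}\langle f \rangle \in \PP(\HHH)$ with $q \leq p$. Since ${T}_{q}$ consists precisely of the nodes of ${T}_{p}$ comparable with $f$, its unique node of length $l$ is $f$, so $q \in \DDD_l$. By genericity fix $p_l \in G \cap \DDD_l$. Now if $F, F' \in \bigcap\{\[{T}_{p}\]: p \in G\}$, then for each $l$ both $F \restrict l$ and $F' \restrict l$ lie in $\lv{{T}_{p_l}}{l}$, which is a singleton, so $F \restrict l = F' \restrict l$; as $l$ was arbitrary, $F = F'$. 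Letting $F$ denote the unique member of the intersection completes the proof.

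The only conceptual point worth flagging is that the bigness requirement in Clause~(3) of Definition~\ref{def:PH} forces every condition to branch, so one cannot thin a condition down to a literal single branch; the resolution is that one only needs a single node \emph{below} a fixed finite level $l$, and this is exactly what passing to ${T}_{p}\langle f \rangle$ achieves while Lemma~\ref{def:Tf} certifies that the result is still a condition. Everything else is routine, and no genericity is needed for non-emptiness.
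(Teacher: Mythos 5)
Your proof is correct, and its core density step is exactly the one the paper intends (the paper's proof is omitted as ``a standard density argument,'' but the intended argument is the same use of Lemma~\ref{def:Tf}: shrink a condition to the cone ${T}_{p}\langle f \rangle$ above a node $f$ of length $l$, so that conditions deciding the branch up to level $l$ are dense). The one genuine difference is your decomposition. The paper's argument is purely density-theoretic: it picks $p_l \in G$ deciding an initial segment $e_l$, checks via directedness of $G$ that the $e_l$ cohere, defines $F = \bigcup_{l \in \omega} e_l$ explicitly, and then verifies $F \in \[{T}_{p}\]$ for every $p \in G$, again by directedness; existence and uniqueness come out of the same construction. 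You instead split the statement into existence, obtained from compactness of ${\prod}_{k \in \omega}{{2}^{\Pset(k)}}$ plus the finite intersection property of $\left\{\[{T}_{p}\]: p \in G\right\}$, and uniqueness, obtained from genericity. What your route buys is the observation that genericity is needed only for uniqueness: any filter on $\PP(\HHH)$ has non-empty $\bigcap\left\{\[{T}_{p}\]: p \in G\right\}$. What it costs is an appeal to Tychonoff/K\"onig in place of the paper's hands-on gluing of the $e_l$; note that both arguments still need K\"onig's lemma (via Clause~(2) of Definition~\ref{def:PH} and finite branching) to know $\[{T}_{q}\] \neq \emptyset$ for a single condition, so neither is strictly more elementary. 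Your closing remark that Clause~(3) prevents thinning to a literal single branch, and that passing to ${T}_{p}\langle f \rangle$ is the correct substitute, is exactly the right point to flag.
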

\begin{proof}
 This is a standard density argument.
\end{proof}
\begin{Def} \label{def:CG}
 Suppose $G$ is $(\V, \PP(\HHH))$-generic.
 In $\VG$, let ${F}_{G} \in {\prod}_{k \in \omega}{{2}^{\Pset(k)}}$ denote the unique function such that $\{{F}_{G}\} = \bigcap\left\{\[{T}_{p}\]: p \in G \right\}$.
 Define ${c}_{G}: \FIN \rightarrow 2$ as follows.
 For $s \in \FIN$, ${F}_{G}(\max(s)): \Pset(\max(s)) \rightarrow 2$.
 Recalling that ${s}^{-} \in \Pset(\max(s))$, define ${c}_{G}(s) = {F}_{G}(\max(s))({s}^{-}) \in 2$.
 In $\V$, let ${\mathring{F}}_{G}$ and ${\mathring{c}}_{G}$ be $\PP(\HHH)$-names that are forced by every condition to denote ${F}_{G}$ and ${c}_{G}$ respectively.
\end{Def}
\begin{Lemma} \label{lem:extend1-t}
 Suppose $k \leq k' \leq l < \omega$ and $s \in \FIN$ with $l = \max(s)$.
 Suppose $p \in \PP(\HHH)$ and $t \in 2$.
 Assume that $s \in {H}_{p, k'}$.
 Then there exists $q \leq p$ such that:
 \begin{enumerate}
  \item
  $\forall i < l \forall e \in \lv{{T}_{q}}{i}\[{\succc}_{{T}_{q}}(e) = {\succc}_{{T}_{p}}(e)\]$;
  \item
  $\forall i > l \forall e \in \lv{{T}_{q}}{i}\[{\succc}_{{T}_{q}}(e) = {\succc}_{{T}_{p}}(e)\]$;
  \item
  $s \in {H}_{q, k}$;
  \item
  for each $u \in \Pset(k)$ and each $v \in \FIN$, if $k \leq \min(v) \leq \max(v) < k'$, then $q \; {\forces}_{\PP(\HHH)} \; {{\mathring{c}}_{G}(u \cup v \cup s) = 1-t}$.
 \end{enumerate}
\end{Lemma}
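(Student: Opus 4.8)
The plan is to build $q$ by thinning $T_p$ at the single level $l = \max(s)$ and leaving every other level untouched. For each $e \in \lv{{T}_{p}}{l}$, the hypothesis $s \in {H}_{p, k'}$ guarantees that ${\succc}_{{T}_{p}}(e)$ is $\pr{k'}{s}$-big; applying Lemma \ref{lem:bigrestriction} with $A = {\succc}_{{T}_{p}}(e)$ and the given $t$ yields a $\pr{k}{s}$-big set ${B}_{e} \subseteq {\succc}_{{T}_{p}}(e)$ consisting of those $\tau$ for which $\tau(u \cup v \cup {s}^{-}) = 1 - t$ whenever $u \in \Pset(k)$, $v \in \FIN$, and $k \leq \min(v) \leq \max(v) < k'$. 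I would then set
\begin{align*}
 {T}_{q} = \left\{ g \in {T}_{p} : \dom(g) \leq l \ \vee \ g(l) \in {B}_{g \restrict l} \right\}.
\end{align*}
Because the defining constraint only refers to the value at coordinate $l$, $T_q$ is a downward-closed subtree of $\TT$ with $\emptyset \in T_q$, and a routine case split on $\dom(e)$ shows ${\succc}_{{T}_{q}}(e) = {\succc}_{{T}_{p}}(e)$ when $\dom(e) \neq l$ and ${\succc}_{{T}_{q}}(e) = {B}_{e}$ when $\dom(e) = l$. This is exactly clauses (1) and (2) of the lemma. Condition (2) of Definition \ref{def:PH} holds because each ${B}_{e}$ is nonempty (Remark \ref{rem:skbig}) and $T_q$ agrees with $T_p$ above level $l$, so any node extends first to level $l$, then through $B$ to level $l+1$, and then arbitrarily high inside $T_p$.

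The main point, and the step I expect to be the most delicate, is verifying condition (3) of Definition \ref{def:PH}, i.e.\@ ${H}_{q, j} \in \HHH$ for every $j$. The key observation is that thinning at the single level $l$ can affect the $\pr{j}{s'}$-bigness requirement only through block vectors $s'$ with $\max(s') = l$. Indeed, if $\max(s') \neq l$, then $\lv{{T}_{q}}{\max(s')}$ equals $\lv{{T}_{p}}{\max(s')}$ (when $\max(s') < l$) or is a subset of it (when $\max(s') > l$), and in both cases ${\succc}_{{T}_{q}}(f) = {\succc}_{{T}_{p}}(f)$ for every $f$ in that level by clauses (1) and (2); hence $s' \in {H}_{p, j}$ forces $s' \in {H}_{q, j}$. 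Since $\{s' \in \FIN : \max(s') = l\}$ is finite, it is not in $\HHH$, so ${H}_{q, j} \supseteq {H}_{p, j} \cap \{s' : \max(s') \neq l\} \in \HHH$. Thus $q \in \PP(\HHH)$, and $q \leq p$ is immediate from ${T}_{q} \subseteq {T}_{p}$. Clause (3) of the lemma then follows from the same successor computation: for $f \in \lv{{T}_{q}}{l} = \lv{{T}_{p}}{l}$ one has ${\succc}_{{T}_{q}}(f) = {B}_{f}$, which is $\pr{k}{s}$-big by construction, while $k \leq l = \max(s)$, so $s \in {H}_{q, k}$.

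Finally, for clause (4) I would read the conclusion off the generic branch from Definition \ref{def:CG}. Fix $u \in \Pset(k)$ and $v \in \FIN$ with $k \leq \min(v) \leq \max(v) < k'$; since $\max(u) < k \leq l$ and $\max(v) < k' \leq l$, the point $l = \max(s)$ lies in neither $u$ nor $v$, so $\max(u \cup v \cup s) = l$ and $(u \cup v \cup s)^{-} = u \cup v \cup {s}^{-}$. Let $G$ be $(\V, \PP(\HHH))$-generic with $q \in G$. Then ${F}_{G} \in \[{T}_{q}\]$ by the choice of ${F}_{G}$, so ${F}_{G}\restrict(l+1) \in {T}_{q}$ and hence ${F}_{G}(l) \in {\succc}_{{T}_{q}}({F}_{G}\restrict l) = {B}_{{F}_{G}\restrict l}$. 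The defining property of ${B}_{{F}_{G}\restrict l}$ now gives ${F}_{G}(l)(u \cup v \cup {s}^{-}) = 1 - t$, which is precisely ${c}_{G}(u \cup v \cup s) = 1 - t$. As $q \in G$ and $u, v$ were arbitrary, $q \; {\forces}_{\PP(\HHH)} \; {{\mathring{c}}_{G}(u \cup v \cup s) = 1 - t}$, completing the proof.
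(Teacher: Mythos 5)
Your proposal is correct and is essentially the paper's own proof: the tree you define is literally the same condition the paper constructs as $q=\bigcup\left\{{T}_{p}\langle {f}^{\frown}{\langle \tau \rangle} \rangle: f \in \lv{{T}_{p}}{l} \wedge \tau \in {B}_{f}\right\}$, with Lemma \ref{lem:bigrestriction} applied at level $l$ in exactly the same way, and clauses (1)--(4) verified by the same computations. The only cosmetic difference is that the paper invokes Lemma \ref{lem:amalgam} to conclude $q \in \PP(\HHH)$ and $q \leq p$, whereas you check Definition \ref{def:PH} directly, handling clause (3) of that definition via the (correct) observation that thinning at level $l$ can only affect the finitely many $s'$ with $\max(s')=l$, a set whose complement lies in $\HHH$.
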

\begin{proof}
 Since $s \in {H}_{p, k'}$, for each $f \in \lv{{T}_{p}}{l}$, ${A}_{f} = {\succc}_{{T}_{p}}(f) \subseteq {2}^{\Pset(l)}$ is $\pr{k'}{s}$-big.
 Thus, by applying Lemma \ref{lem:bigrestriction}, it is seen that for each $f \in \lv{{T}_{p}}{l}$, ${B}_{f} = \{\tau \in {A}_{f}: \forall u \in \Pset(k) \forall v \in \FIN\[k \leq \min(v) \leq \max(v) < k' \implies \tau(u \cup v \cup {s}^{-}) = 1-t\]\}$ is $\pr{k}{s}$-big.
 In particular, each ${B}_{f}$ is non-empty.
 Since $\lv{{T}_{p}}{l}$ is non-empty,
 \begin{align*}
  B = \left\{{f}^{\frown}{\langle \tau \rangle}: f \in \lv{{T}_{p}}{l} \wedge \tau \in {B}_{f} \right\}
 \end{align*}
 is a non-empty subset of $\lv{{T}_{p}}{l+1}$.
 Therefore by defining
 \begin{align*}
  q = \bigcup\left\{ {T}_{p}\langle {f}^{\frown}{\langle \tau \rangle} \rangle: f \in \lv{{T}_{p}}{l} \wedge \tau \in {B}_{f} \right\},
 \end{align*}
 Lemma \ref{lem:amalgam} insures that $q \in \PP(\HHH)$ and $q \leq p$.
 
 To see that (1) holds, suppose $i < l$ and that $e \in \lv{{T}_{q}}{i}$.
 Then $e \in \lv{{T}_{p}}{i}$, and if $\sigma \in {\succc}_{{T}_{p}}(e)$, then ${e}^{\frown}{\langle \sigma \rangle} \in {T}_{p}$ and $\dom\left({e}^{\frown}{\langle \sigma \rangle}\right) \leq l$.
 Choose $f \in \lv{{T}_{p}}{l}$ so that ${e}^{\frown}{\langle \sigma \rangle} \subseteq f$ and choose $\tau \in {B}_{f}$.
 Thus ${e}^{\frown}{\langle \sigma \rangle} \in {T}_{p}\langle {f}^{\frown}{\langle \tau \rangle} \rangle \subseteq {T}_{q}$, whence $\sigma \in {\succc}_{{T}_{q}}(e)$.
 So ${\succc}_{{T}_{p}}(e) \subseteq {\succc}_{{T}_{q}}(e) \subseteq {\succc}_{{T}_{p}}(e)$, as needed.
 
 Similarly for (2), suppose $i > l$ and that $e \in \lv{{T}_{q}}{i}$.
 Then $e \in {T}_{p}$ and ${f}^{\frown}{\langle \tau \rangle} \subseteq e$, for some $f \in \lv{{T}_{p}}{l}$ and $\tau \in {B}_{f}$.
 If $\sigma \in {\succc}_{{T}_{p}}(e)$, then ${e}^{\frown}{\langle \sigma \rangle} \in {T}_{p}\langle {f}^{\frown}{\langle \tau \rangle} \rangle \subseteq {T}_{q}$, whence $\sigma \in {\succc}_{{T}_{q}}(e)$.
 So ${\succc}_{{T}_{p}}(e) \subseteq {\succc}_{{T}_{q}}(e) \subseteq {\succc}_{{T}_{p}}(e)$, as needed.
 
 For (3), consider $f \in \lv{{T}_{q}}{l}$.
 It needs to be seen that ${\succc}_{{T}_{q}}(f) \subseteq {2}^{\Pset(l)}$ is $\pr{k}{s}$-big.
 Indeed $f \in \lv{{T}_{p}}{l}$ and for any $\tau \in {B}_{f}$, ${f}^{\frown}{\langle \tau \rangle} \in {T}_{q}$, whence $\tau \in {\succc}_{{T}_{q}}(f)$.
 So ${B}_{f} \subseteq {\succc}_{{T}_{q}}(f)$, and so ${\succc}_{{T}_{q}}(f)$ is $\pr{k}{s}$-big.
 
 Finally for (4), fix $u \in \Pset(k)$ and $v \in \FIN$ such that $k \leq \min(v) \leq \max(v) < k'$.
 Let $w=u\cup v \cup s$.
 Note that $w \in \FIN$, $l = \max(w)$, and ${w}^{-} = u \cup v \cup {s}^{-}$.
 Now let $G$ be $(\V, \PP(\HHH))$-generic with $q \in G$.
 Since ${F}_{G} \in \[{T}_{q}\]$, ${F}_{G} \restrict l+1 \in {T}_{q}$, and so ${F}_{G} \restrict l+1 = {f}^{\frown}{\langle \tau \rangle}$, for some $f \in \lv{{T}_{p}}{l}$ and $\tau \in {B}_{f}$.
 Therefore, ${c}_{G}(u \cup v \cup s) = {c}_{G}(w) = {F}_{G}(\max(w))({w}^{-}) = {F}_{G}(l)({w}^{-}) = \tau({w}^{-}) = \tau(u \cup v \cup {s}^{-}) = 1-t$, by the definition of ${B}_{f}$.
\end{proof}
\begin{Lemma} \label{lem:1-tfusion}
 Suppose $p \in \PP(\HHH)$ and $\psi \in \BS$ is such that for all $k \in \omega$, $k \leq \psi(k)$.
 Let $t \in 2$.
 Then there exist $q \leq p$ and $X \in {\FIN}^{\[\omega\]}$ such that $\[X\] \in \HHH$ and for each $i \in \omega$, for each $u \in \Pset\left(\max\left(X(i)\right)+1\right)$ and each $v \in \FIN$, if $\max\left(X(i)\right)+1 \leq \min(v) \leq \max(v) < \psi\left(\max\left(X(i)\right)+1\right)$, then $q \; {\forces}_{\PP(\HHH)} \; {{\mathring{c}}_{G}\left(u \cup v \cup X(i+1)\right)=1-t}$.
\end{Lemma}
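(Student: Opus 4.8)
The plan is to build a decreasing fusion sequence $\seq{p}{i}{\in}{\omega}$ below $p$ together with a block sequence $X \in {\FIN}^{\[\omega\]}$ with $\[X\] \in \HHH$, and then to set $q = {\bigcap}_{i \in \omega}{{T}_{{p}_{i}}}$ by means of Lemma~\ref{lem:fusion}. The guiding idea is that $p_{i+1}$ should be produced from $p_i$ by a single application of Lemma~\ref{lem:extend1-t} that discharges the required forcing statement for the index $i$. Writing $X(i) = s_i$, I would obtain $p_{i+1}$ from $p_i$ by invoking Lemma~\ref{lem:extend1-t} with the block $s = s_{i+1}$, with $k = \max(s_i)+1$, with $k' = \psi(\max(s_i)+1)$, and with $l = \max(s_{i+1})$. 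Since $\psi(\max(s_i)+1) \ge \max(s_i)+1$ we have $k \le k'$, and provided $s_{i+1}$ is chosen with $\min(s_{i+1}) > \psi(\max(s_i)+1)$ we also get $k' < \max(s_{i+1}) = l$, so $k \le k' \le l$; the remaining hypothesis $s_{i+1} \in {H}_{{p}_{i}, k'}$ is arranged by drawing $s_{i+1}$ from $\{ s \in {H}_{{p}_{i}, \psi(\max(s_i)+1)} : \min(s) > \psi(\max(s_i)+1)\} \in \HHH$. Clause~(4) of Lemma~\ref{lem:extend1-t} then gives exactly $p_{i+1} \; {\forces}_{\PP(\HHH)} \; {{\mathring{c}}_{G}(u \cup v \cup X(i+1)) = 1-t}$ for all $u \in \Pset(\max(X(i))+1)$ and all $v \in \FIN$ with $\max(X(i))+1 \le \min(v) \le \max(v) < \psi(\max(X(i))+1)$, which is the conclusion sought for $i$.

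The difficulty is that ${p}_{i}$ depends on $s_0, \dotsc, s_i$, so the sets ${H}_{{p}_{i}, \psi(\max(s_i)+1)}$ from which the successive blocks must be drawn cannot be listed in advance; this circularity is precisely what the stability game resolves. I would therefore define a strategy $\Sigma$ for Player~I in ${\Game}^{\mathtt{Stab}}(\HHH)$ that carries the sequence $\seq{p}{i}{\in}{\omega}$ along as auxiliary data: set ${p}_{0} = p$ and $\Sigma(\emptyset) = \FIN$, and, after a partial run $\left\langle \pr{{A}_{j}}{{s}_{j}}: j \leq i \right\rangle$ has been played with auxiliary conditions ${p}_{0}, \dotsc, {p}_{i}$ already reconstructed, let Player~I play ${A}_{i+1} = \{ s \in {H}_{{p}_{i}, \psi(\max(s_i)+1)} : \min(s) > \psi(\max(s_i)+1)\} \in \HHH$ and, on any legal response ${s}_{i+1} \in {A}_{i+1}$, define ${p}_{i+1}$ to be a condition provided by Lemma~\ref{lem:extend1-t} for the parameters above. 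Since $\Sigma$ cannot be a winning strategy by Lemma~\ref{lem:stabgame}, there is a run $\left\langle \pr{{A}_{i}}{{s}_{i}}: i < \omega \right\rangle$ following $\Sigma$ that Player~II wins, whence $X = \{s_i : i < \omega\} \in {\FIN}^{\[\omega\]}$, $X(i) = s_i$, and $\[X\] \in \HHH$; the sequence $\seq{p}{i}{\in}{\omega}$ is recovered from the run through $\Sigma$.

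It then remains to verify the four hypotheses of Lemma~\ref{lem:fusion}. Hypothesis~(1) holds because each application of Lemma~\ref{lem:extend1-t} yields ${p}_{i+1} \leq {p}_{i}$, and~(2) is the assertion that Player~II won. Hypothesis~(3), that $X(i+1) \in {H}_{{p}_{i+1}, \max(X(i))+1}$, is Clause~(3) of Lemma~\ref{lem:extend1-t}. The crux is hypothesis~(4): here I would appeal to Clauses~(1) and~(2) of Lemma~\ref{lem:extend1-t}, which say that passing from ${p}_{i}$ to ${p}_{i+1}$ changes the successor sets only at the single level $\max(X(i+1))$. As the blocks are $\lbb$-increasing, for $i \le m < j$ the level $\max(X(i))$ lies strictly below every level $\max(X(m+1))$ at which pruning occurs between stages $i$ and $j$, so ${\succc}_{{T}_{{p}_{i}}}(e) = {\succc}_{{T}_{{p}_{j}}}(e)$ for every $e \in \lv{{T}_{{p}_{j}}}{\max(X(i))}$, which is even stronger than~(4). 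Lemma~\ref{lem:fusion} then yields $q = {\bigcap}_{i \in \omega}{{T}_{{p}_{i}}} \in \PP(\HHH)$, and since $q \leq {p}_{i+1}$ for every $i$, the forcing statements recorded above are inherited by $q$, while $q \le p_0 = p$. I expect the main obstacle to be bookkeeping the timing: the pruning handling index $i$ occurs at level $\max(X(i+1))$ and so can only be carried out after $s_{i+1}$ has been played. This is exactly why the stability game, whose moves leave the tree free to be pruned later, is the right vehicle, rather than the condition game ${\Game}^{\mathtt{Cond}}\left(\HHH\right)$, whose moves would already have frozen that level.
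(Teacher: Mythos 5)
Your proof is correct, but it resolves the central difficulty by a genuinely different device than the paper. Where you invoke the stability game ${\Game}^{\mathtt{Stab}}(\HHH)$ (Lemma \ref{lem:stabgame}) so that the blocks can be drawn adaptively from the sets ${H}_{{p}_{i}, \psi(\max({s}_{i})+1)}$ attached to the evolving conditions, the paper observes that no adaptive choice is needed: it applies Lemma \ref{lem:selectivediag} to the sets ${A}_{n} = {H}_{p, \psi(n+1)}$ --- which depend only on the original condition $p$ --- to fix $X$ with $\[X\] \in \HHH$ and $X(i+1) \in {H}_{p, \psi\left(\max\left(X(i)\right)+1\right)}$ once and for all, and then runs the same induction with Lemma \ref{lem:extend1-t} followed by Lemma \ref{lem:fusion}. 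The circularity you describe is in fact absent, and for precisely the reason you yourself exploit when verifying hypothesis (4) of Lemma \ref{lem:fusion}: since each application of Lemma \ref{lem:extend1-t} perturbs the tree only at the single level $\max\left(X(j+1)\right)$, clause (2) of that lemma shows that bigness of successor sets at all higher levels is untouched, so ${s}_{{j}^{\ast}+1} \in {H}_{p, {k}_{{j}^{\ast}}'}$ persists to ${s}_{{j}^{\ast}+1} \in {H}_{{p}_{j}, {k}_{{j}^{\ast}}'}$ for every $j \leq {j}^{\ast}$; the paper carries exactly this as an explicit induction invariant (its clause (4)) in place of your game. The trade-off is clear: your route eliminates the paper's most delicate bookkeeping clause, but at the cost of a heavier tool, since Lemma \ref{lem:stabgame} itself rests, via Lemma \ref{lem:htreebranch}, on Lemma \ref{lem:selectivediag}; the paper stays at the level of the diagonalization lemma and gains the extra information that the block sequence can be fixed non-adaptively before any pruning is done. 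Your closing remark that ${\Game}^{\mathtt{Cond}}\left(\HHH\right)$ would be the wrong vehicle --- because its rounds would freeze level $\max({s}_{i+1})$ before ${s}_{i+1}$ is known --- is accurate, but moot on the paper's route, which uses no game at all here.
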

\begin{proof}
 For each $n \in \omega$, let ${A}_{n} = {H}_{p, \psi(n+1)} \in \HHH$.
 By Lemma \ref{lem:selectivediag}, there exists $X \in {\FIN}^{\[\omega\]}$ such that $\[X\] \in \HHH$ and for each $i \in \omega$, $X(i+1) \in {A}_{\max\left(X(i)\right)}$.
 For ease of notation, let ${s}_{i} = X(i)$, ${l}_{i} = \max\left(X(i)\right)$, ${k}_{i} = \max\left(X(i)\right)+1$, and ${k}_{i}' = \psi\left(\max\left(X(i)\right)+1\right)$.
 Thus ${s}_{i+1} \in {H}_{p, {k}_{i}'}$, and ${k}_{i} \leq {k}_{i}' \leq {l}_{i+1} < \omega$.
 
 Lemma \ref{lem:fusion} will be used to obtain $q$.
 To this end, construct a sequence $\seq{p}{i}{\in}{\omega}$ satisfying the following:
 \begin{enumerate}
  \item
  $\forall i \in \omega\[{p}_{i} \in \PP(\HHH)\]$, $\forall i \in \omega\[{p}_{i+1} \leq {p}_{i}\]$, ${p}_{0} = p$;
  \item
  for each $i \in \omega$, ${s}_{i+1} \in {H}_{{p}_{i+1}, {k}_{i}}$;
  \item
  for each $i \leq j < \omega$ and for each $e \in \lv{{T}_{{p}_{j}}}{{l}_{i}}$, ${\succc}_{{T}_{{p}_{i}}}(e) \subseteq {\succc}_{{T}_{{p}_{j}}}(e)$;
  \item
  for each $j \leq {j}^{\ast} < \omega$, ${s}_{{j}^{\ast}+1} \in {H}_{{p}_{j}, {k}_{{j}^{\ast}}'}$;
  \item
  for each $i \in \omega$, for each $u \in \Pset\left({k}_{i}\right)$ and each $v \in \FIN$, if ${k}_{i} \leq \min(v) \leq \max(v) < {k}_{i}'$, then ${p}_{i+1} \; {\forces}_{\PP(\HHH)} \; {{\mathring{c}}_{G}\left(u \cup v \cup {s}_{i+1}\right)=1-t}$.
 \end{enumerate}
 Suppose for a moment that such a sequence has been constructed.
 Then by Lemma \ref{lem:fusion}, $q = {T}_{q} = {\bigcap}_{i \in \omega}{{T}_{{p}_{i}}} \in \PP(\HHH)$, $q \leq {p}_{0} = p$, and for each $i \in \omega$, since $q \leq {p}_{i+1}$, $q$ is as desired because of (5).
 
 The sequence $\seq{p}{i}{\in}{\omega}$ is constructed by induction.
 Here are some details.
 Define ${p}_{0} = p$ and notice that (4) is satisfied because for each ${j}^{\ast} < \omega$, ${s}_{{j}^{\ast}+1} \in {H}_{p, {k}_{{j}^{\ast}}'}$.
 Fix $j \in \omega$ and suppose that $\seq{p}{i}{\leq}{j}$ satisfying (1)--(5) is given.
 Applying (4) with $j = {j}^{\ast}$ yields ${s}_{j+1} \in {H}_{{p}_{j}, {k}_{j}'}$.
 Hence by Lemma \ref{lem:extend1-t}, there exists ${p}_{j+1} \leq {p}_{j}$ satisfying (1)--(4) of Lemma \ref{lem:extend1-t}.
 It is clear that (1), (2), and (5) are satisfied.
 Now for each $i \leq j$ and for each $e \in \lv{{T}_{{p}_{j+1}}}{{l}_{i}}$, $e \in \lv{{T}_{{p}_{j}}}{{l}_{i}}$, and since ${l}_{i} < {l}_{j+1}$, so ${\succc}_{{T}_{{p}_{i}}}(e) \subseteq {\succc}_{{T}_{{p}_{j}}}(e) = {\succc}_{{T}_{{p}_{j+1}}}(e)$ by the induction hypothesis and by (1) of Lemma \ref{lem:extend1-t}.
 This verifies (3).
 Next, suppose $j+1 \leq {j}^{\ast} < \omega$.
 It needs to be seen that ${s}_{{j}^{\ast}+1} \in {H}_{{p}_{j+1}, {k}_{{j}^{\ast}}'}$.
 By the induction hypothesis, ${s}_{{j}^{\ast}+1} \in {H}_{{p}_{j}, {k}_{{j}^{\ast}}'}$.
 Suppose that $f \in \lv{{T}_{{p}_{j+1}}}{{l}_{{j}^{\ast}+1}}$.
 It needs to be seen that ${\succc}_{{T}_{{p}_{j+1}}}(f)$ is $\pr{{k}_{{j}^{\ast}}'}{{s}_{{j}^{\ast}+1}}$-big.
 As $f \in \lv{{T}_{{p}_{j}}}{{l}_{{j}^{\ast}+1}}$, it is known that ${\succc}_{{T}_{{p}_{j}}}(f)$ is $\pr{{k}_{{j}^{\ast}}'}{{s}_{{j}^{\ast}+1}}$-big.
 By (2) of Lemma \ref{lem:extend1-t}, since ${l}_{{j}^{\ast}+1} > {l}_{j+1}$, ${\succc}_{{T}_{{p}_{j+1}}}(f) = {\succc}_{{T}_{{p}_{j}}}(f)$.
 Hence ${\succc}_{{T}_{{p}_{j+1}}}(f)$ is $\pr{{k}_{{j}^{\ast}}'}{{s}_{{j}^{\ast}+1}}$-big, as required for (4).
 This concludes the induction and the proof.
\end{proof}
\begin{Theorem} \label{thm:noresseruction}
 Suppose $\QQ$ is an $\BS$-bounding forcing.
 If $\PP(\HHH)$ completely embeds into $\QQ$, then
 \begin{align*}
  {\forces}_{\QQ}\;{``\text{there is no stable ordered-union ultrafilter on} \ \FIN \ \text{extending} \ \HHH''}.
 \end{align*}
\end{Theorem}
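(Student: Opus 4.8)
The plan is to derive the theorem from Lemma \ref{lem:noextension}, applied with $\PP = \QQ$, with $\GGG = \HHH$ (the ground-model ultrafilter, viewed as a subset of $\Pset(\FIN)$), and with $\mathring{c} = {\mathring{c}}_{G}$, the generic colouring of Definition \ref{def:CG}, reinterpreted as a $\QQ$-name via the complete embedding of $\PP(\HHH)$ into $\QQ$. Since ${\mathring{c}}_{G}$ depends only on the $\PP(\HHH)$-generic real ${F}_{G}$, any $\QQ$-condition forcing the induced $\PP(\HHH)$-generic below a given $p \in \PP(\HHH)$ forces exactly the statements about ${\mathring{c}}_{G}$ that $p$ forces in $\PP(\HHH)$; this is the only role of the embedding, and it is what lets us import the colouring control of Lemma \ref{lem:1-tfusion}. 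Granting the hypothesis of Lemma \ref{lem:noextension}, that lemma immediately yields ${\forces}_{\QQ}$ ``there is no stable ordered-union ultrafilter on $\FIN$ extending $\HHH$''.

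So I would fix $t \in 2$, a $\QQ$-name $\mathring{A}$ with ${\forces}_{\QQ}\;{\mathring{A} \subseteq \FIN}$, and $p \in \QQ$, and produce $q \leq p$ and $B \in \HHH$ witnessing alternative (1) or (2) of Lemma \ref{lem:noextension}. First I would introduce the $\QQ$-name $\mathring{\psi}$ sending $k$ to the least $l > k$ for which $\mathring{A}$ has an element $u$ with $k \leq \min(u) \leq \max(u) < l$ (and to $k+1$ if $\mathring{A}$ has no element with $\min \geq k$). As $\QQ$ is $\BS$-bounding there are $p' \leq p$ and a ground-model $\psi^{\ast} \in \BS$ with $k \leq \psi^{\ast}(k)$ for all $k$ and $p'\;{\forces}_{\QQ}\;{\forall k\[\mathring{\psi}(k) \leq \psi^{\ast}(k)\]}$. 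Next I would take a reduction $p_{0} \in \PP(\HHH)$ of $p'$ and feed $p_{0}$, $\psi^{\ast}$, $t$ into Lemma \ref{lem:1-tfusion}, obtaining $q_{0} \leq p_{0}$ and $X \in {\FIN}^{\[\omega\]}$ with $\[X\] \in \HHH$ such that for every $i$, every $u_{0} \subseteq \max(X(i))+1$, and every $v \in \FIN$ with $\max(X(i))+1 \leq \min(v) \leq \max(v) < \psi^{\ast}(\max(X(i))+1)$, the condition $q_{0}$ forces ${\mathring{c}}_{G}(u_{0} \cup v \cup X(i+1)) = 1-t$. Finally I would choose $q \leq p'$ in $\QQ$ forcing the $\PP(\HHH)$-generic below $q_{0}$, and set $B = \{s \in \[X\] : \min(s) > \max(X(0))\} \in \HHH$; discarding the lowest block guarantees that every $s \in B$ has its $\lbb$-largest block of the form $X(i+1)$ with $i \geq 0$.

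The dichotomy is then decided by boundedness of $\mathring{A}$. If some $q_{1} \leq q$ and some $k$ satisfy $q_{1}\;{\forces}_{\QQ}\;{\forall u \in \mathring{A}\[\min(u) < k\]}$, then $q_{1}\;{\forces}_{\QQ}\;{\forall s \in \mathring{A} \cap B\[\min(s) < k\]}$ and alternative (1) holds with $q_{1}$ in place of $q$. Otherwise, below every $r \leq q$ and for every $k$ some $r' \leq r$ forces an element of $\mathring{A}$ with $\min \geq k$. To verify (2), fix $s \in B$ and $r \leq q$, let $X(i+1)$ be the $\lbb$-largest block of $s$, and put $k = \max(X(i))+1$. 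Choose $r' \leq r$ forcing some element of $\mathring{A}$ of min at least $k$; since $r' \leq q \leq p'$ forces $\mathring{\psi}(k) \leq \psi^{\ast}(k)$, there is an element $u$ of $\mathring{A}$ with $k \leq \min(u) \leq \max(u) < \psi^{\ast}(k)$, which I would decide by some $r'' \leq r'$. Then $u$ may serve as the free gap-part $v$, while $u_{0} := s \setminus X(i+1) \subseteq \max(X(i))+1$, so $s \cup u = u_{0} \cup u \cup X(i+1)$ is of the controlled form; hence $r'' \leq q$ forces ${\mathring{c}}_{G}(s \cup u) = 1-t \neq t$, and $r''\;{\forces}_{\QQ}\;{u \in \mathring{A}}$, giving alternative (2).

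The hard part is the rigidity of Lemma \ref{lem:1-tfusion}: it pins down ${\mathring{c}}_{G}$ only on sets whose $\lbb$-largest block is one of the ground-model blocks $X(i+1)$, whereas the element $u$ extracted from $\mathring{A}$ is arbitrary. The device that overcomes this is to let $u$ play the role of the free gap-part $v$ rather than of the top block: $\BS$-bounding is used to pin down, in advance, a ground-model gap function $\psi^{\ast}$ bounding where the next element of $\mathring{A}$ above a given level can appear, thereby trapping $u$ in the gap below $X(i+1)$, and letting $X(i+1)$ be the already-present top block of $s \in \[X\]$ forces $s \cup u$ into the shape $u_{0} \cup v \cup X(i+1)$. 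The remaining delicacy is purely the bookkeeping of the complete embedding — taking $p_{0}$ to be a reduction of $p'$ so that $q_{0}$ and $p'$ stay compatible, and transferring the $\PP(\HHH)$-forced colouring facts to the conditions of $\QQ$ lying below $q$.
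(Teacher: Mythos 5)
Your proposal is correct and follows essentially the same route as the paper's proof: apply Lemma \ref{lem:noextension} to $\mathring{c} = {\pi}^{\ast}({\mathring{c}}_{G})$, use $\BS$-bounding to fix a ground-model gap function, take a reduction of the $\QQ$-condition, run Lemma \ref{lem:1-tfusion} in $\PP(\HHH)$, and let the element of $\mathring{A}$ trapped in the gap play the role of $v$ below the top block $X(i+1)$ of $s \in B$. The only differences are cosmetic — you define the bounding name unconditionally (with a default value) and postpone the case split on alternative (1) to the end, whereas the paper assumes (1) fails up front and then defines the name $\mathring{\varphi}$ — and your transfer of forced facts through the complete embedding is used in exactly the (correct) direction the paper uses it.
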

\begin{proof}
 Let $\pi: \PP(\HHH) \rightarrow \QQ$ be a complete embedding, and let ${\pi}^{\ast}$ denote the associated map from $\PP(\HHH)$-names to $\QQ$-names.
 Let $\mathring{c}$ denote ${\pi}^{\ast}\left({\mathring{c}}_{G}\right)$.
 Then ${\forces}_{\QQ}\;{\mathring{c}: \FIN \rightarrow 2}$.
 Lemma \ref{lem:noextension} will be used to obtain the desired conclusion.
 To this end, suppose $t \in 2$ and that $\mathring{A}$ is a $\QQ$-name such that ${\forces}_{\QQ}\;{\mathring{A} \subseteq \FIN}$.
 Let ${p}^{\ast} \in \QQ$ be given.
 Assume that there are no ${q}^{\ast} \leq {p}^{\ast}$ and $B \in \HHH$ such that (1) of Lemma \ref{lem:noextension} holds.
 
 Let $G$ be any $(\V, \QQ)$-generic filter with ${p}^{\ast} \in G$.
 Then $\mathring{A}\[G\] \subseteq \FIN$.
 In $\VG$, it must be the case that for every $k \in \omega$, there exists $s \in \mathring{A}\[G\]$ with $k \leq \min(s)$, for otherwise there would be a ${q}^{\ast} \in G$ such that ${q}^{\ast} \leq {p}^{\ast}$ and ${q}^{\ast}$ witnesses (1) of Lemma \ref{lem:noextension} with $B = \FIN \in \HHH$.
 Therefore there is a function $\varphi: \omega \rightarrow \omega$ such that for every $k \in \omega$, there exists $v \in \mathring{A}\[G\]$ with $k \leq \min(v) \leq \max(v) < \varphi(k)$.
 As this holds for every $(\V, \QQ)$-generic $G$ with ${p}^{\ast} \in G$, there is a $\QQ$-name $\mathring{\varphi}$ in $\V$ such that ${\forces}_{\QQ} \; {\mathring{\varphi}: \omega \rightarrow \omega}$ and ${p}^{\ast} \; {\forces}_{\QQ} \; {\forall k \in \omega \exists v \in \mathring{A}\[k \leq \min(v) \leq \max(v) < \mathring{\varphi}(k)\]}$.
 
 Since $\QQ$ is $\BS$-bounding, there exist ${p}^{\ast}_{1} \leq {p}^{\ast}$ and $\psi: \omega \rightarrow \omega$ in $\V$ such that ${p}^{\ast}_{1} \; {\forces}_{\QQ} \; {\forall k \in \omega\[\mathring{\varphi}(k) < \psi(k)\]}$.
 Let $p \in \PP(\HHH)$ be a reduction of ${p}^{\ast}_{1}$ with respect to the complete embedding $\pi$.
 Applying Lemma \ref{lem:1-tfusion} in $\V$, find $q \leq p$ and $X \in {\FIN}^{\[\omega\]}$ satisfying the conclusions of Lemma \ref{lem:1-tfusion}.
 Let $B = \left\{ s \in \[X\]: \min(s) > \max\left(X(0)\right) \right\} \in \HHH$.
 By the choice of $p$, $\pi(q)$ is compatible with ${p}^{\ast}_{1}$ in $\QQ$.
 Choose any ${q}^{\ast} \leq \pi(q), {p}^{\ast}_{1}$.
 To see that ${q}^{\ast} \leq {p}^{\ast}$ and $B \in \HHH$ satisfy (2) of Lemma \ref{lem:noextension}, fix some $s \in B$ and ${r}^{\ast} \leq {q}^{\ast}$.
 Then $\max(s) = \max\left(X(i+1)\right)$, for some $i \in \omega$.
 Let $u = s \cap \left(\max\left(X(i)\right)+1\right) \in \Pset\left(\max\left(X(i)\right)+1\right)$ and note $s = u \cup X(i+1)$.
 Since ${r}^{\ast} \leq {p}^{\ast}_{1} \leq {p}^{\ast}$, there exist ${r}^{\ast}_{1} \leq {r}^{\ast}$ and $v$ such that $\max\left(X(i)\right)+1 \leq \min(v)$, ${r}^{\ast}_{1} \; {\forces}_{\QQ} \; {v \in \mathring{A} \subseteq \FIN}$, and ${r}^{\ast}_{1} \; {\forces}_{\QQ} \; {\max(v) < \mathring{\varphi}\left(\max\left(X(i)\right)+1\right) < \psi\left(\max\left(X(i)\right)+1\right)}$.
 By the choice of $q$, $q \; {\forces}_{\PP(\HHH)} \; {{\mathring{c}}_{G}(s \cup v) = {\mathring{c}}_{G}(u \cup v \cup X(i+1)) = 1-t}$, and so
 \begin{align*}
  \pi(q) \; {\forces}_{\QQ} \; {\mathring{c}(s \cup v) = {\pi}^{\ast}\left({\mathring{c}}_{G}\right)(s \cup v) = 1-t}.
 \end{align*}
 Therefore, ${r}^{\ast}_{1} \; {\forces}_{\QQ} \; {\mathring{c}(s \cup v) = 1-t}$, as needed.
\end{proof}
\section{Preservation of all selective ultrafilters} \label{sec:preservesel}
It will be proved that $\PP(\HHH)$ preserves all selective ultrafilters from the ground model.
This is arguably the most intricate section of the paper.
Given a selective ultrafilter $\UUU$, the proof breaks down into three cases depending on whether $\UUU \; {\equiv}_{RK} \; {\HHH}_{\max}$, or $\UUU \; {\equiv}_{RK} \; \VVV$, for some $\VVV \notin {\C}_{0}(\HHH)$ and $\VVV \; {\not\equiv}_{RK} \; {\HHH}_{\max}$, or $\UUU \in {\C}_{1}(\HHH)$.
A bit of thought shows (and it will be shown) that these cases are exhaustive.
\begin{Def} \label{def:preservesU}
 Suppose $\UUU$ is an ultrafilter on $\omega$ and that $\PP$ is a forcing notion.
 $\PP$ is said to \emph{preserve $\UUU$} if ${\forces}_{\PP} \; {``\{A \subseteq \omega: \exists B \in \UUU\[B \subseteq A\]\} \ \text{is an ultrafilter on} \ \omega.''}$
 Unraveling the definitions, $\PP$ preserves $\UUU$ if and only if for every $p \in \PP$ and every $\PP$-name $\mathring{A}$ such that ${\forces}_{\PP} \; {\mathring{A} \subseteq \omega}$, there exist $q \leq p$ and $B \in \UUU$ such that $q \; {\forces}_{\PP} \; {B \subseteq \mathring{A}}$ or $q \; {\forces}_{\PP} \; {B \subseteq \omega \setminus \mathring{A}}$.
\end{Def}
\begin{Lemma} \label{lem:hmax1}
 Suppose $\mathring{x}$ is a $\PP(\HHH)$-name such that ${\forces}_{\PP(\HHH)}\;{\mathring{x}: \omega \rightarrow 2}$.
 For any $p \in \PP(\HHH)$, there exists $q \leq p$ such that there are $t \in 2$, $A \in \HHH$, $M \in {\HHH}_{\max}$, $\seq{A}{l}{\in}{M}$, and $\left\langle \left\langle {e}_{k}, {S}^{0}_{k}, {S}^{1}_{k} \right\rangle: k \in {f}_{\max}\[A\] \right\rangle$ such that:
 \begin{enumerate}
  \item
  ${e}_{k} \in \lv{{T}_{q}}{k}$, ${S}^{0}_{k}, {S}^{1}_{k} \subseteq {\succc}_{{T}_{q}}({e}_{k})$;
  \item
  for any $l \in M$, ${A}_{l} \in \HHH$, ${A}_{l} \subseteq A$, and for any $s \in {A}_{l}$ with $\max(s) = k$, ${S}^{t}_{k}$ is $\pr{l}{s}$-big;
  \item
  for any $l \in M$, for any $k \in {f}_{\max}\[{A}_{l}\]$, for any $\sigma \in {S}^{t}_{k}$,
  \begin{align*}
   {T}_{q}\langle {{e}_{k}}^{\frown}{\langle \sigma \rangle} \rangle \; {\forces}_{\PP(\HHH)} \; {\mathring{x}(k) = t}.
  \end{align*}
 \end{enumerate}
\end{Lemma}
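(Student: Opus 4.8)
The plan is to first replace $\mathring{x}$ by a finitely determined object via the continuous reading of names. Applying Lemma~\ref{lem:continuous1} to $\mathring{x}$ and $p$ yields $q_0 \leq p$, a block sequence $X \in {\FIN}^{\[\omega\]}$ with $\[X\] \in \HHH$ and $X(i+1) \in {H}_{q_0, \max(X(i))+1}$, together with a coherent family $\left\langle {\eta}_{i,e} \right\rangle$ such that ${T}_{q_0}\langle e \rangle \; {\forces}_{\PP(\HHH)} \; {\mathring{x}\restrict(\max(X(i))+1) = {\eta}_{i,e}}$ whenever $e \in \lv{{T}_{q_0}}{\max(X(i))+1}$. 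The candidate for $A$ will be $\[X'\]$ for a sub-block-sequence $X'$ of $X$ with $\[X'\] \in \HHH$ to be thinned out below; then ${f}_{\max}\[A\] = \{\max(X'(j)) : j \in \omega\}$, and these are exactly the $k$ at which $\mathring{x}(k)$ is decided \emph{one} level above $k$. It is precisely this that makes it possible to locate, for each such $k$, a node $e_k$ at level $k$ whose immediate successors already carry all the information about $\mathring{x}(k)$.

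For each relevant $k = \max(X(j))$ and each node $d \in \lv{{T}_{q_0}}{k}$, the reading induces a two-colouring ${c}_{d} : {\succc}_{{T}_{q_0}}(d) \to 2$ by ${c}_{d}(\sigma) = {\eta}_{j, {d}^{\frown}\langle \sigma \rangle}(k)$, and by coherence every $\sigma$ with ${c}_{d}(\sigma) = t$ satisfies ${T}_{q_0}\langle {d}^{\frown}\langle \sigma \rangle \rangle \; {\forces}_{\PP(\HHH)} \; {\mathring{x}(k) = t}$. Writing ${S}^{t}_{k} = \{\sigma \in {\succc}_{{T}_{q_0}}({e}_{k}) : {c}_{{e}_{k}}(\sigma) = t\}$ for a node $e_k$ yet to be selected, Clauses (1) and (3) of the conclusion are then automatic; the whole content of the lemma is Clause (2), namely that for a \emph{single} bit $t$ there are $M \in {\HHH}_{\max}$ and sets ${A}_{l} \in \HHH$ (for $l \in M$) with ${A}_{l} \subseteq A$ so that ${S}^{t}_{k}$ is $\pr{l}{s}$-big for every $l \in M$ and every $s \in {A}_{l}$ with $\max(s) = k$.

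To secure Clause (2) I would run a homogenisation governed simultaneously by $\HHH$ (over the sets $s$) and by ${\HHH}_{\max}$ (over the levels $l$). Since $q_0 \in \PP(\HHH)$, the set ${H}_{q_0, l} \in \HHH$ already guarantees that ${\succc}_{{T}_{q_0}}(d)$ is $\pr{l}{s}$-big for every node $d$ of level $\max(s)$ whenever $s \in {H}_{q_0, l}$. The key point I would isolate is that the \emph{bad} parameters, those $s$ for which \emph{neither} colour class of ${c}_{{e}_{k}}$ survives as a $\pr{l}{s}$-big set, cannot fill an $\HHH$-large set: the failure of both classes forces a rigid correlation between the colour and the value at the base coordinate ${s}^{-}$ (the $u = \emptyset$ instance of bigness, $\sigma \mapsto \sigma({s}^{-})$), and such a correlation cannot persist as $s$ ranges over an $\HHH$-large family without contradicting the genuine bigness of ${\succc}_{{T}_{q_0}}(d)$. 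Granting this, I would colour the block sequence $X$ and invoke the stable Ramsey property (Lemma~\ref{lem:stableramsey}) together with the diagonalisation of Lemma~\ref{lem:selectivediag} to fix one winning bit $t$, thin $X$ to $X'$ defining $A = \[X'\]$, choose the witnessing nodes $e_k$, and finally extract from ${\HHH}_{\max}$ the set $M$ of levels $l$ on which the surviving ${S}^{t}_{k}$ stays big, along with the nested certificates ${A}_{l} \in \HHH$. Once the data are exhibited no further pruning of the tree should be needed, so one expects $q = q_0$ to work; if a residual adjustment of bigness is required it can be absorbed by a fusion through Lemma~\ref{lem:fusion}.

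The main obstacle is exactly this bigness-preservation step. A two-colouring of a $\pr{l}{s}$-big set need \emph{not} admit a $\pr{l}{s}$-big monochromatic part --- the colouring $\sigma \mapsto \sigma({s}^{-})$ already defeats both classes --- so one cannot extract a big monochromatic subtree node by node, and no naive Hales--Jewett-style argument applies directly. The remedy is to exploit the slack built into Clause (2): discard the offending $s$ by shrinking ${A}_{l}$ inside $\HHH$, discard the offending $l$ by shrinking $M$ inside ${\HHH}_{\max}$, and control the unavoidable loss of bigness by the frozen-middle-block device of Lemmas~\ref{lem:bigrestriction} and~\ref{lem:extend1-t} together with the monotonicity of Lemma~\ref{lem:bigmonotone}. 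Making the two thinnings compatible --- one over $s$ and one over $l$ --- and verifying that the set of truly bad $s$ is non-large is the delicate combinatorial heart of the argument, and it is here that the specific interaction between $\HHH$ and ${\HHH}_{\max}$, rather than mere selectivity, must be used.
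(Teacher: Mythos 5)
Your outline coincides with the paper's own proof up to the crucial claim: Lemma \ref{lem:continuous1} gives $q \leq p$, $X$, and the ${\eta}_{i,e}$; the sets ${S}^{t}_{k}$ are read off from the $\eta$'s exactly as you describe; Clauses (1) and (3) are then automatic; a $3$-colouring of $\[X\]$ according to which of ${S}^{0}_{k}, {S}^{1}_{k}$ is $\pr{l}{s}$-big is homogenized using the stable Ramsey property; the bit $t$ and the set $M$ are extracted at the end with the ultrafilter ${\HHH}_{\max}$ (which enters only trivially there); and indeed no further pruning is needed --- the $q$ from Lemma \ref{lem:continuous1} works, and the paper even takes $A = \[X\]$ rather than thinning $X$. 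So everything reduces, as you say, to showing that the bad case --- neither colour class $\pr{l}{s}$-big --- cannot hold on an $\HHH$-large set of $s$.

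It is exactly there that your argument has a genuine gap. First, the failure of both classes does not force a ``rigid correlation between the colour and the value at ${s}^{-}$''; it only produces, for each bad $s$, a pair of patterns $\pr{{\sigma}_{s,0}}{{\sigma}_{s,1}} \in {2}^{\Pset(l)} \times {2}^{\Pset(l)}$ such that no $\tau$ of colour $0$ realizes ${\sigma}_{s,0}$ over ${s}^{-}$ and no $\tau$ of colour $1$ realizes ${\sigma}_{s,1}$ over ${s}^{-}$. More seriously, the contradiction cannot come from ``the genuine bigness of ${\succc}_{{T}_{q}}({e}_{k})$'' alone: $\pr{l}{s}$-bigness, applied to each bad $s$ separately, constrains a single $\tau$ only on sets of the form $u \cup {s}^{-}$ with $u \in \Pset(l)$, and if ${s}_{i} = Y(i) \cup Y({l}^{\ast}+1)$ and ${s}_{i'} = Y(i') \cup Y({l}^{\ast}+1)$ are two bad sets sharing the same maximum $k$, then ${s}^{-}_{i'}$ is \emph{not} of the form $u \cup {s}^{-}_{i}$, since $Y(i')$ is disjoint from ${s}_{i}$ and lies entirely above $l$. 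Hence no instance of bigness ties together the values of one $\tau$ at the two base coordinates ${s}^{-}_{i}$ and ${s}^{-}_{i'}$, which is precisely what a contradiction requires. The missing tool is Lemma \ref{lem:stepranscondition}: on an $\HHH$-large set $H$, property $({\ast}_{l, k})$ allows one to prescribe, with a \emph{single} $\tau \in {\succc}_{{T}_{q}}({e}_{k})$, arbitrary patterns over all base coordinates ${s}^{-}$ with $s \in H$, $\max(s) = k$, $\min(s) \geq l$, simultaneously. The paper's Claim \ref{claim:hmax1-1} then runs: take ${\left({2}^{{2}^{l}}\right)}^{2}+1$ bad sets ${s}_{i} = Y(i) \cup Y({l}^{\ast}+1)$ with the same maximum (this is where the union structure of $\[Y\]$ enters), pigeonhole two indices $i < i'$ carrying the same witness pair $\pr{{\sigma}_{0}}{{\sigma}_{1}}$, and use $({\ast}_{l, k})$ to find one $\tau$ realizing ${\sigma}_{0}$ over ${s}^{-}_{i}$ and ${\sigma}_{1}$ over ${s}^{-}_{i'}$; whichever colour $\tau$ carries contradicts one of the two witnesses. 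The lemmas you cite in its place (Lemmas \ref{lem:bigmonotone}, \ref{lem:bigrestriction}, \ref{lem:extend1-t}) freeze values on middle blocks and cannot substitute for this simultaneous-realization property, so as written the combinatorial heart of your proof is missing.
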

\begin{proof}
 Apply Lemma \ref{lem:continuous1} to find $q$, $X$, and $\left\langle {\eta}_{i, e}: i \in \omega \wedge e \in \lv{{T}_{q}}{\max\left(X(i)\right)+1} \right\rangle$ satisfying (1)--(4) of that lemma.
 For any $k \in {f}_{\max}\[\[X\]\]$, there exists a unique ${j}_{k} \in \omega$ with $k = \max\left(X({j}_{k})\right)$.
 Choose ${e}_{k} \in \lv{{T}_{q}}{k}$.
 For $t \in 2$, let ${S}^{t}_{k} = \left\{ \sigma \in {\succc}_{{T}_{q}}({e}_{k}): {\eta}_{{j}_{k}, {{e}_{k}}^{\frown}{\langle \sigma \rangle}}(k) = t \right\}$.
 Observe ${S}^{0}_{k} \cup {S}^{1}_{k} = {\succc}_{{T}_{q}}({e}_{k})$.
 
 Fix $l \in \omega$.
 Define ${c}_{l}: \[X\] \rightarrow 3$ as follows.
 Given $s \in \[X\]$, let $k = \max(s) \in {f}_{\max}\[\[X\]\]$.
 If ${S}^{0}_{k}$ is $\pr{l}{s}$-big, then set ${c}_{l}(s) = 0$.
 If ${S}^{0}_{k}$ is not $\pr{l}{s}$-big, but ${S}^{1}_{k}$ is $\pr{l}{s}$-big, then set ${c}_{l}(s) = 1$.
 If neither ${S}^{0}_{k}$ nor ${S}^{1}_{k}$ is $\pr{l}{s}$-big, then set ${c}_{l}(s)=2$.
 Find ${A}_{l} \in \HHH$ such that ${A}_{l} \subseteq \[X\]$ and ${c}_{l}$ is constant on ${A}_{l}$.
 \begin{Claim} \label{claim:hmax1-1}
  ${c}_{l}$ is not constantly $2$ on ${A}_{l}$.
 \end{Claim}
 \begin{proof}
  Suppose for a contradiction that it is constantly $2$.
  Applying Lemma \ref{lem:stepranscondition} to $q$, fix $H \in \HHH$ satisfying the conclusion of that lemma.
  Fix ${k}_{0} \in \omega$ such that $l < {k}_{0}$ and for all $k \in {f}_{\max}\[H\]$ with $k \geq {k}_{0}$, (${\ast}_{l, k}$) of Lemma \ref{lem:stepranscondition} holds.
  Write ${l}^{\ast} = {\left({2}^{{2}^{l}}\right)}^{2}$. Choose $Y \in {\FIN}^{\[{l}^{\ast}+2\]}$ such that ${k}_{0} \leq \min\left(Y(0)\right)$ and $\[Y\] \subseteq {A}_{l} \cap H$.
  Let $k = \max\left(Y({l}^{\ast}+1)\right) \in {f}_{\max}\[{A}_{l}\] \cap {f}_{\max}\[H\] \subseteq {f}_{\max}\[\[X\]\]$.
  Note $l < {k}_{0} \leq k$.
  For each $i < {l}^{\ast}+1$, define ${s}_{i} = Y(i) \cup Y({l}^{\ast}+1) \in {A}_{l} \cap H$.
  Observe that $l < {k}_{0} \leq \min({s}_{i})$ and $\max({s}_{i}) = k$, and so ${s}^{-}_{i} \in H\[l, k\]$.
  Since ${c}_{l}({s}_{i}) = 2$, there exists $\pr{{\sigma}_{i, 0}}{{\sigma}_{i, 1}} \in {2}^{\Pset(l)} \times {2}^{\Pset(l)}$ such that $\forall t \in 2 \forall \tau \in {S}^{t}_{k}\exists u \in \Pset(l)\[ {\sigma}_{i, t}(u) \neq \tau(u \cup {s}^{-}_{i}) \]$.
  There must exist $i < i' < {l}^{\ast}+1$ with $\pr{{\sigma}_{i, 0}}{{\sigma}_{i, 1}} = \pr{{\sigma}_{i', 0}}{{\sigma}_{i', 1}} = \pr{{\sigma}_{0}}{{\sigma}_{1}}$.
  Let $g: H\[l, k\] \rightarrow {2}^{\Pset(l)}$ be any function so that $g({s}^{-}_{i}) = {\sigma}_{0}$ and $g({s}^{-}_{i'}) = {\sigma}_{1}$.
  Applying (${\ast}_{l, k}$) to ${e}_{k} \in \lv{{T}_{q}}{k}$ and $g$, find $\tau \in {\succc}_{{T}_{q}}({e}_{k})$ so that $\forall u \in \Pset(l)\[\tau(u \cup {s}^{-}_{i}) = g({s}^{-}_{i})(u) = {\sigma}_{0}(u) \wedge \tau(u \cup {s}^{-}_{i'}) = g({s}^{-}_{i'})(u) = {\sigma}_{1}(u)\]$.
  Either $\tau \in {S}^{0}_{k}$ or $\tau \in {S}^{1}_{k}$.
  If $\tau \in {S}^{0}_{k}$, then this contradicts the choice of ${\sigma}_{i, 0}$ because $\exists u \in \Pset(l)\[{\sigma}_{0}(u) \neq \tau(u \cup {s}^{-}_{i})\]$.
  If $\tau \in {S}^{1}_{k}$, then this contradicts the choice of ${\sigma}_{i', 1}$ because $\exists u \in \Pset(l)\[{\sigma}_{1}(u) \neq \tau(u \cup {s}^{-}_{i'})\]$.
  The contradiction proves the claim.
 \end{proof}
 Therefore, there exists ${t}_{l} \in 2$ such that ${c}_{l}$ is constantly ${t}_{l}$ on ${A}_{l}$.
 As this was for every $l \in \omega$, there exists $t \in 2$ such that $M = \{l \in \omega: {t}_{l} = t\} \in {\HHH}_{\max}$.
 Define $A = \[X\] \in \HHH$.
 For each $l \in M$, ${A}_{l} \in \HHH$, ${A}_{l} \subseteq A$, and for any $s \in {A}_{l}$ with $k = \max(s)$, ${S}^{t}_{k}$ is $\pr{l}{s}$-big because ${c}_{l}(s) = t$.
 For any $l \in M$, any $k \in {f}_{\max}\[{A}_{l}\] \subseteq {f}_{\max}\[\[X\]\]$, and any $\sigma \in {S}^{t}_{k}$, ${T}_{q}\langle {{e}_{k}}^{\frown}{\langle \sigma \rangle} \rangle \; {\forces}_{\PP(\HHH)} \; {\mathring{x}(k) = {\eta}_{{j}_{k}, {{e}_{k}}^{\frown}{\langle \sigma \rangle}}(k) = t}$.
 Thus (1)--(3) are satisfied.
\end{proof}
The next lemma is true for any partial order.
It has a simple proof, which is left to the reader.
\begin{Lemma} \label{lem:densesplit}
 Suppose $D \subseteq \PP(\HHH)$ is dense and $D = {D}_{0} \cup {D}_{1}$.
 Then for any ${p}^{\ast} \in \PP(\HHH)$, there exist $p \leq {p}^{\ast}$ and $t \in 2$ such that ${D}_{t}$ is dense below $p$.
\end{Lemma}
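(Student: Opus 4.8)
The plan is to argue by a straightforward case split on whether ${D}_{0}$ is already dense below ${p}^{\ast}$. If it is, then setting $p = {p}^{\ast}$ and $t = 0$ satisfies the conclusion immediately, so the only case requiring work is when ${D}_{0}$ fails to be dense below ${p}^{\ast}$. The whole content of the lemma is the observation that driving one half of the partition out below some condition forces the other half to remain dense there.

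In that remaining case, I would unwind the negation of the statement ``${D}_{0}$ is dense below ${p}^{\ast}$''. By definition, density of ${D}_{0}$ below ${p}^{\ast}$ asserts that every $q \leq {p}^{\ast}$ has an extension $r \leq q$ with $r \in {D}_{0}$; its failure therefore produces a single witnessing condition $p \leq {p}^{\ast}$ with the property that \emph{no} $r \leq p$ belongs to ${D}_{0}$. I would then take this $p$ together with $t = 1$ and claim that ${D}_{1}$ is dense below $p$. To check this, fix an arbitrary $r \leq p$. Since $D$ is dense in $\PP(\HHH)$, there is some $s \leq r$ with $s \in D = {D}_{0} \cup {D}_{1}$. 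But $s \leq r \leq p$, so by the choice of $p$ we have $s \notin {D}_{0}$, and hence $s \in {D}_{1}$. As $r \leq p$ was arbitrary, this shows every condition below $p$ has an extension in ${D}_{1}$, which is exactly the assertion that ${D}_{1}$ is dense below $p$.

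I do not expect any genuine obstacle here, since the argument invokes only the defining property of the dense set $D$ and elementary reasoning about the partial order; in particular it is valid for any partial order, as claimed. The one point meriting a moment of care is the logical negation of ``dense below'': it yields a single condition $p$ below which ${D}_{0}$ is entirely absent, rather than a statement quantified over all extensions, and it is precisely this absence that lets the density of $D$ push every further extension into ${D}_{1}$.
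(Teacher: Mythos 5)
Your proof is correct and is essentially the paper's own argument, merely organized as a direct case split rather than a proof by contradiction: the paper assumes the conclusion fails for both $t$, extracts a condition below which ${D}_{0}$ is absent and then one below which ${D}_{1}$ is also absent, and contradicts the density of $D$, which is exactly your observation read contrapositively. The key point you isolate --- that negating ``dense below'' yields a single condition below which ${D}_{0}$ never appears, so density of $D$ forces every further extension into ${D}_{1}$ --- is the same one the paper uses.
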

\begin{Lemma} \label{lem:hmax2}
 Suppose $\mathring{x}$ is a $\PP(\HHH)$-name such that ${\forces}_{\PP(\HHH)} \; {\mathring{x}: \omega \rightarrow 2}$.
 Then for any ${p}^{\ast} \in \PP(\HHH)$, there exists $q \leq {p}^{\ast}$ such that there are $t \in 2$ and $X \in {\FIN}^{\[\omega\]}$ such that $\[X\] \in \HHH$ and for every $i \in \omega$ and every $e \in \lv{{T}_{q}}{\max\left(X(i)\right)+1}$, there exists ${e}^{\ast} \in \lv{{T}_{q}}{\max\left(X(i+1)\right)}$ so that $e \subseteq {e}^{\ast}$, there exists $S \subseteq {\succc}_{{T}_{q}}({e}^{\ast})$ such that $S$ is $\pr{\max\left(X(i)\right)+1}{X(i+1)}$-big, and for each $\sigma \in S$,
 \begin{align*}
  {T}_{q}\langle {{e}^{\ast}}^{\frown}{\langle \sigma \rangle} \rangle \; {\forces}_{\PP(\HHH)} \; {\mathring{x}(\max\left(X(i+1)\right)) = t}.
 \end{align*}
\end{Lemma}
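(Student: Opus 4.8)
The plan is to iterate Lemma~\ref{lem:hmax1} along a fusion governed by the game ${\Game}^{\mathtt{Cond}}\left(\HHH\right)$ of Definition~\ref{def:GcondH}. The first task is to fix, once and for all, a single value $t \in 2$ that will work at every level. I would let $D$ be the set of $q \in \PP(\HHH)$ for which there exist $t$, $A$, $M$, and the further data exactly as in the conclusion of Lemma~\ref{lem:hmax1} (with $q$ itself in place of the condition produced there), and for $t \in 2$ let ${D}_{t}$ collect those $q$ whose witnessing value is $t$. Since Lemma~\ref{lem:hmax1} says $D = {D}_{0} \cup {D}_{1}$ is dense, Lemma~\ref{lem:densesplit} furnishes $p \leq {p}^{\ast}$ and $t \in 2$ with ${D}_{t}$ dense below $p$. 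This $t$ is the value sought in the conclusion, and every condition built below will be $\leq p$, so a witness lying in ${D}_{t}$ is always available.

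Next I would produce $X$ and $q$ from a run of ${\Game}^{\mathtt{Cond}}\left(\HHH\right)$ in which Player~I follows a strategy $\Sigma$ and loses; such a run exists by Lemma~\ref{lem:condgame}. The strategy $\Sigma$ begins with $\Sigma(\emptyset) = \pr{p}{\FIN}$. Given a partial run $\left\langle \pr{\pr{{p}_{j}}{{A}_{j}}}{{s}_{j}}: j \leq i \right\rangle$ following $\Sigma$, write ${l}^{\ast} = \max({s}_{i})+1$. For each of the finitely many $e \in \lv{{T}_{{p}_{i}}}{{l}^{\ast}}$, use the density of ${D}_{t}$ below $p$ to pick ${p}_{i, e} \leq {T}_{{p}_{i}}\langle e \rangle$ with ${p}_{i, e} \in {D}_{t}$, and let ${A}^{e} \in \HHH$, ${M}_{e} \in {\HHH}_{\max}$, $\left\langle {A}^{e}_{l}: l \in {M}_{e} \right\rangle$, and $\left\langle {e}^{e}_{k}, {S}^{e}_{k}: k \in {f}_{\max}\[{A}^{e}\] \right\rangle$ witness that ${p}_{i, e}$ meets Lemma~\ref{lem:hmax1} with value $t$, where ${S}^{e}_{k}$ is the big set ${S}^{t}_{k}$ provided there. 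Player~I then plays ${p}_{i+1} = {\bigcup}_{e}{{T}_{{p}_{i, e}}}$, which is legitimate by Lemma~\ref{lem:amalgam} and Clause~(3) of Definition~\ref{def:GcondH}, followed by
\begin{align*}
 {A}_{i+1} = {\bigcap}_{e \in \lv{{T}_{{p}_{i}}}{{l}^{\ast}}}{{B}_{e}} \cap {H}_{{p}_{i+1}, {l}^{\ast}} \in \HHH, \ \text{where} \ {B}_{e} = \bigcup\left\{ {A}^{e}_{l}: l \in {M}_{e} \wedge l \geq {l}^{\ast} \right\}.
\end{align*}
Each ${B}_{e} \in \HHH$ because ${M}_{e}$ is infinite and ${A}^{e}_{l} \subseteq {B}_{e}$ for any $l \in {M}_{e}$ with $l \geq {l}^{\ast}$, so ${A}_{i+1} \in \HHH$ as a finite intersection of members of $\HHH$.

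To finish I would fix a run $\left\langle \pr{\pr{{p}_{i}}{{A}_{i}}}{{s}_{i}}: i \in \omega \right\rangle$ following $\Sigma$ that Player~II wins, and set $X = \left\{ {s}_{i}: i \in \omega \right\}$, so that $X(i) = {s}_{i}$, $\[X\] \in \HHH$, and $q = {T}_{q} = {\bigcap}_{i}{{T}_{{p}_{i}}} \in \PP(\HHH)$ with $q \leq p \leq {p}^{\ast}$. To verify the conclusion, I would fix $i \in \omega$ and $e \in \lv{{T}_{q}}{\max\left(X(i)\right)+1} \subseteq \lv{{T}_{{p}_{i}}}{{l}^{\ast}}$, and put $k = \max\left(X(i+1)\right) = \max({s}_{i+1})$. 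Since ${s}_{i+1} \in {A}_{i+1} \subseteq {B}_{e}$, there is ${l}_{e} \in {M}_{e}$ with ${l}_{e} \geq {l}^{\ast}$ and ${s}_{i+1} \in {A}^{e}_{{l}_{e}}$; hence $k \in {f}_{\max}\[{A}^{e}_{{l}_{e}}\]$, the node ${e}^{\ast} = {e}^{e}_{k}$ lies at level $k$ and satisfies $e \subseteq {e}^{\ast}$, and $S = {S}^{e}_{k}$ is $\pr{{l}_{e}}{{s}_{i+1}}$-big, hence $\pr{{l}^{\ast}}{X(i+1)}$-big by Lemma~\ref{lem:bigmonotone}. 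Because $\dom\left({e}^{\ast}\right) = k \leq \max({s}_{i+1})+1$ and $\dom\left({{e}^{\ast}}^{\frown}{\langle \sigma \rangle}\right) = k+1 \leq \max({s}_{i+1})+1$ for every $\sigma \in S$, Lemma~\ref{lem:winning1} applied at stage $i+1$ places ${e}^{\ast}$ and every ${{e}^{\ast}}^{\frown}{\langle \sigma \rangle}$ into ${T}_{q}$, so $S \subseteq {\succc}_{{T}_{q}}({e}^{\ast})$. Finally, every node of ${T}_{q}$ extending $e$ lies in ${T}_{{p}_{i, e}}$, whence ${T}_{q}\langle {{e}^{\ast}}^{\frown}{\langle \sigma \rangle} \rangle \leq {T}_{{p}_{i, e}}\langle {{e}^{\ast}}^{\frown}{\langle \sigma \rangle} \rangle$, and the latter forces $\mathring{x}(k) = t$ by Clause~(3) of Lemma~\ref{lem:hmax1}, which is exactly the required statement.

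The hard part will be the interplay between the level at which Lemma~\ref{lem:hmax1} guarantees bigness and the level ${l}^{\ast} = \max\left(X(i)\right)+1$ demanded by the conclusion. Lemma~\ref{lem:hmax1} only controls bigness at the auxiliary levels $l \in {M}_{e} \in {\HHH}_{\max}$, so ${l}^{\ast}$ itself will typically not be among them; the resolution is to absorb $X(i+1)$ into ${B}_{e} = \bigcup\{{A}^{e}_{l}: l \in {M}_{e}, l \geq {l}^{\ast}\}$ and to descend from ${l}_{e}$ to ${l}^{\ast}$ by the monotonicity of bigness (Lemma~\ref{lem:bigmonotone}), with ${M}_{e}$ infinite being exactly what makes ${B}_{e} \in \HHH$. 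The second delicate point will be that the witnessing node ${e}^{e}_{k}$ and its selected successors must survive into $q = \bigcap_{i} {T}_{{p}_{i}}$; this is why they are arranged to sit at levels $\leq \max({s}_{i+1})+1$, where Lemma~\ref{lem:winning1} protects them. Threading the game ${\Game}^{\mathtt{Cond}}\left(\HHH\right)$ so that $\[X\] \in \HHH$ and $q \in \PP(\HHH)$ come for free, while the decisions remain valid for whatever ${s}_{i+1} \in {A}_{i+1}$ Player~II selects, is what makes the bookkeeping close.
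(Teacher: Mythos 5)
Your proposal is correct and follows essentially the same route as the paper's own proof: fix $t$ by combining Lemma \ref{lem:hmax1} with Lemma \ref{lem:densesplit}, run a strategy in ${\Game}^{\mathtt{Cond}}\left(\HHH\right)$ that attaches Lemma \ref{lem:hmax1} witnesses below each node at level $\max({s}_{i})+1$, and extract the conclusion from a run lost by Player I using Lemma \ref{lem:winning1} for survival into $q$ and Lemma \ref{lem:bigmonotone} to lower the bigness parameter. The only deviation is bookkeeping: where you form ${B}_{e} = \bigcup\left\{{A}^{e}_{l}: l \in {M}_{e} \wedge l \geq {l}^{\ast}\right\}$ and descend from a per-element level ${l}_{e}$, the paper fixes a single $k' \in \bigcap_{e}{{M}_{e}}$ with $k' \geq {l}^{\ast}$ and restricts to $\left\{s: \min(s) > k'\right\}$, which has the minor advantage of making explicit that the bigness parameter stays below $\min(s)$ (and in particular below $\max(s)$, as Lemma \ref{lem:bigmonotone} requires), a fact your version obtains only implicitly from the well-formedness of Clause (2) of Lemma \ref{lem:hmax1}.
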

\begin{proof}
 For $t \in 2$, let ${D}_{t}$ be the collection of all $q \in \PP(\HHH)$ for which there exist $A \in \HHH$, $M \in {\HHH}_{\max}$, $\seq{A}{l}{\in}{M}$, and $\left\langle \left\langle {e}_{k}, {S}^{0}_{k}, {S}^{1}_{k} \right\rangle: k \in {f}_{\max}\[A\] \right\rangle$ such that (1)--(3) of Lemma \ref{lem:hmax1} are satisfied w.r.t.\@ $t$.
 By Lemma \ref{lem:hmax1}, ${D}_{0} \cup {D}_{1}$ is dense.
 Hence by Lemma \ref{lem:densesplit}, there exist $p \leq {p}^{\ast}$ and $t \in 2$ such that ${D}_{t}$ is dense below $p$.
 
 Now define a strategy $\Sigma$ for Player I in ${\Game}^{\mathtt{Cond}}\left(\HHH\right)$ as follows.
 ${p}_{0} = p \in \PP(\HHH)$ and ${A}_{0} = \FIN \in \HHH$.
 Define $\Sigma(\emptyset) = \pr{{p}_{0}}{{A}_{0}}$.
 Suppose that $i \in \omega$ and that $\left\langle \pr{\pr{{p}_{j}}{{A}_{j}}}{{s}_{j}}: j \leq i \right\rangle$ is a partial run of ${\Game}^{\mathtt{Cond}}\left(\HHH\right)$ in which Player I has followed $\Sigma$.
 Let $k = \max({s}_{i})+1$, for ease of notation.
 For any $e \in \lv{{T}_{{p}_{i}}}{k}$, ${T}_{{p}_{i}}\langle e \rangle \leq {p}_{i} \leq {p}_{0} = p$.
 Since ${D}_{t}$ is dense below $p$, there exists ${p}_{i, e} \leq {T}_{{p}_{i}}\langle e \rangle$ such that there are ${A}_{e} \in \HHH$, ${M}_{e} \in {\HHH}_{\max}$, $\left\langle {A}_{e, l}: l \in {M}_{e} \right\rangle$, and $\left\langle \left\langle {h}_{e, l}, {S}^{0}_{e, l}, {S}^{1}_{e, l} \right\rangle: l \in {f}_{\max}\[{A}_{e}\] \right\rangle$ satisfying (1)--(3) of Lemma \ref{lem:hmax1}.
 Choose $k' \in \bigcap\{{M}_{e}: e \in \lv{{T}_{{p}_{i}}}{k}\} \in {\HHH}_{\max}$ with $k \leq k'$.
 Let ${A}^{\ast} = \bigcap\{{A}_{e, k'}: e \in \lv{{T}_{{p}_{i}}}{k}\} \in \HHH$ and let ${A}_{i+1} = \{s \in {A}^{\ast}: \min(s) > k'\} \in \HHH$.
 Define ${p}_{i+1} = {T}_{{p}_{i+1}} = \bigcup{\left\{ {T}_{{p}_{i, e}}: e \in \lv{{T}_{{p}_{i}}}{k} \right\}}$.
 Note that ${p}_{i+1} \in \PP(\HHH)$ by Lemma \ref{lem:amalgam}.
 Define $\Sigma\left(\left\langle \pr{\pr{{p}_{j}}{{A}_{j}}}{{s}_{j}}: j \leq i \right\rangle\right) = \pr{{p}_{i+1}}{{A}_{i+1}}$.
 Observe that if $s \in {A}_{i+1}$, then for any $e \in \lv{{T}_{{p}_{i}}}{k}$, $s \in {A}_{e, k'} \subseteq {A}_{e}$.
 Letting $l = \max(s) \in {f}_{\max}\[{A}_{e, k'}\] \subseteq {f}_{\max}\[{A}_{e}\]$, ${h}_{e, l} \in \lv{{T}_{{p}_{i, e}}}{l}$, and since $\dom(e) = k \leq k' < \min(s) \leq \max(s) = l = \dom({h}_{e, l})$, $e \subseteq {h}_{e, l}$.
 Further, ${S}^{t}_{l} \subseteq {\succc}_{{T}_{{p}_{i, e}}}({h}_{e, l})$ and ${S}^{t}_{l}$ is $\pr{k'}{s}$-big.
 For any $\sigma \in {S}^{t}_{l}$, ${T}_{{p}_{i, e}}\langle {{h}_{e, l}}^{\frown}{\langle \sigma \rangle} \rangle \; {\forces}_{\PP(\HHH)} \; {\mathring{x}(l) = t}$.
 Note that ${h}_{e, l} \in \lv{{T}_{{p}_{i+1}}}{l}$ and that ${\succc}_{{T}_{{p}_{i+1}}}({h}_{e, l}) = {\succc}_{{T}_{{p}_{i, e}}}({h}_{e, l})$.
 Therefore, ${S}^{t}_{l} \subseteq {\succc}_{{T}_{{p}_{i+1}}}({h}_{e, l})$ and ${S}^{t}_{l}$ is $\pr{k'}{s}$-big.
 By Lemma \ref{lem:bigmonotone}, ${S}^{t}_{l}$ is $\pr{k}{s}$-big.
 Also since for every $\sigma \in {S}^{t}_{l}$, ${T}_{{p}_{i+1}}\langle {{h}_{e, l}}^{\frown}{\langle \sigma \rangle} \rangle \leq {T}_{{p}_{i, e}}\langle {{h}_{e, l}}^{\frown}{\langle \sigma \rangle} \rangle$, ${T}_{{p}_{i+1}}\langle {{h}_{e, l}}^{\frown}{\langle \sigma \rangle} \rangle \; {\forces}_{\PP(\HHH)} \; {\mathring{x}(l) = t}$.
 The completes the definition of $\Sigma$.
 
 Since $\Sigma$ is not a winning strategy for Player I, there is a run $\left\langle \pr{\pr{{p}_{i}}{{A}_{i}}}{{s}_{i}}: i < \omega \right\rangle$ of ${\Game}^{\mathtt{Cond}}\left(\HHH\right)$ in which Player I has followed $\Sigma$ and lost.
 Then $X = \{{s}_{i}: i < \omega\} \in {\FIN}^{\[\omega\]}$ and $\[X\] \in \HHH$.
 Furthermore, $X(i) = {s}_{i}$, for all $i \in \omega$.
 Again, for ease of notation, denote $\max({s}_{i})+1$ by ${k}_{i}$.
 Let $q = {T}_{q} = {\bigcap}_{i \in \omega}{{T}_{{p}_{i}}}$.
 Then $q \in \PP(\HHH)$ and $q \leq {p}_{0} = p$.
 Consider $i \in \omega$ and $e \in \lv{{T}_{q}}{{k}_{i}}$.
 Then $e \in \lv{{T}_{{p}_{i}}}{{k}_{i}}$.
 Since ${s}_{i+1} \in {A}_{i+1}$, then as noted above, letting $l = \max({s}_{i+1}) = \max\left(X(i+1)\right)$, there exists ${e}^{\ast} \in \lv{{T}_{{p}_{i+1}}}{l}$ such that $e \subseteq {e}^{\ast}$, there exists $S \subseteq {\succc}_{{T}_{{p}_{i+1}}}({e}^{\ast})$ such that $S$ is $\pr{{k}_{i}}{{s}_{i+1}}$-big, and for every $\sigma \in S$, ${T}_{{p}_{i+1}}\langle {{e}^{\ast}}^{\frown}{\langle \sigma \rangle} \rangle \; {\forces}_{\PP(\HHH)} \; {\mathring{x}(l) = t}$.
 As ${e}^{\ast} \in {T}_{{p}_{i+1}}$ and $\dom({e}^{\ast}) = l \leq \max({s}_{i+1})+1$, ${e}^{\ast} \in \lv{{T}_{q}}{\max\left(X(i+1)\right)}$ by Lemma \ref{lem:winning1}.
 Similarly, if $\sigma \in {\succc}_{{T}_{{p}_{i+1}}}({e}^{\ast})$, then ${{e}^{\ast}}^{\frown}{\langle \sigma \rangle} \in {T}_{{p}_{i+1}}$ and $\dom\left({{e}^{\ast}}^{\frown}{\langle \sigma \rangle}\right) = l+1 = \max({s}_{i+1})+1$, ${{e}^{\ast}}^{\frown}{\langle \sigma \rangle} \in {T}_{q}$, whence $\sigma \in {\succc}_{{T}_{q}}({e}^{\ast})$.
 Therefore, $S \subseteq {\succc}_{{T}_{{p}_{i+1}}}({e}^{\ast}) \subseteq {\succc}_{{T}_{q}}({e}^{\ast})$ and $S$ is $\pr{\max\left(X(i)\right)+1}{X(i+1)}$-big.
 Finally, for any $\sigma \in S$, ${T}_{q}\langle {{e}^{\ast}}^{\frown}{\langle \sigma \rangle} \rangle \leq {T}_{{p}_{i+1}}\langle {{e}^{\ast}}^{\frown}{\langle \sigma \rangle} \rangle$, and so ${T}_{q}\langle {{e}^{\ast}}^{\frown}{\langle \sigma \rangle} \rangle \; {\forces}_{\PP(\HHH)} \; {\mathring{x}\left(\max\left(X(i+1)\right)\right) = t}$.
 This is as required.
\end{proof}
\begin{Lemma} \label{lem:bigdecision}
 Suppose $\mathring{x}$ is a $\PP(\HHH)$-name such that ${\forces}_{\PP(\HHH)} \; {\mathring{x}: \omega \rightarrow 2}$.
 Suppose there are $p, t, X,$ and $N$ such that:
 \begin{enumerate}[series=bigdecision]
  \item
  $p \in \PP(\HHH)$, $t \in 2$, $X \in {\FIN}^{\[\omega\]}$ with $\[X\] \in \HHH$, and $N \in {\[\omega\]}^{\omega}$;
  \item
  letting $\seq{n}{i}{\in}{\omega}$ be the strictly increasing enumeration of $N$, for each $i \in \omega$, for each $e \in \lv{{T}_{p}}{\max\left(X(i)\right)+1}$, there exists ${e}^{\ast} \in \lv{{T}_{p}}{\max\left(X(i+1)\right)}$ so that $e \subseteq {e}^{\ast}$, there exists $S \subseteq {\succc}_{{T}_{p}}({e}^{\ast})$ such that
  \begin{align*}
   S \ \text{is} \ \pr{\max\left(X(i)\right)+1}{X(i+1)}\text{-big,}
  \end{align*}
  and for each $\sigma \in S$, ${T}_{p}\langle {{e}^{\ast}}^{\frown}{\langle \sigma \rangle} \rangle \; {\forces}_{\PP(\HHH)} \; {\mathring{x}({n}_{i+1}) = t}$.
 \end{enumerate}
 Then there exists $q \leq p$ such that $\forall 1 \leq i < \omega \[q \; {\forces}_{\PP(\HHH)} \; {\mathring{x}({n}_{i}) = t}\]$.
\end{Lemma}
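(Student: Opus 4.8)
The plan is to build $q$ by a fusion of a $\leq$-decreasing sequence $\langle {p}_{i}: i \in \omega\rangle$ with ${p}_{0}=p$, using the block sequence $X$ and Lemma \ref{lem:fusion}, arranging that the passage from ${p}_{i}$ to ${p}_{i+1}$ thins ${T}_{{p}_{i}}$ only within the block of levels between $\max\left(X(i)\right)+1$ and $\max\left(X(i+1)\right)$, in such a way that every surviving node at level $\max\left(X(i+1)\right)+1$ already forces $\mathring{x}({n}_{i+1})=t$. Throughout write ${l}_{i}=\max\left(X(i)\right)$, and maintain the invariant that ${T}_{{p}_{i}}\langle f \rangle = {T}_{p}\langle f \rangle$ for every $f \in \lv{{T}_{{p}_{i}}}{{l}_{i}+1}$; that is, above level ${l}_{i}+1$ the condition ${p}_{i}$ has not yet been touched and still looks exactly like $p$. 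This invariant is precisely what allows clause (2) of the hypothesis, which is phrased in terms of ${T}_{p}$, to be reapplied inside each ${p}_{i}$.

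At stage $i$, the invariant gives $\lv{{T}_{{p}_{i}}}{{l}_{i}+1} \subseteq \lv{{T}_{p}}{{l}_{i}+1}$, so for each $e \in \lv{{T}_{{p}_{i}}}{{l}_{i}+1}$ clause (2) supplies ${e}^{\ast}_{e} \in \lv{{T}_{p}}{{l}_{i+1}}$ with $e \subseteq {e}^{\ast}_{e}$ and a $\pr{{l}_{i}+1}{X(i+1)}$-big set ${S}_{e} \subseteq {\succc}_{{T}_{p}}({e}^{\ast}_{e})$ each of whose elements $\sigma$ satisfies ${T}_{p}\langle {{e}^{\ast}_{e}}^{\frown}{\langle \sigma \rangle} \rangle \; {\forces}_{\PP(\HHH)} \; {\mathring{x}({n}_{i+1})=t}$. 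Since ${e}^{\ast}_{e} \supseteq e$ and ${T}_{{p}_{i}}\langle e \rangle = {T}_{p}\langle e \rangle$, the node ${e}^{\ast}_{e}$, the finite path from $e$ up to it, and the successors ${S}_{e}$ all already live inside ${T}_{{p}_{i}}$, with ${\succc}_{{T}_{{p}_{i}}}({e}^{\ast}_{e}) = {\succc}_{{T}_{p}}({e}^{\ast}_{e}) \supseteq {S}_{e}$. I let ${p}_{i, e}$ be the subtree of ${T}_{{p}_{i}}\langle e \rangle$ consisting of the single path from $e$ to ${e}^{\ast}_{e}$, the successors in ${S}_{e}$ at ${e}^{\ast}_{e}$, and the full subtrees ${T}_{{p}_{i}}\langle {{e}^{\ast}_{e}}^{\frown}{\langle \sigma \rangle} \rangle$ above each $\sigma \in {S}_{e}$, and set ${p}_{i+1} = \bigcup_{e}{{T}_{{p}_{i, e}}}$ by Lemma \ref{lem:amalgam}, the union running over all (finitely many) $e \in \lv{{T}_{{p}_{i}}}{{l}_{i}+1}$.

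The remaining verifications are routine. Each ${p}_{i, e}$ is a genuine $\HHH$-condition: ${S}_{e} \neq \emptyset$ by Remark \ref{rem:skbig}, so extendability holds, and for the bigness clause it suffices by Lemma \ref{lem:infintelymanyk} to note that above level ${l}_{i+1}$ the tree ${p}_{i, e}$ agrees with ${T}_{{p}_{i}}$, whence ${H}_{{p}_{i}, k} \cap \{s: \min(s) > {l}_{i+1}\} \subseteq {H}_{{p}_{i, e}, k}$ and the left side lies in $\HHH$ for every $k$. For the fusion, condition (4) of Lemma \ref{lem:fusion} holds because stage $i$ changes no successor set at a level $\leq {l}_{i}$ (all nodes at level ${l}_{i}+1$ are retained); condition (3) holds because the level-${l}_{i+1}$ nodes of ${p}_{i+1}$ are exactly the ${e}^{\ast}_{e}$ with ${\succc}_{{p}_{i+1}}({e}^{\ast}_{e}) = {S}_{e}$ being $\pr{{l}_{i}+1}{X(i+1)}$-big, i.e. $X(i+1) \in {H}_{{p}_{i+1}, {l}_{i}+1}$; and the invariant is reproduced at stage $i+1$ since ${T}_{{p}_{i+1}}\langle g \rangle = {T}_{p}\langle g \rangle$ for every $g \in {T}_{{p}_{i+1}}$ with $g \supseteq e$. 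Thus Lemma \ref{lem:fusion} gives $q = {T}_{q} = {\bigcap}_{i \in \omega}{{T}_{{p}_{i}}} \in \PP(\HHH)$ with $q \leq p$.

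Finally I would check that $q \; {\forces}_{\PP(\HHH)} \; {\mathring{x}({n}_{i+1})=t}$ for every $i \in \omega$, which is the assertion for all indices $\geq 1$. Every $f \in \lv{{T}_{q}}{{l}_{i+1}+1}$ has the form ${{e}^{\ast}_{e}}^{\frown}{\langle \sigma \rangle}$ with $\sigma \in {S}_{e}$, so ${T}_{q}\langle f \rangle \subseteq {T}_{p}\langle f \rangle \; {\forces}_{\PP(\HHH)} \; {\mathring{x}({n}_{i+1})=t}$; since $\{{T}_{q}\langle f \rangle: f \in \lv{{T}_{q}}{{l}_{i+1}+1}\}$ is predense below $q$ and each member forces $\mathring{x}({n}_{i+1})=t$, so does $q$. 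The main obstacle is the bookkeeping that keeps the hypothesis applicable stage after stage: one must simultaneously keep the part of each ${p}_{i}$ above the current block a verbatim copy of $p$ (so clause (2) can be invoked again) and meet the fusion requirement (4) that nothing below level ${l}_{i}$ ever change again. Once these two demands are reconciled, the construction is a standard fusion.
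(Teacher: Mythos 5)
Your proof is essentially the paper's own: the same fusion via Lemma \ref{lem:fusion}, the same invariant (the paper's clause that ${T}_{{p}_{j}}\langle e \rangle = {T}_{{p}_{0}}\langle e \rangle$ for all $e \in {T}_{{p}_{j}}$ with $\dom(e) \geq \max\left(X(j)\right)+1$), the same pruning step that keeps only the subtrees above the nodes ${{e}^{\ast}_{e}}^{\frown}{\langle \sigma \rangle}$ with $\sigma \in {S}_{e}$, and the same density argument for the forcing statement at the end. One imprecision: your claim that ${T}_{{p}_{i+1}}\langle g \rangle = {T}_{p}\langle g \rangle$ for \emph{every} $g \supseteq e$ in ${T}_{{p}_{i+1}}$ is false for $g$ at levels ${l}_{i}+1$ through ${l}_{i+1}$, where ${T}_{{p}_{i+1}}$ has been pruned to a single path; the equality holds precisely for $g$ at level ${l}_{i+1}+1$ and above, which is all that reproducing the invariant requires.
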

\begin{proof}
 Lemma \ref{lem:fusion} will be used to get $q$.
 To this end, a sequence $\seq{p}{i}{\in}{\omega}$ having the following properties will be constructed:
 \begin{enumerate}[resume=bigdecision]
  \item
  for each $i \in \omega$, ${p}_{i} \in \PP(\HHH)$ and ${p}_{i+1} \leq {p}_{i}$;
  \item
  for each $i \in \omega$, $X(i+1) \in {H}_{{p}_{i+1}, \max\left(X(i)\right)+1}$;
  \item
  for each $j \in \omega$, for each $e \in {T}_{{p}_{j}}$, if $\dom(e) \geq \max\left(X(j)\right)+1$, then ${T}_{{p}_{j}}\langle e \rangle = {T}_{{p}_{0}}\langle e \rangle$;
  \item
  for each $i \leq j < \omega$ and for each $e \in \lv{{T}_{{p}_{j}}}{\max\left(X(i)\right)}$, ${\succc}_{{T}_{{p}_{i}}}(e) \subseteq {\succc}_{{T}_{{p}_{j}}}(e)$;
  \item
  ${p}_{0} = p$ and for each $1 \leq i < \omega$, ${p}_{i} \; {\forces}_{\PP(\HHH)} \; {\mathring{x}({n}_{i}) = t}$.
 \end{enumerate}
 Assume for a moment that such a sequence has been constructed.
 By Lemma \ref{lem:fusion}, $q = {T}_{q} = {\bigcap}_{i \in \omega}{{T}_{{p}_{i}}} \in \PP(\HHH)$.
 And $q \leq {T}_{{p}_{0}} = {p}_{0} = p$.
 For any $1 \leq i < \omega$, $q \leq {p}_{i}$, whence by (7), $q \; {\forces}_{\PP(\HHH)} \; {\mathring{x}({n}_{i}) = t}$, as required.
 
 To construct $\seq{p}{i}{\in}{\omega}$, proceed by induction.
 Define ${p}_{0} = p$ and note that (3)--(7) are satisfied.
 Suppose ${p}_{j}$ satisfying (3)--(7) is given.
 For each
 \begin{align*}
  e \in \lv{{T}_{{p}_{j}}}{\max\left(X(j)\right)+1},
 \end{align*}
 since ${T}_{{p}_{j}}\langle e \rangle = {T}_{{p}_{0}}\langle e \rangle$, then by (2), there exist ${e}^{\ast} \in \lv{{T}_{{p}_{j}}}{\max\left(X(j+1)\right)}$ and $S \subseteq {\succc}_{{T}_{{p}_{j}}}({e}^{\ast})$ having the properties listed in (2).
 Let $\{{e}_{l}: 1 \leq l \leq L\}$ be a 1-1 enumeration of $\lv{{T}_{{p}_{j}}}{\max\left(X(j)\right)+1}$, where $L = \lc \lv{{T}_{{p}_{j}}}{\max\left(X(j)\right)+1} \rc$.
 Choose $\{{e}^{\ast}_{l}: 1 \leq l \leq L\}$ and $\{{S}_{l}: 1 \leq l \leq L\}$ such that each ${e}^{\ast}_{l}$ and ${S}_{l}$ satisfy the conditions of (2) with respect to ${e}_{l}$.
 The bigness of ${S}_{l}$ implies that it is non-empty, and so $U = \{{{e}^{\ast}_{l}}^{\frown}{\langle \sigma \rangle}: 1 \leq l \leq L \ \text{and} \ \sigma \in {S}_{l}\} \subseteq \lv{{T}_{{p}_{j}}}{\max\left(X(j+1)\right)+1}$ is non-empty.
 Define ${p}_{j+1} = {T}_{{p}_{j+1}} = \bigcup\left\{{T}_{{p}_{j}}\langle h \rangle: h \in U \right\}$.
 By Lemma \ref{lem:amalgam}, ${p}_{j+1} \in \PP(\HHH)$, and ${p}_{j+1} \leq {p}_{j}$.
 Note that if $h \in U$, then $h \in {T}_{{p}_{j}}$ and $\dom(h) = \max\left(X(j+1)\right)+1 \geq \max\left(X(j)\right)+1$, and so ${T}_{{p}_{j}}\langle h \rangle = {T}_{{p}_{0}}\langle h \rangle$.
 Observe also that if $f \in \lv{{T}_{{p}_{j+1}}}{\max\left(X(j+1)\right)}$, then $f = {e}^{\ast}_{l}$, for some $1 \leq l \leq L$.
 To verify (4), first note that $\max\left(X(j)\right)+1 \leq \max\left(X(j+1)\right)$.
 Consider any $f \in \lv{{T}_{{p}_{j+1}}}{\max\left(X(j+1)\right)}$.
 Then $f = {e}^{\ast}_{l}$, for some $1 \leq l \leq L$, and so for any $\sigma \in {S}_{l}$, ${f}^{\frown}{\langle \sigma \rangle} \in {T}_{{p}_{j+1}}$, whence ${S}_{l} \subseteq {\succc}_{{T}_{{p}_{j+1}}}(f) \subseteq {2}^{\Pset\left(\max\left(X(j+1)\right)\right)}$.
 As ${S}_{l}$ is $\pr{\max\left(X(j)\right)+1}{X(j+1)}$-big, so is ${\succc}_{{T}_{{p}_{j+1}}}(f)$.
 Therefore, $X(j+1) \in {H}_{{p}_{j+1}, \max\left(X(j)\right)+1}$.
 Next, consider any $e \in {T}_{{p}_{j+1}}$ with $\dom(e) \geq \max\left(X(j+1)\right)+1$.
 Then $h \subseteq e$ for some $h \in U$.
 Hence ${T}_{{p}_{0}}\langle e \rangle \subseteq {T}_{{p}_{0}}\langle h \rangle = {T}_{{p}_{j}}\langle h \rangle \subseteq {T}_{{p}_{j+1}}$, whence ${T}_{{p}_{0}}\langle e \rangle \subseteq {T}_{{p}_{j+1}}\langle e \rangle \subseteq {T}_{{p}_{0}}\langle e \rangle$.
 So ${T}_{{p}_{0}}\langle e \rangle = {T}_{{p}_{j+1}}\langle e \rangle$, as required for (5).
 For (6), fix $i \leq j+1$ and $e \in \lv{{T}_{{p}_{j+1}}}{\max\left(X(i)\right)}$.
 If $i = j+1$, then there is nothing to prove.
 So assume $i \leq j$.
 Suppose $\sigma \in {\succc}_{{T}_{{p}_{i}}}(e)$.
 Since $e \in \lv{{T}_{{p}_{j}}}{\max\left(X(i)\right)}$, the induction hypothesis says $\sigma \in {\succc}_{{T}_{{p}_{j}}}(e)$, whence ${e}^{\frown}{\langle \sigma \rangle} \in {T}_{{p}_{j}}$.
 As $\max\left(X(i)\right)+1 \leq \max\left(X(j)\right)+1$, there exists $1 \leq l \leq L$ with ${e}^{\frown}{\langle \sigma \rangle} \subseteq {e}_{l} \subseteq {e}^{\ast}_{l}$, and as ${S}_{l} \neq \emptyset$, there is some $\tau \in {S}_{l}$ with ${e}^{\frown}{\langle \sigma \rangle} \subseteq {{e}^{\ast}_{l}}^{\frown}{\langle \tau \rangle} \in U$.
 Thus for some $h \in U$, ${e}^{\frown}{\langle \sigma \rangle} \in {T}_{{p}_{j}}\langle h \rangle$, whence ${e}^{\frown}{\langle \sigma \rangle} \in {T}_{{p}_{j+1}}$, and so $\sigma \in {\succc}_{{T}_{{p}_{j+1}}}(e)$.
 Therefore, ${\succc}_{{T}_{{p}_{i}}}(e) \subseteq {\succc}_{{T}_{{p}_{j+1}}}(e)$, as needed for (6).
 Finally for (7), suppose $r \leq {p}_{j+1}$.
 Choose $h \in \lv{{T}_{r}}{\max\left(X(j+1)\right)+1}$.
 Then $h \in U$ and $h = {{e}^{\ast}_{l}}^{\frown}{\langle \sigma \rangle}$ for some $1 \leq l \leq L$ and $\sigma \in {S}_{l}$.
 Since ${T}_{r}\langle {{e}^{\ast}_{l}}^{\frown}{\langle \sigma \rangle} \rangle \leq {T}_{p}\langle {{e}^{\ast}_{l}}^{\frown}{\langle \sigma \rangle} \rangle$, ${T}_{r}\langle {{e}^{\ast}_{l}}^{\frown}{\langle \sigma \rangle} \rangle \; {\forces}_{\PP(\HHH)} \; {\mathring{x}({n}_{j+1})=t}$.
 Therefore, $\forall r \leq {p}_{j+1} \exists r' \leq r\[r' \; {\forces}_{\PP(\HHH)} \; {\mathring{x}({n}_{j+1})=t}\]$, whence ${p}_{j+1} \; {\forces}_{\PP(\HHH)} \; {\mathring{x}({n}_{j+1})=t}$.
\end{proof}
\begin{Lemma} \label{lem:preserving}
 Suppose $\PP$ is a forcing notion and $\VVV$ is an ultrafilter on $\omega$.
 The following hold:
 \begin{enumerate}
  \item
  Suppose for every $\PP$-name $\mathring{x}$ such that ${\forces}_{\PP} \; {\mathring{x}: \omega \rightarrow 2}$ and for every $p \in \PP$, there exist $q \leq p$ and $A \in \VVV$ such that $\forall l \in A\exists {t}_{l} \in 2\[q \; {\forces}_{\PP} \; {\mathring{x}(l) = {t}_{l}}\]$, then $\PP$ preserves $\VVV$.
  \item
  If $\PP$ preserves $\VVV$ and $\UUU$ is an ultrafilter on $\omega$ such that $\UUU \; {\leq}_{RK} \; \VVV$, then $\PP$ preserves $\UUU$.
 \end{enumerate}
\end{Lemma}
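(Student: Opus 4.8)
The plan is to derive both items directly from Definition~\ref{def:preservesU}, in each case reducing the problem for an arbitrary name for a subset of $\omega$ to the two-valued situation in which the hypotheses are phrased.

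For item~(1), I would fix $p \in \PP$ and a $\PP$-name $\mathring{A}$ with ${\forces}_{\PP} \; {\mathring{A} \subseteq \omega}$, and pass to the $\PP$-name $\mathring{x}$ for the characteristic function of $\mathring{A}$, so that ${\forces}_{\PP} \; {\mathring{x}: \omega \rightarrow 2}$ and ${\forces}_{\PP} \; {\forall l \in \omega\[\mathring{x}(l) = 1 \leftrightarrow l \in \mathring{A}\]}$. Applying the hypothesis of~(1) to $\mathring{x}$ and $p$ produces $q \leq p$, a set $A \in \VVV$, and values $\langle {t}_{l}: l \in A \rangle$ with $q \; {\forces}_{\PP} \; {\mathring{x}(l) = {t}_{l}}$ for every $l \in A$. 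I would then write $A = {A}_{0} \cup {A}_{1}$ with ${A}_{t} = \{l \in A: {t}_{l} = t\}$; since $\VVV$ is an ultrafilter, exactly one ${A}_{t}$ lies in $\VVV$. Taking $B = {A}_{t}$ for that value of $t$, the case $t = 1$ gives $q \; {\forces}_{\PP} \; {B \subseteq \mathring{A}}$ while the case $t = 0$ gives $q \; {\forces}_{\PP} \; {B \subseteq \omega \setminus \mathring{A}}$, which is precisely the requirement of Definition~\ref{def:preservesU}.

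For item~(2), I would use $\UUU \; {\leq}_{RK} \; \VVV$ to fix $f: \omega \rightarrow \omega$ with $\UUU = \{M \subseteq \omega: {f}^{-1}(M) \in \VVV\}$. Given $p$ and a name $\mathring{A}$ with ${\forces}_{\PP} \; {\mathring{A} \subseteq \omega}$, I would form the name $\mathring{A}'$ for ${f}^{-1}(\mathring{A}) = \{n \in \omega: f(n) \in \mathring{A}\}$ and apply preservation of $\VVV$ to $\mathring{A}'$, obtaining $q \leq p$ and $B' \in \VVV$ with either $q \; {\forces}_{\PP} \; {B' \subseteq \mathring{A}'}$ or $q \; {\forces}_{\PP} \; {B' \subseteq \omega \setminus \mathring{A}'}$. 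Setting $B = f\[B'\]$, the key point is that $B' \subseteq {f}^{-1}(B)$, so ${f}^{-1}(B) \in \VVV$ by upward closure of $\VVV$, whence $B \in \UUU$. In the first case $q \; {\forces}_{\PP} \; {B \subseteq \mathring{A}}$ and in the second $q \; {\forces}_{\PP} \; {B \subseteq \omega \setminus \mathring{A}}$, as required.

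Both items are routine once these reductions are in place; the only step needing a moment's care is the verification in~(2) that $f\[B'\] \in \UUU$, and this follows immediately from the inclusion $B' \subseteq {f}^{-1}(f\[B'\])$ together with the fact that $\VVV$ is a filter. I anticipate no genuine obstacle beyond correctly setting up the two name transformations.
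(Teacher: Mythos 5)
Your proposal is correct and follows essentially the same route as the paper: in (1) you pass to the characteristic-function name and split $A$ by the value of ${t}_{l}$, exactly as the paper does, and in (2) you pull back along the RK map, apply preservation of $\VVV$, and push forward, with the key observation $B' \subseteq {f}^{-1}\left(f\[B'\]\right)$ being precisely what the paper uses. The only difference is cosmetic: you phrase (2) with names and conditions, while the paper argues inside the generic extension $\VG$.
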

\begin{proof}
 For (1): let $\mathring{A}$ be a $\PP$-name such that ${\forces}_{\PP} \; {\mathring{A} \subseteq \omega}$.
 Let $\mathring{x}$ be a $\PP$-name such that ${\forces}_{\PP} \; {\text{``}\mathring{x}: \omega \rightarrow 2 \ \text{and} \ \mathring{A} = \{l \in \omega: \mathring{x}(l) = 1\}\text{''}}$.
 Suppose $p \in \PP$.
 Find $q \leq p$ and $A \in \VVV$ such that $\forall l \in A\exists {t}_{l} \in 2\[q \; {\forces}_{\PP} \; {\mathring{x}(l) = {t}_{l}}\]$.
 As $\VVV$ is an ultrafilter, there are $t \in 2$ and $B \in \VVV$ such that $B \subseteq A$ and $\forall l \in B\[{t}_{l} = t\]$.
 If $t = 0$, then $q \; {\forces}_{\PP} \; {B \subseteq \omega \setminus \mathring{A}}$, while if $t = 1$, then $q \; {\forces}_{\PP} \; {B \subseteq \mathring{A}}$, which shows $\PP$ preserves $\VVV$.
 
 For (2): suppose $f: \omega \rightarrow \omega$ witnesses $\UUU \; {\leq}_{RK} \; \VVV$ in the ground model.
 Let $G$ be $(\V, \PP)$-generic.
 In $\VG$, suppose that $A \subseteq \omega$.
 Since $\VVV$ is preserved, there exists $B \in \VVV$ such that either $B \subseteq {f}^{-1}(A)$ or $B \subseteq \omega \setminus {f}^{-1}(A) = {f}^{-1}(\omega \setminus A)$.
 Since $B \in \VVV$ and $f$ is an RK-map in $\V$, $f\[B\] \in \UUU$.
 Now either $f\[B\] \subseteq A$ or $f\[B\] \subseteq \omega \setminus A$, showing that $\UUU$ is preserved.
\end{proof}
\begin{Cor} \label{cor:H_max}
 Suppose $\UUU$ is an ultrafilter such that $\UUU \; {\equiv}_{RK} \; {\HHH}_{\max}$.
 Then $\PP(\HHH)$ preserves $\UUU$.
\end{Cor}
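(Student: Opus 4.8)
The plan is to reduce the statement to the preservation of ${\HHH}_{\max}$ and then to assemble the decision lemmas of this section. Since $\UUU \; {\equiv}_{RK} \; {\HHH}_{\max}$ entails $\UUU \; {\leq}_{RK} \; {\HHH}_{\max}$, Lemma \ref{lem:preserving}(2) reduces everything to showing that $\PP(\HHH)$ preserves ${\HHH}_{\max}$. Recall from Theorem \ref{thm:minmaxrk} that ${\HHH}_{\max}$ is a (non-principal) selective ultrafilter on $\omega$. By Lemma \ref{lem:preserving}(1), it suffices to prove that for every $\PP(\HHH)$-name $\mathring{x}$ with ${\forces}_{\PP(\HHH)} \; {\mathring{x}: \omega \rightarrow 2}$ and every $p \in \PP(\HHH)$, there are $q \leq p$ and $A \in {\HHH}_{\max}$ on which $q$ decides $\mathring{x}$ to a single constant value.

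Given such $\mathring{x}$ and $p$, first apply Lemma \ref{lem:hmax2} to obtain ${q}_{0} \leq p$, a colour $t \in 2$, and $X \in {\FIN}^{\[\omega\]}$ with $\[X\] \in \HHH$ witnessing the conclusion of that lemma. The key observation is that this conclusion is \emph{verbatim} Clause (2) of Lemma \ref{lem:bigdecision} once one takes $N = {f}_{\max}\[\[X\]\]$ and $p = {q}_{0}$: since $X$ is a block sequence, ${f}_{\max}\[\[X\]\] = \{\max\left(X(i)\right) : i \in \omega\}$, and its strictly increasing enumeration $\seq{n}{i}{\in}{\omega}$ is exactly ${n}_{i} = \max\left(X(i)\right)$, so that $\mathring{x}({n}_{i+1})$ and $\mathring{x}\left(\max\left(X(i+1)\right)\right)$ denote the same value. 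Hence Lemma \ref{lem:bigdecision} applies and yields $q \leq {q}_{0} \leq p$ such that $q \; {\forces}_{\PP(\HHH)} \; {\mathring{x}({n}_{i}) = t}$ for all $1 \leq i < \omega$.

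It remains to check that the target set lies in ${\HHH}_{\max}$. Put $A = \{{n}_{i} : 1 \leq i < \omega\}$. Because $\[X\] \in \HHH$, the max-projection ${f}_{\max}\[\[X\]\] = \{{n}_{i} : i \in \omega\}$ belongs to ${\HHH}_{\max}$, and $A$ differs from it only by the single point ${n}_{0}$; as ${\HHH}_{\max}$ is non-principal, $A \in {\HHH}_{\max}$. By construction $q$ forces $\mathring{x}$ to equal $t$ on all of $A$, so the hypothesis of Lemma \ref{lem:preserving}(1) is met with ${t}_{l} = t$ for every $l \in A$. This establishes that $\PP(\HHH)$ preserves ${\HHH}_{\max}$, and the reduction of the first paragraph then delivers the corollary.

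The proof of the corollary is thus essentially a bookkeeping assembly of the three preceding lemmas, so the only point requiring genuine care — and the step I would verify most carefully — is the alignment in the second paragraph: that the big-set/forcing data produced by Lemma \ref{lem:hmax2} at each block $X(i+1)$ is exactly the input demanded by Clause (2) of Lemma \ref{lem:bigdecision} under the identification ${n}_{i} = \max\left(X(i)\right)$. All the substantive combinatorial work has already been absorbed into Lemmas \ref{lem:hmax1}--\ref{lem:bigdecision}, where the $\pr{k}{s}$-bigness machinery is used to extract a color decided on an ${\HHH}_{\max}$-large set of maxima.
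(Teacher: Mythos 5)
Your proposal is correct and follows essentially the same route as the paper's proof: apply Lemma \ref{lem:hmax2}, feed its conclusion into Lemma \ref{lem:bigdecision} via the identification $N = {f}_{\max}\[\[X\]\]$ with ${n}_{i} = \max\left(X(i)\right)$, conclude preservation of ${\HHH}_{\min}$'s counterpart ${\HHH}_{\max}$ by Lemma \ref{lem:preserving}(1), and transfer to any $\UUU \; {\equiv}_{RK} \; {\HHH}_{\max}$ by Lemma \ref{lem:preserving}(2). The only difference is cosmetic (you state the RK-reduction first, the paper last), and your extra care about $A = N \setminus \{{n}_{0}\}$ remaining in the non-principal ultrafilter ${\HHH}_{\max}$ is exactly the unstated justification behind the paper's assertion that $A \in {\HHH}_{\max}$.
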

\begin{proof}
 Suppose $\mathring{x}$ is a $\PP(\HHH)$-name such that ${\forces}_{\PP(\HHH)} \; {\mathring{x}: \omega \rightarrow 2}$ and that ${p}^{\ast} \in \PP(\HHH)$.
 Applying Lemma \ref{lem:hmax2}, find $p \leq {p}^{\ast}$, $t \in 2$, and $X \in {\FIN}^{\[\omega\]}$ satisfying the conclusions of that lemma.
 As $\[X\] \in \HHH$, so $N = {f}_{\max}\[\[X\]\] \in {\HHH}_{\max}$.
 Let $\seq{n}{i}{\in}{\omega}$ be the strictly increasing enumeration of $N$.
 Then $\forall i \in \omega\[{n}_{i} = \max\left(X(i)\right)\]$.
 Thus the hypotheses of Lemma \ref{lem:bigdecision} are satisfied.
 So there exists $q \leq p \leq {p}^{\ast}$ such that $\forall 1 \leq i < \omega\[q \; {\forces}_{\PP(\HHH)} \; {\mathring{x}({n}_{i}) = t}\]$.
 Let $A = \{{n}_{i}: 1 \leq i < \omega\}$.
 Then $A \in {\HHH}_{\max}$ and $\forall l \in A\[q \; {\forces}_{\PP(\HHH)} \; {\mathring{x}(l) = t}\]$.
 Now (1) of Lemma \ref{lem:preserving} implies that $\PP(\HHH)$ preserves ${\HHH}_{\max}$.
 Hence by (2) of Lemma \ref{lem:preserving} $\PP(\HHH)$ also preserves every ultrafilter $\UUU$ such that $\UUU \; {\equiv}_{RK} \; {\HHH}_{\max}$.
\end{proof}
\begin{Lemma} \label{lem:refiningX}
 Suppose $\UUU$ is a selective ultrafilter on $\omega$ with $\UUU \notin {\C}_{0}(\HHH)$.
 Then for any $L \in \HHH$ and $E \in \UUU$, there exist $Y \in {\FIN}^{\[\omega\]}$ and $D \subseteq E$ such that $\[Y\] \in \HHH$, $\[Y\] \subseteq L$, $D \in \UUU$, and letting $\seq{n}{i}{\in}{\omega}$ be the strictly increasing enumeration of $D$, $\forall i \in \omega\[{n}_{i} \in I\left(Y(i)\right)\]$.
\end{Lemma}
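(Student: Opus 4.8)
The plan is to extract from the failure of $\UUU \in {\C}_{0}(\HHH)$ a single ``bad'' block sequence and then to build $Y$ and $D$ around it. Since $\UUU$ is selective, the only way $\UUU \notin {\C}_{0}(\HHH)$ can hold is that Clause (2) of Definition \ref{def:c0c1} fails, so there is ${X}_{0} \in {\FIN}^{\[\omega\]}$ with $\[{X}_{0}\] \in \HHH$ such that $N(Z) \in \UUU$ for \emph{every} $Z \in {\FIN}^{\[\omega\]}$ with $\[Z\] \in \HHH$ and $\[Z\] \subseteq \[{X}_{0}\]$. The first key step is an ``alignment'' observation: for this fixed ${X}_{0}$ and the given $E \in \UUU$, the set $C = \{s \in L \cap \[{X}_{0}\] : I(s) \cap E \neq \emptyset\}$ belongs to $\HHH$. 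Indeed, if not, then $\{s \in L \cap \[{X}_{0}\] : I(s) \cap E = \emptyset\} \in \HHH$, and choosing (by ordered-union-ness) a block sequence $Z$ with $\[Z\] \in \HHH$ and $\[Z\] \subseteq \{s \in L \cap \[{X}_{0}\] : I(s) \cap E = \emptyset\} \subseteq \[{X}_{0}\]$ would force every block $Z(i)$ to satisfy $I(Z(i)) \cap E = \emptyset$, so that $N(Z) = {\bigcup}_{i} I(Z(i))$ is disjoint from $E$ and hence $N(Z) \notin \UUU$, contradicting the badness of ${X}_{0}$.

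Next I would use that $\HHH$ is an ordered-union ultrafilter to choose $Y \in {\FIN}^{\[\omega\]}$ with $\[Y\] \in \HHH$ and $\[Y\] \subseteq C$. Then $\[Y\] \subseteq L$ automatically, and since $\[Y\] \subseteq \[{X}_{0}\]$ the badness of ${X}_{0}$ yields $N(Y) \in \UUU$; moreover every block $Y(i) \in \[Y\] \subseteq C$ satisfies $I(Y(i)) \cap E \neq \emptyset$. Now $N(Y) \cap E \in \UUU$ and the intervals $\{I(Y(i)) : i \in \omega\}$ are pairwise disjoint finite sets whose union is $N(Y)$. Since $\UUU$ is selective, hence a Q-point, there is ${D}^{\ast} \in \UUU$ with ${D}^{\ast} \subseteq N(Y) \cap E$ and $\lc {D}^{\ast} \cap I(Y(i)) \rc \leq 1$ for every $i$. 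The final step enlarges ${D}^{\ast}$ to a full transversal: for each $i$ set ${n}_{i}$ equal to the unique point of ${D}^{\ast} \cap I(Y(i))$ when this set is nonempty, and otherwise pick any ${n}_{i} \in I(Y(i)) \cap E$ (nonempty by the previous sentence). Putting $D = \{{n}_{i} : i \in \omega\}$ gives $D \supseteq {D}^{\ast}$, so $D \in \UUU$; $D \subseteq E$ by construction; and $D$ meets each $I(Y(i))$ in exactly one point, so since the intervals are increasing the strictly increasing enumeration of $D$ is exactly $\seq{n}{i}{\in}{\omega}$ with ${n}_{i} \in I(Y(i))$, as required.

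The step I expect to be the crux is producing a \emph{full} transversal --- exactly one point in each interval --- that nonetheless lies in $\UUU$ and inside $E$. A Q-point selector meets each interval \emph{at most} once and may skip infinitely many of them, and a finite interval $I(Y(i))$ need not meet $E$ at all, so neither fullness nor containment in $E$ is available for free. Both obstacles are dissolved by the preparation: enlarging the selector ${D}^{\ast}$ to a full transversal only adds points, keeping $D$ a superset of the $\UUU$-set ${D}^{\ast}$ and hence in $\UUU$, while the alignment observation --- the one place where the hypothesis $\UUU \notin {\C}_{0}(\HHH)$, i.e.\ the badness of ${X}_{0}$, is genuinely used --- guarantees that every interval $I(Y(i))$ meets $E$, so each skipped interval can be filled by a point of $E$.
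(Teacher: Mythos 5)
Your proof is correct and follows essentially the same route as the paper's: fix a witness ${X}_{0}$ to the failure of Clause (2) of Definition \ref{def:c0c1}, pass to a block sequence $Y$ with $\[Y\] \in \HHH$ and $\[Y\] \subseteq L \cap \[{X}_{0}\]$ all of whose blocks satisfy $I\left(Y(i)\right) \cap E \neq \emptyset$, conclude $N(Y) \in \UUU$ from the choice of ${X}_{0}$, and extract the transversal $D$ by a Q-point argument. The only deviations are cosmetic: you rule out the bad half of the dichotomy by a second appeal to the choice of ${X}_{0}$ (if every block missed $E$ then $N(Z) \cap E = \emptyset$, contradicting $N(Z) \in \UUU$), where the paper argues combinatorially that a union $Z(0) \cup Z(i)$ of two far-apart blocks has an interval meeting the infinite set $E$; and your explicit enlargement of the Q-point selector to a full transversal, one point in each $I\left(Y(i)\right)$, is precisely the step the paper leaves implicit when it asserts the existence of $D \in \UUU$ with $\forall i \in \omega\[\lc I\left(Z(i)\right) \cap D \rc = 1\]$.
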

\begin{proof}
 As $\UUU$ is selective, but $\UUU \notin {\C}_{0}(\HHH)$, there exists $X \in {\FIN}^{\[\omega\]}$ such that $\[X\] \in \HHH$ and for every $Z \in {\FIN}^{\[\omega\]}$, if $\[Z\] \in \HHH$, $\[Z\] \subseteq \[X\]$, then $N(Z) \in \UUU$.
 Let $K = L \cap \[X\] \in \HHH$.
 Define $c: K \rightarrow 2$ by setting $c(s) = 0$ if and only if $I(s) \cap E \neq \emptyset$.
 Fix $Z \in {\FIN}^{\[\omega\]}$ such that $\[Z\] \subseteq K$, $\[Z\] \in \HHH$, and $c$ is constant on $\[Z\]$.
 This constant value cannot be $1$.
 To see this, suppose for a contradiction that $c$ is constantly $1$ on $\[Z\]$.
 As $E$ is an infinite set, choose $m \in E$ with $\min\left(Z(0)\right) \leq m$.
 Choose $0 < i < \omega$ such that $m \leq \max\left(Z(i)\right)$.
 Let $s = Z(0) \cup Z(i) \in \[Z\]$.
 Then $\min(s) = \min\left(Z(0)\right) \leq m \leq \max\left(Z(i)\right) = \max(s)$, whence $m \in I(s) \cap E$, contradicting the supposition that $c$ is constantly $1$ on $\[Z\]$.
 Thus $c$ is constantly $0$ on $\[Z\]$.
 By the choice of $X$, $N(Z) \in \UUU$.
 Let $C = E \cap N(Z) \in \UUU$.
 For any $i \in \omega$, since $c\left(Z(i)\right) = 0$, there is $m \in I\left(Z(i)\right) \cap E$.
 By the definition of $N(Z)$, $m \in I\left(Z(i)\right) \cap E \cap N(Z) = I\left(Z(i)\right) \cap C$, and so $\forall i \in \omega\[I\left(Z(i)\right) \cap C \neq \emptyset\]$.
 Since $C \subseteq N(Z)$, $\UUU$ is a Q-point and $\forall i < {i}^{\ast} < \omega\[I\left(Z(i)\right) \cap I\left(Z({i}^{\ast})\right) = \emptyset\]$, there exists $D \in \UUU$ such that $D \subseteq C \subseteq N(Z)$ and $\forall i \in \omega\[\lc I\left(Z(i)\right) \cap D \rc = 1\]$.
 Let $\seq{n}{i}{\in}{\omega}$ be the strictly increasing enumeration of $D$.
 As $Z \in {\FIN}^{\[\omega\]}$ and as $\lc I\left(Z(i)\right) \cap D \rc = 1$, it follows that ${n}_{i} \in I\left(Z(i)\right)$, for all $i \in \omega$.
 Hence $Z$ and $D$ are as required.
\end{proof}
\begin{Cor} \label{cor:hitsD}
  Suppose $\UUU$ is a selective ultrafilter on $\omega$ with $\UUU \notin {\C}_{0}(\HHH)$.
 Then for any $L \in \HHH$ and $E \in \UUU$, $\{s \in L: I(s) \cap E \neq \emptyset\} \in \HHH$.
\end{Cor}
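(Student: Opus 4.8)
The plan is to read this off directly from Lemma \ref{lem:refiningX}, which already performs the real combinatorial work (the appeal to the selectivity and $Q$-point property of $\UUU$). First I would fix $L \in \HHH$ and $E \in \UUU$ and apply Lemma \ref{lem:refiningX} to obtain $Y \in {\FIN}^{\[\omega\]}$ and $D \subseteq E$ with $\[Y\] \in \HHH$, $\[Y\] \subseteq L$, $D \in \UUU$, and, letting $\seq{n}{i}{\in}{\omega}$ be the strictly increasing enumeration of $D$, $n_i \in I(Y(i))$ for every $i \in \omega$. Since $\HHH$ is a filter and $\[Y\] \in \HHH$, it suffices to prove the inclusion $\[Y\] \subseteq \{s \in L : I(s) \cap E \neq \emptyset\}$; the set $\{s \in L : I(s) \cap E \neq \emptyset\}$ will then belong to $\HHH$ as a superset of $\[Y\]$.

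Next I would verify this inclusion pointwise. Fix $s \in \[Y\]$ and, using that $Y$ is a block sequence, write $s = Y(i_0) \cup \dots \cup Y(i_m)$ with $i_0 < \dots < i_m$, so that $\min(s) = \min(Y(i_0))$ and $\max(s) = \max(Y(i_m))$. Because $Y$ is a block sequence and $i_0 \le i_m$, we have $\max(Y(i_0)) \le \max(Y(i_m)) = \max(s)$. Combining this with $n_{i_0} \in I(Y(i_0))$ gives $\min(s) = \min(Y(i_0)) \le n_{i_0} \le \max(Y(i_0)) \le \max(s)$, so $n_{i_0} \in I(s)$. As $n_{i_0} \in D \subseteq E$, it follows that $n_{i_0} \in I(s) \cap E$, and hence this intersection is nonempty.

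This establishes $\[Y\] \subseteq \{s \in L : I(s) \cap E \neq \emptyset\}$, whence $\{s \in L : I(s) \cap E \neq \emptyset\} \in \HHH$, as desired. I expect no serious obstacle beyond Lemma \ref{lem:refiningX}; the only point requiring care is confirming that the witness $n_{i_0}$ actually lands inside the interval $I(s)$, which is precisely where the definition of $I(s)$ as the full block of integers from $\min(s)$ to $\max(s)$, together with the index interlacing guaranteed by $Y$ being a block sequence, is used.
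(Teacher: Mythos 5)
Your proposal is correct and follows the paper's own proof essentially verbatim: both apply Lemma \ref{lem:refiningX} to get $Y$ and $D \subseteq E$, verify the inclusion $\[Y\] \subseteq \{s \in L : I(s) \cap E \neq \emptyset\}$ by observing that some $n_i \in I(Y(i)) \subseteq I(s)$ whenever $Y(i) \subseteq s$, and conclude by upward closure of $\HHH$. The only cosmetic difference is that you single out the first block $Y(i_0)$ of $s$, while the paper uses an arbitrary block; both verifications are the same one-line computation.
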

\begin{proof}
 Fix $Y \in {\FIN}^{\[\omega\]}$ and $D \subseteq E$ as in Lemma \ref{lem:refiningX}.
 Let $\seq{n}{i}{\in}{\omega}$ be the strictly increasing enumeration of $D$.
 If $s \in \[Y\]$, then $s \in L$ and $Y(i) \subseteq s$, for some $i \in \omega$.
 So ${n}_{i} \in I\left(Y(i)\right) \subseteq I(s)$.
 As $D \subseteq E$, ${n}_{i} \in E \cap I(s)$.
 Thus $\[Y\] \subseteq \{s \in L: I(s) \cap E \neq \emptyset\} \subseteq L \subseteq \FIN$.
 As $\[Y\] \in \HHH$, $\{s \in L: I(s) \cap E \neq \emptyset\} \in \HHH$.
\end{proof}
\begin{Lemma}\label{lem:c01}
 Suppose $\UUU$ is a selective ultrafilter on $\omega$ such that $\UUU \notin {\C}_{0}(\HHH)$ and $\UUU \; {\not\equiv}_{RK} \; {\HHH}_{\max}$.
 Suppose $\mathring{x}$ is a $\PP(\HHH)$-name such that ${\forces}_{\PP(\HHH)} \; {\mathring{x}: \omega \rightarrow 2}$.
 For any $p \in \PP(\HHH)$, there exists $q \leq p$ such that there are $t \in 2$, $A \in \HHH$, $Y \in {\FIN}^{\[\omega\]}$, $D \in \UUU$, $\seq{A}{l}{\in}{\omega}$, and $\seq{e}{k}{\in}{{f}_{\max}\[A\]}$ such that:
 \begin{enumerate}
  \item
  ${e}_{k} \in \lv{{T}_{q}}{k}$;
  \item
  for any $l \in \omega$, ${A}_{l} \in \HHH$, ${A}_{l} \subseteq A$, and for any $s \in {A}_{l}$ with $\max(s) = k$, ${\succc}_{{T}_{q}}({e}_{k})$ is $\pr{l}{s}$-big;
  \item
  for any $s \in A$, $I(s) \cap D \neq \emptyset$, and letting $k = \max(s)$, for all $n \in I(s) \cap D$, ${T}_{q}\langle {e}_{k} \rangle \; {\forces}_{\PP(\HHH)} \; {\mathring{x}(n) = t}$;
  \item
  $A = \[Y\]$ and for all $Z \in {\FIN}^{\[\omega\]}$, if $\[Z\] \in \HHH$ and $\[Z\] \subseteq \[Y\]$, then $N(Z) \in \UUU$.
 \end{enumerate}
\end{Lemma}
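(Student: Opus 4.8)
The plan is to collapse the whole continuous reading to a single branch, let $\UUU$ select the homogeneous colour, and use the two hypotheses only to force every relevant $\UUU$-point to be decided by a node of the form ${e}_{k}$ without ever descending to a successor.

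First I would unpack the hypotheses. Since $\UUU$ is selective but $\UUU \notin {\C}_{0}(\HHH)$, Clause~(2) of Definition~\ref{def:c0c1} fails, so there is $X^{\ast} \in {\FIN}^{\[\omega\]}$ with $\[X^{\ast}\] \in \HHH$ such that $N(Z) \in \UUU$ whenever $\[Z\] \in \HHH$ and $\[Z\] \subseteq \[X^{\ast}\]$; consequently any $A = \[Y\]$ with $\[Y\] \subseteq \[X^{\ast}\]$ will satisfy Clause~(4). Apply Lemma~\ref{lem:continuous1} to $\mathring{x}$ and $p$ to get $q \leq p$, a block sequence $X$ with $\[X\] \in \HHH$, and the coherent family $\langle {\eta}_{i, e} \rangle$ so that the nodes of length $\max(X(i))+1$ of ${T}_{q}$ decide $\mathring{x} \restrict (\max(X(i))+1)$. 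Fix a branch $F^{\ast} \in \[{T}_{q}\]$ and put ${e}_{k} = F^{\ast} \restrict k$ for each block-maximum $k$. Letting $g(n)$ denote the least block-maximum of $X$ that is $\geq n$, coherence of the ${\eta}$'s yields a single $x^{\ast} \in {2}^{\omega}$ with ${T}_{q}\langle {e}_{k} \rangle \; {\forces}_{\PP(\HHH)} \; {\mathring{x}(n) = x^{\ast}(n)}$ for every $n$ with $g(n) < k$. Choose $t \in 2$ with $E = \{ n : x^{\ast}(n) = t \} \in \UUU$.

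The crux is to find $Y$ and $D \in \UUU$ so that the $\UUU$-points that will meet the intervals $I(s)$ all lie strictly below the terminal $X$-block of their $Y$-block; such a point $n$ then satisfies $g(n) < \max(s)$ for every $s \in \[Y\]$ with $n \in I(s)$, hence is decided to $t$ by ${e}_{\max(s)}$ and never requires a successor (which is exactly why no pruning is needed and ${\succc}_{{T}_{q}}({e}_{k})$ stays big). The obstruction is that a point inside the terminal $X$-block is only decided at the next level. To remove it I would use $\UUU \; {\not\equiv}_{RK} \; {\HHH}_{\max}$: the map $g$ is finite-to-one, so by Definition~\ref{def:selective} it is injective on a set in $\UUU$ and the pushforward ${g}_{\ast}(\UUU) = \{ B : {g}^{-1}(B) \in \UUU \}$ is $RK$-equivalent to $\UUU$, whence ${g}_{\ast}(\UUU) \neq {\HHH}_{\max}$. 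Pick $P \in {\HHH}_{\max} \setminus {g}_{\ast}(\UUU)$ and a block sequence $V$ with $\[V\] \in \HHH$ and ${f}_{\max}\[\[V\]\] \subseteq P$, and then $Y$ with $\[Y\] \in \HHH$ and $\[Y\] \subseteq \[X\] \cap \[X^{\ast}\] \cap \[V\]$. Writing $W$ for the sequence of terminal $X$-blocks of the blocks of $Y$, one has ${f}_{\max}\[W\] = {f}_{\max}\[\[Y\]\] \subseteq P$, so $N(W) \subseteq {g}^{-1}(P) \notin \UUU$, while $N(Y) \in \UUU$ by the choice of $X^{\ast}$. Hence $D := (N(Y) \setminus N(W)) \cap E \in \UUU$, and by construction every $n \in D$ sits below the terminal $X$-block of its $Y$-block.

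Finally I would apply Corollary~\ref{cor:hitsD} with the $\UUU$-set $D$ to replace $Y$ by a refinement (still called $Y$) with $\[Y\] \in \HHH$ and $I(s) \cap D \neq \emptyset$ for every $s \in \[Y\]$; since $\[Y\]$ only shrinks, every $n \in I(s) \cap D$ still satisfies $g(n) < \max(s)$. Put $A = \[Y\]$ and ${A}_{l} = A \cap {H}_{q, l}$, which lies in $\HHH$ because $q \in \PP(\HHH)$. Then Clause~(1) is the definition of ${e}_{k}$; Clause~(2) holds since $s \in {H}_{q, l}$ makes every successor set at level $\max(s)$, in particular ${\succc}_{{T}_{q}}({e}_{k})$, be $\pr{l}{s}$-big; Clause~(3) follows from $D \subseteq E$ together with ${T}_{q}\langle {e}_{k} \rangle \; {\forces}_{\PP(\HHH)} \; {\mathring{x}(n) = x^{\ast}(n) = t}$ whenever $g(n) < k = \max(s)$; and Clause~(4) holds because $\[Y\] \subseteq \[X^{\ast}\]$. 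I expect the terminal-block avoidance of the third paragraph to be the main obstacle: it is precisely there that the $\C_{0}$-failure (keeping $N(Y) \in \UUU$) must be married to the incomparability $\UUU \; {\not\equiv}_{RK} \; {\HHH}_{\max}$ (ejecting $N(W)$ from $\UUU$) so that every decision can be read off the fixed nodes ${e}_{k}$.
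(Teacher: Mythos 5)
Your proposal follows the same skeleton as the paper's proof: the witness to the failure of Clause (2) of Definition \ref{def:c0c1} (your ${X}^{\ast}$, the paper's ${Y}^{\ast}$), Lemma \ref{lem:continuous1}, a fixed branch supplying the nodes ${e}_{k}$, the finite-to-one map $g$ (the paper's $\psi$), the hypothesis $\UUU \; {\not\equiv}_{RK} \; {\HHH}_{\max}$ used to misalign a $\UUU$-set from the block maxima, and a final appeal to Lemma \ref{lem:refiningX}/Corollary \ref{cor:hitsD} so that every $I(s)$ meets $D$. The one structural difference is dual rather than substantive: the paper picks $C \in \UUU$ with $\psi\[C\] \notin {\HHH}_{\max}$ and keeps all block maxima \emph{out} of $\psi\[C\]$, whereas you pick $P \in {\HHH}_{\max}$ with ${g}^{-1}(P) \notin \UUU$ and keep all block maxima \emph{inside} $P$. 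Both forms of the misalignment are fine. However, your execution of the crux step has a genuine gap.

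The false step is the implication ``$n$ sits strictly below the terminal $X$-block of its $Y$-block $\implies g(n) < \max(s)$''. Write $Y(i) = X({a}_{1}) \cup \dotsb \cup X({a}_{m})$ with ${a}_{1} < \dotsb < {a}_{m}$, so that $W(i) = X({a}_{m})$; bad points can only occur when $m \geq 2$, so assume this. Let $X(j)$, $j = {a}_{m}-1$, be the predecessor of $X({a}_{m})$ in the enumeration of $X$ (it need not be a constituent of $Y(i)$), and consider any $n$ with $\max\left(X(j)\right) < n < \min\left(X({a}_{m})\right)$. Such an $n$ lies in $N(Y) \setminus N(W)$, so nothing excludes it from your $D = \left(N(Y) \setminus N(W)\right) \cap E$; yet the least $X$-block-maximum $\geq n$ is exactly $\max\left(X({a}_{m})\right) = \max\left(Y(i)\right)$, i.e.\@ $g(n) = \max\left(Y(i)\right)$. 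If $Y(i)$ is the terminal $Y$-block of $s$, then $g(n) = \max(s) = k$, so $\mathring{x}(n)$ is decided only at the nodes of length $k+1$, and different successors of ${e}_{k}$ may decide it differently; hence ${T}_{q}\langle {e}_{k} \rangle$ need not force $\mathring{x}(n) = t$ and Clause (3) fails. The repair is contained in your own computation: every such bad $n$ has $g(n) = \max\left(Y(i)\right) \in P$, so it lies in ${g}^{-1}(P)$, and you already proved ${g}^{-1}(P) \notin \UUU$. So set $D = \left(N(Y) \setminus {g}^{-1}(P)\right) \cap E \in \UUU$ instead of removing only $N(W)$. Then for $n \in D \cap I\left(Y(i)\right)$ one has $g(n) \leq \max\left(Y(i)\right)$ (because $\max\left(Y(i)\right)$ is an $X$-block-maximum) and $g(n) \notin P \ni \max\left(Y(i)\right)$, whence $g(n) < \max\left(Y(i)\right) \leq \max(s)$ for every $s \in \[Y\]$ with $n \in I(s)$. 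With this correction the rest of your argument, including the final shrinking of $Y$ via Corollary \ref{cor:hitsD}, goes through, and the resulting proof is essentially the mirror image of the paper's.
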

\begin{proof}
 Since $\UUU \notin {\C}_{0}(\HHH)$, but $\UUU$ is a selective ultrafilter, there exists ${Y}^{\ast} \in {\FIN}^{\[\omega\]}$ such that $\[{Y}^{\ast}\] \in \HHH$ and for all $Z \in {\FIN}^{\[\omega\]}$, if $\[Z\] \in \HHH$ and $\[Z\] \subseteq \[{Y}^{\ast}\]$, then $N(Z) \in \UUU$.
 Apply Lemma \ref{lem:continuous1} to find $q, X$, and $\left\langle {\eta}_{i, e}: i \in \omega \wedge e \in \lv{{T}_{q}}{\max\left(X(i)\right)+1} \right\rangle$ satisfying (1)--(4) of that lemma.
 For any $k \in {f}_{\max}\[\[X\]\]$, there exists a unique ${j}_{k} \in \omega$ with $k = \max\left(X({j}_{k})\right)$.
 Choose an infinite branch $F$ through ${T}_{q}$ -- that is, choose $F \in \[{T}_{q}\]$.
 For each $k \in {f}_{\max}\[\[X\]\]$, define ${e}_{k} = F \restrict k \in \lv{{T}_{q}}{k}$.
 Define $\psi: \omega \rightarrow \omega$ by $\psi(n) = \min\left\{ k \in {f}_{\max}\[\[X\]\]: k \geq n \right\}$, for all $n \in \omega$.
 For each $n \in \omega$, ${\eta}_{{j}_{\psi(n)}, F\restrict \psi(n)+1}$ is defined at $n$ and is a member of $2$.
 Hence there are $t \in 2$ and $B \in \UUU$ such that $\forall n \in B\[{\eta}_{{j}_{\psi(n)}, F\restrict \psi(n)+1}(n)=t\]$.
 Next, since ${\HHH}_{\max} \; {\not\leq}_{RK} \; \UUU$, there exists $C \in \UUU$ with $\psi\[C\] \notin {\HHH}_{\max}$.
 Thus $E = B \cap C \in \UUU$ and $L = \[{Y}^{\ast}\] \cap \[X\] \cap \{s \in \FIN: \max(s) \in \omega \setminus \psi\[C\]\} \in \HHH$.
 Apply Lemma \ref{lem:refiningX} to find $Y \in {\FIN}^{\[\omega\]}$ and $D \subseteq E$ satisfying the conclusions of that lemma.
 Define $A = \[Y\] \in \HHH$.
 As $q \in \PP(\HHH)$, for each $l \in \omega$, ${H}_{q, l} \in \HHH$, and so ${A}_{l} = A \cap {H}_{q, l} \in \HHH$.
 For each $k \in {f}_{\max}\[A\] \subseteq {f}_{\max}\[\[X\]\]$, ${e}_{k} \in \lv{{T}_{q}}{k}$ by definition.
 For any $l \in \omega$ and for any $s \in {A}_{l}$ with $\max(s)=k$, since $s \in {H}_{q, l}$ and ${e}_{k} \in \lv{{T}_{q}}{\max(s)}$, ${\succc}_{{T}_{q}}({e}_{k})$ is $\pr{l}{s}$-big.
 Hence (1) and (2) hold.
 For (3), let $\seq{n}{i}{\in}{\omega}$ be the strictly increasing enumeration of $D$.
 Then $\forall i \in \omega\[{n}_{i} \in I\left(Y(i)\right)\]$.
 For any $s \in A$, $Y(i) \subseteq s$, for some $i \in \omega$, whence ${n}_{i} \in I\left(Y(i)\right) \subseteq I(s)$, whence $I(s) \cap D \neq \emptyset$.
 Now consider any $s \in A$, let $k = \max(s)$, and fix $n \in I(s) \cap D$.
 Then $n \in C$ and $\psi(n) \in \psi\[C\]$.
 On the other hand, $k \in \omega \setminus \psi\[C\]$, whence $k \neq \psi(n)$.
 As $n \in I(s)$, $n \leq \max(s) = k$.
 As $s \in \[X\]$ and $k \in {f}_{\max}\[\[X\]\]$, the minimality of $\psi(n)$ implies that $\psi(n) \leq k$.
 Therefore $\psi(n) < k$ and $\psi(n)+1 \leq k$, whence $F\restrict \psi(n)+1 \subseteq F \restrict k = {e}_{k}$.
 Therefore ${T}_{q}\langle {e}_{k} \rangle \leq {T}_{q}\langle F\restrict \psi(n)+1 \rangle$.
 Since ${T}_{q}\langle F\restrict \psi(n)+1 \rangle \; {\forces}_{\PP(\HHH)} \; {\mathring{x}(n) = {\eta}_{{j}_{\psi(n)}, F\restrict \psi(n)+1}(n)}$ and since $n \in B$, ${T}_{q}\langle {e}_{k} \rangle \; {\forces}_{\PP(\HHH)} \; {\mathring{x}(n) = t}$, as needed for (3).
 For (4), if $Z \in {\FIN}^{\[\omega\]}$, $\[Z\] \in \HHH$, and $\[Z\] \subseteq \[Y\] \subseteq L \subseteq \[{Y}^{\ast}\]$, then $N(Z) \in \UUU$.
\end{proof}
\begin{Lemma} \label{lem:c0preserve}
 Suppose $\UUU$ is a selective ultrafilter on $\omega$ such that $\UUU \notin {\C}_{0}(\HHH)$ and $\UUU \; {\not\equiv}_{RK} \; {\HHH}_{\max}$.
 Suppose $\mathring{x}$ is a $\PP(\HHH)$-name such that ${\forces}_{\PP(\HHH)} \; {\mathring{x}: \omega \rightarrow 2}$.
 Then for every ${p}^{\ast} \in \PP(\HHH)$, there are $q, t, X,$ and $N$ such that:
 \begin{enumerate}
  \item
  $q \leq {p}^{\ast}$, $t \in 2$, $X \in {\FIN}^{\[\omega\]}$ with $\[X\] \in \HHH$, and $N \in \UUU$;
  \item
  letting $\seq{n}{i}{\in}{\omega}$ be the strictly increasing enumeration of $N$, for each $i \in \omega$, for each $e \in \lv{{T}_{q}}{\max\left(X(i)\right)+1}$, there exists ${e}^{\ast} \in \lv{{T}_{q}}{\max\left(X(i+1)\right)}$ so that $e \subseteq {e}^{\ast}$, there exists $S \subseteq {\succc}_{{T}_{q}}({e}^{\ast})$ such that
  \begin{align*}
   S \ \text{is} \ \pr{\max\left(X(i)\right)+1}{X(i+1)}\text{-big,}
  \end{align*}
  and for each $\sigma \in S$, ${T}_{q}\langle {{e}^{\ast}}^{\frown}{\langle \sigma \rangle} \rangle \; {\forces}_{\PP(\HHH)} \; {\mathring{x}({n}_{i+1}) = t}$.
 \end{enumerate}
\end{Lemma}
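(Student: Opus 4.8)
The plan is to mirror the proof of Lemma~\ref{lem:hmax2}, replacing the use of Lemma~\ref{lem:hmax1} (which produced an ${\HHH}_{\max}$-indexed decision) by Lemma~\ref{lem:c01}, and replacing the automatic transversal ${f}_{\max}\[\[X\]\] \in {\HHH}_{\max}$ by a transversal living in $\UUU$. First, for $t \in 2$ let ${D}_{t}$ be the set of $q \in \PP(\HHH)$ admitting data $A, Y, D, \seq{A}{l}{\in}{\omega}, \seq{e}{k}{\in}{{f}_{\max}\[A\]}$ satisfying (1)--(4) of Lemma~\ref{lem:c01} with respect to $t$. By Lemma~\ref{lem:c01} the union ${D}_{0} \cup {D}_{1}$ is dense, so by Lemma~\ref{lem:densesplit} I may fix $p \leq {p}^{\ast}$ and $t \in 2$ with ${D}_{t}$ dense below $p$. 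I also fix once and for all a block sequence ${Y}^{\ast}$ witnessing $\UUU \notin \C_0(\HHH)$: that is, $\[{Y}^{\ast}\] \in \HHH$ and every $Z \in {\FIN}^{\[\omega\]}$ with $\[Z\] \in \HHH$ and $\[Z\] \subseteq \[{Y}^{\ast}\]$ has $N(Z) \in \UUU$.

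Next I run a fusion through the condition game, exactly as in Lemma~\ref{lem:hmax2}. I define a strategy $\Sigma$ for Player~I in $\Game^{\mathtt{Cond}}(\HHH)$ starting from ${p}_{0} = p$ and ${A}_{0} = \[{Y}^{\ast}\]$, so that the block sequence produced will refine ${Y}^{\ast}$. At stage $i$, writing ${m}_{i} = \max({s}_{i})+1$, for each $e \in \lv{{T}_{{p}_{i}}}{{m}_{i}}$ I use the density of ${D}_{t}$ below ${T}_{{p}_{i}}\langle e \rangle$ to pick ${p}_{i,e} \leq {T}_{{p}_{i}}\langle e \rangle$ in ${D}_{t}$ together with its Lemma~\ref{lem:c01}-data, in particular a set ${D}_{e} \in \UUU$, and I amalgamate ${p}_{i+1} = \bigcup_{e}{{T}_{{p}_{i,e}}}$ by Lemma~\ref{lem:amalgam}. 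Player~I's next move ${A}_{i+1} \in \HHH$ is the intersection of the bigness sets ${A}_{e,{m}_{i}}$ over $e$, the set $\{s \in \[{Y}^{\ast}\] : I(s) \cap {\hat{E}}_{i} \neq \emptyset\}$, which lies in $\HHH$ by Corollary~\ref{cor:hitsD} because ${\hat{E}}_{i} := \bigcap_{j \leq i}\bigcap_{e \in \lv{{T}_{{p}_{j}}}{{m}_{j}}}{{D}_{e}} \in \UUU$, and the tail set $\{s : \min(s) > {m}_{i}\}$. Since by Lemma~\ref{lem:condgame} $\Sigma$ is not winning, some run is lost by Player~I, yielding a block sequence $X$ with $X(i) = {s}_{i}$, $\[X\] \in \HHH$, and $q = \bigcap_{i}{{T}_{{p}_{i}}} \in \PP(\HHH)$ with $q \leq p$. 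Because every $X(i) \in \[{Y}^{\ast}\]$, the sequence $X$ refines ${Y}^{\ast}$, so $\[X\] \subseteq \[{Y}^{\ast}\]$ and hence $N(X) \in \UUU$ by the choice of ${Y}^{\ast}$.

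Just as in Lemma~\ref{lem:hmax2} (invoking Lemmas~\ref{lem:winning1}, \ref{lem:winning2}, and \ref{lem:bigmonotone}), I then verify that for each $i$ and each $e \in \lv{{T}_{q}}{\max(X(i))+1}$ there is ${e}^{\ast} \in \lv{{T}_{q}}{\max(X(i+1))}$ with $e \subseteq {e}^{\ast}$ and with $S := {\succc}_{{T}_{q}}({e}^{\ast})$ being $\pr{\max(X(i))+1}{X(i+1)}$-big, and that for \emph{every} point $n \in I(X(i+1)) \cap {\hat{E}}_{i}$ --- a nonempty set by the choice of ${A}_{i+1}$, and contained in each ${D}_{e}$ --- one has ${T}_{q}\langle {e}^{\ast} \rangle \; {\forces}_{\PP(\HHH)} \; {\mathring{x}(n) = t}$ by clause~(3) of Lemma~\ref{lem:c01}. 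Thus the per-block decision data demanded in (2) of the target statement is in place for \emph{any} choice of ${n}_{i+1} \in I(X(i+1)) \cap {\hat{E}}_{i}$.

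The remaining, and main, difficulty is to select a single $N \in \UUU$ whose increasing enumeration $\seq{n}{i}{\in}{\omega}$ satisfies ${n}_{i+1} \in I(X(i+1)) \cap {\hat{E}}_{i}$; since the target couples the $(i+1)$st element of $N$ to the block $X(i+1)$, the set $N$ is forced to be a transversal meeting each interval $I(X(i+1))$ at a point of ${\hat{E}}_{i}$. I have $N(X) \in \UUU$ together with a decreasing sequence ${\hat{E}}_{i}$ of $\UUU$-sets for which $I(X(i+1)) \cap {\hat{E}}_{i} \neq \emptyset$, and the task is to diagonalize these so that the chosen transversal is genuinely $\UUU$-large. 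The plan is to exploit the selectivity of $\UUU$: using that $\UUU$ is a P-point one passes to a pseudo-intersection of the ${\hat{E}}_{i}$, and using that $\UUU$ is a Q-point one extracts a one-point-per-interval selector, after possibly re-coarsening $X$ by grouping consecutive blocks --- via Lemma~\ref{lem:refiningX} applied to $\[X\]$ and a suitable $\UUU$-set --- so that the selector meets every coarse block; the decisions of the previous paragraph transfer upward to the coarse blocks by extending each ${e}^{\ast}$ to the new top level and applying Lemma~\ref{lem:bigmonotone}. The delicate point is aligning the growth of the blocks with the diagonalization so that the transversal does not escape $\UUU$, and it is precisely here --- through Corollary~\ref{cor:hitsD}, Lemma~\ref{lem:refiningX}, and the witness ${Y}^{\ast}$ --- that both hypotheses $\UUU \notin \C_0(\HHH)$ and $\UUU \; {\not\equiv}_{RK} \; {\HHH}_{\max}$ are used; this step is the analogue, for the non-$\C_1$ case, of the branch extraction carried out by Lemma~\ref{lem:uhtreebranch} in the $\UUU \in {\C}_{1}(\HHH)$ case. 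Once such an $N$ is produced, the quadruple $q, t, X, N$ (with $X$ possibly re-coarsened) witnesses (1) and (2), and, fed into Lemma~\ref{lem:bigdecision}, yields the preservation of $\UUU$.
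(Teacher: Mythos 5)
Your construction tracks the paper's proof faithfully for most of its length: the decomposition ${D}_{0} \cup {D}_{1}$ via Lemma \ref{lem:c01} and Lemma \ref{lem:densesplit}, the witness ${Y}^{\ast}$ to $\UUU \notin {\C}_{0}(\HHH)$, the run of ${\Game}^{\mathtt{Cond}}\left(\HHH\right)$ in which each stage attaches Lemma \ref{lem:c01} data below every node of the current level, the use of Corollary \ref{cor:hitsD} to keep Player I's moves in $\HHH$, and the endgame appeal to $N(X) \in \UUU$ and Q-point-ness are all exactly the paper's steps. The genuine gap is the step you yourself flag as ``delicate'': producing $N \in \UUU$ whose enumeration satisfies ${n}_{i+1} \in I\left(X(i+1)\right) \cap {\hat{E}}_{i}$. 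Deferring this to an after-the-game diagonalization cannot work as described. A P-point pseudo-intersection $E$ of the stage-dependent sets ${\hat{E}}_{i}$ only gives that each $E \setminus {\hat{E}}_{i}$ is finite, and these finite errors can overlap $I\left(X(i+1)\right)$ for every $i$, so the point of $E$ in $I\left(X(i+1)\right)$ (if there is one) need not lie in ${\hat{E}}_{i}$. Worse, nothing in your construction guarantees that the set of decidable points ${\bigcup}_{i}{\left(I\left(X(i+1)\right) \cap {\hat{E}}_{i}\right)}$ belongs to $\UUU$ at all: the sets ${D}_{e}$ are handed to you by the density of ${D}_{t}$, and it is entirely compatible with everything the game provides that the intersections ${\hat{E}}_{i}$ meet each interval $I\left(X(i+1)\right)$ in a thin transversal on whose complement $\UUU$ concentrates. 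In that situation no post-hoc thinning, diagonalization, or re-coarsening can produce the required $N \in \UUU$, because the conclusion simply fails for the pair $q, X$ that your game produced. Your re-coarsening step is also unsupported: Lemma \ref{lem:bigmonotone} only weakens the first parameter $k$ for a fixed block $s$; it does not convert $\pr{\max\left(X(b-1)\right)+1}{X(b)}$-bigness into $\pr{\max\left(X'(i)\right)+1}{X(a) \cup \dotsb \cup X(b)}$-bigness at the higher level, which would require a separate (provable, but nowhere stated) transfer argument.

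The paper resolves precisely this difficulty by choosing the pseudo-intersection \emph{before} the game rather than after. It fixes a countable $M \prec H(\theta)$ containing all relevant parameters and, using that $\UUU$ is a P-point, a single $D \in \UUU$ with $D \; {\subseteq}^{\ast} \; C$ for every $C \in M \cap \UUU$. The strategy $\Sigma$ is then defined so that every condition played and all of the Lemma \ref{lem:c01} data chosen along the way lie in $M$; consequently, at stage $i$ the set $C = \bigcap\left\{{D}_{e}: e \in \lv{{T}_{{p}_{i}}}{k}\right\}$ is in $M \cap \UUU$, so there is ${l}^{\ast}$ with $D \setminus {l}^{\ast} \subseteq C$, and Player I's move ${A}_{i+1}$ demands both $\min(s) > {l}^{\ast}$ and $I(s) \cap D \neq \emptyset$ (the latter set is in $\HHH$ by Corollary \ref{cor:hitsD}). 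This makes every point of $I\left(X(i+1)\right) \cap D$ decidable at the appropriate level, where $D$ is one fixed member of $\UUU$; the endgame is then the straightforward Q-point extraction of $N \subseteq N(X) \cap D$ meeting each interval $I\left(X(i)\right)$ in exactly one point, with the alignment ${n}_{i+1} \in I\left(X(i+1)\right) \cap D$ automatic. Without this advance choice of $D$ --- or some equivalent mechanism replacing your stage-dependent ${\hat{E}}_{i}$ by a single $\UUU$-set known before the game begins --- your argument does not go through.
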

\begin{proof}
 For each $t \in 2$, let ${\DDD}_{t}$ be the collection of all $q \in \PP(\HHH)$ such that there are $A \in \HHH$, $Y \in {\FIN}^{\[\omega\]}$, $D \in \UUU$, $\seq{A}{l}{\in}{\omega}$, and $\seq{e}{k}{\in}{{f}_{\max}\[A\]}$ satisfying (1)--(4) of Lemma \ref{lem:c01} for $t$.
 By Lemma \ref{lem:c01}, $\DDD = {\DDD}_{0} \cup {\DDD}_{1}$ is dense in $\PP(\HHH)$.
 Hence by Lemma \ref{lem:densesplit}, there are $p \leq {p}^{\ast}$ and $t \in 2$ such that ${\DDD}_{t}$ is dense below $p$.
 Since $\UUU \notin {\C}_{0}(\HHH)$, but $\UUU$ is a selective ultrafilter, there exists ${Y}^{\ast} \in {\FIN}^{\[\omega\]}$ such that $\[{Y}^{\ast}\] \in \HHH$ and for all $Z \in {\FIN}^{\[\omega\]}$, if $\[Z\] \in \HHH$ and $\[Z\] \subseteq \[{Y}^{\ast}\]$, then $N(Z) \in \UUU$.
 Let $\theta$ be a sufficiently large regular cardinal and let $M \prec H(\theta)$ be countable with $M$ containing all the relevant parameters.
 In particular, $\HHH, \PP(\HHH), \UUU, \mathring{x}, {\DDD}_{t}, p, {Y}^{\ast} \in M$.
 As $\UUU$ is a P-point, find $D \in \UUU$ such that $D \; {\subseteq}^{\ast} \; C$, for all $C \in M \cap \UUU$.
 
 Now define a strategy $\Sigma$ for Player I in ${\Game}^{\mathtt{Cond}}\left(\HHH\right)$ as follows.
 The definition will insure that whenever $i \in \omega$ and $\left\langle \pr{\pr{{p}_{j}}{{A}_{j}}}{{s}_{j}}: j \leq i \right\rangle$ is a partial run of ${\Game}^{\mathtt{Cond}}\left(\HHH\right)$ in which Player I has followed $\Sigma$, then ${p}_{i} \in M$.
 ${p}_{0} = p \in \PP(\HHH) \cap M$.
 ${A}_{0} = \left\{s \in \[{Y}^{\ast}\]: I(s) \cap D \neq \emptyset \right\}$.
 By Corollary \ref{cor:hitsD}, ${A}_{0} \in \HHH$.
 Define $\Sigma(\emptyset) = \pr{{p}_{0}}{{A}_{0}}$.
 Suppose that $i \in \omega$ and that $\left\langle \pr{\pr{{p}_{j}}{{A}_{j}}}{{s}_{j}}: j \leq i \right\rangle$ is a partial run of ${\Game}^{\mathtt{Cond}}\left(\HHH\right)$ in which Player I has followed $\Sigma$.
 Assume that ${p}_{i} \in M$.
 Let $k = \max({s}_{i})+1$, for ease of notation.
 For any $e \in \lv{{T}_{{p}_{i}}}{k}$, ${T}_{{p}_{i}}\langle e \rangle \leq {p}_{i} \leq {p}_{0} = p$.
 Also ${T}_{{p}_{i}}\langle e \rangle \in M$.
 Since ${\DDD}_{t}$ is dense below $p$ and ${\DDD}_{t} \in M$, there exists ${p}_{i, e} \leq {T}_{{p}_{i}}\langle e \rangle$ such that ${p}_{i, e} \in M$ and there are ${A}_{e} \in \HHH \cap M$, ${Y}_{e} \in {\FIN}^{\[\omega\]} \cap M$, ${D}_{e} \in \UUU \cap M$, $\left\langle {A}_{e, k'}: k' \in \omega \right\rangle \in M$, and $\left\langle {h}_{e, l}: l \in {f}_{\max}\[{A}_{e}\] \right\rangle \in M$ satisfying (1)--(4) of Lemma \ref{lem:c01} for $t$.
 Now $C = \bigcap\left\{{D}_{e}: e \in \lv{{T}_{{p}_{i}}}{k}\right\} \in \UUU \cap M$ because it is the intersection of finitely many members of $\UUU \cap M$, and so there is ${l}^{\ast} \in \omega$ such that ${l}^{\ast} \geq k$ and $D \setminus {l}^{\ast} \subseteq C$.
 Let ${A}^{\ast} = \[{Y}^{\ast}\] \cap \bigcap\{{A}_{e, k}: e \in \lv{{T}_{{p}_{i}}}{k}\} \in \HHH$ and let $L = \{s \in {A}^{\ast}: \min(s) > {l}^{\ast}\} \in \HHH$.
 By Corollary \ref{cor:hitsD}, ${A}_{i+1} = \{s \in L: I(s) \cap D \neq \emptyset\} \in \HHH$.
 Define ${p}_{i+1} = {T}_{{p}_{i+1}} = \bigcup{\left\{ {T}_{{p}_{i, e}}: e \in \lv{{T}_{{p}_{i}}}{k} \right\}}$.
 Note that ${p}_{i+1} \in \PP(\HHH)$ by Lemma \ref{lem:amalgam}, and that ${p}_{i+1} \in M$ because it is the union of finitely many members of $M$.
 Define $\Sigma\left(\left\langle \pr{\pr{{p}_{j}}{{A}_{j}}}{{s}_{j}}: j \leq i \right\rangle\right) = \pr{{p}_{i+1}}{{A}_{i+1}}$.
 As ${p}_{i+1} \in M$, the inductive condition on $\Sigma$ is insured at $i+1$.
 Consider any $s \in {A}_{i+1}$.
 Observe that $s \in L$ and that $I(s) \cap D \neq \emptyset$.
 Hence $s \in {A}^{\ast}$ and $\min(s) > {l}^{\ast}$, which, in particular, means that $s \in \[{Y}^{\ast}\]$.
 Observe that for any $e \in \lv{{T}_{{p}_{i}}}{k}$, $s \in {A}_{e, k} \subseteq {A}_{e}$.
 Letting $l = \max(s) \in {f}_{\max}\[{A}_{e, k}\] \subseteq {f}_{\max}\[{A}_{e}\]$, ${h}_{e, l} \in \lv{{T}_{{p}_{i, e}}}{l}$, and since $\dom(e) = k \leq {l}^{\ast} < \min(s) \leq \max(s) = l = \dom({h}_{e, l})$, $e \subseteq {h}_{e, l}$.
 Further, ${\succc}_{{T}_{{p}_{i, e}}}({h}_{e, l})$ is $\pr{k}{s}$-big.
 If $n \in I(s) \cap D$, then $n \in D$ and ${l}^{\ast} < \min(s) \leq n$, whence $n \in C$.
 Thus $n \in {D}_{e} \cap I(s)$, and so ${T}_{{p}_{i, e}}\langle {h}_{e, l} \rangle \; {\forces}_{\PP(\HHH)} \; {\mathring{x}(n)=t}$.
 In particular, since for every $\sigma \in {\succc}_{{T}_{{p}_{i, e}}}({h}_{e, l})$, ${T}_{{p}_{i, e}}\langle {{h}_{e, l}}^{\frown}{\langle \sigma \rangle} \rangle \leq {T}_{{p}_{i, e}}\langle {h}_{e, l} \rangle$, ${T}_{{p}_{i, e}}\langle {{h}_{e, l}}^{\frown}{\langle \sigma \rangle} \rangle \; {\forces}_{\PP(\HHH)} \; {\mathring{x}(n)=t}$.
 Note that ${h}_{e, l} \in \lv{{T}_{{p}_{i+1}}}{l}$ and that ${\succc}_{{T}_{{p}_{i+1}}}({h}_{e, l}) = {\succc}_{{T}_{{p}_{i, e}}}({h}_{e, l})$.
 Therefore, ${\succc}_{{T}_{{p}_{i+1}}}({h}_{e, l})$ is $\pr{k}{s}$-big.
 Also since for every $\sigma \in {\succc}_{{T}_{{p}_{i+1}}}({h}_{e, l})$, ${T}_{{p}_{i+1}}\langle {{h}_{e, l}}^{\frown}{\langle \sigma \rangle} \rangle \leq {T}_{{p}_{i, e}}\langle {{h}_{e, l}}^{\frown}{\langle \sigma \rangle} \rangle$,
 ${T}_{{p}_{i+1}}\langle {{h}_{e, l}}^{\frown}{\langle \sigma \rangle} \rangle \; {\forces}_{\PP(\HHH)} \; {\mathring{x}(n)=t}$, for every $n \in I(s) \cap D$.
 This completes the definition of $\Sigma$.
 
 Since $\Sigma$ is not a winning strategy for Player I, there is a run $\left\langle \pr{\pr{{p}_{i}}{{A}_{i}}}{{s}_{i}}: i < \omega \right\rangle$ of ${\Game}^{\mathtt{Cond}}\left(\HHH\right)$ in which Player I has followed $\Sigma$ and lost.
 Then $X = \{{s}_{i}: i < \omega\} \in {\FIN}^{\[\omega\]}$ and $\[X\] \in \HHH$.
 Furthermore, $X(i) = {s}_{i}$, for all $i \in \omega$.
 For each $i \in \omega$, $X(i) = {s}_{i} \in {A}_{i} \subseteq \[{Y}^{\ast}\]$.
 Since $X, {Y}^{\ast} \in {\FIN}^{\[\omega\]}$, it follows that $\[X\] \subseteq \[{Y}^{\ast}\]$, and so $N(X) \in \UUU$ by the choice of ${Y}^{\ast}$.
 Furthermore, for each $i \in \omega$, $I\left(X(i)\right) \cap D \neq \emptyset$.
 Let $R = N(X) \cap D \in \UUU$.
 Note that for any $i \in \omega$, if $n \in I\left(X(i)\right) \cap D$, then $n \in N(X) \cap D = R$, and so $n \in I\left(X(i)\right) \cap R$, meaning that $I\left(X(i)\right) \cap R \neq \emptyset$.
 Since $R \subseteq N(X)$, $\UUU$ is a Q-point, and $\forall i < j < \omega\[I\left(X(i)\right) \cap I\left(X(j)\right) = \emptyset\]$, there exists $N \in \UUU$ such that $N \subseteq R$, and $\forall i \in \omega\[\lc I\left(X(i)\right) \cap N \rc = 1\]$.
 Let $\seq{n}{i}{\in}{\omega}$ be the strictly increasing enumeration of $N$.
 As $N \subseteq R \subseteq N(X)$, it is clear that $\{{n}_{i}\} = I\left(X(i)\right) \cap N$, for every $i \in \omega$.
 In particular, ${n}_{i} \in I\left(X(i)\right) \cap D$, for all $i \in \omega$.
 Again, for ease of notation, denote $\max({s}_{i})+1$ by ${k}_{i}$.
 Let $q = {T}_{q} = {\bigcap}_{i \in \omega}{{T}_{{p}_{i}}}$.
 Then $q \in \PP(\HHH)$ and $q \leq {p}_{0} = p \leq {p}^{\ast}$.
 Consider $i \in \omega$ and $e \in \lv{{T}_{q}}{{k}_{i}}$.
 Then $e \in \lv{{T}_{{p}_{i}}}{{k}_{i}}$.
 Since ${s}_{i+1} \in {A}_{i+1}$, then as noted above, letting $l = \max({s}_{i+1}) = \max\left(X(i+1)\right)$, there exists ${e}^{\ast} \in \lv{{T}_{{p}_{i+1}}}{l}$ such that $e \subseteq {e}^{\ast}$, there exists ${\succc}_{{T}_{{p}_{i+1}}}({e}^{\ast}) = S \subseteq {\succc}_{{T}_{{p}_{i+1}}}({e}^{\ast})$ such that $S$ is $\pr{{k}_{i}}{{s}_{i+1}}$-big, and for every $\sigma \in S$, ${T}_{{p}_{i+1}}\langle {{e}^{\ast}}^{\frown}{\langle \sigma \rangle} \rangle \; {\forces}_{\PP(\HHH)} \; {\mathring{x}({n}_{i+1}) = t}$ because ${n}_{i+1} \in I\left(X(i+1)\right) \cap D = I({s}_{i+1}) \cap D$.
 As ${e}^{\ast} \in {T}_{{p}_{i+1}}$ and $\dom({e}^{\ast}) = l \leq \max({s}_{i+1})+1$, ${e}^{\ast} \in \lv{{T}_{q}}{\max\left(X(i+1)\right)}$ by Lemma \ref{lem:winning1}.
 Similarly, if $\sigma \in {\succc}_{{T}_{{p}_{i+1}}}({e}^{\ast})$, then ${{e}^{\ast}}^{\frown}{\langle \sigma \rangle} \in {T}_{{p}_{i+1}}$ and $\dom\left({{e}^{\ast}}^{\frown}{\langle \sigma \rangle}\right) = l+1 = \max({s}_{i+1})+1$, ${{e}^{\ast}}^{\frown}{\langle \sigma \rangle} \in {T}_{q}$, whence $\sigma \in {\succc}_{{T}_{q}}({e}^{\ast})$.
 Therefore, $S \subseteq {\succc}_{{T}_{{p}_{i+1}}}({e}^{\ast}) \subseteq {\succc}_{{T}_{q}}({e}^{\ast})$ and $S$ is $\pr{\max\left(X(i)\right)+1}{X(i+1)}$-big.
 Finally, for any $\sigma \in S$, ${T}_{q}\langle {{e}^{\ast}}^{\frown}{\langle \sigma \rangle} \rangle \leq {T}_{{p}_{i+1}}\langle {{e}^{\ast}}^{\frown}{\langle \sigma \rangle} \rangle$, and so ${T}_{q}\langle {{e}^{\ast}}^{\frown}{\langle \sigma \rangle} \rangle \; {\forces}_{\PP(\HHH)} \; {\mathring{x}({n}_{i+1}) = t}$.
 This is as required.
\end{proof}
\begin{Cor} \label{cor:notc0-preserve}
 Suppose $\VVV$ is a selective ultrafilter on $\omega$ with $\VVV \notin {\C}_{1}(\HHH)$.
 Then $\PP(\HHH)$ preserves $\VVV$.
\end{Cor}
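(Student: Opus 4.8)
The plan is to reduce the statement to two cases handled by results already in hand, after first extracting a usable RK-representative of $\VVV$ from the failure of $\VVV \in {\C}_{1}(\HHH)$. The key observation is that, since $\VVV$ is selective, the negation of Clause~(4) of Definition~\ref{def:c0c1} yields a selective ultrafilter $\VVV'$ on $\omega$ with $\VVV' \notin {\C}_{0}(\HHH)$ and $\VVV \; {\equiv}_{RK} \; \VVV'$. In particular $\VVV \; {\leq}_{RK} \; \VVV'$, so by Part~(2) of Lemma~\ref{lem:preserving} it is enough to prove that $\PP(\HHH)$ preserves $\VVV'$. I would then split on whether $\VVV' \; {\equiv}_{RK} \; {\HHH}_{\max}$. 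This dichotomy, together with the reduction just made, is exactly what makes the three cases in the opening discussion of Section~\ref{sec:preservesel} exhaustive once membership in ${\C}_{1}(\HHH)$ is excluded: the remaining possibilities for $\VVV'$ are $\VVV' \; {\equiv}_{RK} \; {\HHH}_{\max}$ or $\VVV' \; {\not\equiv}_{RK} \; {\HHH}_{\max}$, and in the latter case $\VVV'$ is a selective ultrafilter outside ${\C}_{0}(\HHH)$ that is not RK-equivalent to ${\HHH}_{\max}$.

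In the first case, $\VVV' \; {\equiv}_{RK} \; {\HHH}_{\max}$, preservation of $\VVV'$ is immediate from Corollary~\ref{cor:H_max}. In the second case I would verify the hypothesis of Part~(1) of Lemma~\ref{lem:preserving} directly for $\VVV'$. Fix a $\PP(\HHH)$-name $\mathring{x}$ with ${\forces}_{\PP(\HHH)} \; {\mathring{x}: \omega \rightarrow 2}$ and a condition ${p}^{\ast}$. Since $\VVV'$ is selective, $\VVV' \notin {\C}_{0}(\HHH)$, and $\VVV' \; {\not\equiv}_{RK} \; {\HHH}_{\max}$, Lemma~\ref{lem:c0preserve} produces $q_{0} \leq {p}^{\ast}$, some $t \in 2$, some $X \in {\FIN}^{\[\omega\]}$ with $\[X\] \in \HHH$, and some $N \in \VVV'$ whose Clause~(2) is verbatim the hypothesis~(2) of Lemma~\ref{lem:bigdecision} (taking $p = q_{0}$); moreover $N \in \VVV'$ is infinite, so $N \in {\[\omega\]}^{\omega}$ and hypothesis~(1) of Lemma~\ref{lem:bigdecision} holds. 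Applying Lemma~\ref{lem:bigdecision} yields $q \leq q_{0} \leq {p}^{\ast}$ with $q \; {\forces}_{\PP(\HHH)} \; {\mathring{x}({n}_{i}) = t}$ for every $1 \leq i < \omega$, where $\seq{n}{i}{\in}{\omega}$ enumerates $N$ increasingly. Setting $A = N \setminus \{{n}_{0}\} \in \VVV'$ and ${t}_{l} = t$ for all $l \in A$ then witnesses the hypothesis of Part~(1) of Lemma~\ref{lem:preserving}, so $\PP(\HHH)$ preserves $\VVV'$. In both cases $\PP(\HHH)$ preserves $\VVV'$, and hence $\VVV$, by Part~(2) of Lemma~\ref{lem:preserving}.

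I do not expect any genuine obstacle here: all the real work has been done in Lemmas~\ref{lem:c0preserve}, \ref{lem:bigdecision}, and \ref{lem:preserving}, and this corollary is essentially bookkeeping that chains them together. The one point requiring care is the very first step, namely reading off from $\VVV \notin {\C}_{1}(\HHH)$ (with $\VVV$ selective) the existence of an RK-equivalent $\VVV'$ lying outside ${\C}_{0}(\HHH)$, and then confirming that preservation passes from $\VVV'$ down to $\VVV$ along $\VVV \; {\leq}_{RK} \; \VVV'$. Once that reduction is in place, the case analysis on $\VVV' \; {\equiv}_{RK} \; {\HHH}_{\max}$ is forced and each branch is closed by an earlier result.
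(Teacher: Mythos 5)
Your proposal is correct and follows essentially the same route as the paper's own proof: the paper likewise splits on $\equiv_{RK}\;{\HHH}_{\max}$ (closing that case with Corollary~\ref{cor:H_max}) and otherwise chains Lemma~\ref{lem:c0preserve}, Lemma~\ref{lem:bigdecision}, and Part~(1) of Lemma~\ref{lem:preserving}, then transfers preservation from the RK-representative outside ${\C}_{0}(\HHH)$ down to $\VVV$ via Part~(2) of Lemma~\ref{lem:preserving}. The only difference is bookkeeping order: the paper proves preservation for all selective $\UUU \notin {\C}_{0}(\HHH)$ first and extracts the RK-representative afterwards, whereas you extract it at the outset; the mathematics is identical.
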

\begin{proof}
 It will first be argued that if $\UUU$ is a selective ultrafilter on $\omega$ with $\UUU \notin {\C}_{0}(\HHH)$, then $\PP(\HHH)$ preserves $\UUU$.
 Let such a $\UUU$ be given.
 If $\UUU \; {\equiv}_{RK} \; {\HHH}_{\max}$, then $\PP(\HHH)$ preserves $\UUU$ by Corollary \ref{cor:H_max}.
 So assume that $\UUU \; {\not\equiv}_{RK} \; {\HHH}_{\max}$.
 Suppose $\mathring{x}$ is a $\PP(\HHH)$-name such that ${\forces}_{\PP(\HHH)} \; {\mathring{x}: \omega \rightarrow 2}$ and that $p \in \PP(\HHH)$.
 By Lemma \ref{lem:c0preserve}, there are $q, t, X$, and $N$ such that $q \leq p$, $t \in 2$, $N \in \UUU$, and the hypotheses of Lemma \ref{lem:bigdecision} are satisfied.
 Let $\seq{n}{i}{\in}{\omega}$ be the strictly increasing enumeration of $N$.
 By Lemma \ref{lem:bigdecision}, there exists $q' \leq q \leq p$ such that $\forall 1 \leq i < \omega\[q' \; {\forces}_{\PP(\HHH)} \; {\mathring{x}({n}_{i}) = t} \]$.
 Since $N \in \UUU$, $A = \{{n}_{i}: 1 \leq i < \omega\} \in \UUU$, and so (1) of Lemma \ref{lem:preserving} says that $\PP(\HHH)$ preserves $\UUU$.
 Thus it has been proved that $\PP(\HHH)$ preserves $\UUU$ whenever $\UUU$ is a selective ultrafilter on $\omega$ with $\UUU \notin {\C}_{0}(\HHH)$.
 
 Now suppose $\VVV$ is a selective ultrafilter on $\omega$ with $\VVV \notin {\C}_{1}(\HHH)$.
 Then there exists $\UUU$ such that $\UUU$ is a selective ultrafilter on $\omega$, $\UUU \notin {\C}_{0}(\HHH)$, and $\VVV \; {\equiv}_{RK} \; \UUU$.
 By the previous paragraph, $\PP(\HHH)$ preserves $\UUU$.
 Since $\VVV \; {\leq}_{RK} \; \UUU$ and $\VVV$ is an ultrafilter on $\omega$, $\PP(\HHH)$ preserves $\VVV$ by (2) of Lemma \ref{lem:preserving}.
\end{proof}
\begin{Lemma} \label{lem:c1Hpreserved}
 Suppose $\UUU$ is a selective ultrafilter on $\omega$.
 Suppose $\UUU \in {\C}_{1}(\HHH)$.
 Then $\PP(\HHH)$ preserves $\UUU$.
\end{Lemma}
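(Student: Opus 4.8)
The plan is to verify the criterion of Lemma~\ref{lem:preserving}(1): given a $\PP(\HHH)$-name $\mathring{x}$ with ${\forces}_{\PP(\HHH)}\;{\mathring{x}: \omega \rightarrow 2}$ and a condition $p$, produce $q \leq p$ and $A \in \UUU$ such that $q$ decides $\mathring{x}(l)$ for every $l \in A$. Exactly as in the cases $\UUU \; {\equiv}_{RK} \; {\HHH}_{\max}$ and $\UUU \notin {\C}_{0}(\HHH)$, the final step will be an appeal to Lemma~\ref{lem:bigdecision}: it suffices to find $q \leq p$, $t \in 2$, $X \in {\FIN}^{\[\omega\]}$ with $\[X\] \in \HHH$, and $N \in \UUU$ satisfying Hypotheses~(1)--(2) of that lemma, after which Lemma~\ref{lem:bigdecision} returns a condition deciding $\mathring{x}$ on the $\UUU$-set $\{{n}_{i}: 1 \leq i < \omega\}$. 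The new feature of the present case is that, since $\UUU \in {\C}_{1}(\HHH) \subseteq {\C}_{0}(\HHH)$, the set $N$ cannot be extracted from the intervals $I(X(i))$ as in Lemma~\ref{lem:c0preserve}; instead $N$ must live in the \emph{gaps} between the blocks of $X$. This is precisely the configuration produced by the selectivity-stability game ${\Game}^{\mathtt{SelStab}}\left(\UUU, \HHH\right)$, and it is here that the hypothesis $\UUU \in {\C}_{1}(\HHH)$ will be used, through Lemma~\ref{lem:selstabgame}.

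First I would prove a one-step decision lemma, the gap analogue of Lemma~\ref{lem:c01}. Applying the continuous reading of names (Lemma~\ref{lem:continuous1}) to $\mathring{x}$ and $p$ yields $q$, $X$, and names $\left\langle {\eta}_{i, e} \right\rangle$; for a gap integer $n$ with $\max\left(X(i)\right) < n \leq \max\left(X(i+1)\right)$ the bit $\mathring{x}(n)$ is decided only at level $\max\left(X(i+1)\right)+1$, with value ${\eta}_{i+1, {{e}^{\ast}}^{\frown}{\langle \sigma \rangle}}(n)$ for ${e}^{\ast} \in \lv{{T}_{q}}{\max\left(X(i+1)\right)}$ and $\sigma \in {\succc}_{{T}_{q}}({e}^{\ast})$. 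Splitting ${\succc}_{{T}_{q}}({e}^{\ast})$ according to the forced value of $\mathring{x}(n)$, I would show by the pigeonhole argument of Claim~\ref{claim:hmax1-1}, driven by Lemma~\ref{lem:stepranscondition}, that the two halves cannot both fail to be $\pr{\max\left(X(i)\right)+1}{X(i+1)}$-big. This yields, for each $t \in 2$, a dense family of conditions that carry at every block transition a $t$-monochromatic big set of successors forcing $\mathring{x}(n) = t$, uniformly for all gap integers $n$ in some member of $\UUU$; that the gap integers form a $\UUU$-set uses only Clause~(2) of Definition~\ref{def:c0c1}, namely that after refining $X$ to $Y$ with $N(Y) \notin \UUU$ the complement $\omega \setminus N(Y) \in \UUU$ consists of integers strictly between the blocks of $Y$.

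The fusion would then be carried out through a condition version of the selectivity-stability game, say ${\Game}^{\mathtt{SelStabCond}}\left(\UUU, \HHH\right)$, built from ${\Game}^{\mathtt{SelStab}}\left(\UUU, \HHH\right)$ exactly as ${\Game}^{\mathtt{Cond}}\left(\HHH\right)$ is built from ${\Game}^{\mathtt{Stab}}\left(\HHH\right)$: on an even round Player I plays $\pr{{p}_{2j}}{{B}_{2j}}$ with ${p}_{2j} \in \PP(\HHH)$ and ${B}_{2j} \in \UUU$ and Player II answers with an integer ${n}_{2j} \in {B}_{2j}$; on an odd round Player I plays $\pr{{p}_{2j+1}}{{A}_{2j+1}}$ with ${A}_{2j+1} \in \HHH$ and Player II answers with a block ${s}_{2j+1} \in {A}_{2j+1}$; the ${p}_{i+1}$ are amalgams (Lemma~\ref{lem:amalgam}) over $\lv{{T}_{{p}_{i}}}{\max({s}_{i})+1}$ at odd rounds; and Player II wins when $\{{n}_{2j}\} \in \UUU$, the blocks form a block sequence with $\[\{{s}_{2j+1}\}\] \in \HHH$, and ${\bigcap}_{i}{{T}_{{p}_{i}}} \in \PP(\HHH)$. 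Following the proof of Lemma~\ref{lem:condgame}, I would show Player I has no winning strategy here by collapsing any such strategy to one in ${\Game}^{\mathtt{SelStab}}\left(\UUU, \HHH\right)$ — replacing the block moves by the thinned sets ${A}_{i+1} \cap {H}_{{p}_{i+1}, \max({s}_{i})+1}$ so that Lemmas~\ref{lem:winning1}, \ref{lem:winning2} and \ref{lem:fusion} reconstruct a genuine fusion — and then applying Lemma~\ref{lem:selstabgame}. Instructing Player I to choose each even move ${B}_{2j}$ from the one-step lemma applied to the current condition, a run lost by Player~I produces $q = {\bigcap}_{i}{{T}_{{p}_{i}}}$, blocks $X = \{{s}_{2j+1}\}$, and gap integers $N = \{{n}_{2j}\} \in \UUU$ meeting the hypotheses of Lemma~\ref{lem:bigdecision}.

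The main obstacle will be the coordination of the two kinds of rounds. The integers ${n}_{2j}$ chosen against $\UUU$ have to fall strictly in the gaps $\left(\max\left(X(j-1)\right), \min\left(X(j)\right)\right)$ cut out by the blocks chosen against $\HHH$, and the $t$-monochromatic big sets witnessing the decision of $\mathring{x}({n}_{2j})$ must be installed \emph{before} Player II commits to ${n}_{2j}$, then survive both the odd-round amalgamations and the final fusion. Checking that the induced ${\Game}^{\mathtt{SelStab}}$-strategy loses on a run from which a genuine condition $q \in \PP(\HHH)$ can be rebuilt — that is, that Clause~(4) of Lemma~\ref{lem:fusion}, the monotonicity $\forall i \leq j\,\forall e\[{\succc}_{{T}_{{p}_{i}}}(e) \subseteq {\succc}_{{T}_{{p}_{j}}}(e)\]$, is preserved across both round types — is the technical crux, exactly as in Lemma~\ref{lem:condgame} but now entangled with the interleaved selective ultrafilter.
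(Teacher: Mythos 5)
Your plan is workable, but it is not the paper's proof, and it is considerably heavier than what is actually needed. The paper settles this case with a dichotomy that makes your entire decision apparatus unnecessary: for each $q \leq p$ put ${E}_{q} = \{n \in \omega: \exists r \leq q \, [ r \; {\forces}_{\PP(\HHH)} \; {\mathring{x}(n) = 1} ] \}$. If some $q \leq p$ has ${E}_{q} \notin \UUU$, then $q$ itself forces $\mathring{x}(n) = 0$ for every $n$ in the $\UUU$-set $\omega \setminus {E}_{q}$, and Lemma~\ref{lem:preserving}(1) finishes. Otherwise ${E}_{q} \in \UUU$ for all $q \leq p$, and one defines a strategy for Player I directly in ${\Game}^{\mathtt{SelStab}}\left(\UUU, \HHH\right)$ (with a bookkeeping map tracking a decreasing sequence of conditions, so no new game has to be introduced and proved determined-for-II): the even-round moves are the sets $\bigcap \left\{ {E}_{{T}_{{p}_{n}}\langle e \rangle}: e \in \lv{{T}_{{p}_{n}}}{k} \right\} \in \UUU$, so that whatever integer ${o}_{2n+2}$ Player II picks, each node $e$ at level $k$ admits ${p}_{n, e} \leq {T}_{{p}_{n}}\langle e \rangle$ forcing $\mathring{x}({o}_{2n+2}) = 1$; the amalgam of these over all such $e$ (Lemma~\ref{lem:amalgam}) forces $\mathring{x}({o}_{2n+2}) = 1$ outright, since below any extension of the amalgam there is a condition extending some ${p}_{n, e}$. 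The odd-round moves are the sets ${H}_{{p}_{n+1}, k} \in \HHH$, so a run lost by Player I (which exists by Lemma~\ref{lem:selstabgame}, the only place $\UUU \in {\C}_{1}(\HHH)$ is used) satisfies the hypotheses of Lemma~\ref{lem:fusion}, and the fusion decides $\mathring{x}$ to be $1$ on the $\UUU$-set $\{{o}_{2i}: i \in \omega\}$. Thus no continuous reading of names, no bigness, no pigeonhole, and no appeal to Lemma~\ref{lem:bigdecision} occurs: the point you missed is that once the game is available, the question of \emph{which} value gets decided is settled for free by the ${E}_{q}$ dichotomy, whereas the big-set machinery you imitate from Lemmas~\ref{lem:hmax1}--\ref{lem:hmax2} and \ref{lem:c01}--\ref{lem:c0preserve} is genuinely required only in the ${\HHH}_{\max}$ and non-${\C}_{0}$ cases, where Lemma~\ref{lem:selstabgame} is unavailable. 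That said, I see no step of your route that would fail: the split of ${\succc}_{{T}_{q}}({e}^{\ast})$ by the value decided at a gap integer is a partition of the full successor set, so the counting argument of Claim~\ref{claim:hmax1-1} (via Lemma~\ref{lem:stepranscondition}) applies to it unchanged, and your auxiliary game reduces to ${\Game}^{\mathtt{SelStab}}\left(\UUU, \HHH\right)$ exactly as ${\Game}^{\mathtt{Cond}}\left(\HHH\right)$ reduces to ${\Game}^{\mathtt{Stab}}\left(\HHH\right)$ in Lemma~\ref{lem:condgame}. Two imprecisions should be repaired if you carry it out: first, the pigeonhole does not show that some half is big at a \emph{prescribed} transition $\pr{\max\left(X(i)\right)+1}{X(i+1)}$; it only shows that the blocks at which some half is big form an $\HHH$-large set, from which the game's block moves must then be drawn (this is exactly how ${A}_{l}$ functions in Lemma~\ref{lem:hmax1}). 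Second, Clause~(2) of Definition~\ref{def:c0c1} is not what puts your set $N$ in $\UUU$: Player I can simply restrict each even-round move to integers beyond the last block played, and the uniformization of the value $t$ across integers is an ultrafilter argument (via Lemma~\ref{lem:densesplit} and monotonicity, Lemma~\ref{lem:bigmonotone}), not a consequence of $N(Y) \notin \UUU$; what $\UUU \in {\C}_{0}(\HHH)$ really does in your argument is negative — it explains why Corollary~\ref{cor:hitsD} is unavailable and the integers cannot be harvested from the sets $I(X(i))$.
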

\begin{proof}
 Let $\mathring{x}$ be a $\PP(\HHH)$-name such that ${\forces}_{\PP(\HHH)} \; {\mathring{x}: \omega \rightarrow 2}$ and let $p \in \PP(\HHH)$.
 
 For each $q \leq p$ let ${E}_{q} = \{n \in \omega: \exists r \leq q \[r \; {\forces}_{\PP(\HHH)} \; {\mathring{x}(n) = 1}\]\}$.
 Suppose first that there exists $q \leq p$ such that ${E}_{q} \notin \UUU$.
 Then $A = \omega \setminus {E}_{q} \in \UUU$, and for all $n \in A$, $q \; {\forces}_{\PP(\HHH)} \; {\mathring{x}(n) = 0}$.
 This is as required by (1) of Lemma \ref{lem:preserving}.
 
 It will henceforth be assumed that for all $q \leq p$, ${E}_{q} \in \UUU$.
 Define $\Sigma$ and $\Phi$ such that:
 \begin{enumerate}
  \item
  $\Sigma$ is a strategy for Player I in ${\Game}^{\mathtt{SelStab}}\left(\UUU, \HHH\right)$;
  \item
  for each $n \in \omega$, if $\left\langle \pr{{C}_{i}}{{o}_{i}}: i \leq 2n \right\rangle$ is a partial run of ${\Game}^{\mathtt{SelStab}}\left(\UUU, \HHH\right)$ in which Player I has followed $\Sigma$, then $\Phi\left(\left\langle \pr{{C}_{i}}{{o}_{i}}: i \leq 2n \right\rangle\right) = \seq{p}{i}{\leq}{n}$ and
  \begin{enumerate}
   \item[(a)]
   $\forall i \leq n\[{p}_{i} \in \PP(\HHH)\]$, $\forall i < n\[{p}_{i+1} \leq {p}_{i}\]$, ${p}_{0} \leq p$;
   \item[(b)]
   for each $i < j \leq n$ and for each $e \in \lv{{T}_{{p}_{j}}}{\max\left({o}_{2i+1}\right)}$, ${\succc}_{{T}_{{p}_{i}}}(e) \subseteq {\succc}_{{T}_{{p}_{j}}}(e)$;
   \item[(c)]
   for each $i \leq n$, ${p}_{i} \; {\forces}_{\PP(\HHH)} \; {\mathring{x}({o}_{2i}) = 1}$;
  \end{enumerate}
  \item
  for each $n \in \omega$, if $\left\langle \pr{{C}_{i}}{{o}_{i}}: i \leq 2n+3 \right\rangle$ is a partial run of ${\Game}^{\mathtt{SelStab}}\left(\UUU, \HHH\right)$ in which Player I has followed $\Sigma$ and if
  \begin{align*}
   \Phi\left(\left\langle \pr{{C}_{i}}{{o}_{i}}: i \leq 2n+2 \right\rangle\right) = \seq{p}{i}{\leq}{n+1},
  \end{align*}
  then ${o}_{2n+3} \in {H}_{{p}_{n+1}, \max\left({o}_{2n+1}\right)+1}$;
  \item
  for each $n \in \omega$, if $\left\langle \pr{{C}_{i}}{{o}_{i}}: i \leq 2n+2 \right\rangle$ is a partial run of ${\Game}^{\mathtt{SelStab}}\left(\UUU, \HHH\right)$ in which Player I has followed $\Sigma$, then
  \begin{align*}
   \Phi\left(\left\langle \pr{{C}_{i}}{{o}_{i}}: i \leq 2n \right\rangle\right) = \Phi\left(\left\langle \pr{{C}_{i}}{{o}_{i}}: i \leq 2n+2 \right\rangle\right) \restrict n+1.
  \end{align*}
 \end{enumerate}
 Suppose for a moment that $\Sigma$ and $\Phi$ satisfying (1)--(4) can be defined.
 Since $\Sigma$ is not a winning strategy for Player I, there is a play $\left\langle \pr{{C}_{i}}{{o}_{i}}: i < \omega \right\rangle$ of ${\Game}^{\mathtt{SelStab}}\left(\UUU, \HHH\right)$ in which Player I follows $\Sigma$ and loses.
 Then $X = \{{o}_{2i+1}: i < \omega\} \in {\FIN}^{\[\omega\]}$ and $\[X\] \in \HHH$.
 Moreover, $X(i) = {o}_{2i+1}$.
 Also, there is a sequence $\seq{p}{i}{<}{\omega}$ such that for each $n \in \omega$, $\Phi\left(\left\langle \pr{{C}_{i}}{{o}_{i}}: i \leq 2n \right\rangle\right) = \seq{p}{i}{\leq}{n}$.
 Then $X$ and $\seq{p}{i}{<}{\omega}$ satisfy the hypotheses of Lemma \ref{lem:fusion}.
 To see this, first note that applying (3) with $n=i$ gives $X(i+1)={o}_{2n+3} \in {H}_{{p}_{n+1}, \max\left({o}_{2n+1}\right)+1} = {H}_{{p}_{i+1}, \max\left(X(i)\right)+1}$.
 Next, given $i \leq j < \omega$ and $e \in \lv{{T}_{{p}_{j}}}{\max\left(X(i)\right)} = \lv{{T}_{{p}_{j}}}{\max\left({o}_{2i+1}\right)}$, if $i=j$, then trivially ${\succc}_{{T}_{{p}_{i}}}(e) \subseteq {\succc}_{{T}_{{p}_{j}}}(e)$.
 If $i < j$, then applying (2)(b) with $n=j$ gives ${\succc}_{{T}_{{p}_{i}}}(e) \subseteq {\succc}_{{T}_{{p}_{j}}}(e)$.
 Therefore Lemma \ref{lem:fusion} applies and implies that $q = {T}_{q} = {\bigcap}_{i \in \omega}{{T}_{{p}_{i}}} \in \PP(\HHH)$.
 Further, $q \leq {p}_{0} \leq p$, and for each $i \in \omega$, since $q \leq {p}_{i}$, $q \; {\forces}_{\PP(\HHH)} \; {\mathring{x}\left({o}_{2i}\right) = 1}$.
 Since $A = \{{o}_{2i}: i < \omega\} \in \UUU$, $q$ and $A$ satisfy the conditions of (1) of Lemma \ref{lem:preserving}.
 Thus in both this case and in the case considered in the previous paragraph, (1) of Lemma \ref{lem:preserving} applies and it implies that $\PP(\HHH)$ preserves $\UUU$.
 
 $\Sigma$ and $\Phi$ are defined inductively.
 $\Sigma\left(\emptyset\right) = {E}_{p} \in \UUU$.
 If $\left\langle \pr{{C}_{0}}{{o}_{0}} \right\rangle$ is a partial run of ${\Game}^{\mathtt{SelStab}}\left(\UUU, \HHH\right)$ in which Player I has followed $\Sigma$, then ${C}_{0} = {E}_{p}$ and ${o}_{0} \in {C}_{0} = {E}_{p}$.
 Define $\Phi\left(\left\langle \pr{{C}_{0}}{{o}_{0}} \right\rangle\right) = \left\langle {p}_{0} \right\rangle$, where ${p}_{0} \leq p$ and ${p}_{0} \; {\forces}_{\PP(\HHH)} \; {\mathring{x}({o}_{0}) = 1}$.
 Such a ${p}_{0}$ exists by the definition of ${E}_{p}$.
 Define $\Sigma\left(\left\langle \pr{{C}_{0}}{{o}_{0}} \right\rangle\right) = \FIN \in \HHH$.
 It is clear that (1)--(4) are satisfied by these definitions.
 Now, assume that for some $n \in \omega$, $\left\langle \pr{{C}_{i}}{{o}_{i}}: i \leq 2n+1 \right\rangle$ is a partial run of ${\Game}^{\mathtt{SelStab}}\left(\UUU, \HHH\right)$ in which Player I has followed $\Sigma$ and that $\Phi\left(\left\langle \pr{{C}_{i}}{{o}_{i}}: i \leq 2n \right\rangle\right) = \seq{p}{i}{\leq}{n}$, and that these satisfy (1)--(4).
 Let $k = \max\left({o}_{2n+1}\right)+1$.
 For any $e \in \lv{{T}_{{p}_{n}}}{k}$, ${T}_{{p}_{n}}\langle e \rangle \leq {p}_{n} \leq {p}_{0} \leq p$.
 Therefore ${E}_{{T}_{{p}_{n}}\langle e \rangle} \in \UUU$.
 Define $\Sigma\left( \left\langle \pr{{C}_{i}}{{o}_{i}}: i \leq 2n+1 \right\rangle \right) = {C}_{2n+2} = \bigcap\left\{ {E}_{{T}_{{p}_{n}}\langle e \rangle}: e \in \lv{{T}_{{p}_{n}}}{k} \right\} \in \UUU$.
 If $\left\langle \pr{{C}_{i}}{{o}_{i}}: i \leq 2n+2 \right\rangle$ is a partial continuation of ${\Game}^{\mathtt{SelStab}}\left(\UUU, \HHH\right)$ in which Player I has followed $\Sigma$, then ${o}_{2n+2} \in {C}_{2n+2}$ and so there is a sequence $\left\langle {p}_{n, e}: e \in \lv{{T}_{{p}_{n}}}{k} \right\rangle$ such that for each $e \in \lv{{T}_{{p}_{n}}}{k}$, ${p}_{n, e} \leq {T}_{{p}_{n}}\langle e \rangle$ and ${p}_{n, e} \; {\forces}_{\PP(\HHH)} \; {\mathring{x}\left({o}_{2n+2}\right) = 1}$.
 Define ${p}_{n+1} = {T}_{{p}_{n+1}} = \bigcup\left\{{T}_{{p}_{n, e}}: e \in \lv{{T}_{{p}_{n}}}{k} \right\}$.
 By Lemma \ref{lem:amalgam}, ${p}_{n+1} \in \PP(\HHH)$ and ${p}_{n+1} \leq {p}_{n}$.
 Define $\Phi\left( \left\langle \pr{{C}_{i}}{{o}_{i}}: i \leq 2n+2 \right\rangle \right) = \seq{p}{i}{\leq}{n+1}$.
 Finally, define $\Sigma\left( \left\langle \pr{{C}_{i}}{{o}_{i}}: i \leq 2n+2 \right\rangle \right) = {C}_{2n+3} = {H}_{{p}_{n+1}, k} \in \HHH$, so that if ${o}_{2n+3} \in {C}_{2n+3}$, then ${o}_{2n+3} \in {H}_{{p}_{n+1}, \max\left({o}_{2n+1}\right)+1}$.
 It is clear that (1), (3), and (4) are satisfied.
 It is also clear that (2)(a) holds.
 For 2(b), suppose $i < j \leq n+1$.
 If $i < j \leq n$, then the induction hypothesis gives what is needed.
 So assume that $i < j=n+1$.
 Suppose that $e \in \lv{{T}_{{p}_{j}}}{\max\left({o}_{2i+1}\right)}$.
 Then $e \in \lv{{T}_{{p}_{n}}}{\max\left({o}_{2i+1}\right)}$.
 If $i=n$, then trivially ${\succc}_{{T}_{{p}_{i}}}(e) \subseteq {\succc}_{{T}_{{p}_{n}}}(e)$, while if $i < n$, then by the induction hypothesis, ${\succc}_{{T}_{{p}_{i}}}(e) \subseteq {\succc}_{{T}_{{p}_{n}}}(e)$.
 Thus in either case, ${\succc}_{{T}_{{p}_{i}}}(e) \subseteq {\succc}_{{T}_{{p}_{n}}}(e)$.
 Suppose $\sigma \in {\succc}_{{T}_{{p}_{n}}}(e)$.
 Then ${e}^{\frown}{\langle \sigma \rangle} \in {T}_{{p}_{n}}$, and $\dom({e}^{\frown}{\langle \sigma \rangle}) = \max\left({o}_{2i+1}\right)+1 \leq \max\left({o}_{2n+1}\right)+1 = k$.
 Choose some ${e}^{\ast} \in \lv{{T}_{{p}_{n}}}{k}$ with ${e}^{\frown}{\langle \sigma \rangle} \subseteq {e}^{\ast}$.
 Then ${e}^{\frown}{\langle \sigma \rangle} \in {T}_{{p}_{n, {e}^{\ast}}} \subseteq {T}_{{p}_{n+1}}$, whence $\sigma \in {\succc}_{{T}_{{p}_{n+1}}}(e)$.
 Therefore, ${\succc}_{{T}_{{p}_{i}}}(e) \subseteq {\succc}_{{T}_{{p}_{n}}}(e) \subseteq {\succc}_{{T}_{{p}_{n+1}}}(e) = {\succc}_{{T}_{{p}_{j}}}(e)$, as required for (2)(b).
 For (2)(c), suppose $r \leq {p}_{n+1}$.
 Choose $e \in \lv{{T}_{r}}{k}$.
 Then $e \in \lv{{T}_{{p}_{n}}}{k}$ and ${T}_{r}\langle e \rangle \leq {T}_{{p}_{n+1}}\langle e \rangle \leq {p}_{n, e}$.
 So ${T}_{r}\langle e \rangle \; {\forces}_{\PP(\HHH)} \; {\mathring{x}\left({o}_{2n+2}\right) = 1}$.
 Therefore, ${p}_{n+1} \; {\forces}_{\PP(\HHH)} \; {\mathring{x}\left({o}_{2n+2}\right) = 1}$, as needed for (2)(c).
 This concludes the definition of $\Sigma$ and $\Phi$ and the proof.
\end{proof}
\begin{Theorem} \label{thm:preservesselective}
 $\PP(\HHH)$ preserves all selective ultrafilters on $\omega$.
\end{Theorem}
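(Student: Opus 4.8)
The plan is to reduce the statement to the preservation results already proved in this section by means of a single binary case distinction. I would fix an arbitrary selective ultrafilter $\UUU$ on $\omega$ and aim to show that $\PP(\HHH)$ preserves $\UUU$ in the sense of Definition \ref{def:preservesU}. The natural dividing line is membership in the class ${\C}_{1}(\HHH)$ of Definition \ref{def:c0c1}: every selective ultrafilter either lies in ${\C}_{1}(\HHH)$ or does not, so these two alternatives are plainly exhaustive.

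In the case $\UUU \in {\C}_{1}(\HHH)$, I would simply invoke Lemma \ref{lem:c1Hpreserved}, which gives preservation of $\UUU$ outright. In the complementary case $\UUU \notin {\C}_{1}(\HHH)$, I would invoke Corollary \ref{cor:notc0-preserve}, which asserts exactly that $\PP(\HHH)$ preserves any selective ultrafilter lying outside ${\C}_{1}(\HHH)$. Combining the two cases yields that $\PP(\HHH)$ preserves every selective ultrafilter, which is the assertion of the theorem.

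The point I want to emphasize is that the present theorem carries no new difficulty; all the substance has already been discharged in the preceding lemmas. The binary split above is merely the clean form of the three-case heuristic announced at the start of the section: the branch $\UUU \notin {\C}_{1}(\HHH)$ is handled inside Corollary \ref{cor:notc0-preserve}, which first replaces $\UUU$ by an $RK$-equivalent selective $\VVV \notin {\C}_{0}(\HHH)$ and then separates the subcase $\VVV \equiv_{RK} {\HHH}_{\max}$ (dispatched by Corollary \ref{cor:H_max}, resting on Lemma \ref{lem:hmax2} and Lemma \ref{lem:bigdecision}) from the subcase $\VVV \not\equiv_{RK} {\HHH}_{\max}$ (dispatched by Lemma \ref{lem:c0preserve} together with Lemma \ref{lem:bigdecision}). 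The genuinely hard branch is $\UUU \in {\C}_{1}(\HHH)$, namely Lemma \ref{lem:c1Hpreserved}, whose proof is the real obstacle: it constructs a strategy for Player I in the selectivity--stability game ${\Game}^{\mathtt{SelStab}}\left(\UUU, \HHH\right)$, exploits the fact that Player I has no winning strategy there (Lemma \ref{lem:selstabgame}, which requires precisely the hypothesis $\UUU \in {\C}_{1}(\HHH)$), and reads off a fusion sequence via Lemma \ref{lem:fusion} to decide the name. Thus the only thing to verify for the theorem itself is the exhaustiveness of the case split, which the dichotomy on ${\C}_{1}(\HHH)$ renders immediate.
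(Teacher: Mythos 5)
Your proposal is correct and coincides with the paper's own proof: the paper argues exactly by the dichotomy $\UUU \in {\C}_{1}(\HHH)$ versus $\UUU \notin {\C}_{1}(\HHH)$, citing Lemma \ref{lem:c1Hpreserved} in the first case and Corollary \ref{cor:notc0-preserve} in the second. Your accompanying description of how the two cited results decompose into the earlier lemmas is also accurate, so there is nothing to add.
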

\begin{proof}
 Let $\UUU$ be a selective ultrafilter on $\omega$.
 Either $\UUU \in {\C}_{1}(\HHH)$ or $\UUU \notin {\C}_{1}(\HHH)$.
 If $\UUU \notin {\C}_{1}(\HHH)$, then $\PP(\HHH)$ preserves $\UUU$ by Corollary \ref{cor:notc0-preserve}.
 If $\UUU \in {\C}_{1}(\HHH)$, then $\PP(\HHH)$ preserves $\UUU$ by Lemma \ref{lem:c1Hpreserved}.
\end{proof}
\section{A model with many selectives and no stable ordered-unions} \label{sec:manyselectives}
The main result of the paper is proved in this section.
It is shown that there is a model of set theory where ${2}^{{\aleph}_{0}} = {\aleph}_{2}$, there are ${2}^{{\aleph}_{0}}$ pairwise non-RK-isomorphic selective ultrafilters, but no stable ordered-union ultrafilters.
After all the work done in the previous sections, the proof is mostly a matter of combining the earlier lemmas with certain well-known iteration theorems.
Nevertheless, we provide a good amount of detail.
The first lemma is well-known.
A proof is included for completeness.
\begin{Lemma} \label{lem:selectivestay}
 Suppose $\UUU$ is a selective ultrafilter on $\omega$.
 Suppose $\PP$ is proper and $\BS$-bounding.
 If $\PP$ preserves $\UUU$, then
 \begin{align*}
  {\forces}_{\PP} \; {``\{A \subseteq \omega: \exists B \in \UUU\[B \subseteq A\]\} \ \text{is a selective ultrafilter on} \ \omega''}.
 \end{align*}
\end{Lemma}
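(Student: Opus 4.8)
The plan is to reduce the statement to a density claim and then prove it with the selectivity game, properness, and $\BS$-bounding. Since $\PP$ preserves $\UUU$ (Definition \ref{def:preservesU}), it forces that $\UUU^{\ast} = \{A \subseteq \omega : \exists B \in \UUU\,[B \subseteq A]\}$ is an ultrafilter. By Definition \ref{def:selective}, it therefore suffices to show that for every $\PP$-name $\mathring{f}$ with ${\forces}_{\PP}\;{\mathring{f}: \omega \rightarrow \omega}$ and every $p \in \PP$, the set of $q \leq p$ for which there is some $A \in \UUU$ with $q \; {\forces}_{\PP} \; {\text{``}\mathring{f}\restrict A \text{ is constant or injective''}}$ is dense below $p$. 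A standard genericity argument then yields, for each $f \in {\omega}^{\omega}$ in the extension, a ground-model $A \in \UUU \subseteq \UUU^{\ast}$ on which $f$ is constant or one-to-one, which is exactly selectivity of $\UUU^{\ast}$.

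To establish the density claim, fix $\mathring{f}$ and $p$. First I would use that $\PP$ is $\BS$-bounding to find $p' \leq p$ and a ground-model $g \in \BS$ with $p' \; {\forces}_{\PP} \; {\forall n\,[\mathring{f}(n) \leq g(n)]}$; then each ``possible value set'' $S_{n} = \{ m : \exists r \leq p'\,[r \; {\forces}_{\PP} \; {\mathring{f}(n) = m}]\}$ is finite, and $p' \; {\forces}_{\PP} \; {\forall n\,[\mathring{f}(n) \in S_{n}]}$. Next, invoking properness, fix a countable $M \prec H(\theta)$ containing $\mathring{f}, p', \PP, \UUU$, and $\langle S_{n} : n \in \omega \rangle$, together with an $(M, \PP)$-generic condition $q \leq p'$. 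Inside $M$ I would define a strategy $\Sigma$ for Player I in ${\Game}^{\mathtt{Sel}}(\UUU)$ which, at a position where Player II has produced $n_{0} < \dotsb < n_{i-1}$, consults $M$ and the preservation property of $\UUU$ to play a set $A_{i} \in \UUU \cap M$ arranged so that the value $\mathring{f}$ takes at the next chosen point can be decided by conditions of $M$ coherently with the earlier choices.

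Finally, by Lemma \ref{lem:sel} Player I has no winning strategy in ${\Game}^{\mathtt{Sel}}(\UUU)$, so by elementarity there is a run consistent with $\Sigma$, lying in $M$, that Player II wins; putting $A = \{ n_{i} : i \in \omega \}$ gives $A \in \UUU$. Because $q$ is $(M, \PP)$-generic, each value $\mathring{f}(n_{i})$ is forced to be decided by a condition of $M$ in the generic filter, and the design of $\Sigma$ guarantees that these forced values exhibit constancy or injectivity along $A$, so that $q \; {\forces}_{\PP} \; {\text{``}\mathring{f}\restrict A \text{ is constant or injective''}}$, completing the argument. The main obstacle is precisely the design of $\Sigma$ and the use of $(M,\PP)$-genericity: the sets $S_{n}$ may overlap arbitrarily (indeed be nested), so one cannot simply separate values, and for a general proper $\PP$ there is no fusion to intersect the conditions built along the run. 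The coordination must instead be carried by the single $(M,\PP)$-generic condition $q$ reflecting a Player II win, combining properness, $\BS$-bounding, and the preservation of $\UUU$; the preservation property (Definition \ref{def:preservesU}) is what lets one peel off any repeated forced value into the constant alternative whenever injectivity cannot be secured.
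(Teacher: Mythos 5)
Your opening reduction is fine: given that $\PP$ preserves $\UUU$, the lemma is equivalent to the density, below any $p$, of conditions $q$ forcing ``$\mathring{f}\restrict A$ is constant or injective'' for some ground-model $A \in \UUU$. The gap is in your proof of this density claim: the strategy $\Sigma$, which is the entire content of the argument, is never constructed, and the mechanism you offer in place of fusion --- that the coordination is ``carried by the single $(M,\PP)$-generic condition $q$'' --- provably cannot work. $(M,\PP)$-genericity guarantees only that for generic $G \ni q$ each value $\mathring{f}[G](n)$ is decided by some condition in $G \cap M$; it says nothing about \emph{which} value is decided. Since $\Sigma$ and the run you extract by elementarity lie in $M$ while $q \notin M$, the set $A = \{n_i : i \in \omega\}$ is produced with no knowledge of $q$ and $q$ is chosen with no knowledge of $A$; a proof of this shape would therefore have to show that \emph{every} $(M,\PP)$-generic $q \leq p'$ forces the dichotomy on $A$. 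That fails already for Sacks forcing, which satisfies all the hypotheses (proper, $\BS$-bounding, preserves selective ultrafilters): let $p'$ be the full binary tree and let $\mathring{f}(n)$ be the $n$-th bit of the generic real, so that injectivity on an infinite set is impossible and forcing the dichotomy on $A$ amounts to forcing the generic real to be constant on $A$. For \emph{any} infinite $A \in M$ one can run the usual properness fusion entirely inside $M$ while keeping all splitting nodes of all trees involved at levels in $A$ (first pass to a perfect subtree all of whose splitting nodes sit at levels in $A$, then extend into the next dense set of $M$; splitting nodes of a subtree are splitting nodes of the tree, so the invariant persists). The resulting $q \leq p'$ is $(M,\PP)$-generic and has a splitting node $t$ whose level lies in $A$ above the first two elements $n_0 < n_1$ of $A$; passing to a suitable child of $t$ gives an extension of $q$ forcing a pattern on $n_0$, $n_1$, and the level of $t$ that has both a repetition and two distinct values, i.e., forcing ``neither constant nor injective''. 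Since this works for every infinite $A \in \UUU \cap M$, no design of $\Sigma$ can rescue the architecture: the condition $q$ must be built \emph{together with} $A$, and that is exactly the fusion which, as you yourself observe, is unavailable for a general proper $\PP$. The obstacle you flag at the end is not overcome by your argument; it \emph{is} the missing argument.

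The paper sidesteps this entirely by never seeking a single condition that decides the dichotomy on a ground-model set. It argues semantically in $\VG$: by the preservation hypothesis the generated filter ${\UUU}_{0}$ is an ultrafilter; it is a P-point because properness allows a countable subfamily of $\UUU$ chosen in $\VG$ to be covered by a countable ground-model set $X \subseteq \UUU$, to which P-pointness of $\UUU$ in $\V$ applies; and it is a Q-point because $\BS$-bounding lets one dominate any interval partition of $\VG$ by one from $\V$, to which Q-pointness of $\UUU$ applies (splitting the resulting set into two pieces to pass from ``at most two per interval'' to ``at most one''). Selectivity is then the classical equivalence with ``P-point and Q-point''. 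Your density claim, if you want it, is a corollary of this semantic argument via the truth lemma; in your proposal, the game of Lemma \ref{lem:sel}, the model $M$, and the $(M,\PP)$-generic condition end up doing no work that could close the gap, and your uses of $\BS$-bounding (finiteness of the value sets $S_n$) and of the preservation property (inside the unspecified $\Sigma$) never connect to the conclusion.
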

\begin{proof}
 Let $G$ be $(\V, \PP)$-generic.
 In $\VG$, let ${\UUU}_{0} = \{A \subseteq \omega: \exists B \in \UUU\[B \subseteq A\]\}$.
 By hypothesis, ${\UUU}_{0}$ is an ultrafilter on $\omega$.
 Suppose $\{{A}_{n}: n \in \omega\} \subseteq {\UUU}_{0}$ is given and choose $\{{B}_{n}: n \in \omega\} \subseteq \UUU$ so that ${B}_{n} \subseteq {A}_{n}$.
 Since $\PP$ is proper, there is a set $X \in \V$ which is countable in $\V$ and satisfies $\{{B}_{n}: n \in \omega\} \subseteq X \subseteq \UUU$.
 As $\UUU$ is a P-point in $\V$, there is $B \in \UUU$ such that $B \; {\subseteq}^{\ast} \; A$, for every $A \in X$.
 Now $B \in {\UUU}_{0}$ and $B \; {\subseteq}^{\ast} \; {A}_{n}$, for every $n \in \omega$.
 So ${\UUU}_{0}$ is a P-point in $\VG$.
 
 Next, let $I = \seq{i}{n}{\in}{\omega}$ be an interval partition in $\VG$.
 Since $\PP$ is $\BS$-bounding, there is an interval partition $J = \seq{j}{n}{\in}{\omega}$ in $\V$ such that $\forall n \in \omega \exists l \in \omega\[{I}_{l} \subseteq {J}_{n}\]$.
 As $\UUU$ is a Q-point in $\V$, there is $B \in \UUU$ so that $\forall n \in \omega\[\lc B \cap {J}_{n} \rc \leq 1\]$.
 Now in $\VG$, $B \in {\UUU}_{0}$ and it is easily seen that $\forall n \in \omega\[\lc B \cap {I}_{n} \rc \leq 2\]$.
 Define $C = \left\{ \min\left( B \cap {I}_{n} \right): n \in \omega \wedge B \cap {I}_{n} \neq \emptyset \right\}$ and $D = B \setminus C$.
 It is clear that $\forall n \in \omega\[\lc C \cap {I}_{n} \rc \leq 1\]$ and that $\forall n \in \omega\[\lc D \cap {I}_{n} \rc \leq 1\]$.
 $C \in {\UUU}_{0}$ or $D \in {\UUU}_{0}$ because ${\UUU}_{0}$ is an ultrafilter in $\VG$.
 Therefore, ${\UUU}_{0}$ is a Q-point in $\VG$.
 Finally, it is well-known (see \cite{BJ}) that an ultrafilter on $\omega$ is selective if and only if it is both a P-point and a Q-point.
\end{proof}
\begin{Lemma} \label{lem:hausdorff}
 Let $\UUU$ and $\VVV$ be selective ultrafilters on $\omega$ so that $\UUU \; {\not\equiv}_{RK} \; \VVV$.
 For any $f: \omega \rightarrow \omega$, for any $\psi: \omega \rightarrow \omega$, and for any ${A}_{0} \in \UUU$, there exists a sequence $\seq{n}{i}{<}{\omega}$ such that:
 \begin{enumerate}
  \item
  ${n}_{i} \in \omega$, ${n}_{i} < {n}_{i+1}$;
  \item
  $\{{n}_{2j}: j \in \omega\} \in \UUU$, $\{{n}_{2j}: j \in \omega\} \subseteq {A}_{0}$, and $\{{n}_{2j+1}: j \in \omega\} \in \VVV$;
  \item
  $f({n}_{2i}) < {n}_{2i+1}$ and $\psi({n}_{2i+1}) < {n}_{2i+2}$.
 \end{enumerate}
\end{Lemma}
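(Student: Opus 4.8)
The plan is to extract the sequence $\seq{n}{i}{<}{\omega}$ from a suitable run of the game ${\Game}^{\mathtt{SelSel}}\left(\UUU, \VVV\right)$ of Definition \ref{def:games}. Recall that in this game Player I plays sets from $\UUU$ on even moves and from $\VVV$ on odd moves, Player II responds with points ${n}_{i}$ lying inside the set just played, and Player II wins precisely when $\{{n}_{2j}: j \in \omega\} \in \UUU$ and $\{{n}_{2j+1}: j \in \omega\} \in \VVV$. Since $\UUU \; {\not\equiv}_{RK} \; \VVV$, Lemma \ref{lem:selsel} guarantees that Player I has no winning strategy in this game. The idea is to design a strategy $\Sigma$ for Player I whose legal moves automatically force conditions (1) and (3), together with the inclusion $\{{n}_{2j}: j \in \omega\} \subseteq {A}_{0}$ demanded in (2); then any run that Player II happens to win will witness the lemma.

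Concretely, I would let $\Sigma$ open with ${B}_{0} = {A}_{0} \in \UUU$. Inductively, having seen Player II's responses up to stage $i$, define Player I's next move as follows. After an odd response ${n}_{2i-1}$, at the even stage $2i$ Player I plays ${B}_{2i} = {A}_{0} \cap \left\{ m \in \omega: m > \max\left(\psi\left({n}_{2i-1}\right), {n}_{2i-1}\right) \right\}$, which lies in $\UUU$ because ${A}_{0} \in \UUU$ and all but finitely many integers belong to the non-principal ultrafilter $\UUU$. After an even response ${n}_{2i}$, at the odd stage $2i+1$ Player I plays ${A}_{2i+1} = \left\{ m \in \omega: m > \max\left(f\left({n}_{2i}\right), {n}_{2i}\right) \right\} \in \VVV$, again using that $\VVV$ is non-principal. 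By construction, in every run consistent with $\Sigma$ each ${n}_{2j}$ chosen by Player II lies in ${A}_{0}$, each ${n}_{2i+1}$ satisfies $f({n}_{2i}) < {n}_{2i+1}$ and ${n}_{2i} < {n}_{2i+1}$, and each ${n}_{2i+2}$ satisfies $\psi({n}_{2i+1}) < {n}_{2i+2}$ and ${n}_{2i+1} < {n}_{2i+2}$; hence the sequence $\seq{n}{i}{<}{\omega}$ is automatically strictly increasing and satisfies (1), (3), and the inclusion part of (2).

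Finally, since $\Sigma$ is not a winning strategy for Player I by Lemma \ref{lem:selsel}, there is a run $\left\langle {B}_{0}, {n}_{0}, {A}_{1}, {n}_{1}, \dotsc \right\rangle$ in which Player I follows $\Sigma$ yet Player II wins; for this run $\{{n}_{2j}: j \in \omega\} \in \UUU$ and $\{{n}_{2j+1}: j \in \omega\} \in \VVV$, which supplies the remaining content of condition (2). Combined with the properties forced by $\Sigma$, the resulting sequence satisfies (1)--(3), as required. I do not expect a serious obstacle here: essentially all of the combinatorial weight is carried by Lemma \ref{lem:selsel}, whose hypothesis $\UUU \; {\not\equiv}_{RK} \; \VVV$ is exactly what is assumed. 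The only points that require care are checking that each prescribed move of Player I is genuinely an element of the appropriate ultrafilter, which follows from non-principality, and observing that the arithmetic requirements of (1) and (3) are expressible as cofinite restrictions and hence cost nothing.
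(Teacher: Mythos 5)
Your proposal is correct and is essentially identical to the paper's own proof: the paper also defines a strategy $\Sigma$ for Player I in ${\Game}^{\mathtt{SelSel}}\left(\UUU, \VVV\right)$ opening with ${A}_{0}$, playing $\{n \in \omega: n > \max\{{n}_{2i}, f({n}_{2i})\}\} \in \VVV$ after even responses and $\{n \in {A}_{0}: n > \max\{{n}_{2i+1}, \psi({n}_{2i+1})\}\} \in \UUU$ after odd responses, and then invokes Lemma \ref{lem:selsel} to extract a run lost by Player I. The only differences are cosmetic matters of indexing.
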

\begin{proof}
 Define a strategy $\Sigma$ for Player I in ${\Game}^{\mathtt{SelSel}}\left(\UUU, \VVV\right)$ as follows.
 $\Sigma(\emptyset) = {A}_{0} \in \UUU$.
 Suppose $i \in \omega$ and that $\left\langle \pr{{C}_{j}}{{n}_{j}}: j \leq 2i \right\rangle$ is a partial run of ${\Game}^{\mathtt{SelSel}}\left(\UUU, \VVV\right)$ in which Player I has followed $\Sigma$.
 Then define
 \begin{align*}
  \Sigma\left(\left\langle \pr{{C}_{j}}{{n}_{j}}: j \leq 2i \right\rangle\right) = \{n \in \omega: n > \max\{{n}_{2i}, f({n}_{2i})\}\} \in \VVV.
 \end{align*}
 Next, suppose $\left\langle \pr{{C}_{j}}{{n}_{j}}: j \leq 2i+1 \right\rangle$ is a partial run of ${\Game}^{\mathtt{SelSel}}\left(\UUU, \VVV\right)$ in which Player I has followed $\Sigma$.
 Define
 \begin{align*}
  \Sigma\left(\left\langle \pr{{C}_{j}}{{n}_{j}}: j \leq 2i+1 \right\rangle\right) = \{n \in {A}_{0}: n > \max\{{n}_{2i+1}, \psi({n}_{2i+1})\}\} \in \UUU.
 \end{align*}
 This concludes the definition of $\Sigma$.
 Since it is not a winning strategy for Player I (by Lemma \ref{lem:selsel}), there is a run $\left\langle \pr{{C}_{i}}{{n}_{i}}: i < \omega \right\rangle$ of ${\Game}^{\mathtt{SelSel}}\left(\UUU, \VVV\right)$ in which Player I has followed $\Sigma$ and lost.
 Now (1)--(3) are satisfied because Player II won and because of the way $\Sigma$ is defined.
\end{proof}
\begin{Cor} \label{cor:non-RK-preservation}
 Let $\UUU$ and $\VVV$ be selective ultrafilters on $\omega$ so that $\UUU \; {\not\equiv}_{RK} \; \VVV$.
 Suppose $\PP$ is proper and $\BS$-bounding.
 If $\PP$ preserves $\UUU$ and $\VVV$, then
 \begin{align*}
  {\forces}_{\PP} \; {``\{A \subseteq \omega: \exists B \in \UUU\[B \subseteq A\]\} \; {\not\equiv}_{RK} \; \{A \subseteq \omega: \exists B \in \VVV\[B \subseteq A\]\}.''}
 \end{align*}
\end{Cor}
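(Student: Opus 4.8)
The plan is to combine the preservation hypothesis with the interleaving furnished by Lemma~\ref{lem:hausdorff}, arranged precisely so as to exploit the $\BS$-bounding of $\PP$. Write $\UUU_0$ and $\VVV_0$ for the canonical extensions $\{A \subseteq \omega : \exists B \in \UUU\,(B \subseteq A)\}$ and $\{A \subseteq \omega : \exists B \in \VVV\,(B \subseteq A)\}$. Since $\PP$ is proper, $\BS$-bounding, and preserves both $\UUU$ and $\VVV$, Lemma~\ref{lem:selectivestay} already yields that ${\forces}_{\PP}$ ``$\UUU_0$ and $\VVV_0$ are selective ultrafilters on $\omega$''; in particular they are nonprincipal, and they extend $\UUU$ and $\VVV$. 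It therefore suffices to show that no condition forces $\UUU_0 \; {\equiv}_{RK} \; \VVV_0$, since for the weakest condition this is the same as ${\forces}_{\PP} \; \UUU_0 \; {\not\equiv}_{RK} \; \VVV_0$.

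So I would suppose, toward a contradiction, that some $p \in \PP$ forces $\UUU_0 \; {\equiv}_{RK} \; \VVV_0$. By the characterization of $\equiv_{RK}$ recalled after Definition~\ref{def:rk}, $p$ forces the existence of a permutation of $\omega$ carrying $\VVV_0$ onto $\UUU_0$; fix names $\mathring{e}$ and $\mathring{h}$ forced by $p$ to denote this permutation and its inverse. The first key step is to apply $\BS$-bounding to the name $n \mapsto \max\bigl(\mathring{e}(n), \mathring{h}(n)\bigr)$: there are $q \leq p$ and a ground-model $f \in \BS$ with $q \; {\forces}_{\PP} \; \forall n\,\bigl(\mathring{e}(n) \leq f(n) \wedge \mathring{h}(n) \leq f(n)\bigr)$. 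The point of this step is that a single $f \in \V$ now dominates both the putative isomorphism and its inverse simultaneously.

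The second key step is to feed this $f$ back into Lemma~\ref{lem:hausdorff}, applied in $\V$ to $\UUU, \VVV$ with both function parameters equal to $f$ and with $A_0 = \omega$. This produces, in $\V$, a strictly increasing sequence $\seq{n}{i}{<}{\omega}$ with $A := \{n_{2j}: j < \omega\} \in \UUU$, $B := \{n_{2j+1}: j < \omega\} \in \VVV$, and the crucial two-sided gap control $f(n_{2i}) < n_{2i+1}$ and $f(n_{2i+1}) < n_{2i+2}$ for every $i$. Passing to a generic extension by some $G \ni q$, these ground-model sets satisfy $A \in \UUU_0$ and $B \in \VVV_0$, while $e := \mathring{e}[G]$ and $h := \mathring{h}[G] = e^{-1}$ obey $e(n), h(n) \leq f(n)$ for every $n$.

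Finally the contradiction should fall out by a short index chase. Since $A \in \UUU_0$ and $e$ carries $\VVV_0$ onto $\UUU_0$, the set $\{m : e(m) \in A\}$ lies in $\VVV_0$, so $C := \{m : e(m) \in A\} \cap B \in \VVV_0$ is infinite; pick any $m \in C$. As $m \in B$, write $m = n_{2j+1}$, and as $e(m) \in A$, write $e(m) = n_{2k}$. Then $n_{2k} = e(m) \leq f(n_{2j+1}) < n_{2j+2}$ forces $k \leq j$. On the other hand, setting $d := e(m) = n_{2k}$ and applying $h = e^{-1}$ gives $h(d) = m = n_{2j+1}$, whereas $h(d) \leq f(n_{2k}) < n_{2k+1}$ forces $j < k$; this is the desired contradiction, ruling out any condition forcing $\UUU_0 \; {\equiv}_{RK} \; \VVV_0$. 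The only delicate point is the first step — obtaining one ground-model bound valid for both $\mathring{e}$ and $\mathring{h}$ — and it is exactly the interaction of this two-sided domination with the two-sided gap condition of Lemma~\ref{lem:hausdorff} that drives the argument; the remainder is routine bookkeeping.
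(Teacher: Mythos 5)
Your proof is correct, and it runs on the same two gears as the paper's own argument: $\BS$-bounding to replace extension functions by ground-model dominating functions, followed by the interleaving Lemma~\ref{lem:hausdorff} applied in $\V$. The genuine difference is in what you refute and, correspondingly, which function plays the role of the second parameter $\psi$ of Lemma~\ref{lem:hausdorff}. The paper assumes only a one-sided reduction, namely that some $g$ witnesses $\VVV_0 \; {\leq}_{RK} \; \UUU_0$ in the extension; since such a $g$ need not be injective, it first uses the fact that $\UUU_0$ is a P-point to find $A_1 \in \UUU_0$ on which $g$ is finite-to-one, extracts a modulus $\varphi$ for this (if $n \in A_1$ and $n > \varphi(k)$ then $g(n) > k$), bounds $g$ and $\varphi$ by ground-model functions $f$ and $\psi$, and applies Lemma~\ref{lem:hausdorff} with $A_0 \in \UUU$, $A_0 \subseteq A_1$, concluding that $g[A] \cap B = \emptyset$, which contradicts $g[A] \in \VVV_0$. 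You instead invoke the permutation characterization of ${\equiv}_{RK}$ recalled after Definition~\ref{def:rk} and bound the permutation and its inverse simultaneously, which lets you take $\psi = f$ and $A_0 = \omega$ and replaces the P-point/finite-to-one step by the index chase $k \leq j < k$. The trade-off: your argument is shorter and needs only that $\UUU_0$ and $\VVV_0$ remain (nonprincipal) ultrafilters, not that they are P-points; the paper's argument proves the formally stronger statement $\VVV_0 \; {\not\leq}_{RK} \; \UUU_0$ --- though in context the two conclusions coincide, since $\UUU_0$ and $\VVV_0$ remain selective by Lemma~\ref{lem:selectivestay} and selective ultrafilters are RK-minimal among nonprincipal ultrafilters. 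Either way, your proof establishes the corollary as stated, and it suffices for its applications in Theorems~\ref{thm:aleph2selectives} and~\ref{thm:intermediate}.
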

\begin{proof}
 Let $G$ be any $(\V, \PP)$-generic filter.
 Work in $\VG$.
 Let ${\UUU}_{0} = \{A \subseteq \omega: \exists B \in \UUU\[B \subseteq A\]\}$ and ${\VVV}_{0} = \{A \subseteq \omega: \exists B \in \VVV\[B \subseteq A\]\}$.
 They are both selective ultrafilters on $\omega$.
 Suppose for a contradiction that $g: \omega \rightarrow \omega$ witnesses ${\VVV}_{0} \; {\leq}_{RK} \; {\UUU}_{0}$.
 Then for any $A \in {\UUU}_{0}$, $g\[A\] \in {\VVV}_{0}$.
 As ${\UUU}_{0}$ is a P-point, there is ${A}_{1} \in {\UUU}_{0}$ such that $g$ is either finite-to-one or constant on ${A}_{1}$.
 $g$ cannot be constant on ${A}_{1}$ because $g\[{A}_{1}\] \in {\VVV}_{0}$.
 Therefore, there is a function $\varphi: \omega \rightarrow \omega$ such that for any $k \in \omega$, for any $n \in {A}_{1}$, if $n > \varphi(k)$, then $g(n) > k$.
 Let ${A}_{0} \in \UUU$ with ${A}_{0} \subseteq {A}_{1}$.
 As $\PP$ is $\BS$-bounding, find $f$ and $\psi$ in $\V$ such that $f: \omega \rightarrow \omega$, $\psi: \omega \rightarrow \omega$, and $g(n) < f(n)$ and $\varphi(n) < \psi(n)$, for every $n \in \omega$.
 Applying Lemma \ref{lem:hausdorff} back in $\V$, find a sequence $\seq{n}{i}{<}{\omega}$ satisfying (1)--(3) of Lemma \ref{lem:hausdorff}.
 Let $A = \{{n}_{2i}: i \in \omega\} \in {\UUU}_{0}$ and $B = \{{n}_{2j+1}: j \in \omega\} \in {\VVV}_{0}$.
 Note $A \subseteq {A}_{0} \subseteq {A}_{1}$.
 It will be verified that $g\[A\] \cap B = \emptyset$.
 To this end, fix $i, j \in \omega$.
 If $i \leq j$, then $g({n}_{2i}) < f({n}_{2i}) < {n}_{2i+1} \leq {n}_{2j+1}$.
 If $j < i$, then $\varphi({n}_{2j+1}) < \psi({n}_{2j+1}) < {n}_{2j+2} \leq {n}_{2i}$, whence $g({n}_{2i}) > {n}_{2j+1}$.
 This proves $g\[A\] \cap B = \emptyset$, contradicting $g\[A\] \in {\VVV}_{0}$, and concluding the proof.
\end{proof}
\begin{Lemma}[Blass and Shelah~\cite{simple}] \label{lem:P-pointiteration}
 Suppose $\UUU$ is a P-point.
 Let $\gamma$ be a limit ordinal and let $\langle {\PP}_{\alpha}; {\mathring{\QQ}}_{\alpha}: \alpha \leq \gamma \rangle$ be a CS iteration such that $\forall \alpha < \gamma\[{\forces}_{\alpha}\;{{\mathring{\QQ}}_{\alpha} \ \text{is proper}}\]$.
 Suppose that for all $\alpha < \gamma$, ${\PP}_{\alpha}$ preserves $\UUU$.
 Then ${\PP}_{\gamma}$ preserves $\UUU$.
\end{Lemma}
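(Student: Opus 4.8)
The plan is to run the standard induction underlying Shelah's preservation theorems for countable support iterations (see \cite{PIF}, and \cite[\S6.2]{BJ} for the P-point case), isolating the limit step, which is exactly what is stated here. Fix a sufficiently large regular $\theta$, and recall that for proper $\PP$ the condition ``$\PP$ preserves $\UUU$'' admits the following local reformulation: $\PP$ preserves $\UUU$ if and only if for every countable $M \prec H(\theta)$ with $\PP, \UUU \in M$, every $p \in \PP \cap M$, and every $B \in \UUU$ with $B \subseteq^{\ast} C$ for all $C \in \UUU \cap M$, there is an $(M,\PP)$-generic condition $q \le p$ forcing that $B$ remains a pseudo-intersection of $\UUU \cap M[\mathring G]$ (equivalently, $q \forces \check{B} \subseteq^{\ast} \mathring C$ for every $\mathring C \in M$ forced to lie in the filter generated by $\UUU$). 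Since $\UUU$ is a P-point and $\UUU \cap M$ is countable, such a $B$ always exists. I would first record that this reformulation is valid and that, by hypothesis, it holds for each $\PP_\alpha$ with $\alpha < \gamma$.

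Next I would dispose of the case $\cf(\gamma) > \omega$. Here countable supports are bounded below $\gamma$, so $\PP_\gamma$ is the direct limit $\bigcup_{\alpha < \gamma}\PP_\alpha$, and a routine properness argument shows that below a suitable $(M,\PP_\gamma)$-generic condition every $\PP_\gamma$-name for a subset of $\omega$ is equal to a $\PP_\alpha$-name for $\alpha = \sup(M \cap \gamma) < \gamma$. Hence preservation of $\UUU$ by $\PP_\gamma$ reduces to preservation by the $\PP_\alpha$, which is the hypothesis.

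The substance is the case $\cf(\gamma) = \omega$. Fix a countable $M \prec H(\theta)$ containing the iteration, $\UUU$, a given $p \in \PP_\gamma \cap M$, and a given name $\mathring A \in M$ for a subset of $\omega$, together with an increasing sequence $\langle \gamma_n : n \in \omega \rangle \in M$ cofinal in $\gamma$ with $\gamma_0 = 0$. Using the P-point property, fix once and for all a single $B \in \UUU$ with $B \subseteq^{\ast} C$ for every $C \in \UUU \cap M$. I would then build, by recursion on $n$, a \emph{coherent generic sequence} $\langle q_n : n \in \omega \rangle$ with each $q_n \in \PP_\gamma$, $q_{n+1} \le q_n \le p$, and $q_{n+1}\restriction\gamma_n = q_n\restriction\gamma_n$, such that $q_n\restriction\gamma_n$ is $(M,\PP_{\gamma_n})$-generic and forces $B$ to remain a pseudo-intersection of $\UUU \cap M[\mathring G_{\gamma_n}]$. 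The step from $n$ to $n+1$ is the crux: working over $\V^{\PP_{\gamma_n}}$ one extends through the quotient $\PP_{[\gamma_n,\gamma_{n+1})}$ using the local characterization at the bounded stage $\gamma_{n+1}$ (valid since $\PP_{\gamma_{n+1}}$ preserves $\UUU$ by hypothesis), meanwhile enumerating the dense subsets of $\PP_\gamma$ and the names for filter-elements lying in $M$ so that each requirement is met cofinally. The decisive feature is that the \emph{same} $B$ serves at every stage.

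Finally I would let $q$ be the fusion limit of $\langle q_n \rangle$, with support $\bigcup_n \mathrm{supp}(q_n)$ and satisfying $q\restriction\gamma_n = q_n\restriction\gamma_n$ for all $n$; a level-by-level verification gives $q \in \PP_\gamma$, $q \le p$, that $q$ is $(M,\PP_\gamma)$-generic, and that $q$ forces $B$ to be a pseudo-intersection of $\UUU \cap M[\mathring G]$. In particular $q$ forces $B$ to be almost contained in the $\UUU$-set deciding $\mathring A$, which is the required decision \emph{modulo} $\UUU$, so by the local characterization $\PP_\gamma$ preserves $\UUU$. The main obstacle is exactly the coordination in the recursive step: membership in $\mathring A$ is governed by the top of the iteration, yet the fusion may only alter coordinates above $\gamma_n$, so the decision must be made only modulo $\UUU$, and the countably many pseudo-intersection requirements accruing along $\langle \gamma_n \rangle$ must all be met by the single pseudo-intersection $B$ supplied by the P-point hypothesis. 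This uniformity is the heart of the Blass--Shelah argument \cite{simple} and is precisely what fails for non-P-points.
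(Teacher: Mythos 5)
The paper states this lemma without proof, citing Blass--Shelah \cite{simple}, so your proposal can only be measured against that argument and its standard expositions. Your skeleton is the right one: the reformulation of preservation in terms of countable $M \prec H(\theta)$ and a pseudo-intersection $B$ of $\UUU \cap M$, the easy reduction when $\cf(\gamma) > \omega$ (supports are bounded, so every real appears at a bounded stage), and a fusion along a cofinal sequence $\langle \gamma_n \rangle$ when $\cf(\gamma) = \omega$, using the observation that ${\PP}_{\gamma_n}$ forces the quotient up to $\gamma_{n+1}$ to preserve the generated ultrafilter. All of that is correct and is indeed how the proof begins.

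The gap is at the crux, which your final paragraph names but does not resolve. Your fusion yields a $q$ that is $(M,{\PP}_{\gamma})$-generic and forces, for each $n$, that $B$ decides every subset of $\omega$ lying in $M[\mathring{G}_{\gamma_n}]$. But the set you must decide, $\mathring{A}[G_{\gamma}]$, lies in $M[G_{\gamma}]$ and in general in no $M[G_{\gamma_n}]$: new reals appear at the limit. Genericity does not close this gap, because the dense set $\{r \in {\PP}_{\gamma} : \exists C \in \UUU\, (r \forces C \subseteq \mathring{A} \ \text{or} \ r \forces C \cap \mathring{A} = \emptyset)\}$ is dense only if ${\PP}_{\gamma}$ preserves $\UUU$ --- exactly what is being proved --- so ``the local characterization'' cannot be invoked at stage $\gamma$ without circularity; and if one instead tries to decide $\mathring{A}$ through stage-by-stage interpretations, the finite errors in the almost-inclusions $B \setminus k_n \subseteq (\cdot)$ coming from the witnesses in $\langle \UUU \rangle \cap M[G_{\gamma_n}]$ can grow unboundedly along the fusion, so that at the limit $B$ decides nothing. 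Your closing sentence, that $q$ forces $B$ to be almost contained in ``the $\UUU$-set deciding $\mathring{A}$,'' presupposes that such a set exists below $q$, which is the conclusion. What Blass and Shelah actually add here is a further device: one first splits into the case where some condition locks a $\UUU$-set out of (or into) $\mathring{A}$, and the complementary case in which, for every relevant condition $r$, the set of integers whose membership in $\mathring{A}$ is still possible below $r$ is $\UUU$-large; in the latter case the fusion is interleaved with the \emph{P-point game} (Player I plays sets in $\UUU$, Player II plays finite subsets, and II wins if the union of II's moves is in $\UUU$), and the non-existence of a winning strategy for Player I is what converts the stage-wise information into a single $\UUU$-set decided by the limit condition. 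This is precisely where P-pointness is used beyond merely supplying $B$, and it is the same dichotomy-plus-game mechanism this paper runs for its own single-forcing preservation results --- compare the sets ${E}_{q} = \{n : \exists r \leq q\, (r \forces \mathring{x}(n)=1)\}$ and the games ${\Game}^{\mathtt{SelStab}}$ and ${\Game}^{\mathtt{SelSel}}$ in Lemmas \ref{lem:c1Hpreserved} and \ref{lem:c1HpreservedU}. Without this mechanism, your construction proves that ${\PP}_{\gamma}$ is proper and that $B$ decides all sets appearing at bounded stages, but not that $\UUU$ generates an ultrafilter after forcing with ${\PP}_{\gamma}$.
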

\begin{Theorem} \label{thm:aleph2selectives}
 There is a model of $\ZFC$ with ${\aleph}_{2}$ pairwise non-RK-isomorphic selective ultrafilters on $\omega$ and no stable ordered-union ultrafilters on $\FIN$.
\end{Theorem}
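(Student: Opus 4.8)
The plan is to carry out a countable support iteration of length $\omega_2$ over a model of $\GCH$, using the forcings $\PP(\HHH)$ to destroy stable ordered-union ultrafilters one by one while simultaneously preserving a prescribed family of $\aleph_2$ selective ultrafilters. First I would fix, in a ground model $\V \models \GCH$, a family $\{{\UUU}_{\xi}: \xi < {\omega}_{2}\}$ of pairwise non-RK-isomorphic selective ultrafilters on $\omega$; such a family exists under $\CH$ by the classical construction of Kunen. I would then build a CS iteration $\langle {\PP}_{\alpha}; {\mathring{\QQ}}_{\alpha}: \alpha \leq {\omega}_{2}\rangle$ in which each $\mathring{\QQ}_{\alpha}$ is, according to a bookkeeping function, either trivial or of the form $\PP({\mathring{\HHH}}_{\alpha})$ for a $\PP_\alpha$-name ${\mathring{\HHH}}_{\alpha}$ forced to denote a stable ordered-union ultrafilter. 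By Corollary \ref{cor:proper+bounding} each iterand is proper and $\BS$-bounding and, since $\lc \PP(\HHH) \rc = \cc$ and $\CH$ holds at every intermediate stage, has size $\leq {\aleph}_{1}$. Standard iteration theory then yields that $\PP_{\omega_2}$ is proper, ${\aleph}_{2}$-cc and $\BS$-bounding (invoking the preservation theorem that a CS iteration of proper $\BS$-bounding forcings is $\BS$-bounding), preserves cardinals, forces $\CH$ at every $\alpha < \omega_2$, and produces $2^{\aleph_0} = \aleph_2$.

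Next I would establish the preservation of the ground-model selectives by induction on $\alpha$: at a successor step $\mathring{\QQ}_\alpha = \PP(\mathring{\HHH}_\alpha[G_\alpha])$ preserves \emph{every} selective ultrafilter of $\V[G_\alpha]$ by Theorem \ref{thm:preservesselective} applied internally, and the two-step composition carries the preservation of each $\UUU_\xi$ forward; at limit stages Lemma \ref{lem:P-pointiteration} applies because each $\UUU_\xi$ is a P-point and every initial segment preserves it. Consequently, by Lemma \ref{lem:selectivestay}, each $\UUU_\xi$ generates a selective ultrafilter $\UUU_\xi^\ast$ in $\V[G_{\omega_2}]$, and by Corollary \ref{cor:non-RK-preservation} the $\UUU_\xi^\ast$ remain pairwise non-RK-isomorphic, supplying the required $\aleph_2$ RK-classes.

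Finally I would show that $\V[G_{\omega_2}]$ contains no stable ordered-union ultrafilter. Suppose toward a contradiction that $\HHH^\ast$ is one. Using ${\aleph}_{2}$-cc together with the fact that the iteration adds no reals at stages of uncountable cofinality, I would produce a stage $\alpha^\ast < \omega_2$ of cofinality $\omega_1$ at which the bookkeeping has activated $\mathring{\QQ}_{\alpha^\ast} = \PP(\HHH')$, where $\HHH' = \HHH^\ast \cap \V[G_{\alpha^\ast}]$ is a stable ordered-union ultrafilter in $\V[G_{\alpha^\ast}]$. Concretely one takes a continuous chain $\langle N_\alpha \rangle$ of elementary submodels of a large $H(\theta)$ to obtain a club of $\alpha$ with $\PP_\alpha = \PP_{\omega_2} \cap N_\alpha$, for which the restricted name $\mathring{\HHH}^\ast \restrict N_\alpha$ is a $\PP_\alpha$-name forced to denote a stable ordered-union ultrafilter refining $\HHH^\ast$, and arranges the bookkeeping so that this name is the one used at the corresponding stage. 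Since $\PP(\HHH')$ is the first iterand of the tail $\PP_{\omega_2}/G_{\alpha^\ast}$, it embeds completely into that tail, which is itself $\BS$-bounding; Theorem \ref{thm:noresseruction}, applied in $\V[G_{\alpha^\ast}]$, then forces that no stable ordered-union ultrafilter on $\FIN$ extends $\HHH'$ in $\V[G_{\omega_2}]$. But $\HHH' \subseteq \HHH^\ast$ and $\HHH^\ast$ is precisely such an ultrafilter, a contradiction.

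The main obstacle is exactly this reflection-and-bookkeeping step: arranging that the trace $\HHH^\ast \cap \V[G_{\alpha^\ast}]$ both lands in $\V[G_{\alpha^\ast}]$ as a genuine stable ordered-union ultrafilter and is the object to which $\PP(\cdot)$ is applied at stage $\alpha^\ast$, so that the \emph{strong} destruction of Theorem \ref{thm:noresseruction}, which is robust against any further $\BS$-bounding extension, can be turned against the remainder of the iteration. The cardinal-arithmetic counting (there are at most $\aleph_2$ names to process, distributed over $\omega_2$ stages via a pairing of $\omega_2$ with $\omega_2\times\omega_2$) and the verification that the tail remains $\BS$-bounding are routine once the reflection is in place.
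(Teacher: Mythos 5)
Your overall architecture coincides with the paper's proof: a CS iteration of length $\omega_2$ of forcings $\PP(\mathring{\HHH}_{\alpha})$, properness/$\BS$-bounding/${\aleph}_{2}$-c.c.\@ from standard iteration theory, preservation of the ground-model selectives via Theorem \ref{thm:preservesselective} at successor steps and Lemma \ref{lem:P-pointiteration} at limits, and then Lemma \ref{lem:selectivestay} and Corollary \ref{cor:non-RK-preservation} to get ${\aleph}_{2}$ RK-classes; the destruction half is likewise meant to run through Theorem \ref{thm:noresseruction} applied to a reflected trace $\HHH' = {\HHH}^{\ast} \cap \V\[{G}_{{\alpha}^{\ast}}\]$. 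The genuine gap is the device by which you ensure that this trace \emph{was in fact the ultrafilter processed at stage ${\alpha}^{\ast}$}. You say you will "arrange the bookkeeping so that this name is the one used at the corresponding stage," but this is circular: the bookkeeping is part of the ground-model recursion defining $\left\langle {\PP}_{\alpha}; {\mathring{\QQ}}_{\alpha} \right\rangle$, and at stage $\alpha$ that recursion can only quantify over ${\PP}_{\alpha}$-names, whereas the object to be guessed — the restriction ${\mathring{\HHH}}^{\ast} \restrict {N}_{\alpha}$ together with $p \restrict \alpha$ — is computed from a ${\PP}_{{\omega}_{2}}$-name, something that exists only after the entire recursion is finished (and your chain $\left\langle {N}_{\alpha} \right\rangle$ is itself chosen as a function of ${\mathring{\HHH}}^{\ast}$). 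Nor can the circularity be evaded by the usual bookkeeping trick of enumerating all tasks in advance and deferring them via a pairing of ${\omega}_{2}$ with ${\omega}_{2} \times {\omega}_{2}$: a candidate task at stage $\alpha$ is a ${\PP}_{\alpha}$-name for a stable ordered-union ultrafilter of $\V\[{G}_{\alpha}\]$, and such a task can \emph{only} be handled at stage $\alpha$ itself, because after any further forcing it no longer names an ultrafilter of the current model (indeed, by Theorem \ref{thm:noresseruction} it may be permanently non-extendable — that is the whole point of the construction). Since $\V\[{G}_{\alpha}\]$ satisfies $\CH$ and ${2}^{{\aleph}_{1}} = {\aleph}_{2}$, there are ${\aleph}_{2}$ such candidates at each single stage but only one slot ${\mathring{\QQ}}_{\alpha}$, so deferral-based bookkeeping cannot cover them.

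What is actually needed is a single ground-model sequence that, for \emph{every} future pair $({\mathring{\HHH}}^{\ast}, p)$, correctly anticipates its restriction at stationarily many $\alpha$ of cofinality ${\omega}_{1}$; that is precisely $\diamondsuit\left({S}^{2}_{1}\right)$, which the paper assumes explicitly in the ground model (it holds in $\mathbf{L}$, so the theorem as stated is unaffected) and invokes at exactly this point: the diamond sequence at $\alpha$ guesses the pair $\pr{\mathring{\GGG}}{p\restrict\alpha}$, and the iteration is defined to respond to these guesses. Note that $\diamondsuit\left({S}^{2}_{1}\right)$ is not known to follow from $\GCH$ — guessing at the critical cofinality ${\omega}_{1}$ is exactly the case left out by the Gregory–Shelah theorems — so your hypothesis "$\V \models \GCH$ plus bookkeeping" cannot be repaired by a counting argument; you must add the guessing principle as a hypothesis and restructure the anticipation step around it. The rest of your third paragraph (the club of $\alpha \in {S}^{2}_{1}$ at which the trace is a stable ordered-union ultrafilter of $\V\[{G}_{\alpha}\]$, using that no reals appear at cofinality-${\omega}_{1}$ stages together with closure of $\V\[{G}_{\alpha}\]$ under witnesses, and the complete embedding of $\PP(\HHH')$ into the $\BS$-bounding tail ${\PP}_{{\omega}_{2}} \slash {G}_{\alpha}$) is sound and is the same "standard argument" the paper appeals to.
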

\begin{proof}
 Put ${S}^{2}_{1} = \{\alpha < {\omega}_{2}: \cf(\alpha) = {\omega}_{1}\}$.
 Let $\V$ be a model satisfying $\CH$ and $\diamondsuit\left({S}^{2}_{1}\right)$.
 By $\CH$, fix a family $\{{\UUU}_{\alpha}: \alpha < {\omega}_{2}\}$ of pairwise non-RK-isomorphic selective ultrafilters on $\omega$ (for instance, see \cite{walterppoints} and \cite{Rudin:66}).
 Fixing some diamond sequence witnessing $\diamondsuit\left({S}^{2}_{1}\right)$, define a CS iteration $\left\langle {\PP}_{\alpha}; {\mathring{\QQ}}_{\alpha}: \alpha \leq {\omega}_{2} \right\rangle$ in $\V$ as follows.
 Assume $\alpha < {\omega}_{2}$ and that ${\PP}_{\alpha}$ is proper, $\BS$-bounding, satisfies the ${\aleph}_{2}$-c.c.\@, and that ${\forces}_{\alpha} \; {\CH}$.
 Observe that ${\forces}_{\alpha} \; {\text{``stable ordered-union ultrafilters exist''}}$ because ${\forces}_{\alpha} \; {\CH}$.
 If the diamond sequence at $\alpha$ codes a pair $\pr{\mathring{\GGG}}{p}$ such that $p \in {\PP}_{\alpha}$, $\mathring{\GGG}$ is a ${\PP}_{\alpha}$-name, and $p \; {\forces}_{\alpha} \; {``\mathring{\GGG} \ \text{is a stable ordered-union ultrafilter on} \ \FIN''}$, then choose a ${\PP}_{\alpha}$-name ${\mathring{\HHH}}_{\alpha}$ such that ${\forces}_{\alpha} \; {``{\mathring{\HHH}}_{\alpha} \ \text{is a stable ordered-union ultrafilter on} \ \FIN''}$ and $p \; {\forces}_{\alpha} \; {{\mathring{\HHH}}_{\alpha} = \mathring{\GGG}}$, and define ${\mathring{\QQ}}_{\alpha}$ to be a full ${\PP}_{\alpha}$-name so that ${\forces}_{\alpha} \; {{\mathring{\QQ}}_{\alpha} = \PP({\mathring{\HHH}}_{\alpha})}$.
 Otherwise choose an arbitrary ${\PP}_{\alpha}$-name ${\mathring{\HHH}}_{\alpha}$ with ${\forces}_{\alpha} \; {``{\mathring{\HHH}}_{\alpha} \ \text{is a stable ordered-union ultrafilter on} \ \FIN''}$, and define ${\mathring{\QQ}}_{\alpha}$ to be a full ${\PP}_{\alpha}$-name so that ${\forces}_{\alpha} \; {{\mathring{\QQ}}_{\alpha} = \PP({\mathring{\HHH}}_{\alpha})}$.
 Note that in both cases ${\forces}_{\alpha} \; {\lc {\mathring{\QQ}}_{\alpha} \rc = {\aleph}_{1}}$ because ${\forces}_{\alpha} \; {\CH}$.
 Standard arguments in the theory of proper forcing (see Shelah~\cite{PIF} or Abraham~\cite{Ab}) together with lemmas proved earlier therefore imply that for each $\delta \leq {\omega}_{2}$, ${\PP}_{\delta}$ is proper, $\BS$-bounding, and satisfies the ${\aleph}_{2}$-c.c.
 Furthermore, for each $\delta < {\omega}_{2}$, ${\forces}_{\delta} \; {\CH}$.
 
 Suppose for a contradiction that $\mathring{\HHH}$ is a ${\PP}_{{\omega}_{2}}$-name such that
 \begin{align*}
  p \; {\forces}_{{\omega}_{2}} \; {``\mathring{\HHH} \ \text{is a stable ordered-union ultrafilter on} \ \FIN''},
 \end{align*}
 for some $p \in {\PP}_{{\omega}_{2}}$.
 Then by a standard argument, there exists $\alpha \in {S}^{2}_{1}$ such that the diamond sequence at $\alpha$ codes a pair $\pr{\mathring{\GGG}}{p\restrict \alpha}$ such that $\mathring{\GGG}$ is a ${\PP}_{\alpha}$-name, $p \restrict \alpha \; {\forces}_{\alpha} \; {``\mathring{\GGG} \ \text{is a stable ordered-union ultrafilter on} \ \FIN''}$, and $p \; {\forces}_{{\omega}_{2}} \; \mathring{\GGG} \subseteq \mathring{\HHH}$.
 Let ${G}_{{\omega}_{2}}$ be a $(\V, {\PP}_{{\omega}_{2}})$-generic filter with $p \in {G}_{{\omega}_{2}}$.
 Let ${G}_{\alpha}$ denote its projection, that is ${G}_{\alpha} = \{q \restrict \alpha: q \in {G}_{{\omega}_{2}}\}$.
 In $\V\[{G}_{\alpha}\]$, ${\mathring{\HHH}}_{\alpha}\[{G}_{\alpha}\] = \mathring{\GGG}\[{G}_{\alpha}\]$ is a stable ordered-union ultrafilter on $\FIN$.
 Moreover, $\PP\left({\mathring{\HHH}}_{\alpha}\[{G}_{\alpha}\]\right)$ completely embeds into the completion of ${\PP}_{{\omega}_{2}} \slash {G}_{\alpha}$, and ${\PP}_{{\omega}_{2}} \slash {G}_{\alpha}$ is $\BS$-bounding.
 Therefore Theorem \ref{thm:noresseruction} implies that
 \begin{align*}
  {\forces}_{{\PP}_{{\omega}_{2}} \slash {G}_{\alpha}} \; {``\text{there is no stable ordered-union ultrafilter on} \ \FIN \ \text{extending} \ {\mathring{\HHH}}_{\alpha}\[{G}_{\alpha}\]''}.
 \end{align*}
 However ${G}_{{\omega}_{2}}$ is a $\left(\V\[{G}_{\alpha}\], {\PP}_{{\omega}_{2}} \slash {G}_{\alpha} \right)$-generic filter, $\V\[{G}_{\alpha}\]\[{G}_{{\omega}_{2}}\] = \V\[{G}_{{\omega}_{2}}\]$, and in $\V\[{G}_{{\omega}_{2}}\]$, $\mathring{\HHH}\[{G}_{{\omega}_{2}}\]$ is a stable ordered-union ultrafilter on $\FIN$ extending $\mathring{\GGG}\[{G}_{{\omega}_{2}}\] = \mathring{\GGG}\[{G}_{\alpha}\] = {\mathring{\HHH}}_{\alpha}\[{G}_{\alpha}\]$.
 This is a contradiction which shows that there are no stable ordered-union ultrafilters on $\FIN$ after forcing with ${\PP}_{{\omega}_{2}}$.
 
 An easy inductive argument using Theorem \ref{thm:preservesselective}, Corollary \ref{cor:proper+bounding}, Lemma \ref{lem:selectivestay}, and Lemma \ref{lem:P-pointiteration} shows that for every $\alpha < {\omega}_{2}$ and every $\delta \leq {\omega}_{2}$, ${\PP}_{\delta}$ preserves ${\UUU}_{\alpha}$.
 Let ${G}_{{\omega}_{2}}$ be a $(\V, {\PP}_{{\omega}_{2}})$-generic filter.
 In $\V\[{G}_{{\omega}_{2}}\]$, define ${\UUU}^{\ast}_{\alpha} = \left\{A \subseteq \omega: \exists B \in {\UUU}_{\alpha}\[B \subseteq A\]\right\}$, for every $\alpha < {\omega}_{2}$.
 By Lemma \ref{lem:selectivestay}, each ${\UUU}^{\ast}_{\alpha}$ is a selective ultrafilter on $\omega$, and by Corollary \ref{cor:non-RK-preservation}, ${\UUU}^{\ast}_{\alpha} \; {\not\equiv}_{RK} \; {\UUU}^{\ast}_{\beta}$, for every $\beta \neq \alpha$.
 Since ${\PP}_{{\omega}_{2}}$ preserves ${\aleph}_{2}$ because of the ${\aleph}_{2}$-c.c.\@, a model gotten by forcing with ${\PP}_{{\omega}_{2}}$ has all of the required properties.
\end{proof}
\section{Models with an intermediate number of selectives} \label{sec:intermediate}
This section introduces another partial order which allows us to control the number of selective ultrafilters in the final model.
Given a selective ultrafilter $\UUU$, we introduce a new partial order $\PP(\UUU)$ which is proper, $\BS$-bounding, and destroys $\UUU$ while preserving all selective ultrafilters that are not RK-isomorphic to $\UUU$.
By interleaving partial orders of the form $\PP(\UUU)$ with ones of the form $\PP(\HHH)$, it will be possible to produce models with no stable ordered-union ultrafilters and fewer than ${2}^{{\aleph}_{0}}$ RK-classes of selective ultrafilters.
Any cardinal strictly smaller than ${\aleph}_{2}$ can be obtained in this way.
Thus, for example, there is a model with precisely ${\aleph}_{0}$ distinct RK-classes of selective ultrafilters, but no stable ordered-union ultrafilters.
Furthermore, we can control exactly which RK-classes of selective ultrafilters from the ground model survive in the final forcing extension.

The definition of $\PP(\UUU)$ depends on the notion of a $k$-big set, which is similar to the notion of a $\pr{k}{s}$-big set, except that the semigroup operation $\cup$ plays no role here.
\begin{Def} \label{def:klbig}
 Let $k, l \in \omega$.
 $A \subseteq {2}^{l}$ is \emph{$k$-big} if for every $\sigma: k \rightarrow 2$, there exists $\tau \in A$ such that $\sigma \subseteq \tau$.
\end{Def}
\begin{Def} \label{def:PU}
 Define $\TT' = {\bigcup}_{l \in \omega}{{\prod}_{k \in l}{{2}^{k}}}$.
 $\pr{\TT'}{\subsetneq}$ is a tree.
 Let $\UUU$ be a selective ultrafilter on $\omega$.
 $p$ is called a \emph{$\UUU$-condition} if $p = {T}_{p} \subseteq \TT'$ is a subtree such that the following hold:
 \begin{enumerate}
  \item
  $\emptyset \in {T}_{p}$;
  \item
  $\forall f \in {T}_{p} \forall \dom(f) \leq n < \omega \exists g \in {T}_{p}\[f \subseteq g \wedge n \leq \dom(g)\]$;
  \item
  for each $k \in \omega$, ${H}_{p, k} \in \UUU$, where ${H}_{p, k} = $
  \begin{align*}
   \left\{ l \in \omega: \forall f \in \lv{{T}_{p}}{l}\[{\succc}_{{T}_{p}}(f) \ \text{is} \ k\text{-big}\] \right\}.
  \end{align*}
 \end{enumerate}
 Let $\PP(\UUU) = \left\{p: p \ \text{is a} \ \UUU\text{-condition} \right\}$.
 Define $q \leq p$ if and only if ${T}_{q} \subseteq {T}_{p}$, for all $p, q \in \PP(\UUU)$.
\end{Def}
The forcing $\PP(\UUU)$ may be seen as a tree version of the one Shelah used in \cite{MR1690694} to produce a model with no nowhere dense ultrafilters.

For now until the end of the proof of Lemma \ref{thm:noresseructionU}, $\UUU$ is a fixed selective ultrafilter on $\omega$.
The properties of $\PP(\UUU)$ are quite similar to those of $\PP(\HHH)$, and since the proofs of these properties are also similar, but easier, fewer details will be provided.
\begin{Lemma} \label{lem:oneU}
 $\TT' \in \PP(\UUU)$.
\end{Lemma}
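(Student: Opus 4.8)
The plan is to verify that $\TT'$ satisfies the three clauses of Definition \ref{def:PU}, following the template of Lemma \ref{lem:one}. Clauses (1) and (2) are immediate: $\emptyset \in \TT'$ by definition, and any $f \in \TT'$ extends to a node $g \in \TT'$ of any prescribed length $n \geq \dom(f)$ simply by appending, for each $\dom(f) \leq k < n$, an arbitrary element of $2^{k}$ as the $k$th coordinate. So the only substance is Clause (3).

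First I would observe that for any $l \in \omega$ and any $f \in \lv{\TT'}{l}$, the successor set is ${\succc}_{\TT'}(f) = 2^{l}$, since by the definition of $\TT'$ one has ${f}^{\frown}{\langle \tau \rangle} \in \TT'$ exactly when $\tau \in 2^{l}$. Next, for $k \leq l$, the set $2^{l}$ is $k$-big in the sense of Definition \ref{def:klbig}: given any $\sigma: k \rightarrow 2$, since $k \leq l$ one may extend $\sigma$ to some $\tau: l \rightarrow 2$, and then $\tau \in 2^{l}$ with $\sigma \subseteq \tau$. Hence for every $l \geq k$ and every $f \in \lv{\TT'}{l}$, the set ${\succc}_{\TT'}(f) = 2^{l}$ is $k$-big, which shows that $\{l \in \omega: l \geq k\} \subseteq {H}_{\TT', k}$.

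Finally, since $\{l \in \omega: l \geq k\}$ is cofinite and $\UUU$, being a nonprincipal ultrafilter on $\omega$, contains every cofinite subset of $\omega$, it follows that ${H}_{\TT', k} \in \UUU$. As this holds for each $k \in \omega$, Clause (3) is verified and $\TT' \in \PP(\UUU)$. I do not expect any real obstacle in this argument: in contrast with the case of $\PP(\HHH)$ (Lemma \ref{lem:one}), where the bigness of $\pr{k}{s}$-big sets had to be arranged through the union operation and the ultrafilter $\HHH$, here the full successor set $2^{l}$ is automatically $k$-big whenever $k \leq l$, so Clause (3) reduces to the trivial fact that cofinite sets belong to $\UUU$.
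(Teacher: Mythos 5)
Your proof is correct and follows essentially the same route as the paper's: both reduce everything to Clause (3) of Definition \ref{def:PU}, note that ${\succc}_{\TT'}(f) = {2}^{l}$ is automatically $k$-big once $l$ is large enough, and conclude from the fact that the resulting cofinite set of $l$'s lies in $\UUU$. The only cosmetic difference is that the paper uses the set $\{l \in \omega: l > k\}$ where you use $\{l \in \omega: l \geq k\}$; both work equally well.
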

\begin{proof}
 All of the requirements with the possible exception of (3) of Definition \ref{def:PU} are clear.
 To verify this, given $k \in \omega$, define $H = \{l \in \omega: l > k\}$, and note that $H \in \UUU$.
 Consider $l \in H$.
 Consider $f \in \lv{\TT'}{l}$.
 Then $A = {\succc}_{\TT'}(f) = {2}^{l}$.
 It is clear that $A$ is $k$-big as $k < l$.
 Thus $H \subseteq {H}_{\TT', k}$, and so ${H}_{\TT', k} \in \UUU$.
\end{proof}
\begin{Lemma} \label{lem:TfU}
 Suppose $p \in \PP(\UUU)$.
 Then for any $f \in {T}_{p}$, $q = {T}_{p}\langle f \rangle \in \PP(\UUU)$ and $q \leq p$.
\end{Lemma}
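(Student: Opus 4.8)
The plan is to follow the proof of Lemma~\ref{def:Tf} essentially verbatim, exploiting the fact that $\PP(\UUU)$ and $\PP(\HHH)$ are defined in parallel: the role of $\pr{k}{s}$-big sets (Definition~\ref{def:skbig}) is played by $k$-big sets (Definition~\ref{def:klbig}), the role of $\HHH$ on $\FIN$ is played by $\UUU$ on $\omega$, and the levels $\lv{{T}_{p}}{l}$ are now indexed by $l \in \omega$ directly rather than by $\max(s)$. First I would note that $q = {T}_{p}\langle f \rangle \subseteq {T}_{p} \subseteq \TT'$ is a subtree and that $\emptyset \in {T}_{p}\langle f \rangle$, since $\emptyset \in {T}_{p}$ and $\emptyset \subseteq f$; this gives Clause~(1) of Definition~\ref{def:PU}. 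The relation $q \leq p$ is immediate from ${T}_{q} \subseteq {T}_{p}$, so it remains only to verify Clauses~(2) and~(3).

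For Clause~(2), given $e \in {T}_{p}\langle f \rangle$ and $\dom(e) \leq n < \omega$, I would set $h = e \cup f$. Since every member of ${T}_{p}\langle f \rangle$ is $\subseteq$-comparable with $f$, we have $h \in \{e, f\} \subseteq {T}_{p}$ with $f \subseteq h$ and $\dom(h) = \max\{\dom(e), \dom(f)\}$. Applying Clause~(2) for $p$ to $h$ with $m = \max\{\dom(h), n\}$ yields $g \in {T}_{p}$ satisfying $h \subseteq g$ and $m \leq \dom(g)$. Then $f \subseteq h \subseteq g$ forces $g \in {T}_{p}\langle f \rangle$, while $e \subseteq h \subseteq g$ and $n \leq m \leq \dom(g)$ give exactly what is required.

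The substantive clause is~(3). Fixing $k \in \omega$, I would take $H = \{l \in {H}_{p, k}: l > \dom(f)\}$. This set lies in $\UUU$ because ${H}_{p, k} \in \UUU$ and $\UUU$ is free, so deleting the finitely many $l \leq \dom(f)$ preserves membership. For $l \in H$ and $e \in \lv{{T}_{p}\langle f \rangle}{l}$, the key observation is that $\dom(e) = l > \dom(f)$ combined with the $\subseteq$-comparability of $e$ and $f$ forces $f \subseteq e$; consequently ${\succc}_{{T}_{p}\langle f \rangle}(e) = {\succc}_{{T}_{p}}(e)$, and the latter is $k$-big because $l \in {H}_{p, k}$. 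Hence $H \subseteq {H}_{q, k}$, and so ${H}_{q, k} \in \UUU$, establishing Clause~(3) and completing the verification that $q \in \PP(\UUU)$.

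I do not anticipate any genuine obstacle, as this is the easier, $\cup$-free analogue of Lemma~\ref{def:Tf}. The only points demanding a moment of care are the deduction $f \subseteq e$ on the relevant levels (which is what makes the successor sets coincide and lets $k$-bigness transfer unchanged from $p$ to $q$), and the appeal to freeness of $\UUU$ to discard the finitely many levels at or below $\dom(f)$. Both are routine.
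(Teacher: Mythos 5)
Your proof is correct and is essentially the paper's own argument: the paper likewise handles Clauses (1) and (2) of Definition \ref{def:PU} by the same reasoning as Lemma \ref{def:Tf}, and verifies Clause (3) with exactly your set $H = \{l \in {H}_{p,k} : l > \dom(f)\} \in \UUU$, deducing $f \subseteq e$ on levels $l > \dom(f)$ so that ${\succc}_{{T}_{p}\langle f \rangle}(e) = {\succc}_{{T}_{p}}(e)$ is $k$-big.
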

\begin{proof}
 The argument for all the requirements, with the possible exception of Clause (3) of Definition \ref{def:PU}, is identical to the proof of Lemma \ref{def:Tf}.
 Fix $k \in \omega$.
 Let $H = \{l \in {H}_{p, k}: l > \dom(f)\} \in \UUU$.
 Suppose $l \in H$ and $e \in \lv{{T}_{p}\langle f \rangle}{l}$.
 Then $l \in {H}_{p, k}$, $e \in \lv{{T}_{p}}{l}$, and $\dom(e) = l > \dom(f)$, whence $f \subseteq e$.
 Thus ${\succc}_{{T}_{p}\langle f \rangle}(e) = {\succc}_{{T}_{p}}(e)$ is $k$-big.
 Thus $H \subseteq {H}_{q, k}$, and so ${H}_{q, k} \in \UUU$.
 Thus $q \in \PP(\UUU)$ and $q \leq p$.
\end{proof}
\begin{Lemma} \label{lem:amalgamU}
 Let $p \in \PP(\UUU)$.
 Let $l \in \omega$, $1 \leq m < \omega$, and ${e}_{1}, \dotsc, {e}_{m} \in \lv{{T}_{p}}{l}$.
 If ${p}_{1}, \dotsc, {p}_{m} \in \PP(\UUU)$ are such that $\forall 1 \leq i \leq m\[{p}_{i} \leq {T}_{p}\langle {e}_{i} \rangle\]$, then $q = {T}_{q} = {\bigcup}_{1 \leq i \leq m}{{T}_{{p}_{i}}} \in \PP(\UUU)$, for each $1 \leq i \leq m$, ${p}_{i} \leq q$, and $q \leq p$.
\end{Lemma}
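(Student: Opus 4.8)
The plan is to follow the proof of Lemma~\ref{lem:amalgam} essentially verbatim, replacing $\TT$ by $\TT'$, the underlying set $\FIN$ by $\omega$, the ultrafilter $\HHH$ by $\UUU$, the value $\max(s)$ by the relevant length $l \in \omega$, and the notion of a $\pr{k}{s}$-big set by that of a $k$-big set. First I would record the inclusions: since ${T}_{{p}_{i}} \subseteq {T}_{p}\langle {e}_{i} \rangle \subseteq {T}_{p} \subseteq \TT'$ for each $1 \leq i \leq m$, the union ${T}_{q} = {\bigcup}_{1 \leq i \leq m}{{T}_{{p}_{i}}}$ is a downwards closed subtree of $\TT'$ contained in ${T}_{p}$, and $\emptyset \in {T}_{{p}_{1}} \subseteq {T}_{q}$, which gives Clause~(1) of Definition~\ref{def:PU}. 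For Clause~(2), any $f \in {T}_{q}$ lies in some ${T}_{{p}_{i}}$, so applying Clause~(2) for ${p}_{i}$ produces an extension $g \in {T}_{{p}_{i}} \subseteq {T}_{q}$ of the required length.

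The one point that uses the ultrafilter is Clause~(3). Fix $k \in \omega$ and set $H = {H}_{{p}_{1}, k} \cap \dotsb \cap {H}_{{p}_{m}, k}$, which lies in $\UUU$ as a finite intersection of members of $\UUU$. Given $l \in H$ and $f \in \lv{{T}_{q}}{l}$, the node $f$ belongs to some ${T}_{{p}_{i}}$, so $f \in \lv{{T}_{{p}_{i}}}{l}$ and, since $l \in {H}_{{p}_{i}, k}$, the set ${\succc}_{{T}_{{p}_{i}}}(f)$ is $k$-big. Because ${\succc}_{{T}_{{p}_{i}}}(f) \subseteq {\succc}_{{T}_{q}}(f) \subseteq {2}^{l}$, and $k$-bigness passes directly to supersets inside ${2}^{l}$ straight from Definition~\ref{def:klbig}, the set ${\succc}_{{T}_{q}}(f)$ is $k$-big as well. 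Hence $H \subseteq {H}_{q, k}$ and ${H}_{q, k} \in \UUU$, establishing $q \in \PP(\UUU)$. The order relations are then immediate: ${T}_{q} \subseteq {T}_{p}$ gives $q \leq p$, and ${T}_{{p}_{i}} \subseteq {T}_{q}$ gives ${p}_{i} \leq q$ for each $i$.

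There is no serious obstacle here: the argument is a direct transcription of Lemma~\ref{lem:amalgam}, and in fact slightly simpler, since the analogue of the monotonicity step (Lemma~\ref{lem:bigmonotone}) is trivial for $k$-bigness. The only thing to keep straight is that, for the conditions of $\PP(\UUU)$, the witnessing sets ${H}_{p, k}$ are subsets of $\omega$ rather than of $\FIN$, so the finite intersection is taken inside the ultrafilter $\UUU$ on $\omega$ and the nodes are indexed by their length $l \in \omega$.
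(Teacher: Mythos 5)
Your proposal is correct and follows essentially the same route as the paper: the paper's proof likewise notes that only Clause (3) of Definition \ref{def:PU} needs argument, takes the finite intersection $H = {H}_{{p}_{1}, k} \cap \dotsb \cap {H}_{{p}_{m}, k} \in \UUU$, and uses the fact that $k$-bigness passes to supersets of ${\succc}_{{T}_{{p}_{i}}}(f)$ inside ${2}^{l}$ to conclude $H \subseteq {H}_{q,k}$, with the order relations read off from the tree inclusions exactly as you do.
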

\begin{proof}
 Once again, only Clause (3) of Definition \ref{def:PU} may require some argument.
 For each $k \in \omega$, define $H = {H}_{{p}_{1}, k} \cap \dotsb \cap {H}_{{p}_{m}, k} \in \UUU$.
 Consider $l \in H$.
 Consider $f \in \lv{{T}_{q}}{l}$.
 Then for some $1 \leq i \leq m$, $f \in \lv{{T}_{{p}_{i}}}{l}$.
 Since ${\succc}_{{T}_{{p}_{i}}}(f)$ is $k$-big and ${\succc}_{{T}_{{p}_{i}}}(f) \subseteq {\succc}_{{T}_{q}}(f) \subseteq {2}^{l}$, ${\succc}_{{T}_{q}}(f)$ is also $k$-big.
 Thus $H \subseteq {H}_{q, k}$, and so ${H}_{q, k} \in \UUU$.
 Therefore $q \in \PP(\UUU)$.
 Finally, since ${T}_{q} \subseteq {T}_{p}$, $q \leq p$, and for all $1 \leq i \leq m$, since ${T}_{{p}_{i}} \subseteq {T}_{q}$, ${p}_{i} \leq q$.
\end{proof}
\begin{Lemma} \label{lem:fusionU}
 Suppose $\seq{p}{i}{\in}{\omega}$ and $\seq{l}{i}{\in}{\omega}$ satisfy:
 \begin{enumerate}
  \item
  $\forall i \in \omega\[{p}_{i} \in \PP(\UUU)\]$ and $\forall i \in \omega\[{p}_{i+1} \leq {p}_{i}\]$;
  \item
  $\seq{l}{i}{\in}{\omega} \in \BS$, $\forall i \in \omega\[{l}_{i} < {l}_{i+1}\]$, and $\{{l}_{i}: i \in \omega\} \in \UUU$;
  \item
  for each $i \in \omega$, ${l}_{i+1} \in {H}_{{p}_{i+1}, {l}_{i}+1}$;
  \item
  for each $i \leq j < \omega$ and for each $e \in \lv{{T}_{{p}_{j}}}{{l}_{i}}$, ${\succc}_{{T}_{{p}_{i}}}(e) \subseteq {\succc}_{{T}_{{p}_{j}}}(e)$.
 \end{enumerate}
 Then $q = {T}_{q} = {\bigcap}_{i \in \omega}{{T}_{{p}_{i}}} \in \PP(\UUU)$.
\end{Lemma}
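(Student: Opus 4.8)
The plan is to follow the proof of Lemma \ref{lem:fusion} step for step, deleting everywhere the bookkeeping forced by the semigroup operation $\cup$. Throughout, write ${T}_{i}$ for ${T}_{{p}_{i}}$ and ${H}_{i, k}$ for ${H}_{{p}_{i}, k}$. Since ${p}_{i+1} \leq {p}_{i}$ for all $i$ by hypothesis (1), the trees are decreasing, so ${T}_{j} \subseteq {T}_{i}$ whenever $i \leq j$, and ${T}_{q} = {\bigcap}_{i \in \omega}{{T}_{i}} \subseteq {T}_{0} \subseteq \TT'$ is a subtree with $\emptyset \in {T}_{q}$. Clause (1) of Definition \ref{def:PU} is therefore immediate. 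For Clause (2) I would copy the argument from Lemma \ref{lem:fusion} verbatim: given $f \in {T}_{q}$ and $\dom(f) \leq n < \omega$, the set $\FFF = \{e \in {T}_{0}: \dom(e) = n \wedge f \subseteq e\}$ is finite, each ${T}_{i}$ contains an extension of $f$ to level $n$ and hence a member of $\FFF \cap {T}_{i}$, so some single $e \in \FFF$ lies in infinitely many ${T}_{i}$; as the trees decrease, that $e$ lies in ${T}_{q}$.

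The substance is Clause (3). Fix $k \in \omega$; I would produce a member of $\UUU$ contained in ${H}_{q, k}$. Choose ${i}_{0}$ with ${l}_{{i}_{0}} \geq k-1$ (possible since $\seq{l}{i}{\in}{\omega}$ is strictly increasing by hypothesis (2)), and set $H = \{{l}_{i+1}: i \geq {i}_{0}\}$. Since $\{{l}_{i}: i \in \omega\} \in \UUU$ and $\UUU$ is nonprincipal, $H$ differs from $\{{l}_{i}: i \in \omega\}$ by a finite set and so $H \in \UUU$. I claim $H \subseteq {H}_{q, k}$. Take $l = {l}_{i+1} \in H$ with $i \geq {i}_{0}$ and any $f \in \lv{{T}_{q}}{l}$; I must show ${\succc}_{{T}_{q}}(f)$ is $k$-big. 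Fix $\sigma: k \rightarrow 2$. Because $f \in \lv{{T}_{i+1}}{{l}_{i+1}}$ and ${l}_{i+1} \in {H}_{i+1, {l}_{i}+1}$ by hypothesis (3), ${\succc}_{{T}_{i+1}}(f)$ is $({l}_{i}+1)$-big; and since $k \leq {l}_{i}+1$, the evident monotonicity of $k$-bigness (a $k'$-big set with $k \leq k'$ is $k$-big, obtained by extending $\sigma$ arbitrarily to a function on $k'$, cf.\@ Lemma \ref{lem:bigmonotone}) shows ${\succc}_{{T}_{i+1}}(f)$ is $k$-big. Hence there is $\tau \in {\succc}_{{T}_{i+1}}(f)$ with $\sigma \subseteq \tau$.

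The only delicate point — and the sole place where hypothesis (4) is used — is promoting this $\tau$ from ${\succc}_{{T}_{i+1}}(f)$ into ${\succc}_{{T}_{q}}(f)$, that is, showing ${f}^{\frown}{\langle \tau \rangle} \in {T}_{j}$ for every $j$. For $j \leq i+1$ this is clear, since ${f}^{\frown}{\langle \tau \rangle} \in {T}_{i+1} \subseteq {T}_{j}$. For $j \geq i+1$, I would apply hypothesis (4) with its bound variable instantiated to $i+1$: as $f \in \lv{{T}_{j}}{{l}_{i+1}}$ (because $ {T}_{q} \subseteq {T}_{j}$), it yields ${\succc}_{{T}_{i+1}}(f) \subseteq {\succc}_{{T}_{j}}(f)$, so $\tau \in {\succc}_{{T}_{j}}(f)$ and ${f}^{\frown}{\langle \tau \rangle} \in {T}_{j}$. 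Thus ${f}^{\frown}{\langle \tau \rangle} \in {\bigcap}_{j}{{T}_{j}} = {T}_{q}$, so $\tau \in {\succc}_{{T}_{q}}(f)$ extends $\sigma$. Since $\sigma$ was arbitrary, ${\succc}_{{T}_{q}}(f)$ is $k$-big, giving $l \in {H}_{q, k}$. Hence $H \subseteq {H}_{q, k}$, so ${H}_{q, k} \in \UUU$, and $q \in \PP(\UUU)$. I expect no real obstacle here: this coherence step is the crux, and everything around it is either identical to Lemma \ref{lem:fusion} or strictly simpler, since $k$-bigness (Definition \ref{def:klbig}) only concerns extensions of functions on $k$ rather than the $\pr{k}{s}$-bigness machinery tied to $\cup$.
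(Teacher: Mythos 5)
Your proof is correct and takes essentially the same approach as the paper: clauses (1) and (2) of Definition \ref{def:PU} are handled by repeating the corresponding arguments from Lemma \ref{lem:fusion}, and clause (3) is proved by taking a tail of $\{{l}_{i}: i \in \omega\}$, using hypothesis (3) to get $({l}_{i}+1)$-bigness of ${\succc}_{{T}_{{p}_{i+1}}}(f)$, extending $\sigma$ to a function on ${l}_{i}+1$ (which is exactly what the paper does in place of your appeal to monotonicity), and then using hypothesis (4) to push the chosen successor $\tau$ into ${T}_{{p}_{j}}$ for all $j \geq i+1$. The minor differences (choosing ${l}_{{i}_{0}} \geq k-1$ rather than $k < {l}_{{i}_{0}}$, and packaging the extension step as a monotonicity lemma) are cosmetic.
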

\begin{proof}
 For ease of notation in this proof, the symbols ${T}_{i}$ will replace ${T}_{{p}_{i}}$, ${k}_{i}$ will denote ${l}_{i}+1$, and ${H}_{i, k}$ will be used for ${H}_{{p}_{i}, k}$, for all $i \in \omega$ and $k \in \omega$.
 With the exception of Clause (3) of Definition \ref{def:PU}, the arguments for all the other conditions are identical to the corresponding arguments in the proof of Lemma \ref{lem:fusion}.
 To see that ${T}_{q}$ satisfies (3) of Definition \ref{def:PU}, fix $k \in \omega$.
 Choose ${i}_{0} \in \omega$ such that $k < {l}_{{i}_{0}}$.
 Let $H = \left\{ {l}_{j}: \omega > j > {i}_{0} \right\} \in \UUU$.
 Suppose $l \in H$.
 Then $l = {l}_{i+1}$, where ${i}_{0} \leq i < \omega$.
 Note that $k < {l}_{{i}_{0}} \leq {l}_{i} < {l}_{i}+1={k}_{i}$.
 By hypothesis, ${l}_{i+1} \in {H}_{i+1, {k}_{i}}$.
 Now, fix $f \in \lv{{T}_{q}}{l}$.
 It needs to be seen that ${\succc}_{{T}_{q}}(f)$ is $k$-big.
 To this end, let $\sigma: k \rightarrow 2$ be fixed.
 Since $f \in \lv{{T}_{i+1}}{{l}_{i+1}}$, ${\succc}_{{T}_{i+1}}(f)$ is ${k}_{i}$-big.
 Now choose any ${\sigma}^{\ast}: {k}_{i} \rightarrow 2$ with $\sigma \subseteq {\sigma}^{\ast}$.
 Find $\tau \in {\succc}_{{T}_{i+1}}(f)$ such that ${\sigma}^{\ast} \subseteq \tau$.
 Consider any $i+1 \leq j < \omega$.
 Then $f \in \lv{{T}_{j}}{{l}_{i+1}}$, and the hypothesis is that ${\succc}_{{T}_{i+1}}(f) \subseteq {\succc}_{{T}_{j}}(f)$.
 Thus $\tau \in {\succc}_{{T}_{j}}(f)$, and so ${f}^{\frown}{\langle \tau \rangle} \in {T}_{j}$.
 Hence $\forall i+1 \leq j < \omega\[{f}^{\frown}{\langle \tau \rangle} \in {T}_{j}\]$, whence ${f}^{\frown}{\langle \tau \rangle} \in {T}_{q}$.
 Therefore $\tau \in {\succc}_{{T}_{q}}(f)$.
 Further, $\sigma \subseteq {\sigma}^{\ast} \subseteq \tau$.
 This proves that ${\succc}_{{T}_{q}}(f)$ is $k$-big.
 Thus $H \subseteq {H}_{q, k}$, and so ${H}_{q, k} \in \UUU$.
 This concludes the proof that $q \in \PP(\UUU)$.
\end{proof}
\begin{Def} \label{def:GcondU}
 Define the following two player game called the \emph{$\UUU$-condition game} and denoted \emph{${\Game}^{\mathtt{Cond}}\left(\UUU\right)$}.
 Players I and II alternatively choose $\pr{{p}_{i}}{{A}_{i}}$ and ${k}_{i}$ respectively, where
 \begin{enumerate}
  \item
  ${p}_{i} \in \PP(\UUU)$ and ${A}_{i} \in \UUU$;
  \item
  ${k}_{i} \in {A}_{i}$;
  \item
  there exists $\left\langle {p}_{i, e}: e \in \lv{{T}_{{p}_{i}}}{{k}_{i}+1} \right\rangle$ such that
  \begin{align*}
   \forall e \in \lv{{T}_{{p}_{i}}}{{k}_{i}+1}\[{p}_{i, e} \leq {T}_{{p}_{i}}\langle e \rangle\]
  \end{align*}
  and ${p}_{i+1} = {T}_{{p}_{i+1}} = \bigcup\left\{{T}_{{p}_{i, e}}: e \in \lv{{T}_{{p}_{i}}}{{k}_{i}+1}\right\}$.
 \end{enumerate}
 Together they construct the sequence
 \begin{align*}
  \pr{{p}_{0}}{{A}_{0}}, {k}_{0}, \pr{{p}_{1}}{{A}_{1}}, {k}_{1}, \dotsc,
 \end{align*}
 where each $\pr{{p}_{i}}{{A}_{i}}$ has been played by Player I and ${k}_{i}$ has been chosen by Player II in response subject to Conditions (1)--(3).
 Player II wins if and only if $\forall i < j < \omega\[{k}_{i} < {k}_{j}\]$, $\left\{{k}_{i}: i < \omega\right\} \in \UUU$, and $q = {T}_{q} = {\bigcap}_{i \in \omega}{{T}_{{p}_{i}}} \in \PP(\UUU)$.
\end{Def}
\begin{Lemma} \label{lem:condgameU}
 Player I does not have a winning strategy in ${\Game}^{\mathtt{Cond}}\left(\UUU\right)$.
\end{Lemma}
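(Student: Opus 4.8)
The plan is to mirror the proof of Lemma~\ref{lem:condgame}, replacing the stability game ${\Game}^{\mathtt{Stab}}(\HHH)$ by the selectivity game ${\Game}^{\mathtt{Sel}}(\UUU)$ of Definition~\ref{def:games}, the fusion Lemma~\ref{lem:fusion} by Lemma~\ref{lem:fusionU}, and the amalgamation Lemma~\ref{lem:amalgam} by Lemma~\ref{lem:amalgamU}. Suppose toward a contradiction that $\Sigma$ is a winning strategy for Player~I in ${\Game}^{\mathtt{Cond}}(\UUU)$. I would define a strategy $\Pi$ for Player~I in ${\Game}^{\mathtt{Sel}}(\UUU)$ together with a translation map $\Phi$ so that every partial run $\left\langle \pr{{B}_{i}}{{k}_{i}}: i \leq n \right\rangle$ of ${\Game}^{\mathtt{Sel}}(\UUU)$ in which Player~I has followed $\Pi$ is sent by $\Phi$ to a sequence $\left\langle \pr{{p}_{i}}{{A}_{i}}: i \leq n \right\rangle$ for which $\left\langle \pr{\pr{{p}_{i}}{{A}_{i}}}{{k}_{i}}: i \leq n \right\rangle$ is a partial run of ${\Game}^{\mathtt{Cond}}(\UUU)$ in which Player~I has followed $\Sigma$. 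The defining relation between the two games will be ${B}_{i+1} = {A}_{i+1} \cap {H}_{{p}_{i+1}, {k}_{i}+1} \cap \{n \in \omega: n > {k}_{i}\}$, exactly as in Lemma~\ref{lem:condgame} but with the extra factor $\{n: n > {k}_{i}\}$ forcing the ${k}_{i}$ to be strictly increasing (the selectivity game, unlike the stability game, does not guarantee this on its own). Since ${A}_{i+1} \in \UUU$, ${H}_{{p}_{i+1}, {k}_{i}+1} \in \UUU$, and $\{n: n > {k}_{i}\}$ is cofinite, each ${B}_{i+1}$ is a legal move in ${\Game}^{\mathtt{Sel}}(\UUU)$, so the inductive construction of $\Pi$ and $\Phi$ carries over verbatim from the earlier proof.

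Because $\Pi$ is not a winning strategy (Lemma~\ref{lem:sel}), there is a run $\left\langle \pr{{B}_{i}}{{k}_{i}}: i < \omega \right\rangle$ of ${\Game}^{\mathtt{Sel}}(\UUU)$ in which Player~I follows $\Pi$ and loses; thus $\{{k}_{i}: i < \omega\} \in \UUU$, and by construction ${k}_{i} < {k}_{i+1}$ for all $i$. Letting $\left\langle \pr{{p}_{i}}{{A}_{i}}: i < \omega \right\rangle$ be the coherent sequence obtained from the values of $\Phi$, the sequence $\left\langle \pr{\pr{{p}_{i}}{{A}_{i}}}{{k}_{i}}: i < \omega \right\rangle$ is a run of ${\Game}^{\mathtt{Cond}}(\UUU)$ in which Player~I follows $\Sigma$. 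It then remains only to check that Player~II wins this run, i.e.\ that $q = {T}_{q} = {\bigcap}_{i \in \omega}{{T}_{{p}_{i}}} \in \PP(\UUU)$; the two other winning conditions already hold. I would verify this by applying Lemma~\ref{lem:fusionU} with ${l}_{i} = {k}_{i}$: condition~(1) follows because Clause~(3) of Definition~\ref{def:GcondU} together with Lemma~\ref{lem:amalgamU} gives ${p}_{i+1} \leq {p}_{i}$; condition~(2) is the increasingness and $\UUU$-largeness of $\{{k}_{i}: i < \omega\}$; and condition~(3) holds since ${k}_{i+1} \in {B}_{i+1} \subseteq {H}_{{p}_{i+1}, {k}_{i}+1}$.

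The remaining point, condition~(4) of Lemma~\ref{lem:fusionU}---that ${\succc}_{{T}_{{p}_{i}}}(e) \subseteq {\succc}_{{T}_{{p}_{j}}}(e)$ for all $i \leq j < \omega$ and all $e \in \lv{{T}_{{p}_{j}}}{{k}_{i}}$---is the one step requiring a genuine argument, and I expect it to be the main (though routine) obstacle. I would prove it by induction on $j$, exactly as in Lemma~\ref{lem:condgame}: given $\sigma \in {\succc}_{{T}_{{p}_{i}}}(e)$, the induction hypothesis places ${e}^{\frown}{\langle \sigma \rangle}$ in ${T}_{{p}_{j}}$, and since $\dom\left({e}^{\frown}{\langle \sigma \rangle}\right) = {k}_{i}+1 \leq {k}_{j}+1$ one can extend ${e}^{\frown}{\langle \sigma \rangle}$ to some ${e}^{\ast} \in \lv{{T}_{{p}_{j}}}{{k}_{j}+1}$; Clause~(3) of Definition~\ref{def:GcondU} then supplies ${p}_{j, {e}^{\ast}} \leq {T}_{{p}_{j}}\langle {e}^{\ast} \rangle$ with ${T}_{{p}_{j, {e}^{\ast}}} \subseteq {T}_{{p}_{j+1}}$, forcing ${e}^{\frown}{\langle \sigma \rangle} \in {T}_{{p}_{j+1}}$ and hence $\sigma \in {\succc}_{{T}_{{p}_{j+1}}}(e)$. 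With all four hypotheses of Lemma~\ref{lem:fusionU} verified, $q \in \PP(\UUU)$, so Player~II wins the run even though Player~I has followed $\Sigma$, contradicting the assumption that $\Sigma$ is a winning strategy for Player~I and completing the proof.
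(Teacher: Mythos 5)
Your proposal is correct and follows essentially the same route as the paper's own proof: the same translation $(\Pi,\Phi)$ from ${\Game}^{\mathtt{Cond}}(\UUU)$ to ${\Game}^{\mathtt{Sel}}(\UUU)$ with the identical defining relation ${B}_{i+1} = {A}_{i+1} \cap {H}_{{p}_{i+1}, {k}_{i}+1} \cap \{k \in \omega: k > {k}_{i}\}$ (including the extra tail factor to force strict increase of the ${k}_{i}$, which the paper also inserts), followed by an appeal to Lemma \ref{lem:sel}, and verification of the hypotheses of Lemma \ref{lem:fusionU} with exactly the same induction for the successor-preservation condition via Clause (3) of Definition \ref{def:GcondU}. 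No gaps.
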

\begin{proof}
 Suppose for a contradiction that $\Sigma$ is a winning strategy for Player I in ${\Game}^{\mathtt{Cond}}\left(\UUU\right)$.
 Define $\Pi$ and $\Phi$ such that:
 \begin{enumerate}
  \item
  $\Pi$ is a strategy for Player I in ${\Game}^{\mathtt{Sel}}\left(\UUU\right)$;
  \item
  for each $n \in \omega$, if $\left\langle \pr{{B}_{i}}{{k}_{i}}: i \leq n \right\rangle$ is a partial run of ${\Game}^{\mathtt{Sel}}\left(\UUU\right)$ in which Player I has followed $\Pi$, then $\Phi\left(\left\langle \pr{{B}_{i}}{{k}_{i}}: i \leq n \right\rangle\right) = \left\langle \pr{{p}_{i}}{{A}_{i}}: i \leq n \right\rangle$ and
  \begin{align*}
   \left\langle \pr{\pr{{p}_{i}}{{A}_{i}}}{{k}_{i}}: i \leq n \right\rangle
  \end{align*}
  is a partial play of ${\Game}^{\mathtt{Cond}}\left(\UUU\right)$ in which Player I has followed $\Sigma$ and it has the property that $\forall i < n\[ {B}_{i+1} = {A}_{i+1} \cap {H}_{{p}_{i+1}, {k}_{i}+1} \cap \{k \in \omega: k > {k}_{i} \} \]$;
  \item
  for each $n \in \omega$, if $\left\langle \pr{{B}_{i}}{{k}_{i}}: i \leq n+1 \right\rangle$ is a partial run of ${\Game}^{\mathtt{Sel}}\left(\UUU\right)$ in which Player I has followed $\Pi$, then
  \begin{align*}
   \Phi\left( \left\langle \pr{{B}_{i}}{{k}_{i}}: i \leq n \right\rangle \right) = \Phi\left( \left\langle \pr{{B}_{i}}{{k}_{i}}: i \leq n+1 \right\rangle \right)\restrict n+1.
  \end{align*}
 \end{enumerate}
 $\Pi$ and $\Phi$ will be defined inductively.
 Let $\Sigma(\emptyset) = \pr{{p}_{0}}{{A}_{0}}$ define $\Pi(\emptyset) = {B}_{0} = {A}_{0}$.
 As ${B}_{0} \in \UUU$, this is a valid move for Player I in ${\Game}^{\mathtt{Sel}}\left(\UUU\right)$.
 Note that every partial run of ${\Game}^{\mathtt{Sel}}\left(\UUU\right)$ of length $1$ in which Player I has followed $\Pi$ will have the form $\pr{{B}_{0}}{{k}_{0}}$, where ${k}_{0} \in {B}_{0} = {A}_{0}$.
 For any such $\pr{{B}_{0}}{{k}_{0}}$, define $\Phi(\pr{{B}_{0}}{{k}_{0}}) = \langle \pr{{p}_{0}}{{A}_{0}} \rangle$, and note that $\pr{\pr{{p}_{0}}{{A}_{0}}}{{k}_{0}}$ is a partial run of ${\Game}^{\mathtt{Cond}}\left(\UUU\right)$ in which Player I has followed $\Sigma$.
 Now suppose $n \in \omega$, $\left\langle \pr{{B}_{i}}{{k}_{i}}: i \leq n \right\rangle$ is a partial run of ${\Game}^{\mathtt{Sel}}\left(\UUU\right)$ in which Player I has followed $\Pi$, $\Phi\left(\left\langle \pr{{B}_{i}}{{k}_{i}}: i \leq n \right\rangle\right) = \left\langle \pr{{p}_{i}}{{A}_{i}}: i \leq n \right\rangle$,
 \begin{align*}
  \left\langle \pr{\pr{{p}_{i}}{{A}_{i}}}{{k}_{i}}: i \leq n \right\rangle
 \end{align*}
 is a partial run of ${\Game}^{\mathtt{Cond}}\left(\UUU\right)$ in which Player I has followed $\Sigma$, and
 \begin{align*}
  \forall i < n\[ {B}_{i+1} = {A}_{i+1} \cap {H}_{{p}_{i+1}, {k}_{i}+1} \cap \{k \in \omega: k > {k}_{i} \} \].
 \end{align*}
 Let $\Sigma\left( \left\langle \pr{\pr{{p}_{i}}{{A}_{i}}}{{k}_{i}}: i \leq n \right\rangle \right) = \pr{{p}_{n+1}}{{A}_{n+1}}$.
 Then ${p}_{n+1} \in \PP(\UUU)$ and ${A}_{n+1} \in \UUU$.
 Hence ${B}_{n+1} = {A}_{n+1} \cap {H}_{{p}_{n+1}, {k}_{n}+1} \cap \{k \in \omega: k > {k}_{n} \} \in \UUU$.
 Note that ${B}_{n+1}$ is therefore a legitimate move for Player I in ${\Game}^{\mathtt{Sel}}\left(\UUU\right)$.
 Define $\Pi\left( \left\langle \pr{{B}_{i}}{{k}_{i}}: i \leq n \right\rangle \right) = {B}_{n+1}$.
 Note that any continuation of $\left\langle \pr{{B}_{i}}{{k}_{i}}: i \leq n \right\rangle$ to length $n+2$ in which Player I follows $\Pi$ must have the form $\left\langle \pr{{B}_{i}}{{k}_{i}}: i \leq n+1 \right\rangle$, where ${k}_{n+1} \in {B}_{n+1}$.
 Given any such $\left\langle \pr{{B}_{i}}{{k}_{i}}: i \leq n+1 \right\rangle$, define $\Phi\left( \left\langle \pr{{B}_{i}}{{k}_{i}}: i \leq n+1 \right\rangle \right) = \left\langle \pr{{p}_{i}}{{A}_{i}}: i \leq n+1 \right\rangle$.
 Note that $\left\langle \pr{\pr{{p}_{i}}{{A}_{i}}}{{k}_{i}}: i \leq n+1 \right\rangle$ is a partial run of ${\Game}^{\mathtt{Cond}}\left(\UUU\right)$ in which Player I has followed $\Sigma$ because of the definition of $\pr{{p}_{n+1}}{{A}_{n+1}}$ and ${k}_{n+1} \in {B}_{n+1} \subseteq {A}_{n+1}$.
 Further, by definition and by the induction hypothesis,
 \begin{align*}
  \forall i \leq n\[ {B}_{i+1} = {A}_{i+1} \cap {H}_{{p}_{i+1}, {k}_{i}+1} \cap \{k \in \omega: k > {k}_{i} \} \].
 \end{align*}
 Thus the inductive definition satisfies (1)--(3).
 This concludes the definition of $\Pi$ and $\Phi$.
 
 Since $\Pi$ is not a winning strategy for Player I in ${\Game}^{\mathtt{Sel}}\left(\UUU\right)$, there is a play $\left\langle \pr{{B}_{i}}{{k}_{i}}: i < \omega \right\rangle$ of ${\Game}^{\mathtt{Sel}}\left(\UUU\right)$ in which Player I follows $\Pi$ and loses.
 There exists $\left\langle \pr{{p}_{i}}{{A}_{i}}: i < \omega \right\rangle$ such that for each $n \in \omega$, $\Phi\left( \left\langle \pr{{B}_{i}}{{k}_{i}}: i \leq n \right\rangle \right) = \left\langle \pr{{p}_{i}}{{A}_{i}}: i \leq n \right\rangle$.
 Therefore, $\left\langle \pr{\pr{{p}_{i}}{{A}_{i}}}{{k}_{i}}: i < \omega \right\rangle$ is a play of ${\Game}^{\mathtt{Cond}}\left(\UUU\right)$ in which Player I has followed $\Sigma$, and $\forall i < \omega\[ {B}_{i+1} = {A}_{i+1} \cap {H}_{{p}_{i+1}, {k}_{i}+1} \cap \{k \in \omega: k > {k}_{i} \} \]$.
 Since Player II wins the play $\left\langle \pr{{B}_{i}}{{k}_{i}}: i < \omega \right\rangle$, $\left\{ {k}_{i}: i < \omega \right\} \in \UUU$.
 Lemma \ref{lem:fusionU} will be used to verify that $q = {T}_{q} = {\bigcap}_{i \in \omega}{{T}_{{p}_{i}}} \in \PP(\UUU)$.
 Note that by (3) of Definition \ref{def:GcondU} and by Lemma \ref{lem:amalgamU}, ${p}_{i+1} \leq {p}_{i}$, for all $i \in \omega$.
 For each $i \in \omega$, ${k}_{i+1} \in {B}_{i+1}$, and so ${k}_{i+1} \in {H}_{{p}_{i+1}, {k}_{i}+1}$, and ${k}_{i+1} > {k}_{i}$.
 Therefore $\forall i < j < \omega\[{k}_{i} < {k}_{j}\]$.
 Next, fix some $i < \omega$.
 It will be proved by induction on $j$ that for each $i \leq j < \omega$ and for each $e \in \lv{{T}_{{p}_{j}}}{{k}_{i}}$, ${\succc}_{{T}_{{p}_{i}}}(e) \subseteq {\succc}_{{T}_{{p}_{j}}}(e)$.
 This is clear when $i=j$.
 Assume this is true for some $i \leq j$.
 Fix $e \in \lv{{T}_{{p}_{j+1}}}{{k}_{i}}$ and consider $\sigma \in {\succc}_{{T}_{{p}_{i}}}(e)$.
 Since ${T}_{{p}_{j+1}} \subseteq {T}_{{p}_{j}}$, $e \in \lv{{T}_{{p}_{j}}}{{k}_{i}}$.
 So by the induction hypothesis, $\sigma \in {\succc}_{{T}_{{p}_{j}}}(e)$.
 Therefore ${e}^{\frown}{\langle \sigma \rangle} \in {T}_{{p}_{j}}$ and $\dom\left( {e}^{\frown}{\langle \sigma \rangle} \right) = {k}_{i}+1 \leq {k}_{j}+1$.
 Choose ${e}^{\ast}$ such that ${e}^{\frown}{\langle \sigma \rangle} \subseteq {e}^{\ast}$ and ${e}^{\ast} \in \lv{{T}_{{p}_{j}}}{{k}_{j}+1}$.
 By (3) of Definition \ref{def:GcondU}, there exists ${p}_{j, {e}^{\ast}} \leq {T}_{{p}_{j}}\langle {e}^{\ast} \rangle$ such that ${T}_{{p}_{j, {e}^{\ast}}} \subseteq {T}_{{p}_{j+1}}$.
 Since ${e}^{\frown}{\langle \sigma \rangle} \subseteq {e}^{\ast}$, ${e}^{\frown}{\langle \sigma \rangle} \in {T}_{{p}_{j, {e}^{\ast}}}$.
 Therefore ${e}^{\frown}{\langle \sigma \rangle} \in {T}_{{p}_{j+1}}$, whence $\sigma \in {\succc}_{{T}_{{p}_{j+1}}}(e)$, as required.
 This concludes the induction.
 Thus the hypotheses of Lemma \ref{lem:fusionU} are all satisfied, and so $q = {T}_{q} = {\bigcap}_{i \in \omega}{{T}_{{p}_{i}}} \in \PP(\UUU)$.
 However, this means that Player II wins the play $\left\langle \pr{\pr{{p}_{i}}{{A}_{i}}}{{k}_{i}}: i < \omega \right\rangle$ of ${\Game}^{\mathtt{Cond}}\left(\UUU\right)$ even though Player I has followed $\Sigma$ during this play, contradicting the hypothesis that $\Sigma$ is a winning strategy for Player I in ${\Game}^{\mathtt{Cond}}\left(\UUU\right)$.
\end{proof}
\begin{Lemma} \label{lem:winning1U}
 Suppose $\left\langle \pr{\pr{{p}_{i}}{{A}_{i}}}{{k}_{i}}: i \in \omega \right\rangle$ is a run of ${\Game}^{\mathtt{Cond}}\left(\UUU\right)$ which is won by Player II.
 If $q = {T}_{q} = {\bigcap}_{i \in \omega}{{T}_{{p}_{i}}}$, then for each $i \in \omega$,
 \begin{align*}
  \forall f \in {T}_{{p}_{i}}\[\dom(f) \leq {k}_{i}+1 \implies f \in {T}_{q}\].
 \end{align*}
\end{Lemma}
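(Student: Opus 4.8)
The plan is to follow the proof of Lemma \ref{lem:winning1} almost verbatim, substituting the $\UUU$-versions of all objects: the game ${\Game}^{\mathtt{Cond}}\left(\UUU\right)$ in place of ${\Game}^{\mathtt{Cond}}\left(\HHH\right)$, Definition \ref{def:GcondU} in place of Definition \ref{def:GcondH}, and Lemma \ref{lem:amalgamU} in place of Lemma \ref{lem:amalgam}. The only substantive difference between the two games is that Player II now plays natural numbers ${k}_{i}$ rather than blocks ${s}_{i}$, so the bound $\max({s}_{i})+1$ used throughout the earlier proof is everywhere replaced by ${k}_{i}+1$.

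First I would recall that Clause (3) of Definition \ref{def:GcondU} together with Lemma \ref{lem:amalgamU} guarantees that ${p}_{i+1} \leq {p}_{i}$ for every $i \in \omega$, so that the trees ${T}_{{p}_{i}}$ form a descending sequence under $\subseteq$. I would also note that, since Player II wins the run under consideration, the sequence $\seq{k}{i}{\in}{\omega}$ is strictly increasing; in particular ${k}_{i} \leq {k}_{j}$ whenever $i \leq j$, which is exactly the monotonicity needed to push the domain bound forward along the iteration.

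The core argument is then a straightforward induction on $j \geq i$. Fixing $i \in \omega$ and $f \in {T}_{{p}_{i}}$ with $\dom(f) \leq {k}_{i}+1$, I would show $f \in {T}_{{p}_{j}}$ for all $j \geq i$. The base case $j = i$ is immediate. For the successor step, assuming $f \in {T}_{{p}_{j}}$, I would invoke the growth condition (2) of Definition \ref{def:PU} to extend $f$ to a node $e \in \lv{{T}_{{p}_{j}}}{{k}_{j}+1}$; this is legitimate precisely because $\dom(f) \leq {k}_{i}+1 \leq {k}_{j}+1$. Clause (3) of Definition \ref{def:GcondU} then supplies a condition ${p}_{j, e} \leq {T}_{{p}_{j}}\langle e \rangle$ with ${T}_{{p}_{j, e}} \subseteq {T}_{{p}_{j+1}}$. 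Since $f \subseteq e$ forces $f \in {T}_{{p}_{j}}\langle e \rangle$, and hence $f \in {T}_{{p}_{j, e}}$, it follows that $f \in {T}_{{p}_{j+1}}$, completing the induction. Intersecting over all $j \geq i$ yields $f \in {\bigcap}_{l \in \omega}{{T}_{{p}_{l}}} = {T}_{q}$, as required.

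I do not anticipate any genuine obstacle here, since every ingredient is purely combinatorial and has already been established for $\PP(\UUU)$ in the preceding lemmas; the argument is the verbatim analogue of the $\PP(\HHH)$ case. The only point demanding mild care is the bookkeeping of the index inequalities ${k}_{i}+1 \leq {k}_{j}+1$, which rely on the strict monotonicity of $\seq{k}{i}{\in}{\omega}$ coming from Player II's victory, and which are what permit the extension of $f$ up to level ${k}_{j}+1$ at every stage of the induction.
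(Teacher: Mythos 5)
Your proposal is correct and follows essentially the same argument as the paper: the same induction on $j \geq i$, extending $f$ to a node $e$ of length ${k}_{j}+1$ via the growth condition, then using Clause (3) of Definition \ref{def:GcondU} to pass from ${T}_{{p}_{j}}$ to ${T}_{{p}_{j+1}}$ through ${T}_{{p}_{j, e}}$. Your explicit remark that the inequality ${k}_{i}+1 \leq {k}_{j}+1$ rests on the strict monotonicity of $\seq{k}{i}{\in}{\omega}$, which is part of Player II's winning condition, is a point the paper uses only implicitly.
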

\begin{proof}
 Recall that by Clause (3) of Definition \ref{def:GcondU} and by Lemma \ref{lem:amalgamU}, ${p}_{i+1} \leq {p}_{i}$, for all $i \in \omega$.
 Fix $i \in \omega$ and $f \in {T}_{{p}_{i}}$ with $\dom(f) \leq {k}_{i}+1$.
 It will be proved by induction on $j$ that $\forall i \leq j < \omega\[f \in {T}_{{p}_{j}}\]$.
 When $j=i$, there is nothing to prove.
 Suppose the statement holds for some $i \leq j < \omega$.
 So $f \in {T}_{{p}_{j}}$ and $\dom(f) \leq {k}_{i} + 1 \leq {k}_{j}+1$.
 Choose $e \in {T}_{{p}_{j}}$ such that $f \subseteq e$ and $\dom(e) = {k}_{j}+1$.
 By Clause (3) of Definition \ref{def:GcondU}, there exists ${p}_{j, e} \leq {T}_{{p}_{j}} \langle e \rangle$ so that ${T}_{{p}_{j, e}} \subseteq {T}_{{p}_{j+1}}$.
 As $f \subseteq e$, $f \in {T}_{{p}_{j, e}}$, and so $f \in {T}_{{p}_{j+1}}$.
 This concludes the induction.
 Thus, $\forall i \leq j < \omega\[f \in {T}_{{p}_{j}}\]$, whence $f \in {\bigcap}_{l \in \omega}{{T}_{{p}_{l}}} = {T}_{q}$.
\end{proof}
\begin{Lemma} \label{lem:winning2U}
 Suppose $\left\langle \pr{\pr{{p}_{i}}{{A}_{i}}}{{k}_{i}}: i \in \omega \right\rangle$ is a run of ${\Game}^{\mathtt{Cond}}\left(\UUU\right)$ which is won by Player II.
 Let $i \in \omega$ and $q = {T}_{q} = {\bigcap}_{j \in \omega}{{T}_{{p}_{j}}}$.
 If ${k}_{i+1} \in {H}_{{p}_{i+1}, {k}_{i}+1}$, then ${k}_{i+1} \in {H}_{q, {k}_{i}+1}$.
\end{Lemma}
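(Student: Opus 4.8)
The plan is to mimic verbatim the proof of Lemma \ref{lem:winning2}, with $\pr{k}{s}$-bigness replaced by the simpler notion of $k$-bigness from Definition \ref{def:klbig} and the roles of $\max(s)$ replaced by the level indices ${k}_{i}$. Concretely, I would begin by assuming the hypothesis ${k}_{i+1} \in {H}_{{p}_{i+1}, {k}_{i}+1}$ and unfold Definition \ref{def:PU}(3): this says precisely that for every $f \in \lv{{T}_{{p}_{i+1}}}{{k}_{i+1}}$, the set ${\succc}_{{T}_{{p}_{i+1}}}(f)$ is $({k}_{i}+1)$-big. The goal, again by Definition \ref{def:PU}(3), is to show that every $e \in \lv{{T}_{q}}{{k}_{i+1}}$ has ${\succc}_{{T}_{q}}(e)$ being $({k}_{i}+1)$-big.

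So I would fix an arbitrary $e \in \lv{{T}_{q}}{{k}_{i+1}}$. Since $q \leq {p}_{i+1}$ (that is ${T}_{q} \subseteq {T}_{{p}_{i+1}}$, which holds because ${p}_{j+1} \leq {p}_{j}$ throughout the run by Clause (3) of Definition \ref{def:GcondU} and Lemma \ref{lem:amalgamU}), we have $e \in \lv{{T}_{{p}_{i+1}}}{{k}_{i+1}}$, so ${\succc}_{{T}_{{p}_{i+1}}}(e)$ is $({k}_{i}+1)$-big by the hypothesis. The key containment to establish is ${\succc}_{{T}_{{p}_{i+1}}}(e) \subseteq {\succc}_{{T}_{q}}(e)$. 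For this, take any $\sigma \in {\succc}_{{T}_{{p}_{i+1}}}(e)$, so that ${e}^{\frown}{\langle \sigma \rangle} \in {T}_{{p}_{i+1}}$ with $\dom\left({e}^{\frown}{\langle \sigma \rangle}\right) = {k}_{i+1}+1$. Applying Lemma \ref{lem:winning1U} at index $i+1$ — whose domain bound is exactly $\dom(f) \leq {k}_{i+1}+1$ — yields ${e}^{\frown}{\langle \sigma \rangle} \in {T}_{q}$, whence $\sigma \in {\succc}_{{T}_{q}}(e)$.

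With the containment in hand, I would observe that both successor sets live inside ${2}^{{k}_{i+1}}$, so ${\succc}_{{T}_{q}}(e)$ is a superset of the $({k}_{i}+1)$-big set ${\succc}_{{T}_{{p}_{i+1}}}(e)$ and is therefore itself $({k}_{i}+1)$-big (bigness is trivially monotone under superset by Definition \ref{def:klbig}). Since $e$ was arbitrary, this gives ${k}_{i+1} \in {H}_{q, {k}_{i}+1}$, as required. I do not anticipate any real obstacle here; the only point demanding care is the index bookkeeping, namely checking that the domain ${k}_{i+1}+1$ of the one-step extension matches exactly the threshold ${k}_{i+1}+1$ appearing in Lemma \ref{lem:winning1U} at the same index $i+1$, which it does.
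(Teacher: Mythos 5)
Your proposal is correct and follows essentially the same argument as the paper's proof: fix $e \in \lv{{T}_{q}}{{k}_{i+1}}$, note that ${\succc}_{{T}_{{p}_{i+1}}}(e)$ is $({k}_{i}+1)$-big by the hypothesis, use Lemma \ref{lem:winning1U} on the one-step extensions ${e}^{\frown}{\langle \sigma \rangle}$ (whose domain is exactly ${k}_{i+1}+1$) to obtain ${\succc}_{{T}_{{p}_{i+1}}}(e) \subseteq {\succc}_{{T}_{q}}(e) \subseteq {2}^{{k}_{i+1}}$, and conclude by monotonicity of bigness under supersets. The index bookkeeping you flag is handled exactly as in the paper.
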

\begin{proof}
 Assume that ${k}_{i+1} \in {H}_{{p}_{i+1}, {k}_{i}+1}$.
 Consider some $e \in \lv{{T}_{q}}{{k}_{i+1}}$.
 Then $e \in \lv{{T}_{{p}_{i+1}}}{{k}_{i+1}}$ and so ${\succc}_{{T}_{{p}_{i+1}}}(e)$ is ${k}_{i}+1$-big.
 If $\sigma \in {\succc}_{{T}_{{p}_{i+1}}}(e)$, then ${e}^{\frown}{\langle \sigma \rangle} \in {T}_{{p}_{i+1}}$ and $\dom\left({e}^{\frown}{\langle \sigma \rangle}\right) = {k}_{i+1}+1$.
 Therefore, by Lemma \ref{lem:winning1U}, ${e}^{\frown}{\langle \sigma \rangle} \in {T}_{q}$, whence $\sigma \in {\succc}_{{T}_{q}}(e)$.
 Thus ${\succc}_{{T}_{{p}_{i+1}}}(e) \subseteq {\succc}_{{T}_{q}}(e) \subseteq {2}^{{k}_{i+1}}$, and so ${\succc}_{{T}_{q}}(e)$ is ${k}_{i}+1$-big.
 This shows ${k}_{i+1} \in {H}_{q, {k}_{i}+1}$.
\end{proof}
\begin{Lemma} \label{lem:continuousU}
 Let $\theta$ be a sufficiently large regular cardinal.
 Assume $M \prec H(\theta)$ is countable and that $M$ contains all relevant parameters.
 Suppose $f: \omega \rightarrow M$ is such that $\forall n \in \omega\[f(n) \in {\V}^{\PP(\UUU)} \ \text{and} \ {\forces}_{\PP(\UUU)}\;{f(n) \in \V}\]$.
 Let $p \in \PP(\UUU) \cap M$.
 Then there exist $q, \seq{k}{i}{\in}{\omega}$, and $F$ such that:
 \begin{enumerate}
  \item
  $q \leq p$, $\seq{k}{i}{\in}{\omega} \in \BS$, $\forall i < j < \omega\[{k}_{i} < {k}_{j}\]$, $F$ is a function, $\dom(F) =$
  \begin{align*}
   \left\{ \pr{i}{e}: i \in \omega \wedge e \in \lv{{T}_{q}}{{k}_{i}+1} \right\};
  \end{align*}
  \item
  $\{{k}_{i}: i \in \omega\} \in \UUU$, $\forall i \in \omega\[{k}_{i+1} \in {H}_{q, {k}_{i}+1}\]$;
  \item
  for each $i \leq j < \omega$ and each $e \in \lv{{T}_{q}}{{k}_{j}+1}$,
  \begin{align*}
   {T}_{q}\langle e \rangle \; {\forces}_{\PP(\UUU)} \; {f(i) = F\left(\pr{i}{e\restrict {k}_{i}+1}\right)};
  \end{align*}
  \item
  for any $\pr{i}{e} \in \dom(F)$, $F(\pr{i}{e}) \in M$.
 \end{enumerate}
\end{Lemma}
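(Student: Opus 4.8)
The plan is to run the argument of Lemma~\ref{lem:continuous} verbatim, with every $\HHH$-flavoured ingredient replaced by its $\UUU$-analogue: the stability-condition game ${\Game}^{\mathtt{Cond}}\left(\HHH\right)$ becomes the $\UUU$-condition game ${\Game}^{\mathtt{Cond}}\left(\UUU\right)$ of Definition~\ref{def:GcondU}, the amalgamation Lemma~\ref{lem:amalgam} becomes Lemma~\ref{lem:amalgamU}, and the passage from $\PP(\HHH)$-bigness of a final branch to $\PP(\UUU)$-bigness is handled by Lemma~\ref{lem:winning2U} in place of Lemma~\ref{lem:winning2}. The one genuinely different input is that the non-existence of a winning strategy for Player~I is now supplied by Lemma~\ref{lem:condgameU} (which rests on the Galvin--McKenzie Lemma~\ref{lem:sel}) rather than by Lemma~\ref{lem:condgame}; after that the bookkeeping is identical.

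Concretely, I would define a strategy $\Sigma$ for Player~I in ${\Game}^{\mathtt{Cond}}\left(\UUU\right)$ as follows. Set ${p}_{0}=p\in \PP(\UUU)\cap M$ and ${A}_{0}=\omega\in\UUU$, and put $\Sigma(\emptyset)=\pr{{p}_{0}}{{A}_{0}}$. Given a partial run $\left\langle \pr{\pr{{p}_{j}}{{A}_{j}}}{{k}_{j}}: j\leq i\right\rangle$ following $\Sigma$ with ${p}_{i}\in M$, write $l={k}_{i}+1$. For each $e\in \lv{{T}_{{p}_{i}}}{l}$ the condition ${T}_{{p}_{i}}\langle e\rangle\in \PP(\UUU)$ lies in $M$, and since ${\forces}_{\PP(\UUU)}\;{f(i)\in\V}$ with $f(i)\in M$, elementarity of $M$ yields sequences $\left\langle {p}_{i,e}: e\in \lv{{T}_{{p}_{i}}}{l}\right\rangle\in M$ and $\left\langle {x}_{i,e}: e\in \lv{{T}_{{p}_{i}}}{l}\right\rangle\in M$ with ${p}_{i,e}\leq {T}_{{p}_{i}}\langle e\rangle$ and ${p}_{i,e}\;{\forces}_{\PP(\UUU)}\;{f(i)={x}_{i,e}}$. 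I would then set ${p}_{i+1}={T}_{{p}_{i+1}}=\bigcup\{{T}_{{p}_{i,e}}: e\in \lv{{T}_{{p}_{i}}}{l}\}$, which lies in $\PP(\UUU)\cap M$ by Lemma~\ref{lem:amalgamU}, take ${A}_{i+1}={H}_{{p}_{i+1},l}\in\UUU$, and define $\Sigma$ to respond $\pr{{p}_{i+1}}{{A}_{i+1}}$. By construction ${p}_{i}\in M$ along every run following $\Sigma$.

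By Lemma~\ref{lem:condgameU}, $\Sigma$ is not winning, so there is a run $\left\langle \pr{\pr{{p}_{i}}{{A}_{i}}}{{k}_{i}}: i<\omega\right\rangle$ following $\Sigma$ that Player~I loses; thus $\{{k}_{i}: i<\omega\}\in\UUU$, the ${k}_{i}$ are strictly increasing, and $q={T}_{q}={\bigcap}_{i\in\omega}{{T}_{{p}_{i}}}\in\PP(\UUU)$ with $q\leq {p}_{0}=p$. For clause~(2), note ${k}_{i+1}\in {A}_{i+1}={H}_{{p}_{i+1},{k}_{i}+1}$, so Lemma~\ref{lem:winning2U} gives ${k}_{i+1}\in {H}_{q,{k}_{i}+1}$. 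I would then define $F(\pr{i}{e})={x}_{i,e}$ for $e\in\lv{{T}_{q}}{{k}_{i}+1}\subseteq\lv{{T}_{{p}_{i}}}{{k}_{i}+1}$; since $e\in M$ and the relevant sequence is in $M$, elementarity gives ${x}_{i,e}\in M$, yielding clause~(4). For clause~(3), given $i\leq j<\omega$ and ${e}^{\ast}\in\lv{{T}_{q}}{{k}_{j}+1}$, set $e={e}^{\ast}\restrict({k}_{i}+1)$; because ${T}_{q}\subseteq {T}_{{p}_{i+1}}=\bigcup\{{T}_{{p}_{i,e'}}: e'\in\lv{{T}_{{p}_{i}}}{{k}_{i}+1}\}$ one gets ${T}_{q}\langle {e}^{\ast}\rangle\leq {p}_{i,e}$, whence ${T}_{q}\langle {e}^{\ast}\rangle\;{\forces}_{\PP(\UUU)}\;{f(i)={x}_{i,e}=F(\pr{i}{e})}$, as required. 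I do not anticipate a real obstacle here: the proof is a routine transcription of Lemma~\ref{lem:continuous}, and the only place demanding any care is confirming that the single decision ${x}_{i,e}$ of $f(i)$ per node $e$ (rather than a bigness-valued splitting) suffices, which it does precisely because $f$ takes values in $\V$.
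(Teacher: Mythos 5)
Your proposal is correct and follows essentially the same route as the paper's own proof, which likewise transcribes Lemma \ref{lem:continuous} into the game ${\Game}^{\mathtt{Cond}}\left(\UUU\right)$, using Lemma \ref{lem:amalgamU} for the amalgamation step, Lemma \ref{lem:condgameU} (via Lemma \ref{lem:sel}) to obtain a losing run for $\Sigma$, and Lemma \ref{lem:winning2U} for clause (2), with clauses (3) and (4) verified exactly as you describe. There is no gap.
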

\begin{proof}
 Define a strategy $\Sigma$ for Player I in ${\Game}^{\mathtt{Cond}}\left(\UUU\right)$ as follows.
 ${p}_{0} = p \in \PP(\UUU)$ and ${A}_{0} = \omega \in \UUU$.
 Note that ${p}_{0} \in M$.
 Define $\Sigma(\emptyset) = \pr{{p}_{0}}{{A}_{0}}$.
 Now suppose that $i \in \omega$ and that $\left\langle \pr{\pr{{p}_{j}}{{A}_{j}}}{{k}_{j}}: j \leq i \right\rangle$ is a partial run of ${\Game}^{\mathtt{Cond}}\left(\UUU\right)$ in which Player I has followed $\Sigma$ and that ${p}_{i} \in M$.
 Let $l = {k}_{i}+1$, for ease of notation.
 For any $e \in \lv{{T}_{{p}_{i}}}{l}$, ${T}_{{p}_{i}}\langle e \rangle \in \PP(\UUU)$ and by hypothesis, ${\forces}_{\PP(\UUU)}\;{f(i) \in \V}$.
 Since ${p}_{i} \in M$ and $f(i) \in M$, there exist sequences $\left\langle {p}_{i, e}: e \in \lv{{T}_{{p}_{i}}}{l} \right\rangle \in M$ and $\left\langle {x}_{i, e}: e \in \lv{{T}_{{p}_{i}}}{l} \right\rangle \in M$ such that
 \begin{align*}
  \forall e \in \lv{{T}_{{p}_{i}}}{l}\[{p}_{i, e} \leq {T}_{{p}_{i}}\langle e \rangle \ \text{and} \ {p}_{i, e} \; {\forces}_{\PP(\UUU)} \; {f(i) = {x}_{i, e}}\].
 \end{align*}
 Define ${p}_{i+1} = {T}_{{p}_{i+1}} = \bigcup{\left\{ {T}_{{p}_{i, e}}: e \in \lv{{T}_{{p}_{i}}}{l} \right\}}$.
 Note that ${p}_{i+1} \in M$ and that ${p}_{i+1} \in \PP(\UUU)$ by Lemma \ref{lem:amalgamU}.
 Hence ${A}_{i+1} = {H}_{{p}_{i+1}, l} \in \UUU$.
 Define
 \begin{align*}
  \Sigma\left(\left\langle \pr{\pr{{p}_{j}}{{A}_{j}}}{{k}_{j}}: j \leq i \right\rangle\right) = \pr{{p}_{i+1}}{{A}_{i+1}}.
 \end{align*}
 This completes the definition of $\Sigma$.
 Note that by definition, if $\left\langle \pr{\pr{{p}_{i}}{{A}_{i}}}{{k}_{i}}: i < \omega \right\rangle$ is a run of ${\Game}^{\mathtt{Cond}}\left(\UUU\right)$ in which Player I has followed $\Sigma$, then ${p}_{i} \in M$, for all $i < \omega$.
 
 Since $\Sigma$ is not a winning strategy for Player I, there is a run $\left\langle \pr{\pr{{p}_{i}}{{A}_{i}}}{{k}_{i}}: i < \omega \right\rangle$ of ${\Game}^{\mathtt{Cond}}\left(\UUU\right)$ in which Player I has followed $\Sigma$ and lost.
 Then $\seq{k}{i}{\in}{\omega} \in \BS$, $\forall i < j < \omega\[{k}_{i} < {k}_{j}\]$, and $\{{k}_{i}: i \in \omega \} \in \UUU$.
 Again, for ease of notation, denote ${k}_{i}+1$ by ${l}_{i}$.
 Let $q = {T}_{q} = {\bigcap}_{i \in \omega}{{T}_{{p}_{i}}}$.
 Then $q \in \PP(\UUU)$ and $q \leq {p}_{0} = p$.
 Moreover, for any $i \in \omega$, ${k}_{i+1} \in {A}_{i+1} = {H}_{{p}_{i+1}, {l}_{i}}$, and so by Lemma \ref{lem:winning2U}, ${k}_{i+1} \in {H}_{q, {l}_{i}}$, as required for (2).
 Next, by the definition of $\Sigma$, for each $i \in \omega$, ${p}_{i} \in M$ and there exist sequences $\left\langle {p}_{i, e}: e \in \lv{{T}_{{p}_{i}}}{{l}_{i}} \right\rangle \in M$ and $\left\langle {x}_{i, e}: e \in \lv{{T}_{{p}_{i}}}{{l}_{i}} \right\rangle \in M$ as described in the previous paragraph.
 Given $i \in \omega$ and $e \in \lv{{T}_{q}}{{l}_{i}}$, then $e \in \lv{{T}_{{p}_{i}}}{{l}_{i}}$, and since $e \in M$, so ${x}_{i, e} \in M$.
 Define $F(\pr{i}{e}) = {x}_{i, e}$.
 The argument for (3) is identical to the corresponding argument in the proof of Lemma \ref{lem:continuous}.
\end{proof}
\begin{Cor} \label{cor:proper+boundingU}
 $\PP(\UUU)$ is proper and ${\omega}^{\omega}$-bounding.
\end{Cor}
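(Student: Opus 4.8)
The plan is to follow the proof of Corollary~\ref{cor:proper+bounding} essentially verbatim, substituting Lemma~\ref{lem:continuousU} for Lemma~\ref{lem:continuous} throughout. Lemma~\ref{lem:continuousU} supplies exactly the continuous reading of names needed for $\PP(\UUU)$, with the role previously played by $\max(X(i))+1$ now taken over by ${k}_i+1$, where $\seq{k}{i}{\in}{\omega}$ is the increasing sequence produced by that lemma. Since all of the combinatorial work has already been isolated in Lemma~\ref{lem:continuousU} (which in turn rests on Lemma~\ref{lem:condgameU} and Lemma~\ref{lem:fusionU}), the corollary will follow by the same two genericity-style arguments.

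For properness, I would fix a sufficiently large regular $\theta$ and a countable $M \prec H(\theta)$ containing the relevant parameters, enumerate as $\seq{\mathring{\alpha}}{i}{\in}{\omega}$ all $\PP(\UUU)$-names $\mathring{\alpha} \in M$ with ${\forces}_{\PP(\UUU)}\;{\text{``}\mathring{\alpha} \ \text{is an ordinal''}}$, and set $f(i) = {\mathring{\alpha}}_i$. Given $p \in \PP(\UUU) \cap M$, I apply Lemma~\ref{lem:continuousU} to obtain $q \leq p$, $\seq{k}{i}{\in}{\omega}$, and $F$ satisfying its clauses (1)--(4). To see that $q$ is $(M,\PP(\UUU))$-generic it suffices to show $q \;{\forces}_{\PP(\UUU)}\; {\mathring{\alpha} \in M}$ for each such name. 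Writing $\mathring{\alpha} = {\mathring{\alpha}}_i$ and taking any $r \leq q$, I would choose $e \in \lv{{T}_r}{{k}_i+1}$; then $e \in \lv{{T}_q}{{k}_i+1}$, and ${T}_r\langle e \rangle \leq r$ as well as ${T}_r\langle e \rangle \leq {T}_q\langle e \rangle$, so clause (3) of Lemma~\ref{lem:continuousU} together with clause (4) gives ${T}_r\langle e \rangle \;{\forces}_{\PP(\UUU)}\; {\mathring{\alpha} = f(i) = F(\pr{i}{e}) \in M}$. As such $r$ are dense below $q$, this yields $q \;{\forces}_{\PP(\UUU)}\; {\mathring{\alpha} \in M}$.

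For ${\omega}^{\omega}$-bounding, given a name $\mathring{f}$ with ${\forces}_{\PP(\UUU)}\;{\mathring{f}: \omega \rightarrow \omega}$ and $p \in \PP(\UUU)$, I would pick $M \prec H(\theta)$ with $\mathring{f}, p \in M$ and define $f(i) \in M$ to be a name forced to lie in $\omega$ and to equal $\mathring{f}(i)$. Applying Lemma~\ref{lem:continuousU} produces $q \leq p$, $\seq{k}{i}{\in}{\omega}$, and $F$; since each $\lv{{T}_q}{{k}_i+1}$ is finite, I would let $g(i) \in \omega$ dominate every value $F(\pr{i}{e})$ that is a natural number, as $e$ ranges over $\lv{{T}_q}{{k}_i+1}$. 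Then $q \;{\forces}_{\PP(\UUU)}\; {\forall i\[\mathring{f}(i) \leq g(i)\]}$: were there $r \leq q$ and $i$ with $r \;{\forces}_{\PP(\UUU)}\; {\mathring{f}(i) > g(i)}$, choosing $e \in \lv{{T}_r}{{k}_i+1}$ and arguing as above would give ${T}_r\langle e \rangle \;{\forces}_{\PP(\UUU)}\; {\mathring{f}(i) = F(\pr{i}{e}) \leq g(i)}$, a contradiction.

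I do not expect a genuine obstacle here, as the hard preparatory work lives in Lemma~\ref{lem:continuousU}. The only point requiring care is the bookkeeping with the indices ${k}_i+1$ in place of $\max(X(i))+1$, along with the observation that for any $r \leq q$ every $e \in \lv{{T}_r}{{k}_i+1}$ also belongs to $\lv{{T}_q}{{k}_i+1}$, so that ${T}_r\langle e \rangle$ refines both $r$ and ${T}_q\langle e \rangle$; this is precisely what transfers the local decision of clause (3) to arbitrary extensions of $q$.
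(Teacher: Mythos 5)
Your proposal is correct and is exactly the paper's approach: the paper's own proof simply states that the argument is identical to that of Corollary~\ref{cor:proper+bounding} with Lemma~\ref{lem:continuousU} substituted for Lemma~\ref{lem:continuous}, which is precisely what you carry out, including the index bookkeeping with ${k}_{i}+1$ in place of $\max\left(X(i)\right)+1$. The details you spell out (choosing $e \in \lv{{T}_{r}}{{k}_{i}+1}$ for arbitrary $r \leq q$ and using ${T}_{r}\langle e \rangle \leq {T}_{q}\langle e \rangle$ to transfer the decision from clause (3) of Lemma~\ref{lem:continuousU}) match the paper's proof of the earlier corollary verbatim.
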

\begin{proof}
 The proof is identical to the proof of Corollary \ref{cor:proper+bounding}, using Lemma \ref{lem:continuousU} in place of Lemma \ref{lem:continuous}.
\end{proof}
The construction of Lemma \ref{lem:continuousU} can be easily modified to show that $\PP(\UUU)$ has the weak Sacks property.
Taken in conjunction with Lemmas \ref{lem:c1HpreservedU} and \ref{thm:noresseructionU}, this shows that $\PP(\UUU)$ is a forcing with the weak Sacks property which does not add an independent real and yet does not preserve P-points.
Forcings with these properties were considered by Zapletal in \cite{MR2520149}
\begin{Lemma} \label{lem:c1HpreservedU}
 Suppose $\VVV$ is a selective ultrafilter on $\omega$.
 Suppose $\VVV \; {\not\equiv}_{RK} \; \UUU$.
 Then $\PP(\UUU)$ preserves $\VVV$.
\end{Lemma}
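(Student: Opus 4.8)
The plan is to mimic the proof of Lemma~\ref{lem:c1Hpreserved} almost verbatim, with the selectivity--stability game ${\Game}^{\mathtt{SelStab}}\left(\UUU, \HHH\right)$ replaced by the selectivity--selectivity game ${\Game}^{\mathtt{SelSel}}\left(\VVV, \UUU\right)$, the amalgamation Lemma~\ref{lem:amalgam} replaced by Lemma~\ref{lem:amalgamU}, and the fusion Lemma~\ref{lem:fusion} replaced by Lemma~\ref{lem:fusionU}. Fix a $\PP(\UUU)$-name $\mathring{x}$ with ${\forces}_{\PP(\UUU)}\;{\mathring{x}: \omega \rightarrow 2}$ and a condition $p \in \PP(\UUU)$. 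For each $q \leq p$ put ${E}_{q} = \{n \in \omega: \exists r \leq q\[r \; {\forces}_{\PP(\UUU)} \; {\mathring{x}(n) = 1}\]\}$. If there is some $q \leq p$ with ${E}_{q} \notin \VVV$, then $A = \omega \setminus {E}_{q} \in \VVV$ and $q \; {\forces}_{\PP(\UUU)} \; {\mathring{x}(n) = 0}$ for every $n \in A$, so (1) of Lemma~\ref{lem:preserving} already yields preservation of $\VVV$. I would therefore assume from now on that ${E}_{q} \in \VVV$ for every $q \leq p$.

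Under this assumption I would define a strategy $\Sigma$ for Player~I in ${\Game}^{\mathtt{SelSel}}\left(\VVV, \UUU\right)$ together with an auxiliary map $\Phi$ assigning to each partial run $\left\langle \pr{{C}_{i}}{{o}_{i}}: i \leq 2n \right\rangle$ in which Player~I has followed $\Sigma$ a descending sequence $\seq{p}{i}{\leq}{n}$ of conditions below $p$, exactly as in the analogues of conditions (1)--(4) and (2)(a)--(c) in the proof of Lemma~\ref{lem:c1Hpreserved}, but now with ${l}_{i} = {o}_{2i+1} \in \omega$ playing the role of the fusion levels and with the succ-monotonicity clause phrased at $\lv{{T}_{{p}_{j}}}{{l}_{i}}$. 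The even (that is, $\VVV$) moves serve to decide values of $\mathring{x}$: given a partial run of length $2n+2$, I would let $k = {l}_{n}+1 = {o}_{2n+1}+1$, observe that ${E}_{{T}_{{p}_{n}}\langle e \rangle} \in \VVV$ for every $e \in \lv{{T}_{{p}_{n}}}{k}$, and set $\Sigma$ to play ${C}_{2n+2} = \bigcap\left\{ {E}_{{T}_{{p}_{n}}\langle e \rangle}: e \in \lv{{T}_{{p}_{n}}}{k} \right\} \in \VVV$; for Player~II's response ${o}_{2n+2}$ I would choose, for each such $e$, a condition ${p}_{n, e} \leq {T}_{{p}_{n}}\langle e \rangle$ forcing $\mathring{x}({o}_{2n+2}) = 1$ and amalgamate them via Lemma~\ref{lem:amalgamU} into ${p}_{n+1}$. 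The odd (that is, $\UUU$) moves build the fusion: $\Sigma$ would play ${C}_{2n+3} = {H}_{{p}_{n+1}, {l}_{n}+1} \cap \{j \in \omega: j > {l}_{n}\} \in \UUU$, so that any response ${o}_{2n+3} = {l}_{n+1}$ automatically satisfies ${l}_{n+1} > {l}_{n}$ and ${l}_{n+1} \in {H}_{{p}_{n+1}, {l}_{n}+1}$.

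Since $\VVV \; {\not\equiv}_{RK} \; \UUU$, Lemma~\ref{lem:selsel} guarantees that $\Sigma$ is not winning, so there is a run in which Player~I follows $\Sigma$ and loses. Then $\{{o}_{2i}: i \in \omega\} \in \VVV$ and $\{{l}_{i}: i \in \omega\} = \{{o}_{2i+1}: i \in \omega\} \in \UUU$, and the sequences $\seq{p}{i}{\in}{\omega}$ read off from $\Phi$ and $\seq{l}{i}{\in}{\omega}$ meet the four hypotheses of Lemma~\ref{lem:fusionU}: descent and the bigness condition ${l}_{i+1} \in {H}_{{p}_{i+1}, {l}_{i}+1}$ come directly from the moves, while the succ-monotonicity is proved by the same induction as in Lemma~\ref{lem:c1Hpreserved}, extending a node ${e}^{\frown}{\langle \sigma \rangle}$ of length ${l}_{i}+1 \leq {l}_{n}+1$ to a full level-$({l}_{n}+1)$ node inside some ${T}_{{p}_{n, {e}^{\ast}}}$. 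Hence $q = {T}_{q} = {\bigcap}_{i \in \omega}{{T}_{{p}_{i}}} \in \PP(\UUU)$ with $q \leq p$, and since $q \leq {p}_{i}$ and ${p}_{i} \; {\forces}_{\PP(\UUU)} \; {\mathring{x}({o}_{2i}) = 1}$ for each $i$, the set $A = \{{o}_{2i}: i \in \omega\} \in \VVV$ witnesses clause (1) of Lemma~\ref{lem:preserving}. I expect the only delicate bookkeeping to be the verification, as in Lemma~\ref{lem:c1Hpreserved}, that ${p}_{i}$ genuinely forces (rather than merely densely decides) $\mathring{x}({o}_{2i}) = 1$ and that the succ-monotonicity clause survives each amalgamation; both are handled exactly as in the $\HHH$-case, so no new idea is required.
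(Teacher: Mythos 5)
Your proposal is correct and follows essentially the same route as the paper's own proof: the paper also splits on whether some ${E}_{q} \notin \VVV$, then builds a strategy $\Sigma$ (with auxiliary map $\Phi$) for Player I in ${\Game}^{\mathtt{SelSel}}\left(\VVV, \UUU\right)$, amalgamates the deciding conditions ${p}_{n,e}$ via Lemma~\ref{lem:amalgamU}, plays ${H}_{{p}_{n+1}, {o}_{2n+1}+1} \cap \{o \in \omega: o > {o}_{2n+1}\}$ on the $\UUU$-moves, and invokes Lemma~\ref{lem:selsel}, Lemma~\ref{lem:fusionU}, and clause (1) of Lemma~\ref{lem:preserving} exactly as you describe. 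The bookkeeping points you flag (that ${p}_{i}$ genuinely forces $\mathring{x}({o}_{2i})=1$ via density of the subtrees ${T}_{r}\langle e \rangle \leq {p}_{n,e}$, and that succ-monotonicity survives amalgamation) are handled in the paper by the same arguments as in the $\HHH$-case, so no gap remains.
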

\begin{proof}
 Let $\mathring{x}$ be a $\PP(\UUU)$-name such that ${\forces}_{\PP(\UUU)} \; {\mathring{x}: \omega \rightarrow 2}$ and let $p \in \PP(\UUU)$.
 
 For each $q \leq p$ let ${E}_{q} = \{n \in \omega: \exists r \leq q \[r \; {\forces}_{\PP(\UUU)} \; {\mathring{x}(n) = 1}\]\}$.
 Suppose first that there exists $q \leq p$ such that ${E}_{q} \notin \VVV$.
 Then $A = \omega \setminus {E}_{q} \in \VVV$, and for all $n \in A$, $q \; {\forces}_{\PP(\UUU)} \; {\mathring{x}(n) = 0}$.
 This is as required by (1) of Lemma \ref{lem:preserving}.
 
 It will henceforth be assumed that for all $q \leq p$, ${E}_{q} \in \VVV$.
 Define $\Sigma$ and $\Phi$ such that:
 \begin{enumerate}
  \item
  $\Sigma$ is a strategy for Player I in ${\Game}^{\mathtt{SelSel}}\left(\VVV, \UUU\right)$;
  \item
  for each $n \in \omega$, if $\left\langle \pr{{C}_{i}}{{o}_{i}}: i \leq 2n \right\rangle$ is a partial run of ${\Game}^{\mathtt{SelSel}}\left(\VVV, \UUU\right)$ in which Player I has followed $\Sigma$, then $\Phi\left(\left\langle \pr{{C}_{i}}{{o}_{i}}: i \leq 2n \right\rangle\right) = \seq{p}{i}{\leq}{n}$ and
  \begin{enumerate}
   \item[(a)]
   $\forall i \leq n\[{p}_{i} \in \PP(\UUU)\]$, $\forall i < n\[{p}_{i+1} \leq {p}_{i}\]$, ${p}_{0} \leq p$;
   \item[(b)]
   for each $i < j \leq n$ and for each $e \in \lv{{T}_{{p}_{j}}}{{o}_{2i+1}}$, ${\succc}_{{T}_{{p}_{i}}}(e) \subseteq {\succc}_{{T}_{{p}_{j}}}(e)$;
   \item[(c)]
   for each $i \leq n$, ${p}_{i} \; {\forces}_{\PP(\UUU)} \; {\mathring{x}({o}_{2i}) = 1}$;
  \end{enumerate}
  \item
  for each $n \in \omega$, if $\left\langle \pr{{C}_{i}}{{o}_{i}}: i \leq 2n+3 \right\rangle$ is a partial run of ${\Game}^{\mathtt{SelSel}}\left(\VVV, \UUU\right)$ in which Player I has followed $\Sigma$ and if
  \begin{align*}
   \Phi\left(\left\langle \pr{{C}_{i}}{{o}_{i}}: i \leq 2n+2 \right\rangle\right) = \seq{p}{i}{\leq}{n+1},
  \end{align*}
  then ${o}_{2n+3} \in {H}_{{p}_{n+1}, {o}_{2n+1}+1}$ and ${o}_{2n+3} > {o}_{2n+1}$;
  \item
  for each $n \in \omega$, if $\left\langle \pr{{C}_{i}}{{o}_{i}}: i \leq 2n+2 \right\rangle$ is a partial run of ${\Game}^{\mathtt{SelSel}}\left(\VVV, \UUU\right)$ in which Player I has followed $\Sigma$, then
  \begin{align*}
   \Phi\left(\left\langle \pr{{C}_{i}}{{o}_{i}}: i \leq 2n \right\rangle\right) = \Phi\left(\left\langle \pr{{C}_{i}}{{o}_{i}}: i \leq 2n+2 \right\rangle\right) \restrict n+1.
  \end{align*}
 \end{enumerate}
 Suppose for a moment that $\Sigma$ and $\Phi$ satisfying (1)--(4) can be defined.
 Since $\Sigma$ is not a winning strategy for Player I, there is a play $\left\langle \pr{{C}_{i}}{{o}_{i}}: i < \omega \right\rangle$ of ${\Game}^{\mathtt{SelSel}}\left(\VVV, \UUU\right)$ in which Player I follows $\Sigma$ and loses.
 Then $\langle {o}_{2i+1}: i < \omega \rangle \in \BS$ and $\{{o}_{2i+1}: i < \omega\} \in \UUU$.
 Also, there is a sequence $\seq{p}{i}{<}{\omega}$ such that for each $n \in \omega$, $\Phi\left(\left\langle \pr{{C}_{i}}{{o}_{i}}: i \leq 2n \right\rangle\right) = \seq{p}{i}{\leq}{n}$.
 Then $\langle {o}_{2i+1}: i < \omega \rangle$ and $\seq{p}{i}{<}{\omega}$ satisfy the hypotheses of Lemma \ref{lem:fusionU}.
 To see this, first note that applying (3) with $n=i$ gives ${o}_{2i+3}={o}_{2n+3} \in {H}_{{p}_{n+1}, {o}_{2n+1}+1} = {H}_{{p}_{i+1}, {o}_{2i+1}+1}$ and ${o}_{2i+3} > {o}_{2i+1}$.
 Hence $\forall i < j < \omega\[{o}_{2i+1} < {o}_{2j+1}\]$.
 Next, given $i \leq j < \omega$ and $e \in \lv{{T}_{{p}_{j}}}{{o}_{2i+1}}$, if $i=j$, then trivially ${\succc}_{{T}_{{p}_{i}}}(e) \subseteq {\succc}_{{T}_{{p}_{j}}}(e)$.
 If $i < j$, then applying (2)(b) with $n=j$ gives ${\succc}_{{T}_{{p}_{i}}}(e) \subseteq {\succc}_{{T}_{{p}_{j}}}(e)$.
 Therefore Lemma \ref{lem:fusionU} applies and implies that $q = {T}_{q} = {\bigcap}_{i \in \omega}{{T}_{{p}_{i}}} \in \PP(\UUU)$.
 Further, $q \leq {p}_{0} \leq p$, and for each $i \in \omega$, since $q \leq {p}_{i}$, $q \; {\forces}_{\PP(\UUU)} \; {\mathring{x}\left({o}_{2i}\right) = 1}$.
 Since $A = \{{o}_{2i}: i < \omega\} \in \VVV$, $q$ and $A$ satisfy the conditions of (1) of Lemma \ref{lem:preserving}.
 Thus in both this case and in the case considered in the previous paragraph, (1) of Lemma \ref{lem:preserving} applies and it implies that $\PP(\UUU)$ preserves $\VVV$.
 
 $\Sigma$ and $\Phi$ are defined inductively.
 $\Sigma\left(\emptyset\right) = {E}_{p} \in \VVV$.
 If $\left\langle \pr{{C}_{0}}{{o}_{0}} \right\rangle$ is a partial run of ${\Game}^{\mathtt{SelSel}}\left(\VVV, \UUU\right)$ in which Player I has followed $\Sigma$, then ${C}_{0} = {E}_{p}$ and ${o}_{0} \in {C}_{0} = {E}_{p}$.
 Define $\Phi\left(\left\langle \pr{{C}_{0}}{{o}_{0}} \right\rangle\right) = \left\langle {p}_{0} \right\rangle$, where ${p}_{0} \leq p$ and ${p}_{0} \; {\forces}_{\PP(\UUU)} \; {\mathring{x}({o}_{0}) = 1}$.
 Such a ${p}_{0}$ exists by the definition of ${E}_{p}$.
 Define $\Sigma\left(\left\langle \pr{{C}_{0}}{{o}_{0}} \right\rangle\right) = \omega \in \UUU$.
 It is clear that (1)--(4) are satisfied by these definitions.
 Now, assume that for some $n \in \omega$, $\left\langle \pr{{C}_{i}}{{o}_{i}}: i \leq 2n+1 \right\rangle$ is a partial run of ${\Game}^{\mathtt{SelSel}}\left(\VVV, \UUU\right)$ in which Player I has followed $\Sigma$ and that $\Phi\left(\left\langle \pr{{C}_{i}}{{o}_{i}}: i \leq 2n \right\rangle\right) = \seq{p}{i}{\leq}{n}$, and that these satisfy (1)--(4).
 Let $k = {o}_{2n+1}+1$.
 For any $e \in \lv{{T}_{{p}_{n}}}{k}$, ${T}_{{p}_{n}}\langle e \rangle \leq {p}_{n} \leq {p}_{0} \leq p$.
 Therefore ${E}_{{T}_{{p}_{n}}\langle e \rangle} \in \VVV$.
 Define $\Sigma\left( \left\langle \pr{{C}_{i}}{{o}_{i}}: i \leq 2n+1 \right\rangle \right) = {C}_{2n+2} = \bigcap\left\{ {E}_{{T}_{{p}_{n}}\langle e \rangle}: e \in \lv{{T}_{{p}_{n}}}{k} \right\} \in \VVV$.
 If $\left\langle \pr{{C}_{i}}{{o}_{i}}: i \leq 2n+2 \right\rangle$ is a partial continuation of ${\Game}^{\mathtt{SelSel}}\left(\VVV, \UUU\right)$ in which Player I has followed $\Sigma$, then ${o}_{2n+2} \in {C}_{2n+2}$ and so there is a sequence $\left\langle {p}_{n, e}: e \in \lv{{T}_{{p}_{n}}}{k} \right\rangle$ such that for each $e \in \lv{{T}_{{p}_{n}}}{k}$, ${p}_{n, e} \leq {T}_{{p}_{n}}\langle e \rangle$ and ${p}_{n, e} \; {\forces}_{\PP(\UUU)} \; {\mathring{x}\left({o}_{2n+2}\right) = 1}$.
 Define ${p}_{n+1} = {T}_{{p}_{n+1}} = \bigcup\left\{{T}_{{p}_{n, e}}: e \in \lv{{T}_{{p}_{n}}}{k} \right\}$.
 By Lemma \ref{lem:amalgamU}, ${p}_{n+1} \in \PP(\UUU)$ and ${p}_{n+1} \leq {p}_{n}$.
 Define $\Phi\left( \left\langle \pr{{C}_{i}}{{o}_{i}}: i \leq 2n+2 \right\rangle \right) = \seq{p}{i}{\leq}{n+1}$.
 Finally, define $\Sigma\left( \left\langle \pr{{C}_{i}}{{o}_{i}}: i \leq 2n+2 \right\rangle \right) = {C}_{2n+3} = {H}_{{p}_{n+1}, k} \cap \{o \in \omega: o > {o}_{2n+1}\} \in \UUU$, so that if ${o}_{2n+3} \in {C}_{2n+3}$, then ${o}_{2n+3} \in {H}_{{p}_{n+1}, {o}_{2n+1}+1}$ and ${o}_{2n+3} > {o}_{2n+1}$.
 It is clear that (1), (3), and (4) are satisfied.
 It is also clear that (2)(a) holds.
 For 2(b), suppose $i < j \leq n+1$.
 If $i < j \leq n$, then the induction hypothesis gives what is needed.
 So assume that $i < j=n+1$.
 Suppose that $e \in \lv{{T}_{{p}_{j}}}{{o}_{2i+1}}$.
 Then $e \in \lv{{T}_{{p}_{n}}}{{o}_{2i+1}}$.
 If $i=n$, then trivially ${\succc}_{{T}_{{p}_{i}}}(e) \subseteq {\succc}_{{T}_{{p}_{n}}}(e)$, while if $i < n$, then by the induction hypothesis, ${\succc}_{{T}_{{p}_{i}}}(e) \subseteq {\succc}_{{T}_{{p}_{n}}}(e)$.
 Thus in either case, ${\succc}_{{T}_{{p}_{i}}}(e) \subseteq {\succc}_{{T}_{{p}_{n}}}(e)$.
 Suppose $\sigma \in {\succc}_{{T}_{{p}_{n}}}(e)$.
 Then ${e}^{\frown}{\langle \sigma \rangle} \in {T}_{{p}_{n}}$, and $\dom({e}^{\frown}{\langle \sigma \rangle}) = {o}_{2i+1}+1 \leq {o}_{2n+1}+1 = k$.
 Choose some ${e}^{\ast} \in \lv{{T}_{{p}_{n}}}{k}$ with ${e}^{\frown}{\langle \sigma \rangle} \subseteq {e}^{\ast}$.
 Then ${e}^{\frown}{\langle \sigma \rangle} \in {T}_{{p}_{n, {e}^{\ast}}} \subseteq {T}_{{p}_{n+1}}$, whence $\sigma \in {\succc}_{{T}_{{p}_{n+1}}}(e)$.
 Therefore, ${\succc}_{{T}_{{p}_{i}}}(e) \subseteq {\succc}_{{T}_{{p}_{n}}}(e) \subseteq {\succc}_{{T}_{{p}_{n+1}}}(e) = {\succc}_{{T}_{{p}_{j}}}(e)$, as required for (2)(b).
 For (2)(c), suppose $r \leq {p}_{n+1}$.
 Choose $e \in \lv{{T}_{r}}{k}$.
 Then $e \in \lv{{T}_{{p}_{n}}}{k}$ and ${T}_{r}\langle e \rangle \leq {T}_{{p}_{n+1}}\langle e \rangle \leq {p}_{n, e}$.
 So ${T}_{r}\langle e \rangle \; {\forces}_{\PP(\UUU)} \; {\mathring{x}\left({o}_{2n+2}\right) = 1}$.
 Therefore, ${p}_{n+1} \; {\forces}_{\PP(\UUU)} \; {\mathring{x}\left({o}_{2n+2}\right) = 1}$, as needed for (2)(c).
 This concludes the definition of $\Sigma$ and $\Phi$ and the proof.
\end{proof}
\begin{Lemma} \label{lem:genericrealU}
 Suppose $G$ is $(\V, \PP(\UUU))$-generic.
 In $\VG$, there exists a function $F \in {\prod}_{k \in \omega}{{2}^{k}}$ such that $\{F\} = \bigcap{\left\{ \[{T}_{p}\]: p \in G \right\}}$.
\end{Lemma}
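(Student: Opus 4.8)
The plan is to mimic the density argument that proves the analogous Lemma \ref{lem:genericreal} for $\PP(\HHH)$, replacing the tree $\TT$ of functions into ${2}^{\Pset(k)}$ by the tree $\TT'$ of functions into ${2}^{k}$. Working in $\V$, I would define for each $l \in \omega$ the set
\begin{align*}
 {D}_{l} = \left\{ q \in \PP(\UUU): \exists e \in {\prod}_{k \in l}{{2}^{k}} \forall F \in \[{T}_{q}\]\[e = F\restrict l\] \right\},
\end{align*}
namely the conditions all of whose branches agree below level $l$. The key point is that each ${D}_{l}$ is dense: given any $p \in \PP(\UUU)$, pick any $e \in \lv{{T}_{p}}{l}$, which exists because ${T}_{p}$ satisfies condition (2) of Definition \ref{def:PU} and so has no dead ends; then by Lemma \ref{lem:TfU} the condition ${T}_{p}\langle e \rangle$ lies in $\PP(\UUU)$, is below $p$, and every branch $F \in \[{T}_{p}\langle e \rangle\]$ satisfies $F\restrict l = e$, whence ${T}_{p}\langle e \rangle \in {D}_{l}$.

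Next I would pass to $\VG$ and use genericity of $G$ to meet each ${D}_{l}$, obtaining sequences $\seq{e}{l}{\in}{\omega}$ and $\seq{p}{l}{\in}{\omega}$ with ${e}_{l} \in {\prod}_{k \in l}{{2}^{k}}$, ${p}_{l} \in G$, and $F\restrict l = {e}_{l}$ for every $F \in \[{T}_{{p}_{l}}\]$. The fact that $G$ is a filter, hence directed, yields coherence: for $l < l'$, any $q \in G$ below both ${p}_{l}$ and ${p}_{l'}$ has some branch $F \in \[{T}_{q}\]$, and then ${e}_{l} = F\restrict l \subseteq F\restrict l' = {e}_{l'}$. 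Thus $F = {\bigcup}_{l \in \omega}{{e}_{l}} \in {\prod}_{k \in \omega}{{2}^{k}}$ is a well-defined function.

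Finally I would verify $\{F\} = \bigcap\{\[{T}_{p}\]: p \in G\}$. For containment, given $p \in G$ and $l \in \omega$, directedness of $G$ furnishes $q \in G$ below both $p$ and ${p}_{l}$ together with a branch $F' \in \[{T}_{q}\]$; then $F\restrict l = {e}_{l} = F'\restrict l$, and since ${T}_{q} \subseteq {T}_{p}$ this restriction lies in ${T}_{p}$. Letting $l$ vary shows $F \in \[{T}_{p}\]$, so $F$ belongs to the intersection. For uniqueness, any $F' \in \bigcap\{\[{T}_{p}\]: p \in G\}$ satisfies $F'\restrict l = {e}_{l} = F\restrict l$ for every $l$, because $F' \in \[{T}_{{p}_{l}}\]$ and ${p}_{l} \in G$; hence $F' = F$.

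I do not expect any genuine obstacle here: exactly as in the $\PP(\UUU)$-free case of Lemma \ref{lem:genericreal}, this is a routine density computation. The only point that deserves care is the coherence of the pieces ${e}_{l}$, and this follows purely from the directedness of the generic filter combined with the fact that each $q \in \PP(\UUU)$ has an infinite branch, i.e.\ $\lv{{T}_{q}}{l} \neq \emptyset$ for all $l$, which is guaranteed by condition (2) of Definition \ref{def:PU}.
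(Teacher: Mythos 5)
Your proposal is correct and is essentially the same argument the paper intends: the paper's proof of this lemma just says ``Similar to Lemma \ref{lem:genericreal},'' and the density argument for that lemma (meeting the dense sets ${D}_{l}$ of conditions whose branches agree up to level $l$, using ${T}_{p}\langle e \rangle$ and directedness of $G$ for coherence) is exactly what you wrote, transplanted from $\TT$ to $\TT'$. No gaps; your attention to the existence of infinite branches via condition (2) of Definition \ref{def:PU} is the only point needing care, and you handled it.
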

\begin{proof}
 Similar to Lemma \ref{lem:genericreal}.
\end{proof}
\begin{Def} \label{def:CGU}
 Suppose $G$ is $(\V, \PP(\UUU))$-generic.
 In $\VG$, let ${F}_{G} \in {\prod}_{k \in \omega}{{2}^{k}}$ denote the unique function such that $\{{F}_{G}\} = \bigcap\left\{\[{T}_{p}\]: p \in G \right\}$.
 Define ${c}_{G}: \pc{\omega}{2} \rightarrow 2$ as follows.
 For $\{k, l\} \in \pc{\omega}{2}$ with $k < l$, define ${c}_{G}(\{k, l\}) = {F}_{G}(l)(k) \in 2$.
 In $\V$, let ${\mathring{F}}_{G}$ and ${\mathring{c}}_{G}$ be $\PP(\UUU)$-names that are forced by every condition to denote ${F}_{G}$ and ${c}_{G}$ respectively.
\end{Def}
\begin{Lemma} \label{lem:bigrestrictionU}
 Suppose $k \leq k' \leq l < \omega$.
 Suppose $A \subseteq {2}^{l}$ is $k'$-big.
 Suppose $t \in 2$.
 Then $B = \{\tau \in A: \forall n \in \omega\[k \leq n < k' \implies \tau(n) = 1-t\]\} \subseteq A \subseteq {2}^{l}$ is $k$-big.
\end{Lemma}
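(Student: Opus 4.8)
The plan is to mirror the proof of Lemma \ref{lem:bigrestriction} exactly, simply dropping the roles played there by the set ${s}^{-}$ and the semigroup operation $\cup$. Recall from Definition \ref{def:klbig} that $k'$-bigness of $A$ means that every $\sigma': k' \rightarrow 2$ extends to some $\tau \in A$, and that to prove $B$ is $k$-big I must show that an arbitrary $\sigma: k \rightarrow 2$ extends to some member of $B$. So I would fix such a $\sigma$ and manufacture from it a suitable $\sigma': k' \rightarrow 2$ whose $A$-extension is automatically forced to lie in $B$.

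The key step is the definition of $\sigma'$. I would set
\begin{align*}
 \sigma'(n) = \begin{cases}
  \sigma(n) &\ \text{if} \ n < k,\\
  1-t       &\ \text{if} \ k \leq n < k'.
 \end{cases}
\end{align*}
Since $A$ is $k'$-big, there is $\tau \in A$ with $\sigma' \subseteq \tau$, i.e.\ $\tau(n) = \sigma'(n)$ for all $n < k'$. Two routine verifications then finish the argument. First, $\tau \in B$: for any $n$ with $k \leq n < k'$ one has $\tau(n) = \sigma'(n) = 1-t$, which is precisely the defining condition of $B$ (note the condition $\forall n\[k \leq n < k' \implies \tau(n) = 1-t\]$ is vacuously compatible with $\tau$ being an element of $A$, and $\tau \in A$ by choice). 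Second, $\sigma \subseteq \tau$: for $n < k$ one has $\tau(n) = \sigma'(n) = \sigma(n)$. As $\sigma$ was arbitrary, $B$ is $k$-big, as required.

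There is essentially no obstacle here: the lemma is the $k$-big counterpart of Lemma \ref{lem:bigrestriction}, and because the operation $\cup$ and the tail ${s}^{-}$ have been removed, the bookkeeping is strictly simpler — the extension relation $\subseteq$ replaces the shifted evaluation $u \mapsto \tau(u \cup {s}^{-})$. The only point requiring the mild hypothesis $k \leq k' \leq l$ is that $\sigma'$ is a well-defined function on $k'$ whose $A$-witness is a genuine element of $2^l$; this is immediate. I would present the whole proof in three or four lines.
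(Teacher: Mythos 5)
Your proposal is correct and is essentially identical to the paper's own proof: the paper also defines $\sigma': k' \rightarrow 2$ by $\sigma'(n) = \sigma(n)$ for $n \in k$ and $\sigma'(n) = 1-t$ for $k \leq n < k'$, takes $\tau \in A$ with $\sigma' \subseteq \tau$ by $k'$-bigness, and observes $\tau \in B$ and $\sigma \subseteq \sigma' \subseteq \tau$. Nothing is missing.
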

\begin{proof}
 Let $\sigma: k \rightarrow 2$ be given.
 Define $\sigma': k' \rightarrow 2$ by stipulating that for any $n \in k'$,
 \begin{align*}
  \sigma'(n) = \begin{cases}
   \sigma(n) &\ \text{if} \ n \in k\\
   1-t       &\ \text{if} \ n \notin k.
  \end{cases}
 \end{align*}
 As $A$ is $k'$-big, there exists $\tau \in A$ such that $\sigma' \subseteq \tau$.
 $\tau \in B$ because for any $n \in \omega$, if $k \leq n < k'$, then $\tau(n) = \sigma'(n) = 1-t$.
 Further, $\sigma \subseteq \sigma' \subseteq \tau$.
 As $\sigma$ was arbitrary, $B$ is $k$-big.
\end{proof}
\begin{Lemma} \label{lem:extend1-tU}
 Suppose $k \leq k' \leq l < \omega$.
 Suppose $p \in \PP(\UUU)$ and $t \in 2$.
 Assume that $l \in {H}_{p, k'}$.
 Then there exists $q \leq p$ such that:
 \begin{enumerate}
  \item
  $\forall i \in \omega \setminus \{l\} \forall e \in \lv{{T}_{q}}{i}\[{\succc}_{{T}_{q}}(e) = {\succc}_{{T}_{p}}(e)\]$;
  \item
  $l \in {H}_{q, k}$;
  \item
  for each $n \in \omega$, if $k \leq n < k'$, then $q \; {\forces}_{\PP(\UUU)} \; {{\mathring{c}}_{G}(\{n, l\}) = 1-t}$.
 \end{enumerate}
\end{Lemma}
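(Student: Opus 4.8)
The plan is to follow the proof of Lemma \ref{lem:extend1-t} almost verbatim, replacing the notion of a $\pr{k}{s}$-big set with that of a $k$-big set and using the tools developed for $\PP(\UUU)$ in place of their $\PP(\HHH)$ counterparts. Since $l \in {H}_{p, k'}$, for every $f \in \lv{{T}_{p}}{l}$ the set ${A}_{f} = {\succc}_{{T}_{p}}(f) \subseteq {2}^{l}$ is $k'$-big. Applying Lemma \ref{lem:bigrestrictionU} to each ${A}_{f}$ with the given $t$, I obtain that ${B}_{f} = \{\tau \in {A}_{f}: \forall n \in \omega[k \leq n < k' \implies \tau(n) = 1-t]\}$ is $k$-big, and in particular non-empty. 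As $\lv{{T}_{p}}{l}$ is non-empty (by Clause (2) of Definition \ref{def:PU}), I can set
\begin{align*}
 q = \bigcup\left\{ {T}_{p}\langle {f}^{\frown}{\langle \tau \rangle} \rangle: f \in \lv{{T}_{p}}{l} \wedge \tau \in {B}_{f} \right\},
\end{align*}
and Lemma \ref{lem:amalgamU} guarantees $q \in \PP(\UUU)$ and $q \leq p$.

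For Clause (1), I would treat $i < l$ and $i > l$ exactly as in the two corresponding parts of Lemma \ref{lem:extend1-t}: given $e \in \lv{{T}_{q}}{i}$ and $\sigma \in {\succc}_{{T}_{p}}(e)$, the node ${e}^{\frown}{\langle \sigma \rangle}$ lies inside some ${T}_{p}\langle {f}^{\frown}{\langle \tau \rangle} \rangle \subseteq {T}_{q}$ (choosing $f \supseteq {e}^{\frown}{\langle \sigma \rangle}$ when $i < l$, and using the $f, \tau$ with ${f}^{\frown}{\langle \tau \rangle} \subseteq e$ when $i > l$), so ${\succc}_{{T}_{p}}(e) \subseteq {\succc}_{{T}_{q}}(e)$; the reverse inclusion is immediate from ${T}_{q} \subseteq {T}_{p}$. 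For Clause (2), given $f \in \lv{{T}_{q}}{l} \subseteq \lv{{T}_{p}}{l}$, every $\tau \in {B}_{f}$ satisfies ${f}^{\frown}{\langle \tau \rangle} \in {T}_{q}$, so ${B}_{f} \subseteq {\succc}_{{T}_{q}}(f) \subseteq {2}^{l}$; since ${B}_{f}$ is $k$-big, so is ${\succc}_{{T}_{q}}(f)$, which is precisely what $l \in {H}_{q, k}$ requires.

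The only part requiring the genuinely new ingredient is Clause (3). Fixing $n$ with $k \leq n < k'$ (so $n < k' \leq l$, whence $n < l$), I would take any $(\V, \PP(\UUU))$-generic $G$ with $q \in G$. By Lemma \ref{lem:genericrealU}, ${F}_{G} \in \[{T}_{q}\]$, so ${F}_{G}\restrict(l+1) \in {T}_{q}$, and by the form of $q$ this restriction equals ${f}^{\frown}{\langle \tau \rangle}$ for some $f \in \lv{{T}_{p}}{l}$ and $\tau \in {B}_{f}$; in particular ${F}_{G}(l) = \tau$. Then Definition \ref{def:CGU} gives ${c}_{G}(\{n, l\}) = {F}_{G}(l)(n) = \tau(n)$, and the defining property of ${B}_{f}$ forces $\tau(n) = 1-t$. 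Hence $q \; {\forces}_{\PP(\UUU)} \; {{\mathring{c}}_{G}(\{n, l\}) = 1-t}$, as desired. I do not anticipate a real obstacle here: the entire argument is a streamlined copy of Lemma \ref{lem:extend1-t}, the role of the semigroup union ${s}^{-}$ having simply disappeared, so the only points needing care are the bookkeeping that $n < l$ and that ${\succc}_{{T}_{q}}(f)$ remains a subset of ${2}^{l}$ containing the $k$-big set ${B}_{f}$.
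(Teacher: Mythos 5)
Your proposal is correct and is essentially identical to the paper's own proof: the same sets ${B}_{f}$ obtained from Lemma \ref{lem:bigrestrictionU}, the same amalgamated condition $q$ via Lemma \ref{lem:amalgamU}, and the same generic-filter argument through ${F}_{G}$ and Definition \ref{def:CGU} for Clause (3). The paper leaves Clause (1) as ``similar to Lemma \ref{lem:extend1-t}''; your spelled-out treatment of the cases $i < l$ and $i > l$ matches exactly what that reference intends, and your extra bookkeeping ($n < k' \leq l$, so ${F}_{G}(l)(n) = \tau(n)$ is defined) is correct.
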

\begin{proof}
 Since $l \in {H}_{p, k'}$, for each $f \in \lv{{T}_{p}}{l}$, ${A}_{f} = {\succc}_{{T}_{p}}(f) \subseteq {2}^{l}$ is $k'$-big.
 Thus, by applying Lemma \ref{lem:bigrestrictionU}, it is seen that for each $f \in \lv{{T}_{p}}{l}$, ${B}_{f} = \{\tau \in {A}_{f}: \forall n \in \omega\[k \leq n < k' \implies \tau(n)=1-t\]\}$ is $k$-big.
 In particular, each ${B}_{f}$ is non-empty.
 Since $\lv{{T}_{p}}{l}$ is non-empty,
 \begin{align*}
  B = \left\{{f}^{\frown}{\langle \tau \rangle}: f \in \lv{{T}_{p}}{l} \wedge \tau \in {B}_{f} \right\}
 \end{align*}
 is a non-empty subset of $\lv{{T}_{p}}{l+1}$.
 Therefore by defining
 \begin{align*}
  q = \bigcup\left\{ {T}_{p}\langle {f}^{\frown}{\langle \tau \rangle} \rangle: f \in \lv{{T}_{p}}{l} \wedge \tau \in {B}_{f} \right\},
 \end{align*}
 Lemma \ref{lem:amalgamU} insures that $q \in \PP(\UUU)$ and $q \leq p$.
 
 The argument for (1) is similar to the corresponding argument in the proof of Lemma \ref{lem:extend1-t}.
 
 For (2), consider $f \in \lv{{T}_{q}}{l}$.
 It needs to be seen that ${\succc}_{{T}_{q}}(f) \subseteq {2}^{l}$ is $k$-big.
 Indeed $f \in \lv{{T}_{p}}{l}$ and for any $\tau \in {B}_{f}$, ${f}^{\frown}{\langle \tau \rangle} \in {T}_{q}$, whence $\tau \in {\succc}_{{T}_{q}}(f)$.
 So ${B}_{f} \subseteq {\succc}_{{T}_{q}}(f)$, and so ${\succc}_{{T}_{q}}(f)$ is $k$-big.
 
 Finally for (3), fix $n \in \omega$ such that $k \leq n < k'$.
 Now let $G$ be $(\V, \PP(\UUU))$-generic with $q \in G$.
 Since ${F}_{G} \in \[{T}_{q}\]$, ${F}_{G} \restrict l+1 \in {T}_{q}$, and so ${F}_{G} \restrict l+1 = {f}^{\frown}{\langle \tau \rangle}$, for some $f \in \lv{{T}_{p}}{l}$ and $\tau \in {B}_{f}$.
 Therefore, ${c}_{G}(\{n, l\}) = {F}_{G}(l)(n) = \tau(n) = 1-t$, by the definition of ${B}_{f}$.
\end{proof}
\begin{Lemma} \label{lem:1-tfusionU}
 Suppose $p \in \PP(\UUU)$ and $\psi \in \BS$ is such that for all $k \in \omega$, $k \leq \psi(k)$.
 Let $t \in 2$.
 Then there exist $q \leq p$ and $\seq{l}{i}{\in}{\omega} \in {\omega}^{\omega}$ such that $\forall i < j < \omega\[{l}_{i} < {l}_{j}\]$, $\{{l}_{i}: i \in \omega\} \in \UUU$, $\forall i \in \omega\[\psi\left({l}_{i}+1\right) \leq {l}_{i+1}\]$, and for each $i \in \omega$, for each $n \in \omega$, if ${l}_{i}+1 \leq n < \psi\left({l}_{i}+1\right)$, then $q \; {\forces}_{\PP(\UUU)} \; {{\mathring{c}}_{G}\left(\{n, {l}_{i+1}\}\right)=1-t}$.
\end{Lemma}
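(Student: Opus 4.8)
The plan is to mirror the proof of Lemma~\ref{lem:1-tfusion} almost line for line, replacing the appeal to Lemma~\ref{lem:selectivediag} by the bare selectivity of $\UUU$, the appeal to Lemma~\ref{lem:extend1-t} by Lemma~\ref{lem:extend1-tU}, and the appeal to Lemma~\ref{lem:fusion} by Lemma~\ref{lem:fusionU}. First I would produce the sequence $\seq{l}{i}{\in}{\omega}$. For each $n \in \omega$ set ${A}_{n} = {H}_{p, \psi(n+1)} \in \UUU$, and observe that every $l \in {A}_{n}$ satisfies $\psi(n+1) \leq l$, since $\lv{{T}_{p}}{l} \neq \emptyset$ and, by Definition~\ref{def:klbig}, a $\psi(n+1)$-big subset of ${2}^{l}$ can exist only when $\psi(n+1) \leq l$. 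As $\UUU$ is selective, fix $D \in \UUU$ whose strictly increasing enumeration $\seq{l}{i}{\in}{\omega}$ satisfies ${l}_{i+1} \in {A}_{{l}_{i}}$ for every $i$. Writing ${k}_{i} = {l}_{i}+1$ and ${k}_{i}' = \psi({l}_{i}+1)$, this gives ${k}_{i} \leq {k}_{i}' \leq {l}_{i+1} < \omega$ and ${l}_{i+1} \in {H}_{p, {k}_{i}'}$ for every $i$.

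Next I would construct, by induction, a sequence $\seq{p}{i}{\in}{\omega}$ satisfying the exact analogues of the five conditions in the proof of Lemma~\ref{lem:1-tfusion}:
\begin{enumerate}
 \item
 ${p}_{i} \in \PP(\UUU)$, ${p}_{i+1} \leq {p}_{i}$, ${p}_{0} = p$;
 \item
 ${l}_{i+1} \in {H}_{{p}_{i+1}, {k}_{i}}$;
 \item
 for each $i \leq j < \omega$ and each $e \in \lv{{T}_{{p}_{j}}}{{l}_{i}}$, ${\succc}_{{T}_{{p}_{i}}}(e) \subseteq {\succc}_{{T}_{{p}_{j}}}(e)$;
 \item
 for each $j \leq {j}^{\ast} < \omega$, ${l}_{{j}^{\ast}+1} \in {H}_{{p}_{j}, {k}_{{j}^{\ast}}'}$;
 \item
 for each $i \in \omega$ and each $n$ with ${k}_{i} \leq n < {k}_{i}'$, ${p}_{i+1} \; {\forces}_{\PP(\UUU)} \; {{\mathring{c}}_{G}(\{n, {l}_{i+1}\}) = 1-t}$.
\end{enumerate}
Setting ${p}_{0} = p$ makes (4) hold at the base, since ${l}_{{j}^{\ast}+1} \in {A}_{{l}_{{j}^{\ast}}} = {H}_{p, {k}_{{j}^{\ast}}'}$. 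At a successor step, condition (4) applied with ${j}^{\ast} = j$ to ${p}_{j}$ gives ${l}_{j+1} \in {H}_{{p}_{j}, {k}_{j}'}$, so Lemma~\ref{lem:extend1-tU} with $k = {k}_{j}$, $k' = {k}_{j}'$, and $l = {l}_{j+1}$ yields ${p}_{j+1} \leq {p}_{j}$ delivering (1), (2), and (5) at once.

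The only remaining work is to re-verify (3) and the inductive maintenance of (4), and both reduce to clause~(1) of Lemma~\ref{lem:extend1-tU}, which keeps the successor set at every level other than ${l}_{j+1}$ unchanged when passing from ${p}_{j}$ to ${p}_{j+1}$. For (3), if $i \leq j$ then ${l}_{i} \leq {l}_{j} < {l}_{j+1}$, so the successor set at level ${l}_{i}$ is preserved and the induction hypothesis transfers, while $i = j+1$ is trivial. For (4), if ${j}^{\ast} \geq j+1$ then ${l}_{{j}^{\ast}+1} > {l}_{j+1}$, so every $f \in \lv{{T}_{{p}_{j+1}}}{{l}_{{j}^{\ast}+1}}$ has ${\succc}_{{T}_{{p}_{j+1}}}(f) = {\succc}_{{T}_{{p}_{j}}}(f)$, which is ${k}_{{j}^{\ast}}'$-big by the induction hypothesis, whence ${l}_{{j}^{\ast}+1} \in {H}_{{p}_{j+1}, {k}_{{j}^{\ast}}'}$. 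I expect this fusion bookkeeping, identical in spirit to Lemma~\ref{lem:1-tfusion}, to be the only delicate point; it is slightly cleaner here because there is a single coordinate ${l}_{j+1}$ rather than the whole block $X(i+1)$.

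Finally, condition (1), the properties of $D$, condition (2) rewritten as ${l}_{i+1} \in {H}_{{p}_{i+1}, {l}_{i}+1}$, and condition (3) are precisely the four hypotheses of Lemma~\ref{lem:fusionU}, so $q = {T}_{q} = {\bigcap}_{i \in \omega}{{T}_{{p}_{i}}} \in \PP(\UUU)$ and $q \leq {p}_{0} = p$. Since $q \leq {p}_{i+1}$ for every $i$, condition (5) shows that whenever ${l}_{i}+1 \leq n < \psi({l}_{i}+1)$ one has $q \; {\forces}_{\PP(\UUU)} \; {{\mathring{c}}_{G}(\{n, {l}_{i+1}\}) = 1-t}$; together with $\{{l}_{i}: i \in \omega\} = D \in \UUU$, the strict monotonicity of the ${l}_{i}$, and the spacing $\psi({l}_{i}+1) = {k}_{i}' \leq {l}_{i+1}$, this is exactly the conclusion of the lemma.
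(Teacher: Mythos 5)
Your proof is correct and matches the paper's own argument essentially line for line: the same selective diagonalization producing $\seq{l}{i}{\in}{\omega}$, the same five inductive conditions, and the same appeals to Lemma \ref{lem:extend1-tU} at each successor step and to Lemma \ref{lem:fusionU} at the end, with the verification of conditions (3) and (4) carried out exactly as the paper indicates (via clause (1) of Lemma \ref{lem:extend1-tU}). The only cosmetic difference is that the paper takes ${A}_{n} = {H}_{p, \psi(n+1)} \cap \{l \in \omega: l > \psi(n+1)\}$ to secure the spacing $\psi({l}_{i}+1) \leq {l}_{i+1}$, whereas you derive it from the observation that a $\psi(n+1)$-big subset of ${2}^{l}$ can exist only when $\psi(n+1) \leq l$ --- a valid shortcut, since $\lv{{T}_{p}}{l} \neq \emptyset$ for every $l$ by Definition \ref{def:PU}.
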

\begin{proof}
 For each $n \in \omega$, let ${A}_{n} = {H}_{p, \psi(n+1)} \cap \{l \in \omega: l > \psi(n+1)\} \in \UUU$.
 As $\UUU$ is selective, there exists $\seq{l}{i}{\in}{\omega} \in \BS$ such that $\forall i < j < \omega\[{l}_{i} < {l}_{j}\]$, $\{{l}_{i}: i \in \omega\} \in \UUU$, and for each $i \in \omega$, ${l}_{i+1} \in {A}_{{l}_{i}}$.
 For ease of notation, let ${k}_{i} = {l}_{i}+1$, and ${k}_{i}' = \psi\left({l}_{i}+1\right)$.
 Thus ${l}_{i+1} \in {H}_{p, {k}_{i}'}$, and ${k}_{i} \leq {k}_{i}' \leq {l}_{i+1} < \omega$.
 
 Lemma \ref{lem:fusionU} will be used to obtain $q$.
 To this end, construct a sequence $\seq{p}{i}{\in}{\omega}$ satisfying the following:
 \begin{enumerate}
  \item
  $\forall i \in \omega\[{p}_{i} \in \PP(\UUU)\]$, $\forall i \in \omega\[{p}_{i+1} \leq {p}_{i}\]$, ${p}_{0} = p$;
  \item
  for each $i \in \omega$, ${l}_{i+1} \in {H}_{{p}_{i+1}, {k}_{i}}$;
  \item
  for each $i \leq j < \omega$ and for each $e \in \lv{{T}_{{p}_{j}}}{{l}_{i}}$, ${\succc}_{{T}_{{p}_{i}}}(e) \subseteq {\succc}_{{T}_{{p}_{j}}}(e)$;
  \item
  for each $j \leq {j}^{\ast} < \omega$, ${l}_{{j}^{\ast}+1} \in {H}_{{p}_{j}, {k}_{{j}^{\ast}}'}$;
  \item
  for each $i \in \omega$, for each $n \in \omega$, if ${k}_{i} \leq n < {k}_{i}'$, then
  \begin{align*}
   {p}_{i+1} \; {\forces}_{\PP(\UUU)} \; {{\mathring{c}}_{G}\left(\{n, {l}_{i+1}\}\right)=1-t}.
  \end{align*}
 \end{enumerate}
 Suppose for a moment that such a sequence has been constructed.
 Then by Lemma \ref{lem:fusionU}, $q = {T}_{q} = {\bigcap}_{i \in \omega}{{T}_{{p}_{i}}} \in \PP(\UUU)$, $q \leq {p}_{0} = p$, and for each $i \in \omega$, since $q \leq {p}_{i+1}$, $q$ is as desired because of (5).
 
 The sequence $\seq{p}{i}{\in}{\omega}$ is constructed by induction.
 Define ${p}_{0} = p$ and notice that (4) is satisfied because for each ${j}^{\ast} < \omega$, ${l}_{{j}^{\ast}+1} \in {H}_{p, {k}_{{j}^{\ast}}'}$.
 Fix $j \in \omega$ and suppose that $\seq{p}{i}{\leq}{j}$ satisfying (1)--(5) is given.
 Applying (4) with $j = {j}^{\ast}$ yields ${l}_{j+1} \in {H}_{{p}_{j}, {k}_{j}'}$.
 Hence by Lemma \ref{lem:extend1-tU}, there exists ${p}_{j+1} \leq {p}_{j}$ satisfying (1)--(3) of Lemma \ref{lem:extend1-tU}.
 It is clear that (1), (2), and (5) are satisfied.
 And the verification of (3) and (4) is similar to the corresponding part of the proof of Lemma \ref{lem:1-tfusion}.
 This concludes the induction and the proof.
\end{proof}
\begin{Lemma} \label{thm:noresseructionU}
 Suppose $\QQ$ is an $\BS$-bounding forcing.
 If $\PP(\UUU)$ completely embeds into $\QQ$, then
 \begin{align*}
  {\forces}_{\QQ}\;{``\text{there is no selective ultrafilter on} \ \omega \ \text{extending} \ \UUU''}.
 \end{align*}
\end{Lemma}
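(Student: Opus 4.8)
The plan is to mirror the proof of Theorem~\ref{thm:noresseruction}, replacing the ordered-union machinery with its selective counterpart. The first step is to isolate a destruction criterion analogous to Lemma~\ref{lem:noextension}: if $\PP$ is a forcing and $\mathring{c}$ a $\PP$-name with ${\forces}_{\PP}\;\mathring{c}\colon \pc{\omega}{2} \to 2$, and if for every $t \in 2$ and every $\PP$-name $\mathring{A}$ with ${\forces}_{\PP}\;\mathring{A} \subseteq \omega$ the collection of $q$ admitting some $B \in \UUU$ witnessing either (1) $q \;{\forces}_{\PP}\;\mathring{A} \cap B = \emptyset$, or (2) $\forall n \in B \forall r \leq q \exists m \neq n \exists r' \leq r[r' \;{\forces}_{\PP}\; m \in \mathring{A} \text{ and } r' \;{\forces}_{\PP}\; \mathring{c}(\{n,m\}) \neq t]$ is dense, then ${\forces}_{\PP}$ there is no selective ultrafilter extending $\UUU$. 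I would prove this exactly as Lemma~\ref{lem:noextension}: in a generic extension containing a putative selective $\UUU' \supseteq \UUU$, Kunen's characterization (item (3) of Theorem~\ref{thm:kunen}) yields $A \in \UUU'$ on which $\mathring{c}$ is constantly $t$ over $\pc{A}{2}$; since $A$ and every $B \in \UUU$ lie in $\UUU'$, the set $A \cap B \in \UUU'$ is nonempty, ruling out (1), while a density argument with (2) produces $n, m \in A$ with $n \neq m$ and $\mathring{c}(\{n,m\}) \neq t$, contradicting homogeneity on $\pc{A}{2}$.

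With this criterion in hand I would follow the outline of Theorem~\ref{thm:noresseruction} verbatim. Let $\pi\colon \PP(\UUU) \to \QQ$ be the complete embedding with associated name map $\pi^{\ast}$, and set $\mathring{c} = \pi^{\ast}(\mathring{c}_{G})$, so that ${\forces}_{\QQ}\;\mathring{c}\colon \pc{\omega}{2} \to 2$ by Definition~\ref{def:CGU}. Fix $t \in 2$, a $\QQ$-name $\mathring{A}$ with ${\forces}_{\QQ}\;\mathring{A} \subseteq \omega$, and $p^{\ast} \in \QQ$, and assume no $q^{\ast} \leq p^{\ast}$ and $B \in \UUU$ witness (1). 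Applying this failure to $B = \omega \setminus k$ for each $k$ shows $p^{\ast} \;{\forces}_{\QQ}\; \mathring{A}$ is unbounded, whence there is a $\QQ$-name $\mathring{\varphi}$ with $p^{\ast} \;{\forces}_{\QQ}\; \mathring{\varphi}\colon \omega \to \omega$ and $p^{\ast} \;{\forces}_{\QQ}\; \forall k \exists m \in \mathring{A}[k \leq m < \mathring{\varphi}(k)]$. Since $\QQ$ is $\BS$-bounding, fix $p^{\ast}_{1} \leq p^{\ast}$ and $\psi \in \BS$ in $\V$ with $\psi(k) \geq k$ (which may be assumed) and $p^{\ast}_{1} \;{\forces}_{\QQ}\; \forall k[\mathring{\varphi}(k) < \psi(k)]$, and let $p \in \PP(\UUU)$ be a reduction of $p^{\ast}_{1}$.

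Now I would apply Lemma~\ref{lem:1-tfusionU} in $\V$ to $p$ and $\psi$ to obtain $q \leq p$ and an increasing sequence $\seq{l}{i}{\in}{\omega}$ with $\{l_{i} : i \in \omega\} \in \UUU$, $\psi(l_{i}+1) \leq l_{i+1}$, and $q \;{\forces}_{\PP(\UUU)}\; \mathring{c}_{G}(\{n, l_{i+1}\}) = 1-t$ whenever $l_{i}+1 \leq n < \psi(l_{i}+1)$. Put $B = \{l_{i+1} : i \in \omega\} \in \UUU$; by the choice of $p$ the image $\pi(q)$ is compatible with $p^{\ast}_{1}$, so pick $q^{\ast} \leq \pi(q), p^{\ast}_{1}$. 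To verify (2) for $q^{\ast}$ and $B$, fix $n = l_{i+1} \in B$ and $r^{\ast} \leq q^{\ast}$. Since $r^{\ast} \leq p^{\ast}$, refining $r^{\ast}$ to decide a witness for the instance $k = l_{i}+1$ produces $r^{\ast}_{1} \leq r^{\ast}$ and $m$ with $l_{i}+1 \leq m < \psi(l_{i}+1) \leq l_{i+1} = n$ and $r^{\ast}_{1} \;{\forces}_{\QQ}\; m \in \mathring{A}$; then $m \neq n$, and by the coloring property of $q$ together with $\mathring{c} = \pi^{\ast}(\mathring{c}_{G})$ we get $\pi(q) \;{\forces}_{\QQ}\; \mathring{c}(\{m,n\}) = 1-t$, so $r^{\ast}_{1} \;{\forces}_{\QQ}\; \mathring{c}(\{n,m\}) \neq t$. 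This establishes (2), and the criterion yields the theorem.

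The main obstacle is the bookkeeping in the destruction criterion itself: clause (2) must be phrased so that the pair $\{n,m\}$ is genuine ($m \neq n$) and lands in $\pc{A}{2}$ in the contradiction, and one must extract $\UUU$-positivity of the homogeneous set $A$ from its membership in $\UUU'$ together with $\UUU \subseteq \UUU'$. Everything downstream---the passage from the failure of (1) to unboundedness of $\mathring{A}$, the use of $\BS$-boundedness, and the reduction-compatibility argument across $\pi$---is a routine transcription of the corresponding steps of Theorem~\ref{thm:noresseruction}, now with pairs $\{n, l_{i+1}\}$ in place of unions $u \cup v \cup X(i+1)$ and Lemma~\ref{lem:1-tfusionU} in place of Lemma~\ref{lem:1-tfusion}.
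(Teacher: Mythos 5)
Your proposal is correct and follows essentially the same route as the paper: the paper also sets $\mathring{c} = \pi^{\ast}(\mathring{c}_{G})$, uses $\BS$-bounding to dominate the witnessing function by some $\psi \in \V$, takes a reduction $p$ of $p^{\ast}_{1}$, applies Lemma~\ref{lem:1-tfusionU} to get $q$ and the sequence $\seq{l}{i}{\in}{\omega}$, sets $B = \{l_{j} : j > 0\} \in \UUU$, and derives the contradiction from Ramsey-homogeneity of a putative selective $\VVV \supseteq \UUU$ exactly as you do. The only difference is organizational: you factor out an explicit destruction criterion (mirroring Lemma~\ref{lem:noextension}, with an extra empty-intersection case since you allow arbitrary names $\mathring{A} \subseteq \omega$), whereas the paper inlines the corresponding statement $(\ast)$ and sidesteps that case by working only with names forced to lie in $\pc{\omega}{\omega}$ — a cosmetic distinction.
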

\begin{proof}
 Let $\pi: \PP(\UUU) \rightarrow \QQ$ be a complete embedding, and let ${\pi}^{\ast}$ denote the associated map from $\PP(\UUU)$-names to $\QQ$-names.
 Let $\mathring{c}$ denote ${\pi}^{\ast}\left({\mathring{c}}_{G}\right)$.
 Then ${\forces}_{\QQ}\;{\mathring{c}: \pc{\omega}{2} \rightarrow 2}$.
 Suppose $t \in 2$ and that $\mathring{A}$ is a $\QQ$-name such that ${\forces}_{\QQ}\;{\mathring{A} \in \pc{\omega}{\omega}}$.
 Let ${p}^{\ast} \in \QQ$ be given.
 It will be shown that there exist ${q}^{\ast} \leq {p}^{\ast}$ and $B \in \UUU$ such that
 \begin{align*}
  \tag{$\ast$} \forall l \in B \forall {r}^{\ast} \leq {q}^{\ast} \exists n < l \exists {r}^{\ast}_{1} \leq {r}^{\ast}\[{r}^{\ast}_{1} \; {\forces}_{\QQ} \; {n \in \mathring{A}} \ \text{and} \ {r}^{\ast}_{1} \; {\forces}_{\QQ} \; {\mathring{c}\left(\{n, l\}\right) \neq t}\].
 \end{align*}
 
 Let $G$ be any $(\V, \QQ)$-generic filter with ${p}^{\ast} \in G$.
 Then $\mathring{A}\[G\] \in \pc{\omega}{\omega}$ in $\VG$.
 So in $\VG$, there is a function $\varphi: \omega \rightarrow \omega$ such that for every $k \in \omega$, there exists $n \in \mathring{A}\[G\]$ with $k \leq n < \varphi(k)$.
 As this holds for every $(\V, \QQ)$-generic $G$ with ${p}^{\ast} \in G$, there is a $\QQ$-name $\mathring{\varphi}$ in $\V$ such that ${\forces}_{\QQ} \; {\mathring{\varphi}: \omega \rightarrow \omega}$ and ${p}^{\ast} \; {\forces}_{\QQ} \; {\forall k \in \omega \exists n \in \mathring{A}\[k \leq n < \mathring{\varphi}(k)\]}$.
 
 Since $\QQ$ is $\BS$-bounding, there exist ${p}^{\ast}_{1} \leq {p}^{\ast}$ and $\psi: \omega \rightarrow \omega$ in $\V$ such that ${p}^{\ast}_{1} \; {\forces}_{\QQ} \; {\forall k \in \omega\[\mathring{\varphi}(k) < \psi(k)\]}$.
 Let $p \in \PP(\UUU)$ be a reduction of ${p}^{\ast}_{1}$ with respect to the complete embedding $\pi$.
 Applying Lemma \ref{lem:1-tfusionU} in $\V$, find $q \leq p$ and $\seq{l}{i}{\in}{\omega} \in \BS$ satisfying the conclusions of Lemma \ref{lem:1-tfusionU}.
 Let $B = \left\{ {l}_{j}: j > 0 \right\} \in \UUU$.
 By the choice of $p$, $\pi(q)$ is compatible with ${p}^{\ast}_{1}$ in $\QQ$.
 Choose any ${q}^{\ast} \leq \pi(q), {p}^{\ast}_{1}$.
 To see that ${q}^{\ast} \leq {p}^{\ast}$ and $B \in \UUU$ satisfy ($\ast$), fix some $l \in B$ and ${r}^{\ast} \leq {q}^{\ast}$.
 Then $l = {l}_{i+1}$, for some $i \in \omega$.
 Since ${r}^{\ast} \leq {p}^{\ast}_{1} \leq {p}^{\ast}$, there exist ${r}^{\ast}_{1} \leq {r}^{\ast}$ and $n$ such that ${l}_{i}+1 \leq n$, ${r}^{\ast}_{1} \; {\forces}_{\QQ} \; {n \in \mathring{A} \subseteq \omega}$, and ${r}^{\ast}_{1} \; {\forces}_{\QQ} \; {n < \mathring{\varphi}\left({l}_{i}+1\right) < \psi\left({l}_{i}+1\right) \leq {l}_{i+1} = l}$.
 By the choice of $q$, $q \; {\forces}_{\PP(\UUU)} \; {{\mathring{c}}_{G}(\{n, l\}) = {\mathring{c}}_{G}(\{n, {l}_{i+1}\}) = 1-t}$, and so
 \begin{align*}
  \pi(q) \; {\forces}_{\QQ} \; {\mathring{c}(\{n, l\}) = {\pi}^{\ast}\left({\mathring{c}}_{G}\right)(\{n, l\}) = 1-t}.
 \end{align*}
 Therefore, ${r}^{\ast}_{1} \; {\forces}_{\QQ} \; {\mathring{c}(\{n, l\}) = 1-t}$, as needed.
 
 To conclude, suppose for a contradiction that $G$ is some $(\V, \QQ)$-generic filter and that in $\VG$, there is a selective ultrafilter $\VVV$ on $\omega$ with $\VVV \supseteq \UUU$.
 Then there exist $A \in \VVV$ and $t \in 2$ so that $\mathring{c}\[G\]$ is constantly $t$ on $\pc{A}{2}$.
 Let $\mathring{A}$ be a $\QQ$-name in $\V$ so that ${\forces}_{\QQ} \; {\mathring{A} \in \pc{\omega}{\omega}}$ and $A = \mathring{A}\[G\]$.
 By what has been proved above, there exist ${q}^{\ast} \in G$ and $B \in \UUU$ satisfying ($\ast$).
 Choose any $l \in B \cap A$.
 By ($\ast$), there is $n < l$ such that $n \in \mathring{A}\[G\] = A$ and $\mathring{c}\[G\](\{n, l\}) \neq t$, contradicting the fact that $\mathring{c}\[G\]$ is constantly $t$ on $\pc{A}{2}$.
\end{proof}
It is worth noting here that by iterating partial orders of the form $\PP(\UUU)$, it is possible to get models with, for example, a unique selective ultrafilter.
Models like this were first produced by Shelah in \cite{PIF}.
However, the details seem to be simpler for $\PP(\UUU)$.
\begin{Theorem} \label{thm:intermediate}
 Put ${S}^{2}_{1} = \{\alpha < {\omega}_{2}: \cf(\alpha) = {\omega}_{1}\}$.
 Let $\V$ be a transitive model of a sufficiently large fragment of $\ZFC$ which satisfies $\CH$ and $\diamondsuit\left({S}^{2}_{1}\right)$.
 Let $\kappa < {\aleph}_{2}$ be a cardinal.
 Suppose $\left\{ {\UUU}_{\alpha}: \alpha < \kappa \right\}$ is a family of selective ultrafilters on $\omega$ such that ${\UUU}_{\alpha} \; {\not\equiv}_{RK} \; {\UUU}_{\beta}$, for $\alpha \neq \beta$.
 Then there is a cardinal preserving forcing extension of $\V$ in which:
 \begin{enumerate}
  \item
  there are no stable ordered-union ultrafilters on $\FIN$;
  \item
  each ${\UUU}_{\alpha}$ generates a selective ultrafilter on $\omega$;
  \item
  the ultrafilter generated by ${\UUU}_{\alpha}$ is not RK-isomorphic to the ultrafilter generated by ${\UUU}_{\beta}$, for $\alpha \neq \beta$;
  \item
  if $\VVV$ is any selective ultrafilter on $\omega$, then $\VVV \; {\equiv}_{RK} \; {\UUU}_{\alpha}$, for some $\alpha < \kappa$.
 \end{enumerate}
\end{Theorem}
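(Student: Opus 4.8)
The plan is to mimic the proof of Theorem~\ref{thm:aleph2selectives}, but to interleave forcings of the form $\PP(\VVV)$, which destroy a single selective ultrafilter, with the forcings $\PP(\HHH)$ that destroy stable ordered-union ultrafilters, using the $\diamondsuit\left(S^2_1\right)$-sequence to catch targets of two kinds. First I would define in $\V$ a countable support iteration $\left\langle \PP_\alpha; \mathring{\QQ}_\alpha : \alpha \leq \omega_2\right\rangle$ by recursion, maintaining as inductive hypotheses that each $\PP_\alpha$ is proper, $\BS$-bounding, satisfies the $\aleph_2$-c.c., forces $\CH$, and preserves every $\UUU_\beta$ for $\beta<\kappa$. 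At stage $\alpha$ the diamond guesses a pair $\left\langle \mathring{\GGG}, p\right\rangle$, and I split into two cases according to whether $p$ forces $\mathring{\GGG}$ to be a stable ordered-union ultrafilter on $\FIN$ (call this Type~H), or a selective ultrafilter on $\omega$ with $\mathring{\GGG} \; {\not\equiv}_{RK} \; \UUU_\beta^{\ast}$ for every $\beta<\kappa$ (Type~S), where $\UUU_\beta^{\ast}$ denotes the ultrafilter generated by $\UUU_\beta$. In the Type~H case I set $\mathring{\QQ}_\alpha = \PP(\mathring{\HHH}_\alpha)$ exactly as in Theorem~\ref{thm:aleph2selectives}; in the Type~S case I set $\mathring{\QQ}_\alpha = \PP(\mathring{\VVV}_\alpha)$ for a name $\mathring{\VVV}_\alpha$ that $p$ forces to equal $\mathring{\GGG}$ and that is always forced to be selective with $\mathring{\VVV}_\alpha \; {\not\equiv}_{RK} \; \UUU_\beta^{\ast}$. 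Since $\CH$ holds at each stage, $\forces_\alpha \lc \mathring{\QQ}_\alpha \rc = \aleph_1$, so the standard proper-forcing preservation theorems (Shelah~\cite{PIF}, Abraham~\cite{Ab}), together with Corollary~\ref{cor:proper+bounding} and Corollary~\ref{cor:proper+boundingU}, keep the inductive hypotheses at every $\delta \leq \omega_2$.

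The crucial point for maintaining preservation of the $\UUU_\beta$ is that the forcing chosen at each step preserves \emph{every} $\UUU_\beta$: a Type~H step does so by Theorem~\ref{thm:preservesselective}, while a Type~S step does so by Lemma~\ref{lem:c1HpreservedU}, precisely because the catching condition guarantees $\mathring{\VVV}_\alpha \; {\not\equiv}_{RK} \; \UUU_\beta^{\ast}$ for all $\beta$. Combining this with Lemma~\ref{lem:P-pointiteration} at limit stages and the two-step composition of ultrafilter preservation at successors, an easy induction shows that $\PP_\delta$ preserves each $\UUU_\beta$ for all $\delta \leq \omega_2$. Hence by Lemma~\ref{lem:selectivestay} each $\UUU_\beta$ generates a selective ultrafilter $\UUU_\beta^{\ast}$ in the final model, giving (2), and by Corollary~\ref{cor:non-RK-preservation} these remain pairwise non-RK-isomorphic, giving (3). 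Cardinal preservation follows from properness (for $\omega_1$) and the $\aleph_2$-c.c.\@ (for $\aleph_2$ and above).

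Properties (1) and (4) are the two destruction clauses, handled by the same reflection mechanism. For (1) I would repeat the final-stage argument of Theorem~\ref{thm:aleph2selectives}: if $\mathring{\HHH}$ named a stable ordered-union ultrafilter in $\V[G_{\omega_2}]$, the diamond catches at some $\alpha \in S^2_1$ a $\PP_\alpha$-name $\mathring{\GGG}$ forced to be a stable ordered-union ultrafilter with $\mathring{\GGG}[G_\alpha] \subseteq \mathring{\HHH}[G_{\omega_2}]$, and since $\PP(\mathring{\GGG}[G_\alpha])$ completely embeds into the $\BS$-bounding quotient $\PP_{\omega_2} \slash G_\alpha$, Theorem~\ref{thm:noresseruction} yields a contradiction. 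For (4), suppose toward a contradiction that $\GGG$ is selective in $\V[G_{\omega_2}]$ with $\GGG \; {\not\equiv}_{RK} \; \UUU_\beta^{\ast}$ for all $\beta<\kappa$, named by $\mathring{\GGG}$ and forced by $p$. I would pick $M \prec H(\theta)$ with $\lc M \rc = \aleph_1$, $\omega_1 \subseteq M$, ${}^{\omega}M \subseteq M$, containing $\mathring{\GGG}$, $p$, $\left\langle \UUU_\beta \right\rangle_{\beta<\kappa}$ and the iteration, and with $\alpha = M \cap \omega_2 \in S^2_1$; then $\mathring{\VVV}_\alpha := \mathring{\GGG} \cap M$ is a $\PP_\alpha$-name with $\mathring{\VVV}_\alpha[G_\alpha] = \GGG \cap \V[G_\alpha]$. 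By elementarity and the countable closure of $M$, $p\restrict\alpha$ forces $\mathring{\VVV}_\alpha$ to be selective in $\V[G_\alpha]$ (selectivity is witnessed by functions $\omega \rightarrow \omega$, all of whose $\PP_\alpha$-names lie in $M$). The key verification is that $\mathring{\VVV}_\alpha \; {\not\equiv}_{RK} \; \UUU_\beta^{\ast}$ in $\V[G_\alpha]$: if some permutation $e \in \V[G_\alpha]$ witnessed $\mathring{\VVV}_\alpha \; {\equiv}_{RK} \; \UUU_\beta^{\ast}$, then, because $\UUU_\beta^{\ast}$ is generated by the ground-model sets of $\UUU_\beta$, the pushforward $e_{\ast}(\UUU_\beta^{\ast})$ would have a base $\left\{ e\[B\] : B \in \UUU_\beta \right\} \subseteq \V[G_\alpha]$ lying inside $\GGG \cap \V[G_\alpha]$; as $\GGG$ is an ultrafilter this forces $\GGG = e_{\ast}(\UUU_\beta^{\ast})$, i.e.\@ $\GGG \; {\equiv}_{RK} \; \UUU_\beta^{\ast}$ in $\V[G_{\omega_2}]$, contradicting the choice of $\GGG$. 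Thus a Type~S step at the caught stage $\alpha$ destroyed $\mathring{\VVV}_\alpha[G_\alpha] = \GGG \cap \V[G_\alpha]$, so Lemma~\ref{thm:noresseructionU}, applied to the $\BS$-bounding quotient $\PP_{\omega_2} \slash G_\alpha$ into which $\PP(\mathring{\VVV}_\alpha[G_\alpha])$ completely embeds, shows that no selective ultrafilter in $\V[G_{\omega_2}]$ extends $\GGG \cap \V[G_\alpha]$, while $\GGG$ is one, a contradiction.

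I expect the main obstacle to be the reflection underlying (4): arranging, via $\diamondsuit\left(S^2_1\right)$ together with a countably closed elementary submodel, that the restriction $\GGG \cap \V[G_\alpha]$ is genuinely a selective ultrafilter in $\V[G_\alpha]$ caught by the bookkeeping, and in particular the verification that it fails to be RK-isomorphic to any $\UUU_\beta^{\ast}$ — which is exactly where the fact that each $\UUU_\beta^{\ast}$ is generated from the ground model is used decisively. The remaining work is bookkeeping that must simultaneously accommodate both kinds of targets while never selecting for destruction a selective ultrafilter RK-isomorphic to one of the designated $\UUU_\beta$.
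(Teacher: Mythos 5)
Your proposal is correct and follows essentially the same route as the paper: the same three-case diamond bookkeeping interleaving $\PP(\mathring{\HHH}_\alpha)$ with $\PP(\mathring{\VVV}_\alpha)$, the same preservation lemmas (Theorem~\ref{thm:preservesselective}, Lemma~\ref{lem:c1HpreservedU}, Lemma~\ref{lem:P-pointiteration}, Lemma~\ref{lem:selectivestay}, Corollary~\ref{cor:non-RK-preservation}) for clauses (2) and (3), and the same reflection-plus-destruction argument (Theorem~\ref{thm:noresseruction}, Lemma~\ref{thm:noresseructionU}) for clauses (1) and (4). Your elementary-submodel verification that the restricted ultrafilter $\GGG \cap \V[G_\alpha]$ stays RK-non-isomorphic to each $\UUU_\beta^{\ast}$ --- using that each $\UUU_\beta^{\ast}$ has a ground-model base, so any isomorphism in $\V[G_\alpha]$ would lift to one in $\V[G_{\omega_2}]$ --- is precisely the content of the ``standard argument'' that the paper invokes without spelling out.
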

\begin{proof}
 Fixing some diamond sequence witnessing $\diamondsuit\left({S}^{2}_{1}\right)$, define a CS iteration $\left\langle {\PP}_{\alpha}; {\mathring{\QQ}}_{\alpha}: \alpha \leq {\omega}_{2} \right\rangle$ in $\V$ as follows.
 Assume $\alpha < {\omega}_{2}$ and that ${\PP}_{\alpha}$ is proper, $\BS$-bounding, satisfies the ${\aleph}_{2}$-c.c.\@, that ${\forces}_{\alpha} \; {\CH}$, and that ${\PP}_{\alpha}$ preserves ${\UUU}_{\xi}$, for all $\xi < \kappa$.
 Let $\{{\mathring{\UUU}}^{\alpha}_{\xi}: \xi < \kappa\}$ be a family of ${\PP}_{\alpha}$-names so that for each $\xi < \kappa$, ${\forces}_{\alpha} \; {{\mathring{\UUU}}^{\alpha}_{\xi} = \left\{A \subseteq \omega: \exists B \in {\UUU}_{\xi}\[B \subseteq A\] \right\}}$.
 Then ${\forces}_{\alpha} \; {``{\mathring{\UUU}}^{\alpha}_{\xi} \ \text{is a selective ultrafilter on} \ \omega''}$, for every $\xi < \kappa$, and ${\forces}_{\alpha} \; { {\mathring{\UUU}}^{\alpha}_{\xi} \; {\not\equiv}_{RK} \; {\mathring{\UUU}}^{\alpha}_{\zeta} }$, for $\xi \neq \zeta$.
 
 Observe that ${\forces}_{\alpha} \; {\text{``stable ordered-union ultrafilters exist''}}$ because ${\forces}_{\alpha} \; {\CH}$.
 And ${\forces}_{\alpha} \; {``\exists \VVV\[\VVV \ \text{is a selective ultrafilter on} \ \omega \ \text{and} \ \forall \xi < \kappa\[{\mathring{\UUU}}^{\alpha}_{\xi} \; {\not\equiv}_{RK} \; \VVV\]\]''}$ due to the fact that $\kappa < {\aleph}_{2}$ and ${\forces}_{\alpha} \; {\CH}$.
 
 If the diamond sequence at $\alpha$ codes a pair $\pr{\mathring{\GGG}}{p}$ such that $p \in {\PP}_{\alpha}$, $\mathring{\GGG}$ is a ${\PP}_{\alpha}$-name, and $p \; {\forces}_{\alpha} \; {``\mathring{\GGG} \ \text{is a stable ordered-union ultrafilter on} \ \FIN''}$, then choose a ${\PP}_{\alpha}$-name ${\mathring{\HHH}}_{\alpha}$ such that ${\forces}_{\alpha} \; {``{\mathring{\HHH}}_{\alpha} \ \text{is a stable ordered-union ultrafilter on} \ \FIN''}$ and $p \; {\forces}_{\alpha} \; {{\mathring{\HHH}}_{\alpha} = \mathring{\GGG}}$, and define ${\mathring{\QQ}}_{\alpha}$ to be a full ${\PP}_{\alpha}$-name so that ${\forces}_{\alpha} \; {{\mathring{\QQ}}_{\alpha} = \PP({\mathring{\HHH}}_{\alpha})}$.
 
 If the diamond sequence at $\alpha$ codes a pair $\pr{\mathring{\GGG}}{p}$ such that $p \in {\PP}_{\alpha}$, $\mathring{\GGG}$ is a ${\PP}_{\alpha}$-name, $p \; {\forces}_{\alpha} \; {``\mathring{\GGG} \ \text{is a selective ultrafilter on} \ \omega''}$, and for each $\xi < \kappa$, $p \; {\forces}_{\alpha} \; {\mathring{\GGG} \; {\not\equiv}_{RK} \; {\mathring{\UUU}}^{\alpha}_{\xi}}$, then choose a ${\PP}_{\alpha}$-name ${\mathring{\HHH}}_{\alpha}$ such that ${\forces}_{\alpha} \; {``{\mathring{\HHH}}_{\alpha} \ \text{is a selective ultrafilter on} \ \omega''}$, and for each $\xi < \kappa$, ${\forces}_{\alpha} \; {{\mathring{\HHH}}_{\alpha} \; {\not\equiv}_{RK} \; {\mathring{\UUU}}^{\alpha}_{\xi}}$, and define ${\mathring{\QQ}}_{\alpha}$ to be a full ${\PP}_{\alpha}$-name so that ${\forces}_{\alpha} \; {{\mathring{\QQ}}_{\alpha} = \PP({\mathring{\HHH}}_{\alpha})}$.
 
 If neither of these occurs, then choose an arbitrary ${\PP}_{\alpha}$-name ${\mathring{\HHH}}_{\alpha}$ such that ${\forces}_{\alpha} \; {``{\mathring{\HHH}}_{\alpha} \ \text{is a stable ordered-union ultrafilter on} \ \FIN''}$, and define ${\mathring{\QQ}}_{\alpha}$ to be a full ${\PP}_{\alpha}$-name so that ${\forces}_{\alpha} \; {{\mathring{\QQ}}_{\alpha} = \PP({\mathring{\HHH}}_{\alpha})}$.
 Note that in all cases ${\forces}_{\alpha} \; {\lc {\mathring{\QQ}}_{\alpha} \rc = {\aleph}_{1}}$ because ${\forces}_{\alpha} \; {\CH}$.
 Standard arguments in the theory of proper forcing (see Shelah~\cite{PIF} or Abraham~\cite{Ab}) together with lemmas proved earlier therefore imply that for each $\delta \leq {\omega}_{2}$, ${\PP}_{\delta}$ is proper, $\BS$-bounding, and satisfies the ${\aleph}_{2}$-c.c.
 In particular, this implies that the extension by ${\PP}_{{\omega}_{2}}$ preserves all cardinals.
 Furthermore, for each $\delta < {\omega}_{2}$, ${\forces}_{\delta} \; {\CH}$.
 An easy inductive argument using the earlier lemmas shows that for each $\xi < \kappa$ and for each $\delta \leq {\omega}_{2}$, ${\PP}_{\delta}$ preserves ${\UUU}_{\xi}$.
 Therefore, for each $\xi < \kappa$, ${\forces}_{{\omega}_{2}} \; {``{\mathring{\UUU}}^{{\omega}_{2}}_{\xi} \ \text{is a selective ultrafilter on} \ \omega''}$, and ${\forces}_{{\omega}_{2}} \; {{\mathring{\UUU}}^{{\omega}_{2}}_{\xi} \; {\not\equiv}_{RK} \; {\mathring{\UUU}}^{{\omega}_{2}}_{\zeta}}$, for $\zeta \neq \xi$.
 An argument identical to the one in the proof of Theorem \ref{thm:aleph2selectives} shows that ${\forces}_{{\omega}_{2}} \; {``\text{there are no stable ordered-union ultrafilters on} \ \FIN''}$.
 
 Next, suppose for a contradiction that $\mathring{\VVV}$ is a ${\PP}_{{\omega}_{2}}$-name such that for some $p \in {\PP}_{{\omega}_{2}}$, $p \; {\forces}_{{\omega}_{2}} \; {``\mathring{\VVV} \ \text{is a selective ultrafilter on} \ \omega''}$, and for each $\xi < \kappa$, $p \; {\forces}_{{\omega}_{2}} \; {\mathring{\VVV} \; {\not\equiv}_{RK} \; {\mathring{\UUU}}^{{\omega}_{2}}_{\xi}}$.
 Then by a standard argument, there exists $\alpha \in {S}^{2}_{1}$ such that the diamond sequence at $\alpha$ codes a pair $\pr{\mathring{\GGG}}{p \restrict \alpha}$ such that $\mathring{\GGG}$ is a ${\PP}_{\alpha}$-name,
 \begin{align*}
  p \restrict \alpha \; {\forces}_{\alpha} \; {``\mathring{\GGG} \ \text{is a selective ultrafilter on} \ \omega''},
 \end{align*}
 for each $\xi < \kappa$, $p \restrict \alpha \; {\forces}_{\alpha} \; {\mathring{\GGG} \; {\not\equiv}_{RK} \; {\mathring{\UUU}}^{\alpha}_{\xi}}$, and $p \; {\forces}_{{\omega}_{2}} \; {\mathring{\GGG} \subseteq \mathring{\VVV}}$.
 Let ${G}_{{\omega}_{2}}$ be a $(\V, {\PP}_{{\omega}_{2}})$-generic filter with $p \in {G}_{{\omega}_{2}}$, and let ${G}_{\alpha}$ denote its projection to ${\PP}_{\alpha}$.
 In $\V\[{G}_{\alpha}\]$, ${\mathring{\HHH}}_{\alpha}\[{G}_{\alpha}\] = \mathring{\GGG}\[{G}_{\alpha}\]$ is a selective ultrafilter on $\omega$ such that for every $\xi < \kappa$, ${\mathring{\HHH}}_{\alpha}\[{G}_{\alpha}\] \; {\not\equiv}_{RK} \; {\mathring{\UUU}}^{\alpha}_{\xi}\[{G}_{\alpha}\]$.
 Moreover, $\PP\left({\mathring{\HHH}}_{\alpha}\[{G}_{\alpha}\]\right)$ completely embeds into the completion of ${\PP}_{{\omega}_{2}} \slash {G}_{\alpha}$, and ${\PP}_{{\omega}_{2}} \slash {G}_{\alpha}$ is $\BS$-bounding.
 Therefore Lemma \ref{thm:noresseructionU} implies that
 \begin{align*}
  {\forces}_{{\PP}_{{\omega}_{2}} \slash {G}_{\alpha}} \; {``\text{there is no selective ultrafilter on} \ \omega \ \text{extending} \ {\mathring{\HHH}}_{\alpha}\[{G}_{\alpha}\]''}.
 \end{align*}
 However ${G}_{{\omega}_{2}}$ is a $\left(\V\[{G}_{\alpha}\], {\PP}_{{\omega}_{2}} \slash {G}_{\alpha}\right)$-generic filter, $\V\[{G}_{\alpha}\]\[{G}_{{\omega}_{2}}\] = \V\[{G}_{{\omega}_{2}}\]$, and in $\V\[{G}_{{\omega}_{2}}\]$, $\mathring{\VVV}\[{G}_{{\omega}_{2}}\]$ is a selective ultrafilter on $\omega$ extending $\mathring{\GGG}\[{G}_{{\omega}_{2}}\] = \mathring{\GGG}\[{G}_{\alpha}\] = {\mathring{\HHH}}_{\alpha}\[{G}_{\alpha}\]$.
 This is a contradiction which shows that in the extension by ${\PP}_{{\omega}_{2}}$, every selective ultrafilter on $\omega$ is RK-isomorphic to the ultrafilter generated by ${\UUU}_{\xi}$, for some $\xi < \kappa$.
\end{proof}
\def\polhk#1{\setbox0=\hbox{#1}{\ooalign{\hidewidth
  \lower1.5ex\hbox{`}\hidewidth\crcr\unhbox0}}}
\providecommand{\bysame}{\leavevmode\hbox to3em{\hrulefill}\thinspace}
\providecommand{\MR}{\relax\ifhmode\unskip\space\fi MR }
\providecommand{\MRhref}[2]{%
  \href{http://www.ams.org/mathscinet-getitem?mr=#1}{#2}
}
\providecommand{\href}[2]{#2}

\end{document}